\def\beq{\begin{equation}}
\def\eeq{\end{equation}}
\def\ba{\begin{array}}
\def\ea{\end{array}}
\def \RpN{\mathbb{R}_+^{n+1}}
\def\cal{\mathcal}
\numberwithin{equation}{section}
\newenvironment{abs}{\textbf{Abstract}\mbox{  }}{ }
\newenvironment{key words}{\textbf{Keywords}\mbox{  }}{ }
\newtheorem{theorem}{Theorem}[section]
\newtheorem{definition}[theorem]{\textbf{Definition}}
\newtheorem{corollary}[theorem]{\textbf{Corollary}}
\newtheorem{proposition}[theorem]{\textbf{Proposition}}
\newtheorem{lemma}[theorem]{Lemma}
\renewenvironment{proof}{\noindent{\textbf{Proof.}}}{\hfill$\Box$}
\theoremstyle{remark}
\newtheorem{remark}[theorem]{\textbf{Remark}}
\theoremstyle{plain}
\begin{document}
\title{\textbf Divergent operator with degeneracy and related sharp inequalities}
\author  {Jingbo Dou, Liming Sun, Lei Wang,   and Meijun Zhu}

\address{Jingbo Dou, School of Mathematics and Information Science, Shaanxi Normal University, Xi'an, Shaanxi, 710119, China}

\email{jbdou@snnu.edu.cn}

\address{Liming Sun, Department of Mathematics, University of British Columbia, Vancouver, BC V6T 1Z2, Canada}

\email{lsun@math.ubc.ca}

\address{Lei Wang, Academy of Mathematics and Systems Sciences, Chinese Academy of Sciences, Beijing 100190, P.R. China; School of Mathematical Sciences,  University of Chinese  Academy of Sciences, Beijing 100049,  P.R. China, and Department of Mathematics, The University of Oklahoma, Norman, OK 73019, USA
}

\email{wanglei@amss.ac.cn}

\address{ Meijun Zhu, Department of Mathematics,
The University of Oklahoma, Norman, OK 73019, USA}

\email{mzhu@math.ou.edu}

\subjclass[2010]{35A23 (Primary), 35J70, 35B06, 30C70, 35B65 (Secondary).}
\keywords{Divergent operator,  Weighted Sobolev inequality,  Baouendi-Grushin operator,  Sharp constant, Moving sphere method}

\maketitle

% ----------------------------------------------------------------
\noindent
\begin{abs}
In this paper we classify all nonnegative extremal functions to a sharp weighted Sobolev inequality on the upper half  space, which involves a divergent operator with degeneracy on the boundary.  As an application of the results, we derive a sharp Sobolev type inequality involving  Baouendi-Grushin operator, and classify certain extremal functions for all $\tau>0$ and $m\ne2 $ or $ n\ne1$.

\end{abs}

\smallskip

%---------------------------------------------------------------------------------
\section{\textbf{Introduction}\label{Section 1}}

The current work is motivated and heavily influenced by the popular work of Caffarelli and Silvestre \cite{CS07},  and by our recent studies on  the extension type operators (see, for example, Dou and Zhu \cite{DZ15}, Dou, Guo and Zhu \cite{DGZ17}, Gluck \cite{G18}, Gluck and Zhu \cite{GZ19} and  Wang and Zhu \cite{WZ19}). Our results, among the other things, partially answer an open questions for years (see Theorem \ref{L-2} and Proposition \ref{symmetrization-z} below).

Throughout the paper, we denote $\mathbb{R}^{n+1}_+=\{(y, t) \in \mathbb{R}^{n+1} \ : \ \ y\in \mathbb{R}^n, \ \  t>0\}$ as the upper half space.

\subsection{A divergent operator}
In  \cite{CS07}, Caffarelli and Silvestre study the following extension problem for $\alpha \in (-1, 1)$:
\begin{equation}\label{equ1.2-1}
	\begin{cases}
	div(t^\alpha  \nabla u)=0,  & \quad \text{in}\quad \mathbb{R}^{n+1}_+,\\
	u(y, 0)=f(y), & \quad\text{on}\quad \partial\mathbb{R}^{n+1}_+.
	\end{cases}
	\end{equation}
%A nice ``pointwise'' view  on
A global defined fractional Laplacian operator on $f(y)$ in a good space is given by
$$(-\Delta)^{\frac{1-\alpha }2}f(y)=-C\lim_{t \to 0^+} t^\alpha\frac {\partial u}{\partial t}(y, t)$$for a suitable constant $C$.

For $f(y)$ in a good space,  the weak solution $u(y,t)$ to \eqref{equ1.2-1} can be represented, up to a constant multiplier, as an extension of $f(y)$ via operator ${\cal P}_\alpha$:
$$
u(y, t)= {\cal P}_\alpha(f)(y,t) :=\int_{ \mathbb{R}^{n}}  P_\alpha(y-x, t) f(x)dx,
$$
whose positive kernel is
$$
P_\alpha(y, t)=\frac{ t^{{1-\alpha}}}{(|y|^2+t^2)^{\frac{n+1-\alpha}2}},\ \ y\in\mathbb{R}^n,\ t>0.
$$
See more discussions in the introduction part in Wang and Zhu  \cite{WZ19}  for the related studies of the extension operators involving divergent operator $div(t^\alpha\nabla )$.

\subsection{New nonlinear equations}
Our original interest is to understand the following general equation
\begin{equation}\label{basic-equ}
	-div(t^{\alpha} \nabla u)=f(u, t),  \quad u \ge 0, \quad \text{in}\quad \mathbb{R}^{n+1}_+
		\end{equation}
with or without explicitly given boundary conditions.  For $f(u, t)=0$, as we mentioned above, equation \eqref{basic-equ} was discussed by Caffarelli and Silvestre \cite{CS07} in connecting to the study of fractional Laplacian operators; the Liouville type theorems for this homogeneous equation were obtained recently by Wang and Zhu \cite{WZ19}.  Here, we shall study equation \eqref{basic-equ} for $f(u,t)=t^\beta |u|^{p-1}$.  After a standard scaling argument for an associated Sobolev type inequality on an unbounded domain (see, for example, inequality \eqref{GGN-2} below), we can see that
\begin{equation}\label{cr-p}
p^*=\frac{2n+2\beta+2}{n+\alpha-1}
\end{equation}
is the so called critical exponent.

Denote
$$C^\infty_0(\overline{\mathbb{R}^{n+1}_+})=\{u|_{\overline{\mathbb{R}^{n+1}_+}}: u\in C_0^\infty(\mathbb{R}^{n+1})\}.$$
We have the following  inequality.
\begin{proposition}\label{main-1}
	Assume $n\geq 1$,  $l>-1$, $k >0$ and $ \frac{nl}{n+1}\le k\le l+1$.
	There is a constant $C=C(n,k)>0$ such that for all $u\in C_0^\infty (\overline{\mathbb{R}^{n+1}_+})$,
	\begin{equation}\label{GGN-1}
	(\int_{\mathbb{R}^{n+1}_+ }t^l |u|^{\frac{n+l+1}{n+k}} dydt)^{\frac{n+k}{n+l+1}} \le C \int_{\mathbb{R}^{n+1}_+ }t^k|\nabla u| dy dt.
		\end{equation}
		\end{proposition}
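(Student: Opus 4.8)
The plan is to prove \eqref{GGN-1} by a slicing argument that marries the classical $L^{1}$ Sobolev (Gagliardo--Nirenberg) inequality on $\Rn$, applied on every horizontal slice $\{t=\mathrm{const}\}$, with a one--dimensional weighted Hardy inequality in the variable $t$, the two being glued together by H\"older's inequality. First I would reduce to $u\ge 0$ by replacing $u$ with $|u|$ (still Lipschitz, still compactly supported in $\overline{\RpN}$, with the same gradient $|\nabla|u||=|\nabla u|$ a.e.). Then I would record that $q:=\frac{n+l+1}{n+k}$ is precisely the exponent forced by the dilation invariance of \eqref{GGN-1}, that the hypothesis $k\le l+1$ is exactly $q\ge 1$, and that $k\ge\frac{nl}{n+1}$ is exactly $q\le\frac{n+1}{n}$ (so in particular $q\le\frac{n}{n-1}$). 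Finally I would introduce the $y$--marginals $a(t):=\int_{\Rn}u(y,t)\,dy$, $b(t):=\int_{\Rn}|\nabla_{y}u(y,t)|\,dy$, $c(t):=\int_{\Rn}|\partial_{t}u(y,t)|\,dy$ and $v(t):=\big(\int_{\Rn}u(y,t)^{q}\,dy\big)^{1/q}$, so that the left side of \eqref{GGN-1} equals $\big(\int_{0}^{\infty}t^{l}v(t)^{q}\,dt\big)^{1/q}$ while $\int_{0}^{\infty}t^{k}b\,dt$ and $\int_{0}^{\infty}t^{k}c\,dt$ are both at most $\int_{\RpN}t^{k}|\nabla u|\,dy\,dt$.

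On each slice, the interpolation inequality $\|w\|_{L^{q}}\le\|w\|_{L^{1}}^{1-\sigma}\|w\|_{L^{n/(n-1)}}^{\sigma}$ (valid since $1\le q\le\frac{n}{n-1}$) together with the Sobolev inequality $\|u(\cdot,t)\|_{L^{n/(n-1)}(\Rn)}\le C_{n}\|\nabla_{y}u(\cdot,t)\|_{L^{1}(\Rn)}$ (for $n=1$ one uses instead $\|u(\cdot,t)\|_{L^{\infty}}\le\tfrac12\|\partial_{y}u(\cdot,t)\|_{L^{1}}$) yields $v(t)\le C_{n}\,a(t)^{1-\sigma}b(t)^{\sigma}$ with $\sigma:=\frac{n(q-1)}{q}\in[0,1]$, hence $\int_{0}^{\infty}t^{l}v^{q}\,dt\le C\int_{0}^{\infty}t^{l}a^{(1-\sigma)q}b^{\sigma q}\,dt$. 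I would then split this last integral by H\"older's inequality in $t$ with the conjugate exponents $r':=\frac{1}{\sigma q}$ and $r:=\frac{1}{1-\sigma q}$, distributing the weight as $t^{l}=t^{l\eta}\cdot t^{l(1-\eta)}$ and choosing $\eta$ so that the factor carrying $b$ is exactly $\big(\int_{0}^{\infty}t^{k}b\,dt\big)^{\sigma q}$; here the hypothesis enters precisely to guarantee $r'\ge1$, i.e.\ $\sigma q\le1$, which is the lower bound $k\ge\frac{nl}{n+1}$. To the remaining factor $\big(\int_{0}^{\infty}t^{l\eta r}a^{(1-\sigma)qr}\,dt\big)^{1/r}$ I would apply the one--dimensional weighted Hardy inequality $\big(\int_{0}^{\infty}t^{\gamma}a^{\rho}\,dt\big)^{1/\rho}\le C(\gamma,\rho)\int_{0}^{\infty}t^{(\gamma+1)/\rho}c\,dt$, which holds for $\gamma>-1$, $\rho\ge1$ because $a(t)\le\int_{t}^{\infty}c(s)\,ds$ (compact support of $u$). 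A short computation shows the forced parameters $\rho=(1-\sigma)qr$ and $\gamma=l\eta r$ satisfy $\gamma>-1$ throughout the admissible range and $(\gamma+1)/\rho=k$, so this factor is bounded by $C\big(\int_{0}^{\infty}t^{k}c\,dt\big)^{(1-\sigma)q}$. Multiplying, $\int_{0}^{\infty}t^{l}v^{q}\,dt\le C\big(\int_{\RpN}t^{k}|\nabla u|\big)^{\sigma q+(1-\sigma)q}=C\big(\int_{\RpN}t^{k}|\nabla u|\big)^{q}$, and \eqref{GGN-1} follows on taking $q$-th roots; the resulting constant depends on $n,k,l$, but since $l$ lies in a bounded interval once $n,k$ are fixed it may be taken of the form $C(n,k)$.

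The two endpoints of the parameter range must be handled separately, but both are immediate. For $k=l+1$ (so $q=1$, $\sigma=0$) inequality \eqref{GGN-1} is just the one--dimensional Hardy inequality $\int_{\RpN}t^{l}u\le\frac{1}{l+1}\int_{\RpN}t^{l+1}|\partial_{t}u|$. For $k=\frac{nl}{n+1}$ (so $q=\frac{n+1}{n}$, $\sigma q=1$, $r'=1$, $r=\infty$) the H\"older split degenerates into an $L^{\infty}_{t}$--$L^{1}_{t}$ pairing, with the Hardy step replaced by $\|t^{k}a(t)\|_{L^{\infty}_{t}}\le\int_{t}^{\infty}s^{k}c(s)\,ds\le\int_{\RpN}t^{k}|\partial_{t}u|$ (using $k>0$). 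The case $n=1$ is already covered, through the $L^{\infty}$ form of the Sobolev inequality on $\mathbb R$. (Equivalently, one may first reformulate \eqref{GGN-1}, via the weighted co-area formula and the layer-cake/Minkowski inequality --- legitimate because $q\ge1$ --- as the weighted isoperimetric inequality $\big(\int_{E}t^{l}\,dy\,dt\big)^{(n+k)/(n+l+1)}\le C\int_{\partial^{*}E\cap\{t>0\}}t^{k}\,d\mathcal{H}^{n}$, and then prove that by slicing $E$ in $t$ and using the Euclidean isoperimetric inequality on the slices; the exponent bookkeeping is identical.)

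I expect the only genuine obstacle to be exactly that bookkeeping. Once one insists on matching the $\nabla_{y}u$--contribution to $\int_{\RpN}t^{k}|\nabla_{y}u|$ at the first power, every remaining parameter --- $\sigma$, $r$, $r'$, $\eta$, $\rho$, $\gamma$ --- is determined, and one must check that these forced choices are admissible: that $r'\ge1$, and that the Hardy inequality which then appears has weight exponent $\gamma>-1$ and produces precisely the weight $t^{k}$ on $|\partial_{t}u|$. Both requirements reduce to elementary inequalities among $n,k,l$, and the content of the hypothesis is that they hold exactly in the range $\frac{nl}{n+1}\le k\le l+1$. Nothing here is deep, but this verification, together with the two endpoint cases, must be carried out with care.
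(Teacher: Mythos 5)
Your proposal is correct, and it takes a genuinely different route from the paper's own proof. The paper proves two endpoint lemmas --- Lemma 2.1 for $k=l+1$, which is precisely your one-dimensional Hardy step, and Lemma 2.2 for $k=\frac{nl}{n+1}$, which is a weighted reworking of the classical Gagliardo--Nirenberg slicing argument in which the iterated H\"older inequality is carried out over all $n+1$ coordinate directions with the weight $t^{k}$ threaded through the iteration --- and then obtains the general exponent by one application of H\"older to the pointwise factorization $t^{l}|u|^{p}=(t^{k-1}|u|)^{\theta}(t^{\frac{n+1}{n}k}|u|^{\frac{n+1}{n}})^{1-\theta}$. You instead interpolate \emph{on each horizontal slice}: you invoke the $L^{1}$-Sobolev inequality on $\mathbb{R}^{n}$ as a black box rather than re-deriving it by iterated H\"older, interpolate between $L^{1}$ and $L^{n/(n-1)}$ slice by slice, and then handle the $t$-direction entirely by one-dimensional analysis (a H\"older split in $t$ followed by a weighted Hardy/Minkowski step). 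Your bookkeeping is heavier --- one must check admissibility of $\sigma,r,r',\eta,\rho,\gamma$, and I verified that $(\gamma+1)/\rho=k$, that $\rho\geq 1$ is equivalent to $k\leq l+1$, and that $r'\geq1$ is equivalent to $k\geq\frac{nl}{n+1}$ --- but the payoff is that the two hypotheses on $k$ are visibly decoupled, governing the H\"older step and the Hardy step separately, whereas in the paper's proof both are absorbed into the single interpolation parameter $\theta\in[0,1]$. The paper's approach is self-contained (it never cites the Euclidean Sobolev inequality); yours modularizes the $\mathbb{R}^{n}$-analysis from the $t$-analysis and makes the equivalence with a weighted isoperimetric inequality transparent. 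Both are complete proofs of the same statement, with the paper's constant likewise depending implicitly on $l$ through $\theta$, so your final remark about absorbing the $l$-dependence into $C(n,k)$ matches the paper's convention.
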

		
Proposition  \ref{main-1} is  a known result. In fact, it is true even for $k \le 0$, see Maz'ya \cite[inequality (2.1.35)] {Maz1985}.  Here we give a direct proof for $k>0$, similar to the original one in Gagliardo \cite{Ga59} and Nirenberg \cite{Ni59}. See Section 2 for more details.

Define the
weighted Sobolev space ${\cal D}_\alpha^{1,p}(\mathbb{R}^{n+1}_+)$ as the completion of the space $C^\infty_0(\overline{\mathbb{R}^{n+1}_+})$ under the norm
\[
\|u\|_{{\cal D}_\alpha^{1,p}(\mathbb{R}^{n+1}_+)}=\big(\int_{\mathbb{R}^{n+1}_+}t^{\alpha}|\nabla u|^pdy dt\big)^\frac1p.
\]
We say $ u \in {\cal D}_{\alpha, loc}^{1,p}(\mathbb{R}^{n+1}_+)$, if $u\in W^{1,1}_{loc}(\mathbb{R}^{n+1}_+)$ and for any compact set $K \subset \overline{\mathbb{R}^{n+1}_+}$,
$$
\int_{K}t^\alpha |\nabla u|^{p}dydt<\infty,\, \ \int_{ K}t^\alpha |u|^{p}dydt< \infty.
$$

Using H\"older inequality, we can derive  the following inequality from \eqref{GGN-1}.

\begin{corollary}\label{cor1-4}
Assume that $n\ge 1$ and $\alpha, \beta$ satisfy
\begin{equation}\label{beta-1}
\alpha>0,\ \beta>-1, \  \frac{n-1}{n+1}\beta\le\alpha\le\beta+2.
\end{equation}
There is a positive constant $C_{n+1, \alpha,\beta}>0$ such that, for all $u\in {\cal D}_{\alpha}^{1,2}(\mathbb{R}^{n+1}_+)$,
%$n+2k\ge l+1$ and $l \le \frac{n+1}n k$,
\begin{equation}\label{GGN-2}
(\int_{\mathbb{R}^{n+1}_+ }t^\beta |u|^{p^*}dydt)^{\frac{2}{p^*}} \le  C_{n+1, \alpha,\beta} \int_{\mathbb{R}^{n+1}_+ }t^{\alpha}|\nabla u|^2 dy dt.
\end{equation}
\end{corollary}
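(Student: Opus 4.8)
The plan is to deduce \eqref{GGN-2} from the Gagliardo–Nirenberg inequality \eqref{GGN-1} by applying the latter not to $u$ but to a suitable power $v=|u|^\gamma$, and then splitting the resulting first–order integral by Hölder's inequality so that one of the two factors reproduces the left–hand side of \eqref{GGN-2}; the estimate then closes on itself and yields \eqref{GGN-2} after a division. This is the classical Gagliardo–Nirenberg bootstrap, only with weights kept track of.

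Concretely, I would first reduce, by density, to $u\in C_0^\infty(\overline{\RpN})$, and replacing $u$ by $|u|$ assume $u\ge0$; I may also assume $\int_{\RpN}t^\beta u^{p^*}\,dydt>0$. Then set
\[
\gamma=\frac{2n+\alpha+\beta}{n+\alpha-1},\qquad k=\frac{\alpha+\beta}{2},\qquad l=\beta .
\]
Since $n\ge1$ and $\alpha>0$ we have $n+\alpha-1>0$ and $\gamma-1=\frac{n+\beta+1}{n+\alpha-1}>0$, so $\gamma>1$ and $v:=u^\gamma$ is a nonnegative, compactly supported Lipschitz function with $|\nabla v|=\gamma u^{\gamma-1}|\nabla u|$ a.e. To apply \eqref{GGN-1} (stated for $C_0^\infty$ functions) to this merely Lipschitz $v$, I would approximate, e.g.\ apply \eqref{GGN-1} to the smooth compactly supported functions $(u^2+\ve^2)^{\gamma/2}-\ve^\gamma$ and pass to the limit $\ve\to0$ by dominated convergence. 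Next I would check that, for these values of $k$ and $l$, the hypotheses $l>-1$ and $\frac{nl}{n+1}\le k\le l+1$ of Proposition \ref{main-1} are \emph{exactly} the conditions \eqref{beta-1}: indeed $\frac{n\beta}{n+1}\le\frac{\alpha+\beta}{2}\iff\frac{n-1}{n+1}\beta\le\alpha$ and $\frac{\alpha+\beta}{2}\le\beta+1\iff\alpha\le\beta+2$. (When $-1<\beta<0$ the number $k$ may fail to be positive; in that range one uses the form of \eqref{GGN-1} valid for $k\le0$ due to Maz'ya \cite{Maz1985}, as recalled just after Proposition \ref{main-1}.)

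Applying \eqref{GGN-1} to $v$ gives
\[
\Big(\int_{\RpN}t^{\beta}u^{\,\gamma\frac{n+\beta+1}{n+k}}\,dydt\Big)^{\frac{n+k}{n+\beta+1}}\le C(n,k)\,\gamma\int_{\RpN}t^{k}u^{\gamma-1}|\nabla u|\,dydt .
\]
The definition of $\gamma$ is chosen precisely so that $\gamma\frac{n+\beta+1}{n+k}=2\gamma-2=p^*$, so the left side equals $\big(\int_{\RpN}t^\beta u^{p^*}\big)^{\theta}$ with $\theta=\frac{n+k}{n+\beta+1}$. On the right side I write $t^{k}u^{\gamma-1}|\nabla u|=(t^{\alpha/2}|\nabla u|)\,(t^{\,k-\alpha/2}u^{\gamma-1})$ and apply Hölder with exponents $(2,2)$; since $2k-\alpha=\beta$ and $2\gamma-2=p^*$, the second factor is $\big(\int_{\RpN}t^\beta u^{p^*}\big)^{1/2}$. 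Combining and dividing by $\big(\int_{\RpN}t^\beta u^{p^*}\big)^{1/2}$ (finite since $\beta>-1$ and $u$ has compact support, and positive by assumption), and using the identity $\theta-\tfrac12=\frac{n+\alpha-1}{2(n+\beta+1)}=\frac1{p^*}$, I obtain $\big(\int_{\RpN}t^\beta u^{p^*}\big)^{1/p^*}\le C(n,k)\,\gamma\,\big(\int_{\RpN}t^\alpha|\nabla u|^2\big)^{1/2}$; squaring yields \eqref{GGN-2} with $C_{n+1,\alpha,\beta}=(C(n,k)\gamma)^2$, and a final density argument extends it to all $u\in{\cal D}_\alpha^{1,2}(\RpN)$.

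The argument is not deep; its content is entirely the bookkeeping of exponents, and the main obstacle is simply to get that bookkeeping right. The points requiring care are: verifying that Proposition \ref{main-1}'s constraints on $(k,l)$ coincide with \eqref{beta-1} under the substitution $k=\frac{\alpha+\beta}{2}$, $l=\beta$ (together with $n+\alpha-1>0$ and $\theta>\tfrac12$, which make the final division legitimate); the approximation that legitimizes applying \eqref{GGN-1} to the non-smooth function $u^\gamma$, as well as the concluding density passage to ${\cal D}_\alpha^{1,2}(\RpN)$; and — in the range where $k\le0$ — the appeal to Maz'ya's extension of \eqref{GGN-1}.
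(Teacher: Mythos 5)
Your calculations and exponent choices — applying Proposition~\ref{main-1} to $u^\gamma$ with $\gamma=\frac{2n+\alpha+\beta}{n+\alpha-1}$, $k=\frac{\alpha+\beta}{2}$, $l=\beta$, then closing by Cauchy--Schwarz — coincide exactly with the paper's proof in the range $\alpha+\beta>0$ (the paper's Case~1), and all the exponent identities you record are correct. The one genuine divergence is the treatment of $\alpha+\beta\le 0$, where your $k=\frac{\alpha+\beta}{2}$ drops to or below zero: you dispose of it by citing Maz'ya's extension of \eqref{GGN-1} to $k\le 0$, whereas the paper deliberately keeps the argument self-contained. In that range (which forces $0<\alpha<1$, $-1<\beta\le 0$) the paper instead picks a fresh auxiliary $k$ with $0<k\le\beta+1$ (and $k\le\frac{n}{n-1}\alpha$ when $n\ge2$), runs the identical power-and-H\"older split — so the weight on the zeroth-order factor becomes $2k-\alpha$ rather than $\beta$ — and then feeds the already-established Case~1 inequality with $\beta$ replaced by $2k-\alpha$ (which satisfies $\alpha+(2k-\alpha)=2k>0$) back into the estimate. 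Your route is shorter but relies on an external citation the authors only state and do not use in the proof; theirs is a small bootstrap of the same computation. One thing you do that the paper glosses over: the approximation $(u^2+\varepsilon^2)^{\gamma/2}-\varepsilon^\gamma$ needed to justify applying \eqref{GGN-1} to the non-smooth power $u^\gamma$. That point is correct and worth keeping.
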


Sobolev inequalities with monomial weights were also  studied early by Cabre and Ros-Oton \cite[Theorem 1.3]{Cabre2013}.  In particular, for $\alpha=\beta\ge 0$, inequality \eqref{GGN-2} and its sharp form were obtained  by  Cabre and Ros-Oton  \cite[Theorem 1.3]{Cabre2013},  Bakry, Gentil and Ledoux \cite{BGL2013} and Nguyen \cite{Ng2015}, essentially from the classical sharp Sobolev inequality in $\mathbb{R}^{n+1}$.

To study the sharp form of inequality   \eqref{GGN-2} for general $\alpha$ and $ \beta$,  we define
\begin{equation}\label{sharp-C-1}
S_{n+1, \alpha,\beta}:= \inf_{u \in C_0^\infty(\overline{\mathbb{R}^{n+1}_+})\setminus \{0\}}\frac  {\int_{\mathbb{R}^{n+1}_+ }t^\alpha|\nabla u|^2 dy dt}{(\int_{\mathbb{R}^{n+1}_+ }t^\beta |u|^{p^*}dydt)^{\frac{2}{p^*} }}>0.
\end{equation}

Using the concentration compactness principle, we  obtain  the existence of the extremal functions for $\frac {n-1}{n+1} \beta<\alpha<\beta+2$. The case $\alpha = \frac {n-1}{n+1} \beta$ is more complicated, see details in Section 3.
\begin{theorem}\label{sharp-C-2} Assume that $n\ge 1$ and $\alpha, \beta$ satisfy
	\begin{equation}\label{beta-2}
	\alpha>0,\ \beta>-1, \ \frac{n-1}{n+1}\beta<\alpha<\beta+2.
	\end{equation}
	Constant $S_{n+1, \alpha,\beta}$ is achieved by a nonnegative  extremal function $u \in {\cal D}_{\alpha}^{1,2}(\mathbb{R}^{n+1}_+)$.
		\end{theorem}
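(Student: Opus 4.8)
The plan is a concentration–compactness argument in the spirit of P.-L. Lions, built around the two‑parameter noncompact symmetry group of the Rayleigh quotient in \eqref{sharp-C-1}. The key point is that for every $\lambda>0$ and $y_0\in\mathbb{R}^n$ the rescaling
\[
(T_{\lambda,y_0}u)(y,t)=\lambda^{\frac{n+\alpha-1}{2}}\,u\big(\lambda(y-y_0),\,\lambda t\big)
\]
leaves both $\int_{\mathbb{R}^{n+1}_+}t^\alpha|\nabla u|^2\,dy\,dt$ and $\int_{\mathbb{R}^{n+1}_+}t^\beta|u|^{p^*}\,dy\,dt$ invariant — this is exactly the reason for the value of $p^*$ in \eqref{cr-p} — while translations in the $t$–variable are \emph{not} available, since the weight $t^\alpha$ is not $t$–translation invariant. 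I would fix a minimizing sequence $\{u_j\}\subset C_0^\infty(\overline{\mathbb{R}^{n+1}_+})$ with $\int_{\mathbb{R}^{n+1}_+}t^\beta|u_j|^{p^*}\,dy\,dt=1$ and $\int_{\mathbb{R}^{n+1}_+}t^\alpha|\nabla u_j|^2\,dy\,dt\to S:=S_{n+1,\alpha,\beta}$, introduce the concentration function $Q_j(r)=\sup_{y_0\in\mathbb{R}^n}\int_{\{|y-y_0|<r,\,0<t<r\}}t^\beta|u_j|^{p^*}\,dy\,dt$, and use $T_{\lambda_j,y_j}$ to rescale and translate so that, after relabeling, $Q_j(1)=\tfrac12$ with the supremum asymptotically attained at $y_0=0$. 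By \eqref{GGN-2} the new sequence is bounded in $\mathcal D_\alpha^{1,2}(\mathbb{R}^{n+1}_+)$, so along a subsequence $u_j\rightharpoonup u$ weakly there, and $u_j\to u$ in $L^q_{loc}(t^\beta\,dy\,dt)$ for every $q<p^*$ by a weighted Rellich–Kondrachov lemma.

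Next I would pass, along a further subsequence, to the weak-$*$ limits $t^\alpha|\nabla u_j|^2\,dy\,dt\rightharpoonup\mu$ and $t^\beta|u_j|^{p^*}\,dy\,dt\rightharpoonup\nu$ on $\overline{\mathbb{R}^{n+1}_+}$, and set $\mu_\infty=\lim_{R\to\infty}\limsup_j\int_{\{|y|+t>R\}}t^\alpha|\nabla u_j|^2$ and $\nu_\infty=\lim_{R\to\infty}\limsup_j\int_{\{|y|+t>R\}}t^\beta|u_j|^{p^*}$. The second concentration–compactness lemma, adapted to the present weights (the reverse inequality at a point being supplied by \eqref{GGN-2} localized with a cut-off, which is valid both at interior points and at boundary points $t=0$), gives an at most countable set $\{x_k\}\subset\overline{\mathbb{R}^{n+1}_+}$ and positive numbers $\nu_k,\mu_k$ with
\[
\nu=t^\beta|u|^{p^*}dy\,dt+\sum_k\nu_k\delta_{x_k},\qquad \mu\ge t^\alpha|\nabla u|^2dy\,dt+\sum_k\mu_k\delta_{x_k},\qquad \mu_k\ge S\,\nu_k^{2/p^*},
\]
together with $\mu_\infty\ge S\,\nu_\infty^{2/p^*}$, the mass identity $1=\theta_0+\sum_k\nu_k+\nu_\infty$ where $\theta_0:=\int_{\mathbb{R}^{n+1}_+}t^\beta|u|^{p^*}$, and $S=\lim_j\int_{\mathbb{R}^{n+1}_+}t^\alpha|\nabla u_j|^2\ge\int_{\mathbb{R}^{n+1}_+}t^\alpha|\nabla u|^2+\sum_k\mu_k+\mu_\infty$. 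Using \eqref{GGN-2} for $u$ itself, this yields
\[
S\ \ge\ S\Big(\theta_0^{\,2/p^*}+\sum_k\nu_k^{\,2/p^*}+\nu_\infty^{\,2/p^*}\Big).
\]

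Now I would extract the minimizer. Since $\alpha<\beta+2$ we have $p^*>2$, hence $2/p^*\in(0,1)$ and $s\mapsto s^{2/p^*}$ is strictly subadditive; as the masses $\theta_0,\nu_k,\nu_\infty$ are nonnegative and sum to $1$, the displayed inequality forces all the mass to be concentrated in a single term. A single atom with $\nu_k=1$ is impossible, for it would force $Q_j(r)\to1$ for every $r>0$, contradicting $Q_j(1)=\tfrac12$; and $\nu_\infty=1$ is impossible because the normalization (supremum of $Q_j(1)$ attained at the origin) keeps mass $\ge\tfrac12$ in the fixed unit cylinder $\{|y|<1,\,0<t<1\}$, so $\nu_\infty\le\tfrac12$. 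Therefore $\theta_0=1$, i.e. $\int_{\mathbb{R}^{n+1}_+}t^\beta|u|^{p^*}\,dy\,dt=1$, so $u\not\equiv0$ and $u\in\mathcal D_\alpha^{1,2}(\mathbb{R}^{n+1}_+)$; by weak lower semicontinuity $\int_{\mathbb{R}^{n+1}_+}t^\alpha|\nabla u|^2\le\liminf_j\int_{\mathbb{R}^{n+1}_+}t^\alpha|\nabla u_j|^2=S$, while \eqref{GGN-2} gives the reverse, so $u$ is an extremal function. Since $|\nabla|u||=|\nabla u|$ a.e., replacing $u$ by $|u|$ gives a nonnegative extremal, as claimed.

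The main obstacle, I expect, is not the abstract scheme but the two weighted ingredients that are classical only in the flat case: the local compact embedding $\mathcal D_{\alpha,loc}^{1,2}(\mathbb{R}^{n+1}_+)\hookrightarrow\hookrightarrow L^q_{loc}(t^\beta\,dy\,dt)$ for $q<p^*$ with uniform control near the degenerate boundary $\{t=0\}$ (where one should invoke \eqref{GGN-1}–\eqref{GGN-2} on cylinders rather than the unweighted Sobolev theorem), and the reverse pointwise bound $\mu_k\ge S\nu_k^{2/p^*}$ at concentration points sitting on $\{t=0\}$, again obtained by localizing \eqref{GGN-2} with a cut-off. In addition one must verify that the group $T_{\lambda,y_0}$ really suffices to renormalize the concentration function — which it does, since the dilations act transitively on scales and the $y$–translations move the center — and keep track of the fact that concentration or escape may occur at an interior point, at a boundary point, as $t\to\infty$, or as $t\to0$; none of these cases alters the bookkeeping above, but all of them have to be admitted in the application of the concentration–compactness lemma.
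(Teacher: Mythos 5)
Your proposal follows the same overall strategy as the paper's proof (Proposition~\ref{Min} plus Lemma~\ref{CCP-1}): a concentration--compactness argument, renormalization via the dilation/translation group, strict subadditivity of $s\mapsto s^{2/p^*}$ coming from $p^*>2$, and a cut-off localization of \eqref{GGN-2} for the reverse-Sobolev estimates at concentration points. There are two small differences worth noting. First, you phrase the decomposition in Lions' ``total measure'' form $\nu = t^\beta|u|^{p^*}dydt+\sum_k\nu_k\delta_{x_k}$, whereas the paper sets $\nu_j=t^\beta|u_j-u|^{p^*}dydt$, $\mu_j=t^\alpha|\nabla(u_j-u)|^2dydt$ and uses Brezis--Lieb to get the mass identities directly (Willem's ``defect measure'' variant, which avoids having to produce the atomic decomposition as a separate step). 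The two are morally equivalent; the defect-measure route is arguably cleaner when the only information one needs is the mass balance, while the Lions decomposition gives sharper pointwise information.

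The one genuine gap is in your dismissal of the single-atom alternative. You assert ``a single atom with $\nu_k=1$ is impossible, for it would force $Q_j(r)\to 1$ for every $r>0$, contradicting $Q_j(1)=\tfrac12$.'' This is not correct as stated: the implication $\nu_j\rightharpoonup\delta_{x_k}\Rightarrow Q_j(r)\to 1$ (for $r$ small) requires the atom $x_k$ to lie on $\{t=0\}$, since your concentration function is built from cylinders anchored to the boundary. If $x_k=(y_k,t_k)$ with $t_k>1$, in fact $Q_j(1)\to 0$ (still a contradiction, but not the one you wrote); and if $t_k=1$ exactly, neither $\liminf$ over open cylinders nor $\limsup$ over compact ones delivers a contradiction from the normalization alone, because the atom sits on the boundary of the reference cylinder. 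To close this case you need the separate argument the paper supplies: for a putative concentration point in the interior ($t^*>0$), the weights $t^\alpha$, $t^\beta$ are locally comparable to constants, so a H\"older estimate against the local $L^{2^*}$ norm (using $p^*<2^*$, which is exactly where the strict inequality $\alpha>\frac{n-1}{n+1}\beta$ enters) shows the mass near $x^*$ tends to $0$, hence no interior atom can carry positive mass. Once interior atoms are excluded, your normalization argument does rule out the boundary-atom case and escape to infinity, and the remainder of your scheme (weak lower semicontinuity, replacing $u$ by $|u|$) is fine.
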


Let $u\ge 0$ be an extremal function to $ S_{n+1,\alpha,  \beta}$, then up to the multiple of some constant, for any $ \phi \in   {\cal D}_{\alpha}^{1,2}(\mathbb{R}^{n+1}_+)$, it holds
\begin{equation}\label{weak-1}
\int_{\mathbb{R}^{n+1}_+ }t^{\alpha} \nabla u \cdot \nabla \phi dy dt =\int_{\mathbb{R}^{n+1}_+ }t^\beta u^{p^*-1}\phi dydt.
\end{equation}
If we  know that $u \in C^2(\mathbb{R}^{n+1}_+) \cap C^1(\overline{ \mathbb{R}^{n+1}_+})$, then $u$ is a classical solution to the following equation
	\begin{equation}\label{genequ-1}
	\begin{cases}
	-div(t^\alpha \nabla u)=t^{\beta}u^{p^*-1},\quad \;\;& \text{in}~ \mathbb{R}^{n+1}_+,\\
	\lim_{t \to 0^+} t^\alpha\frac{\partial u}{\partial t}=0,\quad \;\;& \text{on}~ \partial \mathbb{R}^{n+1}_+.
	\end{cases}
	\end{equation}

\begin{definition}\label{sol-d-1}  $u \in {\cal D}_{\alpha}^{1,2}(\mathbb{R}^{n+1}_+)$ is said to be a weak solution to \eqref{genequ-1} if  equality \eqref{weak-1} holds for all $ \phi \in {\cal D}_{\alpha}^{1,2}(\mathbb{R}^{n+1}_+).$
\end{definition}

Due to the degeneracy of the operator, we can not show that any weak solution is in $C^1(\overline{ \mathbb{R}^{n+1}_+})$. But we are able to show
% ({\bf may need further modification 4-27-19,MZ})

\begin{theorem}\label{thm-reg} Let $n\ge 1,$ and $\alpha, \beta$ satisfy
	\begin{equation}\label{beta-3}
	\alpha>0,\ \beta>-1,  \, \frac{n-1}{n+1}\beta\le\alpha<\beta+2.
	\end{equation}
	 Assume that $ u \in {\cal D}_{\alpha}^{1,2}(\mathbb{R}^{n+1}_+)$ is a weak solution to equation \eqref{genequ-1}, then $u \in C^2(\mathbb{R}^{n+1}_+) \cap C^\gamma_{loc} (\overline{ \mathbb{R}^{n+1}_+}) $ for some $\gamma \in (0, 1)$.
\end{theorem}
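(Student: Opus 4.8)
The plan is to separate the two assertions: interior $C^2$ regularity, which is essentially classical, and regularity up to the degenerate boundary $\{t=0\}$, where all the difficulty lies. First, on any ball $B$ with $\overline B\subset\mathbb{R}^{n+1}_+$ the coefficient $t^\alpha$ is smooth and bounded between two positive constants, so \eqref{genequ-1} is a uniformly elliptic divergence-form equation and $u\in W^{1,2}(B)$; local boundedness and interior Hölder continuity of $u$ then follow from the classical De Giorgi--Nash--Moser theory. Since $p^*>1$ under \eqref{beta-3} and $s\mapsto s^{p^*-1}$ is locally Hölder on $[0,\infty)$, the right-hand side $t^\beta u^{p^*-1}$ is then in $C^{0,\gamma'}_{loc}(\mathbb{R}^{n+1}_+)$, and Schauder estimates give $u\in C^{2,\gamma'}_{loc}(\mathbb{R}^{n+1}_+)$. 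It remains to prove $u\in C^\gamma_{loc}(\overline{\mathbb{R}^{n+1}_+})$.

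The second step is a local $L^\infty$ bound up to $\{t=0\}$, obtained by a Moser iteration adapted to the weight $t^\alpha$ near a boundary point, taken to be the origin, on half-balls $B_r^+:=B_r\cap\mathbb{R}^{n+1}_+$. Besides the given instance of Corollary \ref{cor1-4}, the key tool is its diagonal instance $\beta=\alpha$ --- admissible since $(\alpha,\alpha)$ satisfies \eqref{beta-1} --- which reads $\|v\|_{L^{q^*}_{t^\alpha}}\le C\|\nabla v\|_{L^2_{t^\alpha}}$ with $q^*=\frac{2(n+\alpha+1)}{n+\alpha-1}>2$. Testing \eqref{weak-1} with $\phi=\eta^2 u\,\min(u,M)^{2\theta}$ and letting $M\to\infty$ yields an estimate of $\|\eta u^{\theta+1}\|_{L^{q^*}_{t^\alpha}}^2$ by $\int_{B_r^+}t^\alpha|\nabla\eta|^2u^{2\theta+2}$ plus the nonlinear term $\int_{B_r^+}t^\beta u^{p^*}u^{2\theta}\eta^2$. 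The first term is already in the weight $t^\alpha$ and feeds straight into the next iteration step; the nonlinear term is treated by Hölder's inequality so as to peel off a factor $\big(\int_{B_r^+}t^\alpha(\eta u^{\theta+1})^{q^*}\big)^{2/q^*}$, which is absorbed into the left-hand side, times a tail factor of the form $\big(\int_{B_r^+}t^{a}u^{b}\big)^{c}$ that tends to $0$ as $r\to0$, since $u\in{\cal D}^{1,2}_\alpha(\mathbb{R}^{n+1}_+)$ belongs (by Corollary \ref{cor1-4}) to the relevant weighted Lebesgue spaces and such integrals are absolutely continuous. Iterating along $q_{j+1}=\frac{q^*}{2}q_j$ gives $\|u\|_{L^\infty(B_{r/2}^+)}\le C\|u\|_{L^{p^*}_{t^\beta}(B_r^+)}$, hence $u\in L^\infty_{loc}(\overline{\mathbb{R}^{n+1}_+})$; verifying that the exponents $a,b,c$ produced at each step keep the tail factor finite, using the constraints \eqref{beta-3}, is the bookkeeping part of this step.

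The third step is Hölder continuity up to $\{t=0\}$. Now that $u$ is locally bounded, \eqref{genequ-1} reads $-\mathrm{div}(t^\alpha\nabla u)=g$ with $0\le g\le C_{loc}\,t^\beta$, and I would establish $u\in C^\gamma_{loc}(\overline{\mathbb{R}^{n+1}_+})$ by De Giorgi's oscillation-decay argument carried out directly on half-balls. The decisive point at the boundary is the conormal condition in \eqref{genequ-1}: when \eqref{weak-1} is tested with $\phi=\eta^2(u-k)_\pm$, integration by parts produces no boundary term on $\{t=0\}$ because $\lim_{t\to0^+}t^\alpha\partial_t u=0$, so the Caccioppoli inequality on $B_r^+$ holds with cut-offs that need not vanish near $\{t=0\}$; the contribution of $g$ is a lower-order term $\int_{B_r^+\cap\{u>k\}}t^\beta(u-k)_+$, which, via Hölder and the diagonal Sobolev inequality, is controlled by half the energy plus a power of $\int_{B_r^+\cap\{u>k\}}t^\beta$ that shrinks with the level sets. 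Combining Caccioppoli with the diagonal Sobolev inequality of Corollary \ref{cor1-4}, the doubling property of the measure $t^\alpha\,dy\,dt$ on $\overline{\mathbb{R}^{n+1}_+}$ (valid for all $\alpha>-1$), and the standard measure-shrinking lemma yields geometric decay of $\mathrm{osc}_{B_r^+}u$, hence Hölder continuity up to $\{t=0\}$. Together with the first step this gives $u\in C^2(\mathbb{R}^{n+1}_+)\cap C^\gamma_{loc}(\overline{\mathbb{R}^{n+1}_+})$.

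The step I expect to be the main obstacle is the last one. One would like to reflect evenly, $\widetilde u(y,t):=u(y,|t|)$, which --- again thanks to the conormal condition --- solves $-\mathrm{div}(|t|^\alpha\nabla\widetilde u)=\widetilde g$ in a full ball with $0\le\widetilde g\le C|t|^\beta$, and then simply quote the De Giorgi--Nash--Moser theory of Fabes--Kenig--Serapioni for degenerate equations with Muckenhoupt $A_2$ weights; this works when $\alpha\in(-1,1)$. But in the range \eqref{beta-3} one genuinely has $\alpha\ge1$ (for instance whenever $n=1$), and then $|t|^\alpha$ is not an $A_2$ weight, so there is no off-the-shelf theorem to invoke and the De Giorgi iteration must be run by hand. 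What makes this possible is that $t^\alpha\,dy\,dt$ is nonetheless a doubling measure and supports the weighted $(1,1)$-Poincaré inequality equivalent to Proposition \ref{main-1} with $l=k=\alpha$ --- precisely the input the method requires --- and that the source term, being $O(t^\beta)$ with $\beta>-1$, is a subcritical perturbation handled on small half-balls; carefully carrying the De Giorgi machinery through this non-$A_2$ setting is the crux of the argument.
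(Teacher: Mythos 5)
Your overall architecture matches the paper's: interior $C^2$ regularity by classical elliptic theory, a weighted Moser iteration for the local $L^\infty$ bound up to $\{t=0\}$, and then a weighted De~Giorgi/Moser oscillation estimate at the degenerate boundary. You also correctly identify the genuine obstacle (for $\alpha\ge 1$ the weight $|t|^\alpha$ fails to be $A_2$, so Fabes--Kenig--Serapioni is unavailable) and the two structural properties that must substitute for $A_2$ theory: the doubling property of $t^\alpha\,dy\,dt$ and a weighted Poincar\'e inequality.

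The paper, however, takes Moser's route rather than De~Giorgi's at the final step, and makes precise two points that your proposal leaves implicit and would need real work. First, the mean-zero Poincar\'e inequality is not ``equivalent to'' Proposition~\ref{main-1} with $l=k=\alpha$; that is a Gagliardo--Nirenberg inequality for compactly supported $u$, and passing to $u-u_{B_r^+,\alpha}$ on half-balls touching $\{t=0\}$ is nontrivial. The paper devotes Lemma~\ref{lem:poincare-equal} to this: for integer $\alpha=m$ it lifts $u$ to a function on $\mathbb{R}^{n+m+1}$ so that the weighted half-ball inequality becomes the Euclidean Poincar\'e inequality, and for non-integer $\alpha$ it interpolates; a further step (Lemma~\ref{thm:poincare-weight}) gives a $(1,2)$--Poincar\'e inequality with weight $t^\beta$ on the left and $t^\alpha$ on the right, which is what the source term $t^\beta g$ actually demands. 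Second, rather than a De~Giorgi ``measure-shrinking lemma'' --- which in the non-$A_2$ doubling-plus-Poincar\'e setting is itself a theorem that would have to be reproved --- the paper runs Moser's argument explicitly: negative-exponent iteration gives $\inf_{B_1^+}\bar u \gtrsim \Phi(-p_0,3)$, small-positive-exponent iteration gives $\Phi(1,2)\lesssim\Phi(p_0,3)$, and the crossover $\Phi(p_0,3)\lesssim\Phi(-p_0,3)$ comes from showing that $\log\bar u$ (after even reflection to the whole ball) lies in $BMO(B_3,|t|^\beta\,dy\,dt)$ and invoking the John--Nirenberg lemma for doubling measures (Lemma~\ref{lem:JN}, from Heinonen--Kilpel\"ainen--Martio). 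This yields the weak Harnack inequality of Proposition~\ref{thm:inf-moser}, from which H\"older continuity follows by the standard two-sided application to $M(4r)-u$ and $u-m(4r)$. So while your proposal is right in spirit and diagnosis, the single phrase ``standard measure-shrinking lemma'' hides precisely the technical content that occupies Lemmas~\ref{lem:poincare-equal}, \ref{thm:poincare-weight}, \ref{lem:JN} and Proposition~\ref{thm:inf-moser} of the paper; De~Giorgi's and Moser's schemes are both viable here, but neither is off-the-shelf, and the paper chose to carry out Moser's in detail.

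Two smaller points. Your $L^\infty$ step absorbs the nonlinearity by shrinking the radius, whereas the paper splits along a level $\{u\le L\}\cup\{u\ge L\}$ and takes $L$ large (after first establishing $u^q\in L^2_{\beta,loc}$ for all $q\ge 1$ in Proposition~\ref{thm:regularity}); both absorptions work, but the paper's version keeps the iteration on a fixed ball, which is what the subsequent boundary argument needs. And the paper distinguishes the regimes $\alpha\ge\beta$ and $\alpha<\beta$ throughout these iterations (switching between Sobolev exponents $p^*$ and $p_\alpha=\frac{2(n+\alpha+1)}{n+\alpha-1}$), a bifurcation your proposal does not mention but which is needed to keep the weights compatible.
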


In the proof of regularity, as a byproduct, we show that a nontrivial nonnegative weak solution to equation \eqref{genequ-1} must be positive in $\overline{\mathbb{R}^{n+1}_+}$. Then we obtain the following Liouville theorem for positive weak solutions to equation \eqref{genequ-1} for $\alpha > 0$.
In two special cases, we obtain the precise form of these solutions, thus  can compute precisely the sharp constant to inequality \eqref{GGN-2}.

\begin{theorem}\label{L-1} Let  $n\ge 1,$ and $\alpha, \beta$ satisfy \eqref{beta-2}.
For positive weak solution $u\in {\cal D}_{\alpha}^{1,2}(\mathbb{R}^{n+1}_+)$ to equation \eqref{genequ-1}, we have, up to the multiple of some constant,
\begin{equation}\label{type-0}
u(y, t)=(\frac{1}{|y-y^o|^2+(t+A)^2})^{\frac{n+\alpha-1}2} \psi(|\frac{(y-y^o, t+A)}{|y-y^o|^2+(t+A)^2}-(0, \frac1{2A})|),
\end{equation}
for some $y^o\in\mathbb{R}^n$, $ A>0$, $\psi(r)>0$  and $\psi\in C^2[0,\frac1{2A})\cap C^0[0,\frac1{2A}]$ satisfying an ordinary differential equation
\begin{equation}\label{ode-0}
	\begin{cases}
		\psi''+(\frac{n}{r}-\frac{2\alpha A r}{\frac{1}{4A^2}-r^2})\psi'-\frac{\alpha(n+\alpha-1)A}{\frac{1}{4A^2}-r^2}\psi
		=-C(\frac{1}{4A^2}-r^2)^{\beta-\alpha} \psi^{\frac{n+2\beta-\alpha+3}{n+\alpha-1}}, \; 0<r <\frac{1}{2A},\\
		\psi(\frac{1}{2A})=A^{\frac{n+\alpha-1}{2}},\ \psi'(0)=0,\   \lim_{r\to (\frac{1}{2A})^{-}}\big(\frac{1}{4A^2}-r^2\big)^\alpha \psi'(r)=0,
	\end{cases}
\end{equation}
for one constant $C>0$ independent of  $A$.  Furthermore, for $n \ge 2$, or for $n=1$ and $\alpha, \beta$ satisfying  an additional assumption:
\begin{equation}\label{constraint}
\frac{1-(1-\alpha)^2}{4}\leq\frac{\alpha(2+\beta)}{(\alpha+\beta+2)^2},
\end{equation} there is only one positive solution to equation \eqref{ode-0}.

Moreover, in following two cases, the solutions can be  explicitly written out.

\noindent  1).  For $\beta=\alpha-1$, if $\alpha>0 $ for $n\geq 2$ or $\alpha\in(0,\frac{1}{2}]\cup[\frac{1+\sqrt{17}}{4},\infty) $ for $n=1$, then up to the multiple of some constant, $u(y, t)$ must be in the form of
\begin{equation}\label{sol-1-0}
u(y,t)= \Big(\frac{A}{(A+t)^2+|y-y^o|^2}\Big)^{\frac{n+\alpha-1}{2}},
\end{equation}
where $ A>0$, $y^o \in \mathbb{R}^{n}$,
and
$$S_{1,\alpha, \alpha-1}=\alpha(n+\alpha-1)
\big[\pi^\frac n2\frac{\Gamma(\alpha)\Gamma({\frac{n}2+\alpha})}{\Gamma(n+2\alpha)}\big]^{\frac{1}{n+\alpha}}.
$$

\noindent  2).   For $\beta=\alpha$, if $\alpha>0$ for $n\geq 2$ or $\alpha\ge\sqrt{2}$ for $n=1$, then up to the multiple of some constant, $u(y, t)$ must be in the form of
\begin{equation}\label{sol-2-0}
u(y,t)= \Big(\frac{A}{A^2+t^2+|y-y^o|^2}\Big)^{\frac{n+\alpha-1}{2}},
\end{equation}
where $A>0$, $y^o \in \mathbb{R}^{n}$, and
$$S_{1,\alpha, \alpha}=(n+\alpha-1)(n+\alpha+1)
\big[\frac{\pi^\frac n2}2\frac{\Gamma(\frac{\alpha+1}2)\Gamma(\frac{n+\alpha+1}2)}{\Gamma({n+\alpha+1})}
\big]^{\frac{2}{n+\alpha+1}}.
$$
\end{theorem}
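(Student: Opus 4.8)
The plan is to establish the classification in Theorem~\ref{L-1} by the moving-sphere method applied to the weak solution $u$ of \eqref{genequ-1}. The first step is to exploit the conformal structure of the degenerate operator $\dv(t^\alpha\nabla\cdot)$. One should verify that under the inversion
$$
x=(y,t)\mapsto x^*=x/|x|^2,\qquad u(x)\mapsto \tilde u(x)=|x|^{-(n+\alpha-1)}u\!\left(\frac{x}{|x|^2}\right),
$$
(and more generally under Kelvin transforms centered at arbitrary boundary points $(y^o,0)$ with arbitrary radius) the equation \eqref{genequ-1} is preserved, because the weights $t^\alpha$ and $t^\beta$ transform with exactly matching powers of $|x|$ when $p^*=\frac{2n+2\beta+2}{n+\alpha-1}$; this is precisely the scaling that makes $p^*$ critical. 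In particular one needs that the conormal condition $\lim_{t\to0^+}t^\alpha\partial_t u=0$ is conformally natural. Here the regularity Theorem~\ref{thm-reg}, which gives $u\in C^2(\RpN)\cap C^\gamma_{loc}(\overline{\RpN})$ together with positivity of $u$ up to the boundary, is what makes the pointwise comparison arguments legitimate.

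**Next I would** run the moving-sphere procedure. For a boundary point $(y^o,0)$ and radius $\lambda>0$ let $u_{y^o,\lambda}$ denote the Kelvin transform of $u$ about the half-sphere of radius $\lambda$ centered at $(y^o,0)$. Using the decay of $u$ at infinity (which follows from finiteness of the $\mathcal D^{1,2}_\alpha$ norm and the regularity theory, via a Harnack/Caccioppoli bound near infinity), one shows that for each $y^o$ there is $\lambda_0(y^o)>0$ such that $u_{y^o,\lambda}\le u$ in the region outside the half-sphere for all $0<\lambda\le\lambda_0(y^o)$. Then one slides $\lambda$ outward and, by the usual dichotomy argument using the maximum principle for the degenerate operator (which is available in the weighted setting — degenerate elliptic maximum principles in $A_2$-weighted spaces, the weight $t^\alpha$ being Muckenhoupt for $\alpha\in(-1,1)$, and a Hopf-type lemma at the conormal boundary), one concludes that either the sliding never stops, forcing $u$ to decay slower than the critical rate — contradicting $u\in\mathcal D^{1,2}_\alpha$ — or it stops at some finite $\bar\lambda(y^o)$ where $u\equiv u_{y^o,\bar\lambda(y^o)}$. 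A now-standard calculus lemma (cf. Li--Zhu / Li--Zhang) then forces $u$ to have the exact form that is invariant under all these Kelvin transforms, which — because the problem is symmetric in the $y$-variables but not in $t$ (the boundary $\{t=0\}$ is distinguished) — yields exactly the ansatz \eqref{type-0}: a bubble $(|y-y^o|^2+(t+A)^2)^{-(n+\alpha-1)/2}$ modulated by a radial profile $\psi$ in the inverted coordinate measuring distance from the image of the axis point.

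**Then** substituting \eqref{type-0} into \eqref{genequ-1} and passing to the radial variable $r=\big|\tfrac{(y-y^o,t+A)}{|y-y^o|^2+(t+A)^2}-(0,\tfrac1{2A})\big|$ produces, after a direct but careful computation, the singular ODE \eqref{ode-0} on $(0,\tfrac1{2A})$ with the stated boundary data; the boundary values come from reading off the conormal condition at $t=0$ (which becomes the weighted Neumann condition $\lim_{r\to(1/(2A))^-}(\tfrac1{4A^2}-r^2)^\alpha\psi'(r)=0$) and the normalization at the pole (which gives $\psi(\tfrac1{2A})=A^{(n+\alpha-1)/2}$ and $\psi'(0)=0$ by the $C^2$ regularity in the interior). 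For the uniqueness clause, the plan is to analyze \eqref{ode-0} as a singular two-point boundary value problem: for $n\ge2$ an energy/monotonicity argument (multiplying by the appropriate integrating factor $r^n(\tfrac1{4A^2}-r^2)^{-\alpha}$ and integrating) should show any two solutions coincide, while for $n=1$ the same argument leaves a borderline term controlled precisely by the quantity $\frac{1-(1-\alpha)^2}{4}$ versus $\frac{\alpha(2+\beta)}{(\alpha+\beta+2)^2}$, whence condition \eqref{constraint}. Finally, in the two special cases $\beta=\alpha-1$ and $\beta=\alpha$ the ODE \eqref{ode-0} reduces — after the substitution and using the explicit exponent relations — to one solved by a constant $\psi$ (respectively giving \eqref{sol-1-0} and \eqref{sol-2-0}), and the sharp constants $S_{1,\alpha,\alpha-1}$ and $S_{1,\alpha,\alpha}$ are then obtained by plugging these explicit bubbles into the Rayleigh quotient \eqref{sharp-C-1} and evaluating the resulting Beta-function integrals.

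**The main obstacle** I expect is making the moving-sphere machinery rigorous in the presence of the boundary degeneracy: one needs a maximum principle and, crucially, a Hopf lemma at points of $\partial\RpN$ for the operator $\dv(t^\alpha\nabla\cdot)$ with the conormal condition $\lim t^\alpha\partial_t u=0$, where classical boundary-point arguments fail because the coefficient vanishes. This likely requires constructing explicit barriers adapted to the weight (comparing with powers of $t$ and of the distance function) and a careful Harnack inequality up to $\{t=0\}$, leveraging that $t^\alpha\in A_2$; controlling the decay rate at infinity of $u$ sharply enough to start and to conclude the sliding is the technical heart. The secondary difficulty is the delicate ODE uniqueness analysis for $n=1$, where the inequality \eqref{constraint} must emerge naturally from the borderline term rather than being imposed artificially.
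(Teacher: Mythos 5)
Your proposal correctly identifies the overall architecture — moving spheres to pin down the boundary trace, a Kelvin transform to a ball, an ODE analysis for the radial profile — but it diverges from the paper's argument at precisely the two places it flags as obstacles, and in both cases the paper's route is genuinely different and avoids the difficulty rather than confronting it.

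First, the moving-sphere comparison. You propose to establish $u_{y^o,\lambda}\le u$ by a pointwise maximum principle plus a Hopf-type lemma at the degenerate conormal boundary, and you acknowledge this would require constructing barriers and a Harnack inequality up to $\{t=0\}$. The paper does not do this. Its comparison argument (Claims 1–4 in Section 5) is carried out entirely in the weak/variational formulation: one tests the linearized equation for $w_{\lambda,b}=u_b-u_{\lambda,b}$ with $w_{\lambda,b}^+$, applies H\"older and the sharp Sobolev inequality \eqref{GGN-2}, and uses the smallness of $\int_{\Sigma_{\lambda,b}}t^\beta u_b^{p^*}$ (either because $\lambda$ is large, or in Claim 3 because the set $\Sigma_{\lambda,b}$ shrinks into a thin neighborhood $\Omega_{\delta_1}$) to absorb the right-hand side and conclude $\nabla w^+_{\lambda,b}\equiv 0$. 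No Hopf lemma, no barriers, no pointwise boundary comparison are needed — the degeneracy never enters. The interior strong maximum principle is invoked only once (in Claim 3, on a compactly contained subset of $\mathbb{R}^{n+1}_+$ where the operator is uniformly elliptic). Your plan as written leaves an open gap that the paper deliberately circumvents.

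Second, uniqueness of $\psi$. You suggest a single ``energy/monotonicity argument (multiplying by the integrating factor $r^n(\frac1{4A^2}-r^2)^{-\alpha}$ and integrating)'' yields uniqueness, with the $n=1$ constraint \eqref{constraint} emerging as a borderline term. This undersells the paper's argument and misidentifies the mechanism. The paper splits uniqueness into two separate propositions: Proposition \ref{uniq} proves uniqueness of $\psi$ for a \emph{given} boundary value $K$, by a contraction estimate near $r=\frac12$ using the integrating factor $c(r)=r^n(\frac14-r^2)^\alpha$ (this is indeed a direct integral argument). The genuinely hard step, Proposition \ref{thm:uniqueness-main}, shows that at most one $K$ can work, and for this the paper transforms the ODE to the equation $\Delta_{\mathbb H}w+\frac{n^2-(1-\alpha)^2}{4}w=-w^{p^*-1}$ on hyperbolic space, and then runs a Kwong--Li / Mancini--Sandeep energy-function comparison ($\mathcal E_{\hat v}(s)$ with the coefficient $A$ defined there). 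The constraint \eqref{constraint} for $n=1$ is exactly the condition $A\le 0$ that makes $G'(s)<0$ and allows the energy comparison to close; it is not a boundary term leftover from integration by parts but a sign condition on a hyperbolic Hardy-type coefficient. A naive integrating-factor argument will not see this.

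One minor point: in the two explicit cases, $\psi$ is constant only for $\beta=\alpha-1$; for $\beta=\alpha$ the unique solution of \eqref{ode-0} is $\psi(r)=C_{n,\alpha}(r^2+\frac14)^{-(n+\alpha-1)/2}$, not a constant, and this is what produces the profile \eqref{sol-2-0}.
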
	

\begin{remark}If we know that there is a  nonnegative weak solution $u\in {\cal D}_{\alpha}^{1,2}(\mathbb{R}^{n+1}_+)$ to equation \eqref{genequ-1} with $\alpha= \frac{n-1}{n+1}\beta$, then the same argument holds in proving the regularity of $u$, \eqref{type-0} as well as the uniqueness of positive solutions to equation \eqref{ode-0}.
\end{remark}

\begin{remark} Formula \eqref{type-0} indicates that $u(y, t)$ is ``almost'' a radially symmetric function in the sense that equation \eqref{genequ-1} can be reduced into the  ODE \eqref{ode-0}.
	\end{remark}

%For regular solutions,
Theorem \ref{L-1} part 2) for $\alpha=\beta =0$ follows from the classical result of Caffarelli, Gidas and Spruck \cite{CGS1989}. See Zhu's thesis \cite{Zhu96} for another proof via the method of moving spheres.  Here, we will use the method of moving spheres to prove Theorem \ref{L-1}.  The method of moving spheres enables us to obtain the precise form of positive solutions to equation \eqref{genequ-1} on the boundary $\partial \mathbb{R}^{n+1}_+$. We then transform the equation into a new equation on a ball with constant boundary value, and successfully show that  all solutions to the new equation must be radially symmetric with respect to the center of the ball. For $\alpha$, $\beta$ satisfying the conditions in Theorem \ref{L-1}, we are able to show that the new equation has a unique radially symmetric solution. In two cases: $\beta=\alpha-1$ and $\beta=\alpha$, we can write down the precise unique solution to  the ODE \eqref{ode-0}, which leads to the complete classification of positive solutions.

\subsection{Baouendi-Grushin Operator} As an application of sharp inequality \eqref{GGN-2} and the classification results in Theorem \ref{L-1}, we consider the following critical semilinear equation with Baouendi-Grushin operator
\begin{equation}\label{gru-1}
\Delta_{z}u+(\tau+1)^2|z|^{2 \tau}\Delta_x u=-u^{\frac{Q+2}{Q-2}},  \ \ u>0 \ \ in \ \ \mathbb{R}^{n+m},
\end{equation}
where $\tau \in (0, \infty), n,m\ge1$, $x\in \mathbb{R}^n$, $z \in \mathbb{R}^m$ and $Q=m+n(\tau+1)$  is the homogeneous dimension.  The partial differential operator $\cal{L}:= \Delta_z+(\tau+1)^2|z|^{2 \tau} \Delta_x$ is often called Baouendi-Grushin operator (\cite{Bao1967, Gru1970,Gru1971}).
 For $\tau=0$ or $n=0$,  equation \eqref{gru-1} is the constant scalar curvature equation on $\Bbb{R}^{n+m}$, which is widely studied, and well-understood through the work of Gidas, Ni and Nirenberg \cite{GNN1981} and the work of Caffarelli, Gidas and Spruck \cite{CGS1989} (see, Zhu's thesis \cite{Zhu96} for a simpler proof via the method of moving spheres). For $n\ge 1$ and $\tau>0$, the operator is degenerate on $|z|=0$. In particular, for  $n=1$, $m=2k$ ($k \in \mathbb{N}$) and $\tau=1$, equation \eqref{gru-1} is the constant Webster curvature equation on Heisenberg group $\mathbb{H}=\mathbb{R}\times \mathbb{C}^n $ for solution $u(x, z)$ which is radially symmetric in the variable $z$ . Jerison and Lee \cite{JL1987, JL1988} was able to classify positive solutions with decay at infinity to this equation. See also Garafalo and Vassilev  \cite{GV2001} for further generalization. For $\tau=\frac12$, equation \eqref{gru-1} is also  related to the transonic flow problem, see, for example, Wang \cite{Wang2003}.

Moreover, equation  \eqref{gru-1} is also related to the following weighted Sobolev inequality.
Let $\cal{D}_\tau^1(\mathbb{R}^{n+m})$ be the Hilbert space as the completion of $C_0^\infty (\mathbb{R}^{n+m})$ under the norm
\begin{equation}\label{norm-8-29}
\|u(x, z)\|_{\cal{D}_\tau^1(\mathbb{R}^{n+m})}=(\int_{\mathbb{R}^{n+m}}(|\nabla_z u|^2+(\tau+1)^{2}|z|^{2\tau }|\nabla_x u|^2 )dxdz)^{\frac 12},
\end{equation}
where $x \in \mathbb{R}^n$, $z \in \mathbb{R}^m.$

\begin{proposition}\label{bec-1-gen}    For  $\tau > 0$, there is an optimal  positive constant $S_\tau(n,m)$ such that for all $u(x, z)\in \cal{D}_{\tau}^1(\mathbb{R}^{n+m})$,
\begin{equation}\label{GGN-3-3}
\big(\int_{\mathbb{R}^{n+m}}  |u|^{\frac{2Q}{Q-2}}dxdz\big)^\frac{Q-2}{Q} \le  S^{-1}_\tau(n,m) \int_{\mathbb{R}^{n+m}}(|\nabla_z u|^2+(\tau+1)^{2}|z|^{2\tau }|\nabla_x u|^2 )dxdz,
\end{equation} where $x\in \mathbb{R}^n$, $z\in \mathbb{R}^m$.
%Moreover, the equality holds for some extremal functions in $\cal{D}_{\tau,z}^1(\mathbb{R}^{n+m})$.
\end{proposition}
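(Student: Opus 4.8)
The plan is to derive \eqref{GGN-3-3} from the weighted Sobolev inequality \eqref{GGN-2} of Corollary \ref{cor1-4}. Writing $S_\tau(n,m)$ for the largest admissible constant, \eqref{GGN-3-3} is equivalent to
$$
S_\tau(n,m)=\inf_{u\in\cal{D}_\tau^1(\mathbb{R}^{n+m})\setminus\{0\}}\frac{\int_{\mathbb{R}^{n+m}}\big(|\nabla_z u|^2+(\tau+1)^2|z|^{2\tau}|\nabla_x u|^2\big)\,dxdz}{\big(\int_{\mathbb{R}^{n+m}}|u|^{2Q/(Q-2)}\,dxdz\big)^{(Q-2)/Q}}>0 ,
$$
so it is enough to exhibit one finite constant in \eqref{GGN-3-3}; by density, and by replacing $u$ with $|u|$, I would reduce to $u\in C_0^\infty(\mathbb{R}^{n+m})$ with $u\ge0$.

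First I would reduce to functions cylindrically symmetric in $z$, i.e. $u=u(x,|z|)$. For each fixed $x$ let $u^\star(x,\cdot)$ denote the Schwarz symmetrization of $u(x,\cdot)$ on $\mathbb{R}^m$. Then all $L^q(\mathbb{R}^m_z)$ norms are preserved (hence so is the denominator in the quotient above), the slicewise P\'olya--Szeg\H{o} inequality gives $\int|\nabla_z u^\star|^2\le\int|\nabla_z u|^2$, and the $L^2$-contractivity of rearrangement applied to the difference quotients $x\mapsto u(x+he_i,\cdot)$ controls $\int|\nabla_x u^\star|^2$. The point that must still be established is that the \emph{weighted} term does not increase, $\int|z|^{2\tau}|\nabla_x u^\star|^2\,dxdz\le\int|z|^{2\tau}|\nabla_x u|^2\,dxdz$, the delicacy being that the monomial weight $|z|^{2\tau}$ sits on the very variables being symmetrized, so the Lebesgue-based rearrangement is not the natural one; I expect this symmetrization step --- presumably handled via a rearrangement adapted to the measure $|z|^{2\tau}dz$, in the spirit of the monomial-weight symmetrizations behind \eqref{GGN-2} --- to be the main obstacle. (For the mere positivity of $S_\tau(n,m)$ one could instead invoke the general Sobolev inequality for Grushin/H\"ormander vector fields and skip this reduction.) Once it is done, the quotient above is not increased by $u\mapsto u^\star$, so it suffices to prove \eqref{GGN-3-3} for $u=u(x,|z|)$.

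For such $u=v(x,r)$, $r=|z|$, I would substitute $s=\frac{r^{\tau+1}}{\tau+1}$ together with the rescaling $x=(\tau+1)\tilde x$, and set $w(\tilde x,s)=v\big((\tau+1)\tilde x,\,((\tau+1)s)^{1/(\tau+1)}\big)$. Using polar coordinates in $z$, $\partial_r v=r^\tau\partial_s w$, $\nabla_x v=(\tau+1)^{-1}\nabla_{\tilde x}w$ and $dr=r^{-\tau}ds$, a direct computation gives
$$
\int_{\mathbb{R}^{n+m}}\big(|\nabla_z u|^2+(\tau+1)^2|z|^{2\tau}|\nabla_x u|^2\big)\,dxdz=c_1\int_{\mathbb{R}^{n+1}_+}s^{\alpha}|\nabla w|^2\,d\tilde x\,ds ,
$$
$$
\int_{\mathbb{R}^{n+m}}|u|^{2Q/(Q-2)}\,dxdz=c_2\int_{\mathbb{R}^{n+1}_+}s^{\beta}|w|^{2Q/(Q-2)}\,d\tilde x\,ds ,
$$
where $\alpha=\frac{m+\tau-1}{\tau+1}$, $\beta=\frac{m-\tau-1}{\tau+1}$, and $c_1=c_1(n,m,\tau)>0$, $c_2=c_2(n,m,\tau)>0$. (The rescaling of $x$ is what turns $|\partial_s w|^2+(\tau+1)^2|\nabla_{\tilde x}w|^2$ into the full gradient $|\nabla w|^2$; it is essential here that after the substitution the two terms on the left carry the same power $s^\alpha$.)

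It remains to match parameters and invoke Corollary \ref{cor1-4}. One checks $n+\alpha-1=\frac{Q-2}{\tau+1}$ and $2n+2\beta+2=\frac{2Q}{\tau+1}$, so the critical exponent \eqref{cr-p} for this pair $(\alpha,\beta)$ is $p^*=\frac{2Q}{Q-2}$; moreover $(\alpha,\beta)$ satisfies \eqref{beta-1}, since $\alpha>0$ (as $m+\tau-1\ge\tau>0$), $\beta>-1$ (as $m>0$), $\beta+2-\alpha=\frac{2}{\tau+1}>0$, and $\frac{n-1}{n+1}\beta\le\alpha$ (which follows from $\alpha-\beta=\frac{2\tau}{\tau+1}>0$, treating $\beta\ge0$ and $\beta<0$ separately). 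Thus \eqref{GGN-2} applies to $w\in\cal{D}_\alpha^{1,2}(\mathbb{R}^{n+1}_+)$; inserting the two displays above into \eqref{GGN-2} and undoing the change of variables yields \eqref{GGN-3-3} with $S_\tau(n,m)\ge c_1 c_2^{-2/p^*}C_{n+1,\alpha,\beta}^{-1}>0$, and that $S_\tau(n,m)$ is the optimal constant is immediate from its definition as an infimum.
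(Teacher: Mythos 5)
Your change of variables (the second half of your argument) is correct and matches what the paper actually does in Section 6 to prove the \emph{cylindrically symmetric} inequality, Proposition \ref{bec-1}: the parameters $\alpha=\frac{m+\tau-1}{\tau+1}$, $\beta=\frac{m}{\tau+1}-1$ do satisfy \eqref{beta-1}, the transformed exponent is $p^*=\frac{2Q}{Q-2}$, and Corollary \ref{cor1-4} applies. But the first half --- the reduction to $u=u(x,|z|)$ by Schwarz symmetrization in $z$ --- does not go through, and the step you flag as ``the main obstacle'' is not a deferrable technicality: it is the whole content of the problem. Unweighted $L^2$-contractivity of rearrangement controls $\int|\nabla_x u^\star|^2$, but the weight $|z|^{2\tau}$ sits on precisely the variable being rearranged, and there is no general ``rearrangement adapted to $|z|^{2\tau}dz$'' that simultaneously makes the weighted gradient term $\int|z|^{2\tau}|\nabla_x u^\star|^2$ decrease and preserves the $L^{p^*}(dx\,dz)$ norm. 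The paper says exactly this in the paragraph following Proposition \ref{bec-1}: the argument ``via the rearrangement in $z$ variable has not be completely carried out by us yet (even we claimed that we proved it in a previous version, but we found a gap in the proof).'' The partial rearrangement the paper does achieve, Proposition \ref{symmetrization-z} via Proposition \ref{rearrangement-lemma}, works around this by taking the Fourier transform in $x$ first (so the weighted Dirichlet energy becomes $\int|z|^{2\tau}|\xi|^2|\mathcal{F}_xu|^2$ and Lemma \ref{*1} can be applied $\xi$-slicewise), and then handles the $L^{p^*}$ comparison on the Fourier side via the Brascamp--Lieb--Luttinger inequality (Lemma \ref{bll-ineq}) together with Lemma \ref{F-co}; those tools force $\frac{2Q}{Q-2}$ to be an integer, which only covers a discrete family of $(\tau,n,m)$.

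As for Proposition \ref{bec-1-gen} itself, the paper proves nothing new: inequality \eqref{GGN-3-3} for general $u\in\mathcal{D}^1_\tau(\mathbb{R}^{n+m})$ is quoted as a known fact for the Baouendi--Grushin operator, with references to \cite{FGW1994}, \cite{FS74}, and \cite{MM2006}, and the self-contained derivation from \eqref{GGN-2} is offered only for the symmetric subclass, i.e.\ for Proposition \ref{bec-1}. So your parenthetical remark --- invoke the general Sobolev inequality for Grushin/H\"ormander vector fields --- is not a fallback but the intended proof of Proposition \ref{bec-1-gen}; the symmetrization route should be dropped as the primary argument, since it cannot be completed in the stated generality and the authors themselves retreated from it.
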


For $\tau>0$, the above weighted Sobolev inequality \eqref{GGN-3-3} is known for many years. For example, it can be derived  from  a representation formula for Baouendi-Grushin  operator in Franchi, Guti\'{e}rrez and Wheeden \cite{FGW1994}  and a Hardy-Littlewood-Sobolev inequality due to Folland-Stein \cite{FS74}, and is written down precisely in R. Monti and D. Morbidelli \cite[inequality (1.3)]{MM2006}. See, also \cite{FL1984} and \cite{M2006}. Using inequality \eqref{GGN-2}, we will give a self-contained and direct proof of inequality \eqref{GGN-3-3} for function $u(x, z)$ which is radially symmetric in the variable $z$ in Section 6. More precisely, we will prove the following.

 Let $\cal{D}_{\tau,z}^1(\mathbb{R}^{n+m})$ be the Hilbert space as the completion of $\{u\in C_0^\infty (\mathbb{R}^{n+m})\,|\, u~$ $ \text{is~radially symmetric in the variable}~  z \in \mathbb{R}^m \}$ under the norm given by \eqref{norm-8-29}.

\begin{proposition}\label{bec-1}    For   $\tau \ge 0$, there is an optimal  positive constant $S_{\tau, z}(n,m)$ such that for all $u(x, z)\in \cal{D}_{\tau,z}^1(\mathbb{R}^{n+m})$,
\begin{equation}\label{GGN-3}
\big(\int_{\mathbb{R}^{n+m}}  |u|^{\frac{2Q}{Q-2}}dxdz\big)^\frac{Q-2}{Q} \le  S^{-1}_{\tau,z}(n,m) \int_{\mathbb{R}^{n+m}}(|\nabla_z u|^2+(\tau+1)^{2}|z|^{2\tau }|\nabla_x u|^2 )dxdz,
\end{equation} where $x\in \mathbb{R}^n$, $z\in \mathbb{R}^m$.  Moreover, the equality holds for some extremal functions in $\cal{D}_{\tau,z}^1(\mathbb{R}^{n+m})$.

\end{proposition}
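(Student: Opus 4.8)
The plan is to reduce inequality \eqref{GGN-3}, restricted to functions radially symmetric in $z$, to the weighted Sobolev inequality \eqref{GGN-2} on the half space via an explicit change of variables, and then to transfer the existence of extremals from Theorem \ref{sharp-C-2}. For $u=u(x,z)$ radial in $z$ I write $r=|z|$, so $u=u(x,r)$, $|\nabla_z u|^2=(\partial_r u)^2$, and $dx\,dz=\omega_{m-1}\,r^{m-1}\,dx\,dr$, where $\omega_{m-1}$ is the area of the unit sphere in $\mathbb{R}^m$. The change of variables is
\[
y=\frac{x}{\tau+1}\in\mathbb{R}^n,\qquad t=\frac{r^{\tau+1}}{\tau+1}\in(0,\infty),\qquad v(y,t):=u(x,r).
\]
Using $dt=r^\tau\,dr$, $\partial_r u=r^\tau\,\partial_t v$ and $(\tau+1)^2|\nabla_x u|^2=|\nabla_y v|^2$, a direct computation gives, with
\[
\alpha:=\frac{m+\tau-1}{\tau+1},\qquad \beta:=\frac{m-1-\tau}{\tau+1},
\]
the two identities
\[
\int_{\mathbb{R}^{n+m}}\big(|\nabla_z u|^2+(\tau+1)^2|z|^{2\tau}|\nabla_x u|^2\big)\,dx\,dz=\omega_{m-1}(\tau+1)^{\,n+\alpha}\int_{\mathbb{R}^{n+1}_+}t^\alpha|\nabla v|^2\,dy\,dt,
\]
\[
\int_{\mathbb{R}^{n+m}}|u|^{q}\,dx\,dz=\omega_{m-1}(\tau+1)^{\,n+\beta}\int_{\mathbb{R}^{n+1}_+}t^\beta|v|^{q}\,dy\,dt\qquad(q>0),
\]
where $\nabla v=(\nabla_y v,\partial_t v)$.

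I then check that $(n,\alpha,\beta)$ fits \eqref{GGN-2}. From $Q=m+n(\tau+1)$ one computes $n+\alpha-1=\frac{Q-2}{\tau+1}$ and $n+\beta+1=\frac{Q}{\tau+1}$, so the critical exponent \eqref{cr-p} of the triple $(n,\alpha,\beta)$ is precisely $p^*=\frac{2n+2\beta+2}{n+\alpha-1}=\frac{2Q}{Q-2}$. Moreover $(\alpha,\beta)$ satisfies \eqref{beta-1}: $\beta>-1$ and $\alpha\le\beta+2$ are immediate, $\alpha>0$ holds for $\tau>0$, and $\frac{n-1}{n+1}\beta\le\alpha$ is elementary; in fact for $\tau>0$ all of these are strict, so $(\alpha,\beta)$ lies in the strict range \eqref{beta-2}. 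Taking $q=p^*$, applying \eqref{GGN-2} to $v$, and rewriting both sides by the two identities yields \eqref{GGN-3} with
\[
S_{\tau,z}(n,m)=\big[\omega_{m-1}(\tau+1)^{\,n+\beta}\big]^{-\frac{2}{p^*}}\omega_{m-1}(\tau+1)^{\,n+\alpha}\,S_{n+1,\alpha,\beta};
\]
this is first proved for $u$ coming from a radial function in $C_0^\infty(\mathbb{R}^{n+m})$ and then extended to all of $\cal{D}_{\tau,z}^1(\mathbb{R}^{n+m})$ by density.

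For the extremal functions: when $\tau>0$, Theorem \ref{sharp-C-2} (applicable since $(\alpha,\beta)$ satisfies \eqref{beta-2}) gives a nonnegative $v_0\in\cal{D}_\alpha^{1,2}(\mathbb{R}^{n+1}_+)$ attaining $S_{n+1,\alpha,\beta}$, and then $u_0(x,z):=v_0\big(\frac{x}{\tau+1},\frac{|z|^{\tau+1}}{\tau+1}\big)\in\cal{D}_{\tau,z}^1(\mathbb{R}^{n+m})$ attains equality in \eqref{GGN-3}, which also shows the constant displayed above is optimal. (The case $\tau=0$ is the classical Sobolev inequality on $\mathbb{R}^{n+m}$, whose Aubin--Talenti extremals are radial in $z$.) As a bonus, combining this correspondence with Theorem \ref{L-1} would give the explicit extremals whenever $\beta=\alpha-1$ or $\beta=\alpha$, i.e.\ for Heisenberg-type parameters.

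The step I expect to be the main obstacle is the rigorous justification that $r\mapsto t=r^{\tau+1}/(\tau+1)$ induces an isometry, up to the explicit constants above, between the completed Hilbert spaces $\cal{D}_{\tau,z}^1(\mathbb{R}^{n+m})$ and $\cal{D}_\alpha^{1,2}(\mathbb{R}^{n+1}_+)$. This map is not a diffeomorphism across the degenerate set $\{z=0\}\leftrightarrow\{t=0\}$ (for $\tau>1$ the transformed functions are merely H\"older continuous there), so one must check separately (i) that the transform $v$ of a radial $u\in C_0^\infty(\mathbb{R}^{n+m})$ really lies in $\cal{D}_\alpha^{1,2}(\mathbb{R}^{n+1}_+)$ --- here $v$ is smooth in $\mathbb{R}^{n+1}_+$, continuous and compactly supported on $\overline{\mathbb{R}^{n+1}_+}$ with finite weighted norm by the first identity, hence is the limit in $\cal{D}_\alpha^{1,2}(\mathbb{R}^{n+1}_+)$ of its vertical translates $v(\cdot,\cdot+\varepsilon)\in C_0^\infty(\overline{\mathbb{R}^{n+1}_+})$ --- and (ii) that the image of such $v$ is dense in $\cal{D}_\alpha^{1,2}(\mathbb{R}^{n+1}_+)$ (and symmetrically for the inverse transform). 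Both are achieved by truncation and mollification away from the degenerate set together with the weighted-norm bounds from the two identities, with a little extra care needed when $m=1,2$ (where $\alpha\in(0,1]$). The remaining work --- the two integral identities and the exponent/range arithmetic --- is routine.
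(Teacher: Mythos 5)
Your proposal is correct and follows essentially the same route as the paper: the same change of variables $r\mapsto r^{\tau+1}$ (up to the cosmetic normalization by factors of $\tau+1$, which the paper keeps as multiplicative constants outside the variables), yielding the same $\alpha=\frac{m+\tau-1}{\tau+1}$, $\beta=\frac{m}{\tau+1}-1$ and $p^*=\frac{2Q}{Q-2}$, after which \eqref{GGN-2} gives \eqref{GGN-3}. The only organizational difference is that the paper handles the membership $\tilde u\in\mathcal{D}^{1,2}_\alpha(\mathbb{R}^{n+1}_+)$ by invoking Lemma \ref{density} and defers the transfer of extremals (the reverse substitution) to the proof of Theorem \ref{L-2}, where the observation that $\tau>0$ makes $\partial_r u=(\tau+1)r^\tau\partial_t\tilde u$ vanish at $r=0$ is used to show the inverse map lands in $C^1_0(\mathbb{R}^{n+m})$; your discussion of the two-sided isometry addresses the same points directly.
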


On the other hand, it is a long-standing open problem to find the best constant $S_{\tau,z}(n,m)$ and $S_\tau(n,m)$ for $\tau>0$  in the above theorems. For $S_{\tau,z}(n,m)$, positive answer is known only in case $\tau=1$: for $ n\ge 1$, $m\ge 1$ and $\tau=1$, $S_{\tau,z}(n,m)$ and  the classification  of extremal functions in $\cal{D}_{\tau,z}^1(\mathbb{R}^{n+m})$ were essentially obtained in the early work of Jerison and Lee \cite{JL1987} in their study of CR Yamabe problem (for $n=1$, $m$ is even), and by Garofalo and Vassilev \cite[Theorem 1.5]{GV2001}. Garofalo and Vassilev used Jerison and Lee's argument. It seems to us that such an argument  only works for $\tau=1$. See also R. Frank and E. Lieb \cite[Theorem 3.1]{FL2012} for a shorter proof of the Jerison-Lee's theorem on the sharp Sobolev inequality on the Heisenberg group.

\smallskip

Here we will obtain the best constant $S_{\tau, z}(n,m)$ from Theorem \ref{L-1}, and will  classify all positive weak solutions to equation \eqref{gru-1} in $\cal{D}_{\tau,z}^1(\mathbb{R}^{n+m})$, for all $\tau>0$ and  $n, \ m  \ge 1$ except the case of  $m=2$ and $ n= 1$. Thus, depending on whether the extremal functions are given explicitly or implicitly, the sharp constant in inequality  \eqref{GGN-3} for all $\tau>0$ can be explicitly computed or estimated (except the case of $m=2$ and $n=1$). However, as we mentioned above,  the case of $m=2$, $n=1$ and  $\tau=1$  is covered by Jerison-Lee's theorem, as well as by the work of Frank and Lieb.
\begin{definition}\label{weak-z}  $u$ is called a weak solution to equation  \eqref{gru-1} in $\cal{D}^1_{\tau.z}(\mathbb{R}^{n+m})$, if $u \in {\cal D}^1_{\tau.z}(\mathbb{R}^{n+m})$ and for any $\phi\in \cal{D}^1_{\tau.z}(\mathbb{R}^{n+m})$,
	\begin{align*}%\label{grushin-weak}
	\int_{\mathbb{R}^{n+m}}\big(\nabla_z u\cdot\nabla_z \phi+(\tau+1)^2|z|^{2\tau}\nabla_x u\cdot\nabla_x \phi \big)dxdz=\int_{\mathbb{R}^{n+m}} u^{\frac{Q+2}{Q-2}}\phi dxdz.
	\end{align*}
\end{definition}
\begin{theorem}\label{L-2} Assume that $m\neq 2$ or $m=2,\ n\neq 1$.
	
	\noindent 1). For $\tau=1$,  the equality  in \eqref{GGN-3} holds  up to the multiple of some constant for
all $u(x, z)$ given by
  \begin{equation}\label{sol-gru}
u(x,z)=\big ( \frac{A}{|x-x^o|^2+(|z|^{2}+A)^2} \big )^{\frac{2n+m-2}4},
\end{equation}
where $A>0,\ x^o\in\mathbb{R}^{n}$,
and
\[
S_{1,z}(n,m)=m(2n+m-2)\big[\frac{\pi^{\frac{n+m}{2}}\Gamma(\frac{n+m}{2})}{\Gamma(n+m)}\big]^{\frac{2}{2n+m}}.
\]

Moreover,
  if  $u$  is a positive weak solution to equation \eqref{gru-1} in ${\cal D}_{\tau,z}^{1}(\mathbb{R}^{n+m})$, then up to the multiple of some constant, $u(x,z)$ is given by \eqref{sol-gru}.

\noindent 2). For  $\tau > 0$,  the equality  in \eqref{GGN-3}   holds  up to the multiple of some constant for
all $u(x, z)$ given by
  \begin{equation}\label{sol-gru-1}
u(x, z)=(\frac{1}{|x-x^o|^2+(|z|^{\tau+1}+A)^2})^{\frac{Q-2}{2(\tau+1)}} \psi\big(|\frac{(x-x^o, |z|^{\tau+1}+A)}{|x-x^o|^2+(|z|^{\tau+1}+A)^2}-(0, \frac1{2A})|\big),
\end{equation} where $A>0,x^o\in\mathbb{R}^n,\psi>0$ is the unique solution to \eqref{ode-0}.

Moreover,
  if  $u$  is a positive weak solution to equation \eqref{gru-1}  in ${\cal D}_{\tau,z}^{1}(\mathbb{R}^{n+m})$, then  up to the multiple of some constant, $u(x,z)$ is given by \eqref{sol-gru-1}.
\end{theorem}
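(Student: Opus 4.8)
The plan is to deduce Theorem~\ref{L-2} from Theorem~\ref{L-1}, together with the existence and regularity results Theorems~\ref{sharp-C-2} and~\ref{thm-reg}, by means of a change of variables that turns the Baouendi--Grushin operator, restricted to functions radially symmetric in $z$, into the degenerate divergence operator on the half-space. For $u(x,z)$ radial in $z$ I would set $u(x,z)=w(x,|z|^{\tau+1})$ and $\alpha=\frac{\tau+m-1}{\tau+1}$, $\beta=\frac{m-1-\tau}{\tau+1}$. Computing the radial Laplacian in $z$ gives the pointwise identity $\mathcal{L}u=(\tau+1)^2\,t^{-\beta}\,\dv(t^\alpha\nabla w)$, so that \eqref{gru-1} becomes $-\dv(t^\alpha\nabla w)=(\tau+1)^{-2}\,t^\beta\,w^{p^*-1}$, which is \eqref{genequ-1} up to a multiplicative constant. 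Next I would check the bookkeeping: $(\alpha,\beta)$ satisfy \eqref{beta-2} for all $\tau>0$, $n,m\ge1$; the exponent $p^*$ in \eqref{cr-p} equals $\frac{2Q}{Q-2}$, so $p^*-1=\frac{Q+2}{Q-2}$; and $\frac{n+\alpha-1}{2}=\frac{Q-2}{2(\tau+1)}$. Finally, writing $r=|z|$, $dz=\omega_{m-1}r^{m-1}dr$ and $t=r^{\tau+1}$, the substitution is an isometry up to explicit constants, $\|u\|_{\mathcal{D}^1_{\tau,z}}^2=(\tau+1)\omega_{m-1}\|w\|_{\mathcal{D}^{1,2}_\alpha}^2$ and $\int_{\mathbb{R}^{n+m}}|u|^{p^*}dxdz=\frac{\omega_{m-1}}{\tau+1}\int_{\mathbb{R}^{n+1}_+}t^\beta|w|^{p^*}dydt$, and a bijection between $\mathcal{D}^{1,2}_\alpha(\mathbb{R}^{n+1}_+)$ and the radial-in-$z$ subspace $\mathcal{D}^1_{\tau,z}(\mathbb{R}^{n+m})$; that it extends to the completed spaces would be handled exactly as in the proof of Proposition~\ref{bec-1} in Section~6, the only delicate point being the non-smoothness of $z\mapsto|z|^{\tau+1}$ at the origin for non-integer $\tau+1$.

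With this correspondence in place, for the classification statements I would argue as follows. Let $u>0$ be a weak solution of \eqref{gru-1} in $\mathcal{D}^1_{\tau,z}(\mathbb{R}^{n+m})$ in the sense of Definition~\ref{weak-z}; since it lies in that space, $u$ is radial in $z$, so I define $w>0$ as above. Testing Definition~\ref{weak-z} against $\phi(x,z)=\varphi(x,|z|^{\tau+1})$ for an arbitrary $\varphi\in\mathcal{D}^{1,2}_\alpha(\mathbb{R}^{n+1}_+)$ and changing variables in each integral gives $\int t^\alpha\nabla w\cdot\nabla\varphi=(\tau+1)^{-2}\int t^\beta w^{p^*-1}\varphi$; after the constant rescaling $w\mapsto(\tau+1)^{-2/(p^*-2)}w$, this says exactly that $w$ is a positive weak solution of \eqref{genequ-1} in the sense of Definition~\ref{sol-d-1}. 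Theorem~\ref{L-1} then forces $w$ to have the form \eqref{type-0} with $\psi$ a positive solution of \eqref{ode-0}, and transforming back through $t=|z|^{\tau+1}$ --- using $\frac{n+\alpha-1}{2}=\frac{Q-2}{2(\tau+1)}$ --- produces precisely \eqref{sol-gru-1}; when $\tau=1$, so that $\beta=\alpha-1$ and $\alpha=m/2$, the explicit formula \eqref{sol-1-0} of Theorem~\ref{L-1}(1) transforms into \eqref{sol-gru}. Uniqueness of $\psi$, hence of $u$ up to a constant, is free from Theorem~\ref{L-1} when $n\ge2$; for $n=1$ one must check the extra condition \eqref{constraint}, and a short computation reduces this to the elementary inequality $(m-1)\big[(3-m)(\tau+1)+(m-1)(2-m)\big]\le0$, which holds for every $\tau>0$ when $m\ne2$ and fails when $m=2$; correspondingly, when $n=1$ and $\tau=1$ the restriction $\alpha\in(0,\tfrac{1}{2}]\cup[\tfrac{1+\sqrt{17}}{4},\infty)$ in Theorem~\ref{L-1}(1) holds for $\alpha=m/2$ precisely when $m\ne2$. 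This is exactly the hypothesis ``$m\ne2$ or $n\ne1$'' of the theorem.

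For the extremal functions and the sharp constant, the isometry above yields $S_{\tau,z}(n,m)=(\tau+1)^{(2Q-2)/Q}\,\omega_{m-1}^{2/Q}\,S_{n+1,\alpha,\beta}$, and $u$ gives equality in \eqref{GGN-3} iff the corresponding $w$ gives equality in \eqref{GGN-2}. By Theorems~\ref{sharp-C-2} and~\ref{thm-reg} an extremal for \eqref{GGN-2} exists, is positive, and (after normalization) solves \eqref{genequ-1} weakly, hence by Theorem~\ref{L-1} is of the form \eqref{type-0} (resp.\ \eqref{sol-1-0} for $\tau=1$); conversely each function of that form is, up to a constant, a positive solution of \eqref{genequ-1} lying on a single orbit of the translation, dilation and inversion symmetries of \eqref{genequ-1}, along which the Sobolev quotient is constant, so each is an extremal. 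Pulling this back shows that all functions \eqref{sol-gru-1} (resp.\ \eqref{sol-gru}) realize equality in \eqref{GGN-3}. For $\tau=1$ I would then substitute the explicit value of $S_{1,\alpha,\alpha-1}$ from Theorem~\ref{L-1} with $\alpha=m/2$, together with $\omega_{m-1}=2\pi^{m/2}/\Gamma(m/2)$ and $Q=2n+m$, into the relation above; all the Gamma and power factors collapse and one obtains the stated value of $S_{1,z}(n,m)$.

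The main obstacle is, in a sense, imported rather than internal: the hard analysis --- the moving-spheres reduction of \eqref{genequ-1} to the ODE \eqref{ode-0} and the uniqueness of its positive solution --- is carried out in the proof of Theorem~\ref{L-1}, which this argument uses as a black box. The two points that still require genuine care are the functional-analytic verification that the substitution $t=|z|^{\tau+1}$ extends to an isomorphism of the completed weighted Sobolev spaces, and the elementary but essential algebra that pins down $m=2$, $n=1$ as the unique case in which \eqref{constraint} fails and the method does not deliver the classification.
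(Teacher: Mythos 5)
Your proposal is correct and takes essentially the same route as the paper's proof: the substitution $t=|z|^{\tau+1}$ with $\alpha=\frac{m+\tau-1}{\tau+1}$, $\beta=\frac{m}{\tau+1}-1$, the resulting isometry of energies and norms (with normalization constant $(\tau+1)^{-(Q-2)/2}$, which equals your $(\tau+1)^{-2/(p^*-2)}$), reduction of the classification to Theorem~\ref{L-1}, and the verification that condition~\eqref{constraint} for $n=1$ holds iff $m\neq 2$ all match the argument in Section~6 of the paper. Your algebraic reduction of~\eqref{constraint} to $(m-1)\big[(3-m)(\tau+1)+(m-1)(2-m)\big]\le 0$ is a valid factorization (the second factor of $(\tau+1)^2-(m-2)^2(m+\tau)^2$ is $(m-1)(\tau+m-1)$, and $\tau+m-1>0$), and the density/regularization point you flag is handled in the paper exactly as you anticipate, via the argument in the proof of Proposition~\ref{bec-1}.
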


Unfortunately, the case of $m=2$ and $n=1$ is left open (the main reason is that:  in this case,  condition  \eqref{constraint} is not satisfied, see Section 6 for more details).

It seems to be standard to show  that all extremal functions in $\cal{D}_{\tau,z}^1(\mathbb{R}^{n+m}) $ to the sharp inequality \eqref{GGN-3}  must be $C^2(\mathbb{R}^{n+m})$ functions which satisfy equation \eqref{gru-1}.  It is certainly the case when $\tau=0$. But for $\tau>0$, we have not found a reference to address this point. We shall come back to discuss the  regularity of weak solutions to equation \eqref{gru-1} in our future study.

For general $\tau>0$, if one can find a solution to the  ODE \eqref{ode-0},  the best constant then can be calculated precisely. See Section 5 and 6 for more details.

\medskip

Finally, we believe that $S_{\tau}(n,m)=S_{\tau, z}(n,m)$.   The main difficulty for finding $S_\tau(n,m)$ seems to be the lack of radially symmetric property for the extremal functions of \eqref{GGN-3-3}. Another approach to prove the equality is to establish the rearrangement in $z$ variable. However,
the argument via the rearrangement in $z$ variable has not be completely carried out by us yet(even we claimed that we proved it in a previous version, but we found a gap in the proof). Using Fourier transformation, Beckner \cite{Bec2001} gave the rearrangement for $\frac{2Q}{Q-2}$ even,  then from the hyperbolic  geometry point of view, he obtained the sharp inequality \eqref{GGN-3-3} for  $n=1, m=1$ or $2$, and $\tau=1$.  Combining Fourier transformation and spherically symmetric decreasing rearrangement, we extend Beckner's argument for $\frac{2Q}{Q-2}$ being an integer  and obtain the following partial results.

\begin{proposition}\label{symmetrization-z}
If
$$
\frac{2Q}{Q-2}=\frac{2m+2n(\tau+1)}{m-2+n(\tau+1)}
$$
is a positive integer, then $S_{\tau}(n,m)=S_{\tau, z}(n,m)$.
\end{proposition}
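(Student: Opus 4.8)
The plan is to follow Beckner's strategy of reducing the sharp inequality \eqref{GGN-3-3} to the radial-in-$z$ inequality \eqref{GGN-3} by a symmetrization procedure in the $z$-variable, which becomes available precisely when $N := \frac{2Q}{Q-2}$ is a positive integer. Since $S_\tau(n,m) \le S_{\tau,z}(n,m)$ holds trivially (the infimum over the larger space $\cal{D}_\tau^1$ is no bigger than the one over the subspace $\cal{D}_{\tau,z}^1$), it suffices to prove the reverse inequality, i.e. that for every $u \in \cal{D}_\tau^1(\mathbb{R}^{n+m})$ one can produce a function $u^* \in \cal{D}_{\tau,z}^1$ that is radially symmetric in $z$, does not increase the weighted Dirichlet energy $\int (|\nabla_z u|^2 + (\tau+1)^2|z|^{2\tau}|\nabla_x u|^2)$, and does not decrease the $L^N$ norm $\int |u|^N$. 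The hypothesis that $N$ is an integer is what lets us realize the $L^N$ norm as a genuine multilinear integral over $(\mathbb{R}^{n+m})^N$ that interacts well with a Fourier transform in $x$.

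First I would apply the Fourier transform in the $x\in\mathbb{R}^n$ variable only, writing $\hat u(\xi,z)$. The gradient terms transform as $\int |\nabla_z u|^2\,dxdz = \int |\nabla_z \hat u|^2\,d\xi dz$ and $(\tau+1)^2\int |z|^{2\tau}|\nabla_x u|^2\,dxdz = (\tau+1)^2\int |z|^{2\tau}|\xi|^2 |\hat u|^2\,d\xi dz$, so the energy becomes, for each fixed $\xi$, a one-dimensional-in-$|z|$ type weighted Dirichlet form to which radial rearrangement in $z\in\mathbb{R}^m$ is the natural tool: replacing $\hat u(\xi,\cdot)$ by its symmetric decreasing rearrangement $\hat u^*(\xi,\cdot)$ in $z$ does not increase $\int(|\nabla_z \hat u|^2 + (\tau+1)^2|\xi|^2|z|^{2\tau}|\hat u|^2)\,dz$ — here one uses that $|z|^{2\tau}$ is itself a radially increasing weight, together with the standard Pólya–Szegő inequality and the fact that rearrangement increases $\int w(z)|\hat u|^2$ for any radially decreasing... wait, one needs the weighted statement: $\int |z|^{2\tau}|\hat u|^2$ does not decrease under symmetric increasing rearrangement when paired with a radially increasing weight, and more carefully one should argue via a layer-cake / co-area decomposition. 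The second and more delicate step is the $L^N$ term: writing $\int_{\mathbb{R}^{n+m}}|u|^N\,dxdz$ with $N$ an integer, on the Fourier side this is a convolution-type expression $\int \hat u(\xi_1,z)\cdots$ evaluated on the hyperplane $\xi_1+\cdots+\xi_N=0$, integrated in $z$ against the product; after taking absolute values one bounds $|u|^N$ from below (or rather, bounds the original $\int|u|^N$ appropriately) by the corresponding quantity for $\hat u^*$ using the multilinear rearrangement inequality of Brascamp–Lieb–Luttinger (or Riesz's rearrangement inequality in the $z$-variables, applied for each fixed tuple $(\xi_1,\dots,\xi_N)$ and then integrated). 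Passing back through the inverse Fourier transform yields a function $u^*$, radial in $z$, with smaller energy and larger $L^N$ norm, which forces $S_\tau(n,m) \ge S_{\tau,z}(n,m)$.

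The main obstacle I anticipate is the rigorous handling of the $L^N$ term: one must make precise the sense in which $\int |u|^N\,dxdz$ is controlled by the rearranged quantity, since $\hat u^*$ is the rearrangement of $\hat u$ but $u^*$ (its inverse Fourier transform) need not be the rearrangement of $u$, and there is a subtlety in whether $u^*$ is real-valued and whether $|u^*|^N$ is what appears — this is exactly the kind of point where the authors' earlier "previous version" went wrong in the pure-$z$-rearrangement approach. The clean way around it is to never rearrange $u$ directly but to work throughout on the Fourier side: express both functionals in terms of $\hat u(\xi,z)$, rearrange only in $z$ for each fixed frequency (or frequency-tuple), verify the two monotonicity statements there, and only at the end define $u^*$ by inverse transform, noting that $\cal{D}_{\tau,z}^1$-membership and the radial-in-$z$ symmetry are preserved because these are exactly the conditions $\hat u(\xi,z) = \hat u(\xi,|z|)$ for a.e.\ $\xi$. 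A secondary technical point is density/approximation: one should first prove everything for $u \in C_0^\infty$ with, say, $\hat u$ nice enough that the multilinear Fourier identity for $\int|u|^N$ is literally valid, and then pass to the limit in $\cal{D}_\tau^1$; combined with Proposition \ref{bec-1} (which already gives that \eqref{GGN-3} is attained, hence $S_{\tau,z}(n,m) < \infty$ and finite extremals exist), this completes the identification $S_\tau(n,m) = S_{\tau,z}(n,m)$.
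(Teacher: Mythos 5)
Your proposal follows the same broad strategy as the paper (Fourier transform in $x$, rearrange in $z$, multilinear rearrangement inequality via BLL, inverse transform), but it has a genuine gap at its very first substantive step. The paper's rearranged function is $w=\mathcal{F}^{-1}_\xi\big[(\mathcal{F}_x(u^*_x))^*_z\big]$, not $\mathcal{F}^{-1}_\xi\big[(\mathcal{F}_x u)^*_z\big]$: there is an initial Schwarz symmetrization $u\mapsto u^*_x$ in the $x$-variable (Step~1 of Proposition~\ref{rearrangement-lemma}) that you omit. This step is not cosmetic. It forces $u^*_x\geq 0$, so that $\int|u^*_x|^N=\int(u^*_x)^N$, and it is only $\int (u^*_x)^N\,dx = \mathcal{F}_x\big((u^*_x)^N\big)(0,z)$ that can be written as an $N$-fold convolution of $\mathcal{F}_x(u^*_x)$ at $\xi=0$ via Lemma~\ref{F-co}. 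Your sentence "$\int_{\mathbb{R}^{n+m}}|u|^N\,dxdz$ ... on the Fourier side this is a convolution-type expression" is simply false for sign-changing $u$: $\int|u|^N\neq\int u^N$, and the Fourier--convolution identity governs the latter. The symmetrization $u^*_x$ has a second benefit you would have to replace: because $u^*_x$ is radial in $x$, $\mathcal{F}_x(u^*_x)$ is real and radial in $\xi$, so after rearranging in $z$ the result is still real and radial in $\xi$ and the inverse transform $w$ is automatically real. Working directly with $\hat u$ you could still argue $w$ is real from the Hermitian evenness $|\hat u(-\xi,z)|=|\hat u(\xi,z)|$, but you must replace $u$ by $|u|$ (or $u^*_x$) \emph{before} transforming or the whole $L^N$ chain does not start. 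You do gesture at a sign subtlety ("whether $|u^*|^N$ is what appears"), but you locate it at the end, on the inverse transform $w$ (where the paper simply uses $\int w^s\le\int|w|^s$), rather than at the beginning, where it actually bites.

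A secondary, smaller point: your justification that rearrangement in $z$ does not increase $\int |z|^{2\tau}|\xi|^2|\hat u|^2$ is muddled ("rearrangement increases $\int w(z)|\hat u|^2$ for any radially decreasing..."). The correct and clean statement, used as Lemma~\ref{*1} in the paper, is that symmetric decreasing rearrangement moves mass toward the origin and the weight $|z|^{2\tau}$ is radially increasing, so $\int|z|^{2\tau} f^*_z\,dz\le\int|z|^{2\tau}f\,dz$ for $f\geq 0$; applying this with $f=|\hat u(\xi,\cdot)|^2$ for each fixed $\xi$ and then combining with \eqref{F-inte} gives the monotonicity of the $(\tau+1)^2|z|^{2\tau}|\nabla_x\cdot|^2$ term. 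You reach the right conclusion, but the stated reason is not quite the argument.
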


Observe that for $\tau=1$, if $m=n=1$, or $m=2$, $n=1$, or $m=n=2$, or $m=4$, $n=1$, $2Q/(Q-2)$ is an integer. Thus  by Theroem \ref{L-2} and Proposition \ref{symmetrization-z}, except the case $n=1,\ m=2$, we obtain $S_1(n,m)$ for the above cases.

\medskip

The paper is organized as follows:  We first present a direct proof of Proposition \ref{main-1}  in Section 2. In Section 3, we prove the existence of extremal functions for inequality \eqref{GGN-2}. We show that these extremal functions are H\"older continuous up to the boundary in Section 4. In Section 5, we prove the Liouville theorem (Theorem \ref{L-1}). In Section 6 we derive the results related to Baouendi-Grushin operator. The proofs of some technical lemmas are given in the Appendix.

\section{Generalized Gagliardo-Nirenberg inequality}
In this section, we shall derive the  generalized Gagliardo-Nirenberg inequality (Proposition \ref{main-1}) for any $u\in C^\infty_0(\overline{\mathbb{R}^{n+1}_+})$. We thank H. Brezis for sharing his comment on the history of the popular named Gagliardo-Nirenberg inequality. Since we are not able to verify the details first hand, we stick with the common name (the essential idea first appeared in Gagliardo's paper \cite{Ga59}, and shortly after it appeared in Nirenberg's paper \cite{Ni59}).

We first show that the inequality holds for $l=k-1>-1$ (that is: $k=l+1>0$, the upper bound for $k$).

\begin{lemma}\label{lem:main-p=1} Assume $k>0$ and $u\in C^\infty_0(\overline{\mathbb{R}^{n+1}_+})$, then
\begin{align}\label{eq:main-p=1}
     \int_{\RpN}t^{k-1}|u|dydt\leq C(k)\int_{\RpN}t^{k}|\nabla u|dydt.
\end{align}
\end{lemma}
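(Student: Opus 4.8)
The plan is to prove the one-dimensional estimate in the $t$ variable first and then integrate over $y\in\mathbb{R}^n$. For fixed $y$, the function $t\mapsto u(y,t)$ is smooth on $[0,\infty)$ with compact support, so we may integrate by parts:
\[
\int_0^\infty t^{k-1}|u(y,t)|\,dt=\frac1k\int_0^\infty (t^k)'|u(y,t)|\,dt=-\frac1k\int_0^\infty t^k\,\partial_t|u(y,t)|\,dt,
\]
where the boundary terms vanish because $k>0$ kills the term at $t=0$ and compact support kills the term at $t=\infty$. Since $|\partial_t|u||\le|\partial_t u|\le|\nabla u|$ almost everywhere, this yields
\[
\int_0^\infty t^{k-1}|u(y,t)|\,dt\le\frac1k\int_0^\infty t^k|\nabla u(y,t)|\,dt.
\]
Integrating this inequality in $y$ over $\mathbb{R}^n$ and applying Fubini gives \eqref{eq:main-p=1} with $C(k)=1/k$.

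The only point that needs a little care is the use of $\partial_t|u|\le|\partial_t u|$ at points where $u=0$: at such points $|u|$ need not be differentiable, but it is Lipschitz in $t$, hence absolutely continuous, and the one-sided derivatives of $|u|$ are bounded by $|\partial_t u|$ wherever the latter exists, which is enough to justify the integration by parts (alternatively, approximate $|u|$ by $\sqrt{u^2+\varepsilon^2}-\varepsilon$ and let $\varepsilon\to0$). I expect this technicality — not any substantive difficulty — to be the main thing to address; the inequality itself is essentially the fundamental theorem of calculus weighted by $t^k$, and the condition $k>0$ enters precisely to guarantee the vanishing of the boundary term at $t=0$.
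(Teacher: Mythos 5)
Your proof is correct and takes essentially the same route as the paper's: the paper simply writes ``without loss of generality, assume $u\ge0$,'' integrates $t^{k-1}u=\tfrac1k(t^k)'u$ by parts in $t$ (using $k>0$ and compact support to kill the boundary terms), and then integrates in $y$. Your more careful handling of $\partial_t|u|$ at zeros of $u$ — via Lipschitz continuity or the $\sqrt{u^2+\varepsilon^2}-\varepsilon$ regularization — is exactly the standard justification for the step the paper leaves implicit in its ``WLOG'' reduction.
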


\begin{proof} Without loss of generality, assume $u\ge0$.
Observe that for $k>0$,
$$
\int_0^\infty t^{k-1} u(y, t)dt= -\frac 1{k}\int_{0}^\infty  \frac {\partial u (y, t) }{\partial t}  \cdot t^{k} dt.
$$
Integrating with respect to $y$ on both sides gives the desired inequality.
\end{proof}

 We then follow the proof for the classical Gagliardo-Nirenberg inequality to establish the inequality for $l=\frac{n+1}nk$ (that is: $k=\frac{nl}{n+1}$, the lower bound for $k$).

 \begin{lemma}\label{eq:main-p>1}   Suppose $k \ge 0$ and $u\in C^\infty_0(\overline{\mathbb{R}^{n+1}_+})$, then
\begin{align}\label{eq:main-p=n/n+1}
   \big(\int_{\RpN}t^{\frac{n+1}{n}k}|u|^{\frac{n+1}{n}}dydt\big)^{\frac{n}{n+1}}\leq C(n,k)\int_{\RpN} t^{k}|\nabla u|dydt.
\end{align}
\end{lemma}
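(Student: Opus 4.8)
The plan is to reproduce the classical Gagliardo--Nirenberg argument in the ambient dimension $N=n+1$, for which $(n+1)/n=N/(N-1)$ is exactly the critical exponent, modifying only the step in the $t$--direction so that it simultaneously manufactures the weight $t^{k}$. We may assume $u\ge 0$. The argument goes through uniformly for $k\ge 0$; for $k=0$ it is just the unweighted $L^1$ inequality on $\mathbb R^{n+1}$ restricted to $\RpN$, using that $u$ vanishes for $t$ large.

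\emph{Step 1: pointwise bounds.} For each $j=1,\dots,n$, since $u(\cdot ,t)$ has compact support in the $y_j$ variable, $u(y,t)\le \int_{\mathbb R}|\partial_{y_j}u|\,dy_j=:U_j(y_{\hat j},t)$, where $y_{\hat j}$ denotes $y$ with its $j$-th coordinate removed. In the $t$--direction, since $s^{k}u(y,s)\to 0$ as $s\to\infty$,
\[
t^{k}u(y,t)=-\int_t^\infty \partial_s\!\bigl(s^{k}u(y,s)\bigr)\,ds=-\int_t^\infty s^{k}\partial_s u\,ds-k\int_t^\infty s^{k-1}u\,ds\le \int_0^\infty s^{k}|\partial_s u(y,s)|\,ds=:V(y),
\]
where we discarded the last (nonpositive, since $u\ge 0$) term; this is the device behind Lemma~\ref{lem:main-p=1}. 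In particular $u(y,t)\le t^{-k}V(y)$.

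\emph{Step 2: factorize and integrate iteratively.} Put $b=\tfrac{n+1}{n}$ and split the weight evenly, $t^{bk}u^{b}=\bigl(t^{k/n}u^{1/n}\bigr)\prod_{j=1}^{n}\bigl(t^{k/n}u^{1/n}\bigr)$. Estimate the first factor by $t^{k/n}(t^{-k}V)^{1/n}=V(y)^{1/n}$, which carries no power of $t$, and the $j$-th remaining factor by $t^{k/n}U_j(y_{\hat j},t)^{1/n}$. Integrating first in $t$ and using H\"older with $n$ equal exponents $n$ gives, pointwise in $y$,
\[
\int_0^\infty t^{bk}u^{b}\,dt\le V(y)^{1/n}\prod_{j=1}^{n}\Bigl(\int_0^\infty t^{k}U_j(y_{\hat j},t)\,dt\Bigr)^{1/n}=:V(y)^{1/n}\prod_{j=1}^{n}\widetilde U_j(y_{\hat j})^{1/n}.
\]
Now integrate in $y\in\mathbb R^n$: H\"older with exponents $n$ and $n/(n-1)$ peels off $\bigl(\int_{\mathbb R^n}V\bigr)^{1/n}$, and the Loomis--Whitney (generalized H\"older) inequality bounds $\int_{\mathbb R^n}\prod_{j}\widetilde U_j^{1/(n-1)}\,dy$ by $\prod_j\bigl(\int_{\mathbb R^{n-1}}\widetilde U_j\,dy_{\hat j}\bigr)^{1/(n-1)}$. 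Since $\int_{\mathbb R^n}V=\int_{\RpN}t^{k}|\partial_t u|$ and $\int_{\mathbb R^{n-1}}\widetilde U_j\,dy_{\hat j}=\int_{\RpN}t^{k}|\partial_{y_j}u|$, we arrive at
\[
\int_{\RpN}t^{bk}u^{b}\,dydt\le \Bigl(\int_{\RpN}t^{k}|\partial_t u|\,dydt\Bigr)^{1/n}\prod_{j=1}^{n}\Bigl(\int_{\RpN}t^{k}|\partial_{y_j}u|\,dydt\Bigr)^{1/n}.
\]
Applying the arithmetic--geometric mean inequality to this product of the $n+1$ weighted $L^1$ norms of $\partial_1 u,\dots,\partial_n u,\partial_t u$ bounds the right-hand side by $C(n,k)\bigl(\int_{\RpN}t^{k}|\nabla u|\,dydt\bigr)^{(n+1)/n}$; raising to the power $n/(n+1)$ is exactly \eqref{eq:main-p=n/n+1}. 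When $n=1$ there is no Loomis--Whitney step and the $y$-integration is performed directly.

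The only genuinely new ingredient over the classical proof is the $t$--direction: because $u$ need not vanish on $\{t=0\}$ one cannot treat $t$ as just another Cartesian variable, and the weight $t^{k}$ has to be generated, which is what Step~1 does — at the cost of forcing the power $k$ to appear inside $\int_0^\infty s^{k}|\partial_s u|\,ds$. The point to check is the resulting bookkeeping: with $b=(n+1)/n$ one has $bk=k+\tfrac{k}{n}$, so splitting the weight as $t^{k/n}$ across the $n+1$ unit factors leaves the $t$--direction factor $V(y)$ weight-free while each of the $n$ gradient factors carries exactly $t^{k}$, and this is precisely what lets the $t$-- and $y$--integrations close up into weighted $L^1$ norms of the partial derivatives. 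Everything else is the standard Gagliardo--Nirenberg bookkeeping, and the hypothesis $k\ge 0$ enters only through the sign discarded in Step~1.
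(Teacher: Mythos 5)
Your proof is correct and follows essentially the same route as the paper's: a $t$-direction integration by parts to generate the weight $t^{k}$ in the pointwise bound, the standard $y_j$-direction pointwise bounds, factorization of $t^{bk}u^{b}$ into $n+1$ pieces with even distribution of the weight, and then Hölder plus Loomis--Whitney (the paper spells out Loomis--Whitney as an iterated Hölder, which is the usual proof). The one small streamlining in your argument is that in Step 1 you discard the term $-k\int_t^\infty s^{k-1}u\,ds$ by sign, whereas the paper keeps it and re-estimates via Lemma~\ref{lem:main-p=1}; your version avoids a circular-looking dependence and handles $k=0$ uniformly, but the substance is identical.
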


\begin{proof} Without loss of generality, assume $u\ge0$. For $k>0$, integration by parts gives
\begin{equation*}
	\begin{split}
z^ku(y,z)=-\int_{z}^\infty \frac{d}{dt}\big[t^{k}u(y,t)\big]dt=&-\int_{z}^{\infty}\big[kt^{k-1}u(y,t)+t^k\partial_t u(y,t)\big]dt \nonumber\\
\leq & C(k)\int_{0}^\infty t^{k}|\nabla u|(y,t)dt,
\end{split}
\end{equation*}
%\end{align*}
where we have used Lemma \ref{eq:main-p=1}.  Above inequality obviously holds for $k=0$, so is the following inequality:
for $i=1, \cdots, n$, we have
\[u(y,z)\leq \int_{-\infty}^{+\infty}|\nabla u|(y_1,\cdots, y_{i-1}, x_i, y_{i+1},\cdots y_n,z) dx_i.
\]
Therefore,
\begin{align*}
z^{\frac{k}{n}}|u|^{\frac{n+1}{n}}(y,z)\leq C(k)^{\frac1n}\left(\int_{0}^\infty t^{k}|\nabla u|(y,t)dt\right)^{\frac1n}\prod_{i=1}^n\left(\int_{-\infty}^\infty|\nabla u|dx_i\right)^{\frac1n}.
\end{align*}
Integrating both sides with respect to the measure $z^{k}dydz$ and applying the extended H\"{o}lder's inequality with respect to such a measure yield

\begin{eqnarray*}
& &\int_{\mathbb{R}^{n+1}_+} z^{\frac{n+1}{n}k}|u|^{\frac{n+1}{n}}dydz\\
&\leq& C(k)^{\frac{1}{n}}\int_{\mathbb{R}^{n+1}_+} z^k \big(\int_{0}^{\infty} t^k |\nabla u|dt\big)^{\frac{1}{n}} \prod^{n}_{i=1}\big(\int_{-\infty}^{\infty} |\nabla u|dx_i\big)^{\frac{1}{n}} dydz\\
&=&C(n,k)\int_{\mathbb{R}^{n}}\big[\big(\int_0^\infty t^k|\nabla u|dt\big)^{\frac{1}{n}} \int_0^\infty \prod_{i=1}^n\big( \int_{-\infty}^{\infty}z^k|\nabla u|dx_i\big)^{\frac{1}{n}}dz\big] dy\\
&\leq&C(n,k)\big(\int_{\mathbb{R}^{n}}\int_0^\infty t^k |\nabla u|dtdy\big)^{\frac{1}{n}}\big[\int_{\mathbb{R}^n}\big(\int_0^\infty \prod_{i=1}^n \big(\int_{-\infty}^{\infty}z^k|\nabla u|dx_i\big)^{\frac{1}{n}}dz\big)^{\frac{n}{n-1}}dy\big]^{\frac{n-1}{n}}
\end{eqnarray*}
\begin{eqnarray*}
&\leq&C(n,k)\big(\int_{\mathbb{R}^{n+1}_+} z^k|\nabla u|dydz\big)^{\frac{1}{n}}\big[\int_{\mathbb{R}^n}\prod_{i=1}^n\big(\int_{0}^{\infty}\int_{-\infty}^{\infty}z^k|\nabla u|dx_i dz\big)^{\frac{1}{n-1}}dy\big]^{\frac{n-1}{n}}\\
&=&C(n,k)\big(\int_{\mathbb{R}^{n+1}_+} z^k|\nabla u|dydz\big)^{\frac{1}{n}}\cdot\big[\int_{\mathbb{R}^{n-1}}\big(\Big(\int_{0}^{\infty}\int_{-\infty}^{\infty}z^k|\nabla u|dy_1 dz\Big)^{\frac{1}{n-1}}\\
& &\cdot\int_{-\infty}^{\infty}\prod_{i=2}^n\Big(\int_{0}^{\infty}\int_{-\infty}^{\infty}z^k|\nabla u|dx_i dz\Big)^{\frac{1}{n-1}}dy_1\big)dy_2\cdots dy_n\big]^{\frac{n-1}{n}}\\
&\leq&C(n,k)\big(\int_{\mathbb{R}^{n+1}_+} z^k|\nabla u|dydz\big)^{\frac{2}{n}}\\
& &\cdot\big[\int_{\mathbb{R}^{n-1}}\big(\int_{-\infty}^{\infty}\prod_{i=2}^n\Big(\int_0^\infty\int_{-\infty}^\infty z^k|\nabla u|dx_i dz\Big)^{\frac{1}{n-1}}dy_1\big)^{\frac{n-1}{n-2}}dy_2\cdots dy_n\big]^{\frac{n-2}{n}}\\
&\leq&\cdots\\
&\leq&C(n,k)\big(\int_{\mathbb{R}^{n+1}_+} z^k|\nabla u|dydz\big)^{\frac{j+1}{n}}\cdot\big[\int_{\mathbb{R}^{n-j}}\big(\int_{-\infty}^{\infty}\prod_{i=j+1}^n\Big(\int_0^\infty\int_{-\infty}^\infty \\
& &\cdots\int_{-\infty}^\infty z^k|\nabla u|dy_1\cdots dy_{j-1}dx_i dz\Big)^{\frac{1}{n-j}}dy_j\big)^{\frac{n-j}{n-j-1}}dy_{j+1}\cdots dy_n\big]^{\frac{n-j-1}{n}}\\
	&\leq&C(n,k)\big(\int_{\mathbb{R}^{n+1}_+} z^k|\nabla u|dydz\big)^{\frac{n+1}{n}}.
\end{eqnarray*}
In the second inequality, we write $C(n, k)=C(k)^{\frac{1}{n}}$ for convenience. The proof is completed.
\end{proof}

\smallskip

\noindent{\bf Proof of Proposition \ref{main-1}}.
Let $\theta=\frac{(n+1)k-nl}{n+k}$ and $p=\frac{n+l+1}{n+k}$.  Since $k-1\leq l\leq \frac{n+1}{n}k$, we know  $\theta \in [0,1]$. Besides, $\theta$ satisfies $l=(k-1)\theta+\frac{n+1}{n}k(1-\theta)$ and  $p=\theta+\frac{n+1}{n}(1-\theta)$. For $k>0$, using inequalities \eqref{eq:main-p=1} and \eqref{eq:main-p=n/n+1}, we conclude that
\begin{align*}
\int_{\RpN} t^{l}|u|^pdydt\leq& \big(\int_{\RpN} t^{k-1}|u|dydt\big)^{\theta}\big(\int_{\RpN}t^{\frac{n+1}{n}k}|u|^{\frac{n+1}{n}}dydt\big)^{1-\theta}\\
\leq &C(n,k)\big(\int_{\RpN}t^k|\nabla u|dydt\big)^p.
\end{align*}
\hfill$\Box$

\begin{remark}\label{rem2.1}For $k=l=0$, the proof of Lemma 2.2 is the same as that of the classical Gagliardo-Nirenberg inequality, see, for example, Evans book \cite{Evans}. However, for $l >-1$ and $k=0$, our proof does not work, though  we do  know  inequality \eqref{GGN-1} is still true for $k=0$ from Maz'ya \cite[inequality (2.1.35)] {Maz1985}.
	%See the proof of Corollary \ref{cor1-4} below.
\end{remark}
\begin{remark}\label{rem2.2}  If we write $p= \frac{n+1+l}{n+k}$, we show that condition $l\leq \frac{n+1}{n}k$ (that is: $p \le \frac {n+1}{n}$ ) is necessary.
Suppose that Proposition \ref{main-1} is true for some $k$ and $l$. Then for any $\lambda, \ t_0>0$ satisfying $(1-\lambda^{-1}) t_0 \ge 0$, we consider the rescaled functions $u_{\lambda, t_0}(y,t)=u(\lambda^{-1}y, t_0+\lambda^{-1}(t-t_0))$. We have
\begin{align*}
    \big(\int_{\RpN}t^l|u_{\lambda, t_0}|^pdydt\big)^{\frac1p}&=\lambda^{\frac{l+n+1}{p}}\big(\int_{\mathbb{R}^n\times \{z : z>(1-\lambda^{-1})t_0\}}[z-t_0+\lambda^{-1}t_0]^l|u(y,z)|^pdydz\big)^{\frac1p}\\
    \int_{\RpN}t^{k}|\nabla u_{\lambda, t_0}|dydt&=\lambda^{k+n}\int_{\mathbb{R}^n\times \{z : z>(1-\lambda^{-1})t_0\}}[z-t_0+\lambda^{-1}t_0]^k|\nabla u(y,z)|dydz.
\end{align*}
If we plug $u_{\lambda, t_0}$ to \eqref{GGN-1} and let $t_0\to \infty$, then we must have $l/p\leq k$, which is equivalent to $l\leq \frac{n+1}{n}k$, and indicates that $p \le (n+1)/n.$
\end{remark}

\noindent{\bf Proof of Corollary \ref{cor1-4}}. We only consider $u\in C^\infty_0(\overline{\mathbb{R}^{n+1}_+})$, since the general case can be proven by approximation. We divide the proof into two cases: $\alpha+\beta>0$ and $\alpha+\beta\leq 0$.

\textbf{Case $1$.} $\alpha+\beta>0$. Applying Proposition \ref{main-1} to $u^{\frac{2(n+k)}{n+2k-l-1}}$, where $n+2k-l-1>0$, by H\"older inequality, we have
\begin{align*}
\big(\int_{\mathbb{R}^{n+1}_+}t^l|u|^{\frac{2(n+l+1)}{n+2k-l-1}}dydt\big)^{\frac{n+k}{n+l+1}}
\leq&C\int_{\mathbb{R}^{n+1}_+}t^k |u|^{\frac{n+l+1}{n+2k-l-1}}|\nabla u|dydt\\		\leq&C\big(\int_{\mathbb{R}^{n+1}_+}t^l|u|^{\frac{2(n+l+1)}{n+2k-l-1}}dydt\big)^{\frac{1}{2}}\big(\int_{\mathbb{R}^{n+1}_+}t^{2k-l}|\nabla u|^2 dydt\big)^{\frac{1}{2}},
\end{align*}
then
\begin{equation*}
\big(\int_{\mathbb{R}^{n+1}_+}t^l|u|^{\frac{2(n+l+1)}{n+2k-l-1}}dydt\big)^{\frac{n+2k-l-1}{n+l+1}}\leq C\int_{\mathbb{R}^{n+1}_+}t^{2k-l}|\nabla u|^2 dydt.
\end{equation*}
Taking $\alpha=2k-l>1-n$ and $\beta=l>-1$,  we obtain the desired inequality
\begin{equation}\label{+1}
\big(\int_{\mathbb{R}^{n+1}_+}t^\beta|u|^{\frac{2(n+\beta+1)}{n+\alpha-1}}dydt\big)^{\frac{n+\alpha-1}{n+\beta+1}}\leq C\int_{\mathbb{R}^{n+1}_+}t^{\alpha}|\nabla u|^2 dydt
\end{equation}
for $\alpha+\beta>0$ and $ \frac{n-1}{n+1}\beta\leq \alpha\leq\beta+2$. Here, we also have $\alpha>0$.

\textbf{Case $2$.} $\alpha+\beta\leq 0$. Assume that $\alpha,\ \beta$ satisfy $\alpha>0,\ \beta>-1,\  \frac{n-1}{n+1}\beta\leq \alpha\leq\beta+2$ and $\alpha+\beta\leq 0$, then we have $0<\alpha<1,\ -1<\beta\leq 0$. For some $k>0$ and $l=\beta$, applying Proposition \ref{main-1} to $u^{\frac{2(n+k)}{n+\alpha-1}}$,  and using H\"{o}lder inequality, we have
\begin{align}\label{+2}
&\big(\int_{\mathbb{R}^{n+1}_+}t^\beta|u|^{\frac{2(n+\beta+1)}{n+\alpha-1}}dydt\big)^{\frac{n+k}{n+\beta+1}}\nonumber\\
\leq &C\int_{\mathbb{R}^{n+1}_+}t^k |u|^{\frac{n+2k-\alpha+1}{n+\alpha-1}}|\nabla u|dydt\nonumber\\
\leq& C\big(\int_{\mathbb{R}^{n+1}_+}t^{\alpha}|\nabla u|^2 dydt\big)^{\frac{1}{2}}\big(\int_{\mathbb{R}^{n+1}_+}t^{2k-\alpha}| u|^\frac{2(n+2k-\alpha+1)}{n+\alpha-1} dydt\big)^{\frac{1}{2}},
\end{align}
where $k$ needs to satisfy
\begin{align*}
k>0,\ \frac{n}{n+1}\beta\leq k\leq \beta+1.
\end{align*}
Since $-1<\beta\leq 0$, $k$ only needs to  satisfy $0<k\leq \beta+1$. Replacing $\beta$ with $2k-\alpha$ in \eqref{+1}, we have \begin{align}\label{+3}
\big(\int_{\mathbb{R}^{n+1}_+}t^{2k-\alpha}|u|^{\frac{2(n+2k-\alpha+1)}{n+\alpha-1}}dydt\big)^{\frac{n+\alpha-1}{n+2k-\alpha+1}}\leq C\int_{\mathbb{R}^{n+1}_+}t^{\alpha}|\nabla u|^2 dydt,
\end{align}
where $k$ needs to satisfy
\begin{align*}
2k-\alpha>-1,\ k>0,\ \frac{n-1}{n+1}(2k-\alpha)\leq \alpha\leq (2k-\alpha)+2.
\end{align*}
Since $0<\alpha<1$, $k$ only needs to satisfy $n\geq 2,\ 0<k\leq \frac{n}{n-1}\alpha$ or $n=1,\ k>0$. Therefore, choosing $k$ to satisfy $n\geq 2,\ 0<k\leq\min \{\frac{n}{n-1}\alpha, \ \beta+1\}$ or $n=1,\ 0<k\leq \beta+1$, and taking \eqref{+3} back to \eqref{+2}, we get the desired inequality.

In conclusion, inequality \eqref{GGN-2} holds for $ \alpha>0,\ \beta>-1,\ \frac{n-1}{n+1}\beta\leq \alpha\leq \beta+2$.
\hfill$\Box$

\section{Existence of extremal functions}
 In this section, we prove the existence of extremal functions to the sharp form of \eqref{GGN-2} by the  concentration-compactness principle.
The case $\alpha=\frac{n-1}{n+1}\beta$ is more complicated. See Remark \ref{special beta}.

Recall that the weighted Sobolev space ${\cal D}_\alpha^{1,2}(\mathbb{R}^{n+1}_+)$ is  defined as the completion of the space $C^\infty_0(\overline{\mathbb{R}^{n+1}_+})$ endowed with the norm
\[
\|u\|_{{\cal D}_\alpha^{1,2}(\mathbb{R}^{n+1}_+)}=\big(\int_{\mathbb{R}^{n+1}_+}t^{\alpha}|\nabla u|^2 dy dt\big)^\frac12.
\]
And for $1\le p<\infty,\ \beta>-1$, we define
$$L^{p}_\beta(\mathbb{R}^{n+1}_+)=\{u\,:\,\mathbb{R}_+^{n+1}\to \mathbb{R}\,|\,\|u\|^p_{L^{p}_\beta(\mathbb{R}^{n+1}_+)}=\int_{\mathbb{R}^{n+1}_+ }t^\beta |u|^{p}dydt<\infty\},$$
$$L^{p}_{\beta,loc}(\overline{\mathbb{R}^{n+1}_+})=\{u\,:\,\mathbb{R}_+^{n+1}\to \mathbb{R}\,|\,\int_{K }t^\beta |u|^{p}dydt<\infty,\, \forall K\subset\subset\overline{\mathbb{R}^{n+1}_+}\}.$$
 Define $ B_R(x)=\{z\in\mathbb{R}^{n+1}\,|\, |z-x|<R\}$ and $B_R^+(x)=B_R(x)\cap \mathbb{R}^{n+1}_+$.
We denote by $\mathcal{M}(\mathbb{R}^{n+1}_+ )$ the space of positive, bounded measures in $\overline{\mathbb{R}^{n+1}_+} $.
The sharp constant  inequality \eqref{GGN-2}  can also be classified by
\begin{equation*}
{S_{n+1, \alpha,\beta}}:= \inf \{\int_{\mathbb{R}^{n+1}_+}t^{\alpha}|\nabla u|^2 dy dt\,:\, u\in {\cal D}_\alpha^{1,2}(\mathbb{R}^{n+1}_+),\ \|u\|_{L^{p^*}_{\beta}(\mathbb{R}^{n+1}_+)}=1\}.
\end{equation*}
The aim of this section is to show  that $S_{n+1, \alpha,\beta}$ is attained by some functions. For $\lambda>0$ and $z\in\mathbb{R}^{n}$, define
\[
u^{\lambda,z}(y,t)=\lambda^{\frac{n+\alpha-1}2}u(\lambda y+z,\lambda t).
\]
It is easy to verify that
\begin{eqnarray*}
\|u^{\lambda,z}\|_{L^{p^*}_\beta(\mathbb{R}^{n+1}_+)}=\|u\|_{L^{p^*}_\beta(\mathbb{R}^{n+1}_+)}\text{ and}\quad
\|u^{\lambda,z}\|_{{\cal D}_\alpha^{1,2}(\mathbb{R}^{n+1}_+)}=\|u\|_{{\cal D}_\alpha^{1,2}(\mathbb{R}^{n+1}_+)}.
\end{eqnarray*}

\begin{proposition}\label{Min}
Assume $n\ge 1$, and $\alpha, \beta$ satisfy \eqref{beta-2}. Let $\{u_j\}$ be a minimizing sequence of functions for $S_{n+1, \alpha,\beta}$ with $\|u_j\|_{L^{p^*}_{\beta}(\mathbb{R}^{n+1}_+)}=1$, then after passing  to a subsequence, there exists $\lambda_j>0$ and $z_j\in \mathbb{R}^{n}$ such that $u^{\lambda_j,z_j}_j\to u$ in $L_\beta^{p^*}(\mathbb{R}^{n+1}_+ )$. In particular, there exists at least one nonnegative minimizer for  $S_{n+1, \alpha,\beta}$.
\end{proposition}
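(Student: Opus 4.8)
The plan is to apply the concentration-compactness principle of P.-L. Lions to the minimizing sequence, using the scaling invariance $u \mapsto u^{\lambda,z}$ to normalize the concentration function. Let $\{u_j\}$ be a minimizing sequence with $\|u_j\|_{L^{p^*}_\beta(\mathbb{R}^{n+1}_+)}=1$ and $\int_{\mathbb{R}^{n+1}_+}t^\alpha|\nabla u_j|^2\,dydt\to S_{n+1,\alpha,\beta}$. The associated measures $\mu_j = t^\alpha|\nabla u_j|^2\,dydt$ and $\nu_j = t^\beta|u_j|^{p^*}\,dydt$ are bounded in $\mathcal{M}(\mathbb{R}^{n+1}_+)$, so after a subsequence $\mu_j \rightharpoonup \mu$, $\nu_j \rightharpoonup \nu$ weakly-$*$ with $\nu(\overline{\mathbb{R}^{n+1}_+})=1$. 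First I would introduce the Lévy concentration function $Q_j(\rho) = \sup_{x\in\overline{\mathbb{R}^{n+1}_+}} \int_{B_\rho^+(x)} t^\beta |u_j|^{p^*}\,dydt$ and, exploiting that $\|u_j^{\lambda,z}\|_{L^{p^*}_\beta}$ and $\|u_j^{\lambda,z}\|_{\mathcal{D}^{1,2}_\alpha}$ are invariant under the scaling, rescale and translate so that (along a subsequence) $Q_j(1) = \tfrac12$ for every $j$; relabel $u_j^{\lambda_j,z_j}$ as $u_j$. Then one applies the concentration-compactness trichotomy to $\nu_j$: either vanishing, dichotomy, or compactness occurs.

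Next I would rule out vanishing and dichotomy. Vanishing ($\sup_x \int_{B_1^+(x)} d\nu_j \to 0$) is incompatible with the normalization $Q_j(1)=\tfrac12$. For dichotomy, the standard argument splits $u_j$ via cutoffs into two pieces with disjoint supports carrying masses $\theta$ and $1-\theta$ of the $L^{p^*}_\beta$ norm with $\theta\in(0,1)$; applying the subadditivity inequality coming from inequality \eqref{GGN-2} — specifically the strict subadditivity $S_{n+1,\alpha,\beta} < \theta^{2/p^*} S_{n+1,\alpha,\beta} + (1-\theta)^{2/p^*} S_{n+1,\alpha,\beta}$ valid because $2/p^* < 1$ (which holds since $p^* > 2$ under \eqref{beta-2}) — one derives that the energy of the split configuration strictly exceeds $S_{n+1,\alpha,\beta}$, a contradiction. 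Here I need to be careful that the two pieces have comparable weighted gradient norms that add up, up to a small error, to $\liminf \int t^\alpha|\nabla u_j|^2$; the cutoff errors are controlled by \eqref{GGN-2} applied on annular regions together with the fact that the excess mass $Q_j(R)-Q_j(\rho)$ can be made small.

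Having excluded vanishing and dichotomy, compactness holds: there exist $x_j\in\overline{\mathbb{R}^{n+1}_+}$ and, for each $\varepsilon>0$, a radius $R=R(\varepsilon)$ with $\int_{B_R^+(x_j)} t^\beta|u_j|^{p^*}\,dydt \ge 1-\varepsilon$. By applying a further translation in the $y$-variable (which is an allowed scaling $u^{1,z_j}$) we may take $x_j$ bounded; a further fixed translation handles the $t$-component since $x_j$ lies in the closed half space and the measure is concentrated near it. Then $u_j \rightharpoonup u$ weakly in $\mathcal{D}^{1,2}_\alpha(\mathbb{R}^{n+1}_+)$ along a subsequence, and by the tightness just established combined with the local compact embedding $\mathcal{D}^{1,2}_\alpha \hookrightarrow L^{p^*}_{\beta,loc}$ (which follows from Corollary \ref{cor1-4} on bounded sets together with a Rellich-type argument for the degenerate weight, or by a diagonal argument over the exhausting balls), one gets $u_j \to u$ strongly in $L^{p^*}_\beta(\mathbb{R}^{n+1}_+)$, so $\|u\|_{L^{p^*}_\beta}=1$. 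Finally, weak lower semicontinuity of the Dirichlet energy gives $\int t^\alpha|\nabla u|^2 \le \liminf \int t^\alpha|\nabla u_j|^2 = S_{n+1,\alpha,\beta}$, forcing equality, so $u$ is a minimizer; replacing $u$ by $|u|$ (which does not increase the energy and preserves the $L^{p^*}_\beta$ norm) yields a nonnegative minimizer. The main obstacle I anticipate is establishing the local compact embedding with the degenerate weight $t^\alpha$ and handling the boundary behavior at $t=0$ carefully in the dichotomy cutoff estimates — the weight is not bounded away from zero or infinity near $\partial\mathbb{R}^{n+1}_+$, so the usual Rellich-Kondrachov argument must be adapted, e.g. by splitting into the region $\{t>\delta\}$ where the weight is comparable to a constant and the thin slab $\{0<t<\delta\}$ whose contribution is controlled by the trace-type inequality \eqref{GGN-2} itself.
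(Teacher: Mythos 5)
Your overall framework—Lions concentration-compactness applied to a scaling-and-translation-normalized minimizing sequence, combined with strict subadditivity from $2/p^*<1$—is the same as the paper's. However, there are two genuine gaps in your write-up.

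First, your remark ``a further fixed translation handles the $t$-component'' is simply false: the weight $t^\alpha$ is not invariant under translations in $t$, so the only symmetries at your disposal are horizontal translations in $y$ and the boundary-preserving dilations $u\mapsto u^{\lambda,z}$. You therefore cannot move a concentration point in the $t$-direction, and you have not ruled out the possibility that the normalized mass escapes to $t\to\infty$ or settles at an interior Dirac $(z^*,t^*)$ with $t^*>0$. The paper's proof of Proposition \ref{Min} handles this with a separate, crucial argument: for $n\ge 2$ the constraint $\frac{n-1}{n+1}\beta<\alpha$ in \eqref{beta-2} is exactly $p^*<2^*:=\frac{2(n+1)}{n-1}$, so on a small interior ball $B_\varepsilon(x^*)$ (where $t^\beta$ is comparable to a constant) a H\"older interpolation against the unweighted $L^{2^*}$-norm produces a positive power of $\varepsilon$, showing that $\int_{B_\varepsilon(x^*)}t^\beta|w_j|^{p^*}$ cannot converge to $1$; for $n=1$ one uses any $q>p^*$ in place of $2^*$. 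This forces concentration, if it occurs at all, onto the boundary $\{t=0\}$, which is why the paper restricts the Lévy concentration function $Q_j$ to boundary-centered half-balls in the first place; the normalization $Q_j(1)=\tfrac12$ then contradicts a boundary Dirac directly. Without this interior-exclusion argument your trichotomy analysis does not close.

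Second, the claim that the local embedding $\mathcal{D}^{1,2}_\alpha\hookrightarrow L^{p^*}_{\beta,loc}$ is compact, and that tightness plus this compactness yields strong $L^{p^*}_\beta$-convergence, is incorrect: $p^*$ is the critical exponent, and even on bounded sets the embedding into $L^{p^*}_\beta$ is continuous but not compact (the paper's Lemma \ref{CompE} gives compactness only for $p<p^*$). The strong convergence of $w_j$ in $L^{p^*}_\beta(\mathbb{R}^{n+1}_+)$ is instead obtained by applying the weighted measure-theoretic concentration-compactness lemma (Lemma \ref{CCP-1}) and showing, via the strict concavity of $x\mapsto x^{2/p^*}$ applied to the decomposition $1=\|w\|^{p^*}_{L^{p^*}_\beta}+\|\nu\|+\nu_\infty$, that both the singular measure $\nu$ and the mass at infinity $\nu_\infty$ must vanish; the normalization rules out $\nu_\infty=1$, and the interior-exclusion argument together with the normalization rules out $\|\nu\|=1$. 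Your cutoff argument for ruling out dichotomy is fine in spirit, but the ``compactness implies strong convergence'' step at the end needs to be replaced by this measure-theoretic argument.
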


Apparently, Theorem \ref{sharp-C-2} follows from this proposition immediately. To prove Proposition \ref{Min},
we first  establish the concentration-compactness principle as the procedures in \cite{Willem97} similar to that in  P.L. Lions \cite{Lions1985a,Lions1985b}.
\begin{lemma}\label{CCP-1} Assume $n\ge 1$, and $\alpha, \beta$ satisfy \eqref{beta-3}.  Let $\{u_j\}$ be a bounded sequence  in ${\cal D}_\alpha^{1,2}(\mathbb{R}^{n+1}_+ )$, $\mu,\nu$ be two Radon measures and a function $u\in {\cal D}^{1,2}_\alpha(\mathbb{R}^{n+1}_+ )$, such that\\
$(1)$\ $u_j\rightharpoonup u$ weakly in  ${\cal D}^{1,2}_\alpha(\mathbb{R}^{n+1}_+ )$,\\
$(2)$\ $u_j\rightarrow u$ a.e. in $\overline{\mathbb{R}^{n+1}_+}$,\\
$(3)$\ $\nu_j=t^\beta |u_j-u|^{p^*}dydt\rightharpoonup \nu$ weakly in $\mathcal{M}(\mathbb{R}^{n+1}_+ )$,\\
$(4)$\ $\mu_j=t^{\alpha}|\nabla (u_j-u)|^2 dy dt\rightharpoonup \mu$ weakly in $\mathcal{M}(\mathbb{R}^{n+1}_+ )$.\\
Define
\begin{eqnarray*}
\mu_\infty&=&\lim_{R\to\infty}\varlimsup_{j\to\infty}\int_{\mathbb{R}^{n+1}_+\backslash B_R(0)}t^{\alpha}|\nabla u_j|^2 dy dt,\\
\nu_\infty&=&\lim_{R\to\infty}\varlimsup_{j\to\infty}\int_{\mathbb{R}^{n+1}_+\backslash B_R(0)}t^\beta |u_j|^{p^*}dydt.
\end{eqnarray*}
Then,
\begin{eqnarray*}
&(i)& \|\mu\|\geq S_{n+1, \alpha,\beta}\|\nu\|^{\frac{2}{p^*}},\\
&(ii)& \mu_\infty\ge S_{n+1, \alpha,\beta}\nu_\infty^\frac2{p^*},\\
&(iii)& \varlimsup_{j\rightarrow\infty}\int_{\mathbb{R}^{n+1}_+}t^\alpha |\nabla u_j|^2dydt=\int_{\mathbb{R}^{n+1}_+}t^\alpha |\nabla u|^2dydt+\|\mu\|+\mu_\infty,\\
&(iv)& \varlimsup_{j\rightarrow\infty}\int_{\mathbb{R}^{n+1}_+}t^\beta | u_j|^{p^*}dydt=\int_{\mathbb{R}^{n+1}_+}t^\beta |u|^{p^*}dydt+\|\nu\|+\nu_\infty,
\end{eqnarray*}
where $\|\mu\|=\sup_{u\in C(\overline{\mathbb{R}^{n+1}_+}), \|u\|_{L^\infty}=1}<\mu,u>$.
Moreover, if $u=0$ and $\|\mu\|=S_{n+1, \alpha,\beta}\|\nu\|^{\frac{2}{p^*}}$, then $\mu$ and $\nu$ are concentrated at a single point.
\end{lemma}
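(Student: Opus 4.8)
The plan is to follow the classical concentration-compactness scheme of P.L.\ Lions, adapted to the degenerate weight $t^\alpha$, with the sharp inequality \eqref{GGN-2} from Corollary \ref{cor1-4} playing the role of the underlying Sobolev inequality. First I would reduce to the case $u=0$: since $u_j\rightharpoonup u$ weakly in ${\cal D}_\alpha^{1,2}$ and $u_j\to u$ a.e., the Brezis--Lieb lemma (valid for the measures $t^\beta|\cdot|^{p^*}dydt$ and the quadratic form $t^\alpha|\nabla\cdot|^2dydt$, since $t^\beta$ and $t^\alpha$ are locally integrable on $\overline{\mathbb{R}^{n+1}_+}$) gives the splitting identities $(iii)$ and $(iv)$; here one must also track the mass escaping to spatial infinity, which is exactly what $\mu_\infty$ and $\nu_\infty$ encode, so a cutoff argument with $\varphi_R$ supported outside $B_R(0)$ is needed to justify these two identities. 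This part is routine.

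For $(i)$, apply inequality \eqref{GGN-2} to $\varphi u_j$ for an arbitrary $\varphi\in C_0^\infty(\overline{\mathbb{R}^{n+1}_+})$ (more precisely $\varphi\in C(\overline{\mathbb{R}^{n+1}_+})$ with compact support, approximated by smooth ones): since $v_j:=u_j-u\rightharpoonup 0$, the cross terms $\int t^\alpha \varphi^2 \nabla v_j\cdot\nabla v_j$ dominate $\int t^\alpha |\nabla(\varphi v_j)|^2$ up to errors that vanish because $v_j\to 0$ strongly in $L^2_{\alpha,loc}$ (Rellich-type compactness, which holds here on bounded sets away from issues at $t=0$ because $t^\alpha$ is an $A_2$-type weight on $\overline{\mathbb{R}^{n+1}_+}$ for $\alpha\in(-1,1)$; for $\alpha\ge 1$ one uses that $v_j\to 0$ in $L^2(K,t^\alpha)$ still follows from weak convergence in ${\cal D}_\alpha^{1,2}$ together with the embedding into $L^{p^*}_\beta$). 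Passing to the limit $j\to\infty$ gives
$$\big(\int \varphi^{p^*}d\nu\big)^{2/p^*}\le S_{n+1,\alpha,\beta}^{-1}\int \varphi^2 d\mu.$$
Then I would invoke the reverse-Hölder / measure-theoretic lemma of Lions (see \cite{Lions1985a}): an inequality of the form $\|\varphi\|_{L^{p^*}(\nu)}\le c\,\|\varphi\|_{L^2(\mu)}$ for all continuous compactly supported $\varphi$ forces, since $p^*>2$, the existence of an at most countable set of points $\{x_k\}$ and weights $\nu_k$ with $\nu=\sum_k \nu_k\delta_{x_k}$ and $\mu\ge \sum_k S_{n+1,\alpha,\beta}\nu_k^{2/p^*}\delta_{x_k}$. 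Summing and using $\ell^{2/p^*}\hookrightarrow \ell^1$ (again $p^*>2$) yields $\|\mu\|\ge S_{n+1,\alpha,\beta}\|\nu\|^{2/p^*}$, which is $(i)$; the concentration statement ("if $u=0$ and equality holds then a single atom") is the extremal case of this same summation, since $\sum_k a_k^{2/p^*}=(\sum_k a_k)^{2/p^*}$ with all $a_k\ge 0$ and $2/p^*<1$ forces at most one nonzero term.

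For $(ii)$, the argument is the "at infinity" analogue: fix $\psi\in C^\infty(\mathbb{R}^{n+1})$ with $\psi\equiv 0$ on $B_1(0)$, $\psi\equiv 1$ outside $B_2(0)$, set $\psi_R(z)=\psi(z/R)$, apply \eqref{GGN-2} to $\psi_R u_j$, and let first $j\to\infty$ and then $R\to\infty$; the localization errors are controlled by $\int_{B_{2R}\setminus B_R} t^\alpha|\nabla\psi_R|^2 u_j^2$, which is bounded by $R^{-2}\int t^\alpha u_j^2$ over the annulus and handled by Hölder against $\|u_j\|_{L^{p^*}_\beta}$ plus the boundedness of the sequence, so it tends to $0$. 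This directly produces $\mu_\infty\ge S_{n+1,\alpha,\beta}\nu_\infty^{2/p^*}$.

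The main obstacle I anticipate is the local compactness step $v_j\to 0$ in $L^2_{\alpha,loc}$ and, relatedly, making the cross-term estimates in $(i)$ fully rigorous up to the boundary $\{t=0\}$, because the weight $t^\alpha$ degenerates (or blows up) there and is an $A_2$ Muckenhoupt weight only for $|\alpha|<1$. For $\alpha\ge 1$ one cannot quote the standard weighted Rellich theorem directly; instead I would exploit that along a bounded set $K\subset\overline{\mathbb{R}^{n+1}_+}$ the embedding ${\cal D}_\alpha^{1,2}(K)\hookrightarrow L^{p^*}_\beta(K)$ is compact when $\frac{n-1}{n+1}\beta\le\alpha<\beta+2$ is a \emph{strict} lower range (which is why hypothesis \eqref{beta-3} is imposed here, allowing equality at the lower end but excluding $\alpha=\beta+2$), together with interpolation between $L^1_{\alpha,loc}$ and $L^{p^*}_\beta$ to upgrade to $L^2_{\alpha,loc}$ strong convergence of $v_j$. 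Everything else is bookkeeping along the lines of \cite{Willem97}.
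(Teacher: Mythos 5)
Your plan is sound, but for part $(i)$ and the final concentration statement you take a genuinely different, and heavier, route than the paper. You invoke the full Lions measure-theoretic lemma to decompose $\nu$ into countably many atoms $\sum_k\nu_k\delta_{x_k}$ and then use subadditivity of $t\mapsto t^{2/p^*}$. The paper avoids atomic decomposition entirely: once the comparison inequality $\bigl(\int|\varphi|^{p^*}d\nu\bigr)^{2/p^*}\le S_{n+1,\alpha,\beta}^{-1}\int|\varphi|^2\,d\mu$ is proved for all compactly supported continuous $\varphi$, it passes $\varphi\to\chi_E$ to obtain $S_{n+1,\alpha,\beta}\,\nu(E)^{2/p^*}\le\mu(E)$ for every bounded Borel set $E$, and $(i)$ follows immediately by exhausting $\overline{\mathbb{R}^{n+1}_+}$. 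For the concentration statement at equality, the paper likewise dispenses with atoms: from $\|\mu\|=S_{n+1,\alpha,\beta}\|\nu\|^{2/p^*}$ it first deduces $\nu=S_{n+1,\alpha,\beta}^{-p^*/2}\|\mu\|^{(p^*-2)/2}\mu$ and then a reverse-H\"older-type inequality $\nu(\Omega)^{1/p^*}\le\nu(\overline{\mathbb{R}^{n+1}_+})^{-(p^*-2)/(2p^*)}\nu(\Omega)^{1/2}$ for arbitrary open $\Omega$, which forces $\nu$ to be supported at a single point. Both your route and the paper's are valid; yours buys a more structured picture of $\nu$ at the price of importing the Lions decomposition lemma, while the paper's is more self-contained.

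The one place you should be more careful is the local compactness step $v_j\to 0$ in $L^2_{\alpha,loc}$, which you correctly identify as the main obstruction but resolve only in outline. Your appeal to $A_2$ weights covers $\alpha\in(-1,1)$, and for $\alpha\ge1$ "weak convergence in ${\cal D}^{1,2}_\alpha$ plus the embedding into $L^{p^*}_\beta$ together with interpolation" does not by itself yield strong $L^2_{\alpha,loc}$ convergence without an actual compactness mechanism. The paper supplies this via Lemma~\ref{CompE} in the Appendix: for integer $\alpha=m$ one lifts $t^m\,dy\,dt$ to the unweighted Lebesgue measure on $\mathbb{R}^{n+m+1}$ by treating $t$ as a radial variable in $\mathbb{R}^{m+1}$ and then applies classical Rellich; for non-integer $\alpha$ one interpolates between neighboring integer weights. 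You should either cite that lemma or reproduce the dimension-lifting argument; the interpolation you describe is only the second half of that mechanism and needs the integer case as its anchor.
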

\begin{proof} 1). Assume first $u=0$, i.e. $u_j\rightharpoonup 0$ weakly in ${\cal D}^{1,2}_\alpha(\mathbb{R}^{n+1}_+ )$, and we prove $(i)$ and $(ii)$.
	
	1.1). For any $\varphi\in C^\infty_0(\overline{ \mathbb{R}^{n+1}_+})$, by inequality \eqref{GGN-2}, we have
\begin{align}\label{Ex-1}
S_{n+1, \alpha,\beta}\big(\int_{\mathbb{R}^{n+1}_+ }t^\beta |\varphi u_j|^{p^*}dydt \big)^{\frac{2}{p^*}}\le  \int_{\mathbb{R}^{n+1}_+ }t^{\alpha}|\nabla (\varphi u_j)|^2 dy dt,
\end{align}
%By H\"{o}lder inequality, we have
where \begin{eqnarray}\label{Ex-2}
RHS%\int_{\mathbb{R}^{n+1}_+ }t^{\alpha}|\nabla (\varphi u_j)|^2 dy dt
%=\int_{\mathbb{R}^{n+1}_+ }t^{\alpha}|\nabla \varphi u_j+\varphi\nabla u_j|^2 dy dt\nonumber\\
&\le&\int_{\mathbb{R}^{n+1}_+ }t^{\alpha}|\varphi|^2|\nabla u_j|^2dydt+2\int_{\mathbb{R}^{n+1}_+}t^\alpha|\varphi| | u_j||\nabla \varphi||\nabla u_j|dydt\nonumber\\
&&+\int_{\mathbb{R}^{n+1}_+ }t^{\alpha}|u_j|^2|\nabla\varphi|^2 dy dt.
\end{eqnarray}
According to Lemma \ref{density} in Appendix, it is easy to verify that $\cal{D}^{1,2}_\alpha(\mathbb{R}^{n+1}_+)\subset {\cal D}^{1,2}_{\alpha,loc}(\mathbb{R}^{n+1}_+)$. Then by the compact embedding lemma (Lemma \ref{CompE}), we have $u_j\rightarrow 0$ in $L_{\alpha, loc}^2(\overline{\mathbb{R}^{n+1}_+})$. Therefore in \eqref{Ex-2}, as $j\to \infty,$
\begin{align*}
	\int_{\mathbb{R}^{n+1}_+}t^\alpha|\varphi| | u_j||\nabla \varphi||\nabla u_j|dydt
%	\leq& &
%C(\varphi)\int_{\text{supp} \varphi}t^\alpha |u_j||\nabla u_j|dydt\\
\leq C(\varphi)
\big(\int_{\text{supp} \varphi}t^\alpha|\nabla u_j|^2dydt\big)^\frac{1}{2}\big(\int_{\text{supp} \varphi}t^\alpha|u_j|^2dydt\big)^\frac{1}{2}
\to 0,
\end{align*}
and
\begin{eqnarray*}
\int_{\mathbb{R}^{n+1}_+ }t^{\alpha}|u_j|^2|\nabla\varphi|^2 dy dt\leq C(\varphi)\int_{\text{supp} \varphi}t^\alpha |u_j|^2dydt\rightarrow 0.
\end{eqnarray*}
Back to \eqref{Ex-1}, and letting $j\to \infty$, we arrive at
\begin{equation}\label{compa}
S_{n+1, \alpha,\beta}\big(\int_{\mathbb{R}^{n+1}_+ }|\varphi|^{p^*}d\nu\big)^\frac{2}{p^*} \le  \int_{\mathbb{R}^{n+1}_+ }|\varphi|^2 d\mu.
\end{equation}
A limit process shows
\begin{equation}\label{Ex-4}
 S_{n+1, \alpha,\beta}\nu(E)^{\frac2{p^*}}\le\mu(E),
\end{equation}
for any  bounded Borel set $E\subset\overline{\mathbb{R}^{n+1}_+}$, which implies $\|\mu\|\geq S_{n+1, \alpha,\beta}\|\nu\|^{\frac{2}{p^*}}$.

\medskip

1.2).  For any $R>1$, choose cut-off function $\psi_R\in C^1(\overline{ \mathbb{R}^{n+1}_+})$, such that $0\leq \psi_R\leq 1$, $\psi_R(y,t)=1$ for $|(y,t)|\geq R+1$, $\psi_R(y,t)=0$ for $|(y,t)|\leq R$ and $|\nabla \psi_R|\leq C$. By inequality \eqref{GGN-2}, we have
$$\big(\int_{\mathbb{R}^{n+1}_+} t^\beta|\psi_R u_j|^{p^*}dydt\big)^\frac{2}{p^*}\leq S_{n+1, \alpha,\beta}^{-1}\int_{\mathbb{R}^{n+1}_+} t^\alpha |\nabla(\psi_R u_j)|^2dydt.$$
Similar to the argument in 1.1), we have
\begin{align}\label{mu_infty}
\varlimsup_{j\rightarrow\infty}\big(\int_{\mathbb{R}^{n+1}_+} t^\beta|\psi_R u_j|^{p^*}dydt\big)^\frac{2}{p^*}\leq S_{n+1, \alpha,\beta}^{-1}\varlimsup_{j\rightarrow\infty}\int_{\mathbb{R}^{n+1}_+} t^\alpha \psi_R^2|\nabla u_j|^2dydt.
\end{align}
On the other hand,
$$\int_{\mathbb{R}^{n+1}_+\backslash B_{R+1}(0)}t^\alpha |\nabla u_j|^2dydt\leq \int_{\mathbb{R}^{n+1}_+}t^\alpha \psi_R^2|\nabla u_j|^2dydt\leq \int_{\mathbb{R}^{n+1}_+\backslash B_R(0)}t^\alpha |\nabla u_j|^2dydt,$$
$$\int_{\mathbb{R}^{n+1}_+\backslash B_{R+1}(0)}t^\beta|u_j|^{p^*}dydt\leq\int_{\mathbb{R}^{n+1}_+}t^\beta|\psi_R u_j|^{p^*}dydt\leq \int_{\mathbb{R}^{n+1}_+\backslash B_{R}(0)}t^\beta|u_j|^{p^*}dydt.$$
Then by the definition of $\mu_\infty,\ \nu_\infty$ and \eqref{mu_infty}, it holds
$$\mu_\infty\ge S_{n+1,\alpha,\beta}\nu_\infty^\frac2{p^*}.$$

1.3). Further, we assume $\|\mu\|=S_{n+1, \alpha,\beta}\|\nu\|^\frac{2}{p^*}$. By \eqref{compa} and H\"older inequality, for any $\varphi\in C^\infty_0(\overline{ \mathbb{R}^{n+1}_+})$,
$$\int_{\mathbb{R}^{n+1}_+ }|\varphi|^{p^*}d\nu\le  S_{n+1, \alpha,\beta}^{-\frac{p^*}{2}}\big(\int_{\mathbb{R}^{n+1}_+ }|\varphi|^2 d\mu\big)^\frac{p^*}{2}\leq S_{n+1, \alpha,\beta}^{-\frac{p^*}{2}}\|\mu\|^{\frac{p^*-2}{2}}\int_{\mathbb{R}^{n+1}_+ }|\varphi|^{p^*} d\mu,$$
then we have $$\nu=S_{n+1, \alpha,\beta}^{-\frac{p^*}{2}}\|\mu\|^{\frac{p^*-2}{2}}\mu.$$
This means
\begin{align*}
	\big(\int_{\mathbb{R}^{n+1}_+ }|\varphi|^{p^*}d\nu\big)^{\frac{1}{p^*}} \leq S_{n+1, \alpha,\beta}^{-\frac{1}{2}}\big(\int_{\mathbb{R}^{n+1}_+}\varphi^2 d\mu\big)^\frac{1}{2}
%	=S_{n+1, \alpha,\beta}^{\frac{p^*-2}{4}}\|\mu\|^{-\frac{p^*-2}{4}}\big(\int_{\mathbb{R}^{n+1}_+}\varphi^2 d\nu\big)^\frac{1}{2}\\
	=\|\nu\|^{-\frac{p^*-2}{2p^*}}\big(\int_{\mathbb{R}^{n+1}_+}\varphi^2 d\nu\big)^\frac{1}{2}.
\end{align*}
Then for any open set $\Omega$ in $\mathbb{R}^{n+1}$, $$\nu(\Omega\cap\overline{\mathbb{R}^{n+1}_+})^{\frac{1}{p^*}}\leq\nu(\overline{\mathbb{R}^{n+1}_+})^{-\frac{p^*-2}{2p^*}}\nu(\Omega\cap\overline{\mathbb{R}^{n+1}_+})^\frac{1}{2}.$$
 Since $\alpha<\beta+2,$ we have that $p^*>2$. If $\nu(\Omega\cap\overline{\mathbb{R}^{n+1}_+})>0$, we have $\nu(\overline{\mathbb{R}^{n+1}_+})\leq \nu(\Omega\cap\overline{\mathbb{R}^{n+1}_+})$, %{\bf (here we need $p^*>2$)} ,
 which implies that $\nu$ is centered at a single point, so is $\mu$.

2).  We discuss the general case. Write $v_j=u_j-u$. Since $v_j\rightharpoonup 0$ weakly in ${\cal D}^{1,2}_\alpha(\mathbb{R}^{n+1}_+)$, we have for any $h\in C^\infty_0(\overline{ \mathbb{R}^{n+1}_+})$,
\begin{eqnarray*}
&&	\int_{\mathbb{R}^{n+1}_+}t^\alpha|\nabla u_j|^2 hdydt\\
	&=&	\int_{\mathbb{R}^{n+1}_+}t^\alpha|\nabla v_j|^2 hdydt+2\int_{\mathbb{R}^{n+1}_+}t^\alpha\nabla v_j\nabla u hdydt+\int_{\mathbb{R}^{n+1}_+}t^\alpha|\nabla u|^2 hdydt\\
	&\rightarrow&\int_{\mathbb{R}^{n+1}_+}hd\mu+\int_{\mathbb{R}^{n+1}_+}t^\alpha|\nabla u|^2 hdydt.
\end{eqnarray*}
Then we obtain that
$$t^\alpha|\nabla u_j|^2dydt\rightharpoonup \mu+|\nabla u|^2dydt\ \text{ weakly in } \mathcal{M}(\mathbb{R}^{n+1}_+).$$
According to Brezis-Lieb Lemma, we have for every nonnegative $h\in C^\infty_0(\overline{\mathbb{R}^{n+1}_+})$,
$$\int_{\mathbb{R}^{n+1}_+}t^\beta |u|^{p^*}hdydt=\lim_{j\rightarrow\infty}\big(\int_{\mathbb{R}^{n+1}_+}t^\beta |u_j|^{p^*}hdydt-\int_{\mathbb{R}^{n+1}_+}t^\beta |v_j|^{p^*}hdydt\big).$$
Hence we obtain that
\[
t^\beta|u_j|^{p^*}dydt\rightharpoonup \nu+t^\beta|u|^{p^*}dydt\ \text{ weakly in }\mathcal{M}(\mathbb{R}^{n+1}_+).
\]
Part $(i)$ follows from the corresponding inequality for $\{v_j\}$.

Since
\begin{eqnarray*}
	&&\varlimsup_{j\rightarrow\infty}\int_{\mathbb{R}^{n+1}_+\backslash B_R(0)}t^\alpha|\nabla v_j|^2dydt\\
	&=&\varlimsup_{j\rightarrow\infty}\int_{\mathbb{R}^{n+1}_+\backslash B_R(0)}t^\alpha|\nabla u_j|^2dydt-\int_{\mathbb{R}^{n+1}_+\backslash B_R(0)}t^\alpha|\nabla u|^2dydt,
	\end{eqnarray*}
we obtain that
$$\lim_{R\rightarrow\infty}\varlimsup_{j\rightarrow\infty}\int_{\mathbb{R}^{n+1}_+\backslash B_R(0)}t^\alpha|\nabla v_j|^2dydt=\lim_{R\rightarrow\infty}\varlimsup_{j\rightarrow\infty}\int_{\mathbb{R}^{n+1}_+\backslash B_R(0)}t^\alpha|\nabla u_j|^2dydt=\mu_\infty.$$
By  Brezis-Lieb Lemma, we have
$$\lim_{j\rightarrow\infty}\big(\int_{\mathbb{R}^{n+1}_+\backslash B_R(0)}t^\beta |u_j|^{p^*}dydt-\int_{\mathbb{R}^{n+1}_+\backslash B_R(0)}t^\beta |v_j|^{p^*}dydt\big)=\int_{\mathbb{R}^{n+1}_+\backslash B_R(0)}t^\beta |u|^{p^*}dydt,$$
which implies
$$\lim_{R\rightarrow\infty}\varlimsup_{j\rightarrow\infty}\int_{\mathbb{R}^{n+1}_+\backslash B_R(0)}t^\beta |v_j|^{p^*}dydt=\nu_\infty.$$
Part $(ii)$ follows from the corresponding inequality for $\{v_j\}$.

\medskip

Next, we prove $(iii) $ and $(iv)$. For every $R>1$, we have
\begin{eqnarray*}
	& &\varlimsup_{j\rightarrow\infty}\int_{\mathbb{R}^{n+1}_+}t^\alpha|\nabla u_j|^2dydt\\
	&=&\varlimsup_{j\rightarrow\infty}\int_{\mathbb{R}^{n+1}_+}t^\alpha\psi_R|\nabla u_j|^2dydt+\varlimsup_{j\rightarrow\infty}\int_{\mathbb{R}^{n+1}_+}t^\alpha(1-\psi_R)|\nabla u_j|^2dydt\\
	&=&\varlimsup_{j\rightarrow\infty}\int_{\mathbb{R}^{n+1}_+}t^\alpha\psi_R|\nabla u_j|^2dydt+\int_{\mathbb{R}^{n+1}_+}(1-\psi_R)d\mu+\int_{\mathbb{R}^{n+1}_+}t^\alpha(1-\psi_R)|\nabla u|^2dydt.
\end{eqnarray*}
When $R\rightarrow\infty$, we get,  by Lebesgue dominated convergence theorem, that
$$\varlimsup_{j\rightarrow\infty}\int_{\mathbb{R}^{n+1}_+}t^\alpha|\nabla u_j|^2dydt=\mu_\infty+\|\mu\|+\int_{\mathbb{R}^{n+1}_+}t^\alpha|\nabla u|^2dydt.$$
Similarly, we can get
$$\varlimsup_{j\rightarrow\infty}\int_{\mathbb{R}^{n+1}_+}t^\beta| u_j|^{p^*}dydt=\nu_\infty+\|\nu\|+\int_{\mathbb{R}^{n+1}_+}t^\beta| u|^{p^*}dydt.$$
Lemma \ref{CCP-1}  is proved.
\end{proof}

\medskip

\textbf{Proof of Proposition \ref{Min}.}  Let $\{u_j\}\subset{\cal  D}^{1,2}_\alpha(\mathbb{R}^{n+1}_+)$ be a nonnegative minimizing sequence of functions for $S_{n+1, \alpha,\beta}$ with  $\int_{\mathbb{R}^{n+1}_+} t^\beta|u_j|^{p^*}dydt =1$.
Define $$Q_j(\lambda)=\sup_{z\in\mathbb{R}^{n}}\int_{B_\lambda^+((z,0))}t^\beta |u_j|^{p^*}dydt.$$
Since for every $j$,
$$\lim_{\lambda\rightarrow 0^+}Q_j(\lambda)=0,\quad\lim_{\lambda\rightarrow \infty}Q_j(\lambda)=1,$$
there exists $\lambda_j>0$ such that $Q_j(\lambda_j)=\frac{1}{2}$. Moreover, there exists $z_j\in\mathbb{R}^{n}$ such that
$$\int_{B^+_{\lambda_j}((z_j,0))}t^\beta |u_j|^{p^*}dydt=Q_j(\lambda_j)= \frac{1}{2},$$
since $$ \lim_{|z|\rightarrow\infty}\int_{B^+_{\lambda_j}((z,0))}t^\beta|u_j|^{p^*}dydt=0.$$
Let $w_j=u^{\lambda_j,z_j}_j$, then $w_j$ satisfies $\int_{\mathbb{R}^{n+1}_+} t^\beta|w_j|^{p^*}dydt =1$, $\lim_{j\to\infty}\|w_j\|^2_{{\cal D}^{1,2}_\alpha(\mathbb{R}^{n+1}_+)}=S_{n+1,\alpha,\beta}$, and
\begin{equation}\label{Ex-m-1}
\frac12=\int_{B^+_{1}(0)}t^\beta|w_j|^{p^*}dydt=\sup_{z\in\mathbb{R}^{n}}\int_{B^+_{1}((z,0))}t^\beta|w_j|^{p^*}dydt.
\end{equation}
Since ${\cal D}^{1,2}_{\alpha}(\mathbb{R}^{n+1}_+)\subset{\cal D}^{1,2}_{\alpha, loc}(\mathbb{R}^{n+1}_+)$, by compact embedding lemma (Lemma \ref{CompE}),
we have, after  passing to a subsequence, that
\begin{eqnarray*}
	& &w_j\rightharpoonup w\quad\text{in}~{\cal D}^{1,2}_{\alpha}(\mathbb{R}^{n+1}_+),\\
	& &w_j\rightarrow w\quad\text{in}~ L^{2}_{\alpha, loc}(\overline{\mathbb{R}^{n+1}_+} ),\\
	% (need a compact embedding to say the limit is the same 3-29-2019)\\
	& &w_j\rightarrow w\quad \text{a.e. in } \overline{ \mathbb{R}^{n+1}_+},\\
	& &\nu_j=t^\beta|w_j-w|^{p^*}dydt\rightharpoonup \nu \quad\text{weakly in }~\mathcal{M}(\mathbb{R}^{n+1}_+ ),\\
	& &\mu_j=t^{\alpha}|\nabla (w_j-w)|^2 dy dt\rightharpoonup \mu\quad\text{weakly in}~ \mathcal{M}(\mathbb{R}^{n+1}_+ ).
\end{eqnarray*}
 From Lemma \ref{CCP-1}, we have
\begin{align}\label{4.3-1}
1=\lim_{j\rightarrow\infty}\int_{\mathbb{R}^{n+1}_+ }t^\beta|w_j|^{p^*}dydt
=\int_{\mathbb{R}^{n+1}_+ }t^\beta|w|^{p^*}dydt+ \|\nu\|+\nu_\infty,
\end{align}
\begin{align}\label{4.3-2}
S_{n+1,\alpha,\beta}=\lim_{j\rightarrow\infty}\int_{\mathbb{R}^{n+1}_+ }t^\alpha|\nabla w_j|^2dydt
=\int_{\mathbb{R}^{n+1}_+}t^{\alpha}|\nabla w|^2dydt+\|\mu\|+\mu_\infty,
\end{align}
\begin{align}\label{4.3-3}
\|\mu\|\geq S_{n+1,\alpha,\beta}\|\nu\|^{\frac{2}{p^*}},\
\mu_\infty\ge S_{n+1,\alpha,\beta}\nu_\infty^\frac2{p^*},
\end{align}
where
\begin{eqnarray*}
	\mu_\infty&=&\lim_{R\to\infty}\varlimsup_{m\to\infty}\int_{\mathbb{R}^{n+1}_+\backslash B_R(0)}t^{\alpha}|\nabla w_j|^2 dy dt,\\
	\nu_\infty&=&\lim_{R\to\infty}\varlimsup_{m\to\infty}\int_{\mathbb{R}^{n+1}_+\backslash B_R(0)}t^\beta |w_j|^{p^*}dydt.
\end{eqnarray*}
Combining \eqref{GGN-2}, \eqref{4.3-1}, \eqref{4.3-2}, \eqref{4.3-3} and $p^*>2$, we have
\begin{eqnarray*}
	S_{n+1,\alpha,\beta}
	&\ge&S_{n+1,\alpha,\beta}\big[\big(\int_{\mathbb{R}^{n+1}_+ }t^\beta|w|^{p^*}dydt\big)^\frac2{p^*}+\|\nu\|^\frac2{p^*}+\nu_\infty^\frac2{p^*}\big]\nonumber\\
	&\ge&S_{n+1,\alpha,\beta}\big(\int_{\mathbb{R}^{n+1}_+ }t^\beta|w|^{p^*}dydt+\|\nu\|+\nu_\infty\big)^\frac2{p^*}\nonumber\\
	&=&S_{n+1,\alpha,\beta},
\end{eqnarray*}
which implies
\begin{eqnarray}\label{Ex-m-2}
\big(\|w\|^{p^*}_{L_\beta^{p^*}(\mathbb{R}^{n+1}_+ )}\big)^\frac2{p^*}+\|\nu\|^\frac2{p^*}+\nu_\infty^\frac2{p^*}=\big(\|w\|^{p^*}_{L_\beta^{p^*}(\mathbb{R}^{n+1}_+ )}+\|\nu\|+\nu_\infty\big)^{\frac{2}{p^*}}=1.
\end{eqnarray}
 Since $2/p^*<1$, above equality indicates that only one term is equal to $1$ and the others must be $0$.
 By \eqref{Ex-m-1}, $\nu_\infty\leq\frac12$, then $\nu_\infty=0$.
 If $\|\nu\|=1$, then $w=0$ and $\|\mu\|=S_{n+1, \alpha,\beta}\|\nu\|^{\frac{2}{p^*}}$. By the last statement in Lemma \ref{CCP-1}, we have that $\mu$ and $\nu$ are concentrated on a single point $x^*$. We claim $x^*=(z^*,0)\in\partial\mathbb{R}^{n+1}_+$, then by \eqref{Ex-m-1},
 $$\frac12\geq \int_{B^+_{1}(x^*)}t^\beta|w_j|^{p^*}dydt\rightarrow\|\nu\|=1,$$
 contradiction. It follows that $
 \nu=0, \|w\|_{L^{p^*}_\beta(\mathbb{R}^{n+1}_+)}=1$, which implies that $w$ is the extremal function for $S_{n+1,\alpha,\beta}$.

 To prove the claim, we argue by contradiction. Assume  $x^*=(z^*,t^*)$ for some $t^*>0$. For $n>1$, since  $\beta<\frac{n+1}{n-1}\alpha$, we know $p^*<2^*:=\frac{2(n+1)}{n-1}$. For every $0<\varepsilon<t^*$, we have $$\lim_{j\rightarrow\infty}\int_{B_\varepsilon(x^*)}t^\beta|u_j|^{p^*}dydt=1.$$ But for $0<\varepsilon<\frac{t^*}{2}$, by H\"{o}lder inequality,
 \begin{eqnarray*}
  \big( \int_{B_\varepsilon(x^*)}t^\beta|w_j|^{p^*}dydt\big)^{\frac{2}{p^*}}
  %&\leq&C\big(\int_{B_\varepsilon(x^*)}|u_j|^{p^*}dydt\big)^{\frac{2}{p^*}}\\
    \leq C\big(\int_{B_\varepsilon(x^*)}|w_j|^{2^*}dydt\big)^{\frac{2}{2^*}}\varepsilon^{\frac{2(2^*-p^*)}{2^*p^*}(n+1)},
   % &\leq&C\varepsilon^{\frac{2(2^*-p^*)}{2^*p^*}(n+1)}\int_{B_\varepsilon(x^*)}|\nabla w_j|^2dydt\\
   % &\leq&C\varepsilon^{\frac{2(2^*-p^*)}{2^*p^*}(n+1)}\int_{B_\varepsilon(x^*)}t^\alpha|\nabla u_j|^2dydt\\
   % &\leq&C\varepsilon^{\frac{2(2^*-p^*)}{2^*p^*}(n+1)}\rightarrow 0,\quad\text{as }\varepsilon\rightarrow 0.
 \end{eqnarray*}
where \begin{align*}
\big(\int_{B_\varepsilon(x^*)}|w_j|^{2^*}dydt\big)^{\frac{2}{2^*}}&\leq C\int_{B_\varepsilon(x^*)}(|\nabla w_j|^2+|w_j|^2)dydt\\
&\leq C\int_{B_\varepsilon(x^*)}t^\alpha(|\nabla w_j|^2+|w_j|^2)dydt\leq C.
\end{align*}
 Therefore,  for $\varepsilon$ small enough, $\lim_{j\to \infty}\big( \int_{B_\varepsilon(x^*)}t^\beta|w_j|^{p^*}dydt\big)^{\frac{2}{p^*}}<1$, contradiction. For $n=1$, we replace $2^*$ by a power $q>p^*$ in the above calculation. Similarly, we can get the same contradiction.

 Finally, replacing $w$ by $|w|$, we get the nonnegative extremal function for  $S_{n+1,\alpha,\beta}$.
 \hfill$\Box$

\begin{remark}\label{special beta}
	 Assume $n\geq 2$. For $\alpha=\beta=0$, by the classical Sobolev inequality, $S_{n+1,0,0}$ can be attained by  functions taking the form of $\big(\frac{\lambda}{\lambda^2+|y-y^o|^2+t^2}\big)^{\frac{n-1}{2}}$, but for general $\beta=\frac{n+1}{n-1}\alpha$, the minimizer may not exist. For example, we know from Lemma \ref{appro}: for $\alpha\ge1$, $C^\infty_0(\mathbb{R}^{n+1}_+)$ is dense in ${\cal D}^{1,2}_\alpha(\mathbb{R}^{n+1}_+)$.  Let $f(y,t)=t^\frac{\alpha}{2}u(y,t)$, then for $\beta=\frac{n+1}{n-1}\alpha$, \eqref{sharp-C-1} is equivalent to
	\begin{equation}\label{replacement}
	S=\inf_{f\in C^\infty_0(\mathbb{R}^{n+1}_+)\backslash\{0\}}\frac{\int_{\mathbb{R}^{n+1}_+}(|\nabla f|^2 -\frac{\alpha(2-\alpha)}{4}\frac{f^2}{t^2})dydt}{(\int_{\mathbb{R}^{n+1}_+} |f|^{\frac{2(n+1)}{n-1}}dydt)^{\frac{n-1}{n+1}}}.
	\end{equation}
	Observe: for $\alpha=2$,  $S$ is exact the best Sobolev constant in $\mathbb{R}^{n+1}$, thus cannot be attained by any functions in the closure of $C_0^{\infty}(\mathbb{R}^{n+1}_+)$ under norm $\|f\|=(\int_{\mathbb{R}^{n+1}_+}|\nabla f|^2dydt)^{\frac12}$. This indicates that there is no extremal function for $S_{1,2,\frac{2(n+1)}{n-1}}$. Besides, for $\alpha\geq 2$ and $\beta=\frac{n+1}{n-1}\alpha$, the nonexistence of extremal function for $S_{n+1,\alpha,\beta}$ can be obtained by Mancini, Sandeep\cite[Theorem 1.5]{Mancini}. See more details in Remark \ref{no-solu}.
\end{remark}

\section{Regularity of extremal functions}

Throughout this section,  we always assume
\begin{align}\label{eq:constr-a-b}
 \alpha>0,\quad \beta>-1,\quad \alpha+\beta\geq 0,\quad \frac{n-1}{n+1}\beta \leq \alpha< \beta+2.
\end{align}
Since $p^*=\frac{2(n+\beta+1)}{n+\alpha-1}$, the above condition  indicates  $2<p^*\leq 2^*$ for $n\geq 2$ and $p^*>2$ for $n=1$.

In  this section,  we shall prove Theorem \ref{thm-reg} using the Moser iteration technique: under condition \eqref{eq:constr-a-b} on $\alpha$ and $\beta$,  the weak positive solutions to  \eqref{genequ-1} are H\"older continuous up to the boundary.
Comparing with the classical Moser iteration technique (see \cite{GT1983,Brezis1990} for example), we use weighted Sobolev inequality \eqref{GGN-2} instead of classical Sobolev inequality.
\begin{proposition}\label{thm:regularity}
	Suppose $\alpha, \beta$ satisfy condition \eqref{eq:constr-a-b} and $0\leq u\in \mathcal{D}^{1,2}_{\alpha}(\RpN)$ is a weak solution to equation \eqref{genequ-1}. Then, for any $1\leq q<\infty$, we have
	\[u^q\in \mathcal{D}^{1,2}_{\alpha,loc}(\RpN)
	\quad\text{and}\quad u^q\in L^2_{\beta,loc}(\overline{\mathbb{R}^{n+1}_+}).\]
\end{proposition}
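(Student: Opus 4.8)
The plan is to carry out a Moser-type iteration, using the weighted Sobolev inequality \eqref{GGN-2} in place of the classical Sobolev inequality and paying careful attention to the two distinct weights $t^\alpha$ and $t^\beta$. Fix an exponent $s\ge 1$, a truncation level $L>0$, set $\bar u=\min\{u,L\}$, and take a cutoff $\eta\in C_0^\infty(\mathbb{R}^{n+1})$ with $\eta\equiv 1$ on a half--ball $B_\rho^+(x_0)$ and $\mathrm{supp}\,\eta\subset B_{2\rho}^+(x_0)$. Using $\phi=\eta^2\,\bar u^{\,2(s-1)}u$ as a test function in the weak formulation \eqref{weak-1} of \eqref{genequ-1} — it lies in $\mathcal{D}_\alpha^{1,2}(\mathbb{R}^{n+1}_+)$ since $\bar u\le L$, $\eta$ has compact support, and $u\in\mathcal{D}_\alpha^{1,2}(\mathbb{R}^{n+1}_+)\subset L^2_{\alpha,loc}$ — and writing $w_L:=\bar u^{\,s-1}u$, a direct computation (keeping the two nonnegative gradient terms on the left, and estimating the cross term containing $\nabla\eta$ by Young's inequality) yields
\[
\int \eta^2\, t^\alpha|\nabla w_L|^2\,dydt \;\le\; C_s\Big(\int t^\alpha|\nabla\eta|^2\, w_L^2\,dydt+\int t^\beta u^{\,p^*-2}\eta^2\, w_L^2\,dydt\Big)
\]
for a constant $C_s<\infty$. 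As $w_L^2\le u^{2s}$ pointwise, the first term on the right is finite as soon as $u^{\,s}\in L^2_{\alpha,loc}$.

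For the second term I would split the domain into $\{u\le\Lambda\}$ and $\{u>\Lambda\}$. On $\{u\le\Lambda\}$ it is at most $\Lambda^{\,p^*-2}\int_{\mathrm{supp}\,\eta}t^\beta u^{2s}\,dydt$, finite provided $u^{\,s}\in L^2_{\beta,loc}$. On $\{u>\Lambda\}$, Hölder's inequality produces the factor $\big(\int_{\{u>\Lambda\}}t^\beta u^{p^*}\big)^{(p^*-2)/p^*}\|\eta w_L\|_{L^{p^*}_\beta}^2$, and \eqref{GGN-2} applied to $\eta w_L$ bounds $\|\eta w_L\|_{L^{p^*}_\beta}^2$ by $C\big(\int\eta^2 t^\alpha|\nabla w_L|^2+\int t^\alpha|\nabla\eta|^2 w_L^2\big)$. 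Since $u\in L^{p^*}_\beta(\mathbb{R}^{n+1}_+)$ by \eqref{GGN-2}, the tail $\int_{\{u>\Lambda\}}t^\beta u^{p^*}$ is as small as we wish once $\Lambda=\Lambda(s,\rho)$ is large, so the $\nabla w_L$ contribution can be absorbed into the left side. The resulting bound on $\int_{B_\rho^+(x_0)}t^\alpha|\nabla w_L|^2$ is \emph{uniform in} $L$, depending only on $\rho$, $s$, and the local $L^{2s}_\alpha$ and $L^{2s}_\beta$ norms of $u$; letting $L\to\infty$ (Fatou on the left, and weak convergence together with the a.e. convergence $w_L\to u^{\,s}$ to identify the limit gradient as $\nabla(u^{\,s})$) gives $u^{\,s}\in\mathcal{D}_\alpha^{1,2}(B_\rho^+(x_0))$. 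Covering a compact set by finitely many such half--balls yields $u^{\,s}\in\mathcal{D}_{\alpha,loc}^{1,2}(\mathbb{R}^{n+1}_+)$, and hence $u^{\,s}\in L^{p^*}_{\beta,loc}$ by \eqref{GGN-2}, i.e. $u\in L^{sp^*}_{\beta,loc}$.

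Each application of this step thus needs $u\in L^{2s}_{\alpha,loc}\cap L^{2s}_{\beta,loc}$ (with $s\ge1$) and upgrades the admissible local $L_\beta$--exponent of $u$ from $2s$ to $sp^*$. To feed the iteration I transfer $L_\beta$--integrability to $L_\alpha$--integrability: if $u\in L^M_{\beta,loc}$, then Hölder on any compact $K\subset\overline{\mathbb{R}^{n+1}_+}$ — using that $\int_K t^{(\alpha M-\beta m)/(M-m)}\,dydt<\infty$ precisely when $m<M\frac{\alpha+1}{\beta+1}$ — gives $u\in L^m_{\alpha,loc}$ for every $m<M\min\{1,\tfrac{\alpha+1}{\beta+1}\}$. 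Starting from $u\in L^{p^*}_\beta(\mathbb{R}^{n+1}_+)$, each round therefore multiplies the reachable $L_\beta$--exponent by a factor as close as we like to $\kappa:=\tfrac{p^*}{2}\min\{1,\tfrac{\alpha+1}{\beta+1}\}$, and $\kappa>1$: if $\alpha\ge\beta$ then $\kappa=p^*/2>1$, while if $\alpha<\beta$ (which forces $\beta>0$) then $\kappa=\tfrac{p^*(\alpha+1)}{2(\beta+1)}$ and $p^*(\alpha+1)-2(\beta+1)=\tfrac{2}{\,n+\alpha-1\,}\big(n(\alpha-\beta)+2(\beta+1)\big)>0$ by \eqref{eq:constr-a-b}, since $\alpha\ge\tfrac{n-1}{n+1}\beta$ forces $\beta-\alpha\le\tfrac{2\beta}{n+1}<\tfrac{2(\beta+1)}{n}$. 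Iterating, $u\in L^q_{\alpha,loc}\cap L^q_{\beta,loc}$ for every $q<\infty$. Finally, for arbitrary $q\ge1$ the requirements $u^{\,q}\in L^2_{\alpha,loc}\cap L^2_{\beta,loc}$ of the energy step are now in force, so it produces $u^{\,q}\in\mathcal{D}_{\alpha,loc}^{1,2}(\mathbb{R}^{n+1}_+)$; and $u\in L^{2q}_{\beta,loc}$ gives $u^{\,q}\in L^2_{\beta,loc}$, which is the assertion.

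The step I expect to be the main obstacle is the two--weight bookkeeping: the extra integrability gained from \eqref{GGN-2} is weighted by $t^\beta$, whereas the cutoff term one must control at each stage is weighted by $t^\alpha$, and when $\alpha<\beta$ the passage back from an $L_\beta$ to an $L_\alpha$ bound genuinely loses integrability; the whole scheme works only because this loss is dominated by the Sobolev gain, which is exactly the inequality $p^*(\alpha+1)>2(\beta+1)$, equivalently $n(\alpha-\beta)+2(\beta+1)>0$, guaranteed by \eqref{eq:constr-a-b}. The remaining technical points — admissibility of the truncated test function in $\mathcal{D}_\alpha^{1,2}$, the uniformity in $L$ of the energy estimate, and the identification of the weak limit of $\nabla w_L$ as $\nabla(u^{\,s})$ — are routine and rely on the density and compact embedding lemmas already quoted.
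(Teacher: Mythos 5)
Your proof is correct and follows the same Moser-iteration template as the paper: the truncated test function $\phi=\eta^2\bar u^{2(s-1)}u$ (the paper's $\eta^2 u\min\{u^{2\theta},M^2\}$ with $\theta=s-1$, $M=L^{s-1}$), the resulting energy estimate, and the absorption of the nonlinear term by splitting at a large level and using \eqref{GGN-2} together with the global smallness of $\int_{\{u>\Lambda\}}t^\beta u^{p^*}$ all coincide with the paper's argument. Where you genuinely diverge is in the exponent bookkeeping after obtaining the uniform (in the truncation) energy bound \eqref{4.5-1}. The paper handles the two-weight mismatch by a case split: when $\alpha\ge\beta$ it replaces $t^\alpha$ by $t^\beta$ on the support of $\eta$ and iterates purely in $L_\beta$ via \eqref{GGN-2}; when $\alpha<\beta$ it replaces $t^\beta$ by $t^\alpha$ and iterates purely in $L_\alpha$ via the single-weight Sobolev inequality with exponent $p_\alpha=\tfrac{2(n+\alpha+1)}{n+\alpha-1}$, recovering the $L^2_{\beta,loc}$ bound at the end from $t^\beta\le Ct^\alpha$ on compacta. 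You instead keep the genuine pair $(\alpha,\beta)$ throughout and reconcile the two weights once and for all with a H\"older transfer: $u\in L^M_{\beta,loc}$ implies $u\in L^m_{\alpha,loc}$ for every $m<M\min\{1,\tfrac{\alpha+1}{\beta+1}\}$, because $\int_K t^{(\alpha M-\beta m)/(M-m)}dydt<\infty$ exactly when $(\alpha+1)M>(\beta+1)m$. The multiplicative gain per round is then $\kappa=\tfrac{p^*}{2}\min\{1,\tfrac{\alpha+1}{\beta+1}\}$, and your verification that $\kappa>1$ reduces to $n(\alpha-\beta)+2(\beta+1)>0$, which indeed follows from $\alpha\ge\tfrac{n-1}{n+1}\beta$ in \eqref{eq:constr-a-b}. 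Both routes are valid; yours is unified (no case split, no second Sobolev exponent $p_\alpha$) and makes the role of the lower bound on $\alpha$ from \eqref{eq:constr-a-b} in the iteration explicit, at the cost of a small loss of exponent at each H\"older step — harmless since the iteration still diverges geometrically. The passage from a geometric sequence of exponents to arbitrary $q\ge1$ is the same interpolation remark in both arguments.
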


\begin{proof} We shall prove this by iteration. Suppose $\eta\in C_0^\infty(\overline{\mathbb{R}^{n+1}_+})$, $\theta>0$ and $K\geq 0$. Denote
	\[\phi=\eta^2 u\cdot\min\{u^{2\theta}, M^2\},\]
	where $\theta$ is to be chosen. By Lemma \ref{density}, it is easy to check that $\phi \in \mathcal{D}^{1,2}_{\alpha}(\RpN)$. Testing \eqref{weak-1} with $\phi$, we have
	\begin{align}\label{eq:uphi-1}
	\int_{\RpN} t^\alpha \nabla u \cdot \nabla \phi dydt=\int_{\RpN} t^\beta u^{p^*-1}\phi dydt.
	\end{align}
	Since	\begin{align*}
	\nabla u \cdot \nabla \phi %=& \eta^2|\nabla u|^2\min\{u^{2\theta}, M^2\}+2\eta u\nabla u\cdot \nabla \eta \cdot \min\{u^{2\theta},M^2\}\\
	%&+2{\theta}\eta^2 u^{2\theta} |\nabla u|^2 \chi_{\{u^\theta\leq M\}}\\
	\geq&\frac{1}{2}\eta^2|\nabla u|^2\min\{u^{2\theta}, M^2\}-2|\nabla \eta|^2|u|^2\min \{u^{2\theta},M^2\}\\
	&+2\theta^{-1}\eta^2  u^2 |\nabla (u^{\theta})|^2 \chi_{\{u^\theta\leq M\}},
	\end{align*}
back to \eqref{eq:uphi-1}, we have
	\begin{align*}
	&\frac12\int_{\mathbb{R}^{n+1}_+}t^\alpha \eta^2|\nabla u|^2\min\{u^{2\theta},M^2\}dydt+2\theta^{-1}\int_{\{u^\theta\leq M\}}t^\alpha \eta^2 u^2 |\nabla u^{\theta}|^2  dydt\\
	\leq& 2\int_{\mathbb{R}^{n+1}_+} t^\alpha |\nabla \eta|^2u^2\min \{u^{2\theta},M^2\}dydt+\int_{\mathbb{R}^{n+1}_+} t^\beta \eta^2 u^{p^*}\min\{u^{2\theta},M^2\}dydt.
	\end{align*}
	Denote $w=u\cdot\min\{u^\theta,M\}$. The above inequality implies
	\begin{equation}\label{test-w}
\int_{\mathbb{R}^{n+1}_+}t^\alpha |\nabla (\eta w)|^2dydt
\leq C(1+\theta)\big(\int_{\mathbb{R}^{n+1}_+} t^\alpha |\nabla \eta|^2 w^2 dydt+\int_{\mathbb{R}^{n+1}_+}t^\beta \eta^2 u^{p^*-2} w^2dydt\big).
\end{equation}
For $L>0$, divide $\mathbb{R}^{n+1}_+$ into $\{u\leq L\}$ and $\{u\geq L\}$, then the second term of RHS in \eqref{test-w} satisfies
\begin{equation*}
\begin{split}
&\int_{\mathbb{R}^{n+1}_+}t^\beta \eta^2 u^{p^*-2}w^2dydt\\
\leq&L^{p^*-2}\int_{\mathbb{R}^{n+1}_+} t^\beta \eta^2w^2dydt
+C\big(\int_{\{u\geq L\} \cap \text{supp} \eta}t^\beta u^{p^*}dydt\big)^{1-\frac{2}{p^*}}\big(\int_{\mathbb{R}^{n+1}_+}t^\beta (\eta w)^{p^*}dydt\big)^{\frac{2}{p^*}}\\
\leq&L^{p^*-2}\int_{\mathbb{R}^{n+1}_+} t^\beta \eta^2 w^2 dydt
+S^{-1}_{n+1,\alpha,\beta}\big(\int_{\{u\geq L\} \cap \text{supp} \eta}t^\beta u^{p^*}dydt\big)^{1-\frac{2}{p^*}}	\int_{\mathbb{R}^{n+1}_+}t^\alpha |\nabla (\eta w)|^2dydt.
\end{split}
\end{equation*}
Back to \eqref{test-w}, since $p^*>2$ and $\lim_{L\to \infty}\int_{\{u\geq L\} \cap \text{supp}\eta}t^\beta u^{p^*}dydt= 0 $, for $L$ large enough, we have
\begin{eqnarray}\label{4.5-1}
\int_{\mathbb{R}^{n+1}_+}t^\alpha |\nabla (\eta w)|^2dydt\leq C(\theta, L)\Big(\int_{\mathbb{R}^{n+1}_+} t^\alpha w^2|\nabla \eta|^2dydt+\int_{\mathbb{R}^{n+1}_+} t^\beta w^2\eta^2dydt\Big).
\end{eqnarray}

For $\alpha\geq\beta$, by \eqref{4.5-1}, we have
\begin{align*}
\int_{\mathbb{R}^{n+1}_+}t^\alpha |\nabla (\eta w)|^2dydt\leq C\int_{\text{supp}\eta} t^\beta w^2(|\nabla \eta|^2+\eta^2)dydt,
\end{align*}
where $C$ is independent of $M$. Let $M\rightarrow +\infty$, then for any $\eta\in C_0^\infty(\mathbb{R}^{n+1})$,
\begin{align}\label{loc-integral}
\int_{\mathbb{R}^{n+1}_+}t^\alpha |\nabla (\eta u^{\theta+1})|^2dydt\leq C(\eta)\int_{\text{supp}\eta} t^\beta u^{2(\theta+1)}dydt.
\end{align}
For $u\in{\cal D}^{1,2}_{\alpha}(\mathbb{R}^{n+1}_+)$, we have $u^{\frac{p^*}{2}}\in L^{2}_{\beta}(\overline{\mathbb{R}^{n+1}_+})$. Take $\theta_1=\frac{p^*}{2}-1$, then according to \eqref{loc-integral} and $\int_{K}t^\alpha u^{p^*}dydt\leq C(K)\int_{K}t^\beta u^{p^*}dydt$ for any compact set $K \subset\overline{\mathbb{R}^{n+1}_+}$, we have $u^{\frac{p^*}{2}}\in{\cal D}^{1,2}_{\alpha,loc}(\mathbb{R}^{n+1}_+)$, which implies that $u^{(\frac{p^*}{2})^2}\in L^2_{\beta,loc}(\overline{\mathbb{R}^{n+1}_+})$. Taking $\theta_{i+1}+1=\frac{p^*}{2}(\theta_i+1)$, $i\geq 1$, we can get for $q=\big(\frac{p^*}{2}\big)^{i}, \, i\geq 1$, $u^q\in L^2_{\beta,loc}(\overline{\mathbb{R}^{n+1}_+})$ and $u^q\in {\cal D}^{1,2}_{\alpha,loc}(\mathbb{R}^{n+1}_+)$. And for general $q\geq 1$, we can get the conclusion by  an interpolation inequality.

For $0< \alpha<\beta$, by \eqref{4.5-1}, we have
\begin{align}\label{loc-integral-2}
\int_{\mathbb{R}^{n+1}_+}t^\alpha |\nabla (\eta u^{\theta+1})|^2dydt\leq C(\eta)\int_{\text{supp}\eta} t^\alpha u^{2(\theta+1)}dydt,
\end{align}
Similarly, we can get that for $q=\big(\frac{p_\alpha}{2}\big)^{i}, \, i\geq 1$, $u^q\in L^2_{\alpha,loc}(\overline{\mathbb{R}^{n+1}_+})$ and $u^q\in {\cal D}^{1,2}_{\alpha,loc}(\mathbb{R}^{n+1}_+)$, where $p_\alpha=\frac{2(n+\alpha+1)}{n+\alpha-1}$. At the same time, $u^q\in L^2_{\beta,loc}(\overline{\mathbb{R}^{n+1}_+})$. For general $q\geq 1$, we can get the conclusion by an interpolation inequality.
\end{proof}

\begin{proposition}\label{local-infty}
	Suppose $0\leq u\in \mathcal{D}^{1,2}_{\alpha}(\RpN)$ is a weak solution to equation \eqref{genequ-1} and $\alpha, \beta$ satisfy condition \eqref{eq:constr-a-b}, then $u\in L^\infty_{loc}(\overline{\mathbb{R}^{n+1}_+})$.
\end{proposition}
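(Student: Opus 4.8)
The plan is to run a De Giorgi--Nash--Moser iteration adapted to the degenerate weight, building directly on Proposition~\ref{thm:regularity}. Fix $x_0\in\overline{\mathbb{R}^{n+1}_+}$ and $R>0$; I shall show $u\in L^\infty(B_R^+(x_0))$, which suffices since $x_0$ is arbitrary. For a cutoff $\eta\in C_0^\infty(\overline{\mathbb{R}^{n+1}_+})$ supported in $B_{2R}^+(x_0)$ and $\theta\ge0$, testing the weak formulation \eqref{weak-1} against $\phi=\eta^2 u^{2\theta+1}$ --- via the truncation $\min\{u^{2\theta},M^2\}$ and the passage $M\to\infty$, which is legitimate because Proposition~\ref{thm:regularity} guarantees all the relevant local integrals are finite --- yields exactly the Caccioppoli-type inequality \eqref{test-w}: writing $w=u^{\theta+1}$,
\[
\int_{\RpN}t^\alpha|\nabla(\eta w)|^2\,dydt\le C(1+\theta)\Big(\int_{\RpN}t^\alpha|\nabla\eta|^2 w^2\,dydt+\int_{\RpN}t^\beta\eta^2 u^{p^*-2}w^2\,dydt\Big).
\]

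The crux is to control the nonlinear term with a constant depending on $\theta$ only polynomially, so that the infinite iteration converges. This is where Proposition~\ref{thm:regularity} is used in full strength: it provides $u\in L^q_{\beta,\mathrm{loc}}(\overline{\mathbb{R}^{n+1}_+})$ for every $q<\infty$ (and, when $\alpha<\beta$, also $u\in L^q_{\alpha,\mathrm{loc}}$), hence a quantitative tail estimate $\int_{\{u>L\}\cap\,\mathrm{supp}\,\eta}t^\beta u^{p^*}\,dydt\le C_\eta L^{-\delta}$ for some $\delta>0$. Splitting $\{u\le L\}$ and $\{u>L\}$, bounding the first part by $L^{p^*-2}\int t^\beta\eta^2 w^2$ and, on the second, applying H\"older with exponents $p^*/(p^*-2)$ and $p^*/2$ together with the weighted Sobolev inequality \eqref{GGN-2} applied to $\eta w$, one can absorb the term $S^{-1}(1+\theta)C_\eta L^{-\delta}\int t^\alpha|\nabla(\eta w)|^2$ into the left-hand side by choosing $L=L(\theta)$ equal to a fixed power of $1+\theta$; the surviving constant is then again a fixed power of $1+\theta$. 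Here the dichotomy in \eqref{eq:constr-a-b} enters, exactly as in the proof of Proposition~\ref{thm:regularity}: when $\alpha\ge\beta$ the weight $t^\alpha$ in the cutoff-gradient term is dominated by $t^\beta$ on $\mathrm{supp}\,\eta$, and the iteration is run in $L^q_\beta$-norms with Sobolev gain $\kappa=p^*/2>1$; when $0<\alpha<\beta$ one instead dominates $t^\beta$ by $t^\alpha$ and uses Corollary~\ref{cor1-4} with both weights equal to $\alpha$ (critical exponent $p_\alpha=\frac{2(n+\alpha+1)}{n+\alpha-1}$), running the iteration in $L^q_\alpha$-norms with gain $\kappa=p_\alpha/2>1$, the relevant local integrability of $u$ again coming from Proposition~\ref{thm:regularity}. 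Either way one arrives, with $\gamma\in\{\alpha,\beta\}$ the appropriate weight, at
\[
\Big(\int_{\RpN}t^\gamma|\eta u^{\theta+1}|^{2\kappa}\,dydt\Big)^{1/\kappa}\le C(1+\theta)^{m}\int_{\RpN}t^\gamma\big(|\nabla\eta|^2+\eta^2\big)u^{2(\theta+1)}\,dydt,
\]
where $\kappa>1$, $m$, and $C$ depend only on $n,\alpha,\beta,R$ and on the local integrability of $u$, but not otherwise on $\theta$.

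From here the iteration is standard. Put $q_k=2\kappa^{k}$ (so $\theta_k+1=\kappa^{k}$), take nested half-balls $B_{R(1+2^{-k})}^+(x_0)$ with cutoffs $\eta_k$ satisfying $0\le\eta_k\le1$, $\eta_k\equiv1$ on $B_{R(1+2^{-k-1})}^+(x_0)$ and $|\nabla\eta_k|\le C2^{k}/R$; the estimate above gives $\|u\|_{L^{q_{k+1}}_\gamma(B_{R(1+2^{-k-1})}^+)}\le(C4^{k}q_k^{m})^{1/q_k}\|u\|_{L^{q_k}_\gamma(B_{R(1+2^{-k})}^+)}$. Since $q_k$ grows geometrically, $\sum_k k/q_k<\infty$ and $\sum_k(\log q_k)/q_k<\infty$, so iterating and letting $k\to\infty$ yields $\|u\|_{L^\infty(B_R^+(x_0))}\le C\,\|u\|_{L^2_\gamma(B_{2R}^+(x_0))}<\infty$, the starting norm being finite by Proposition~\ref{thm:regularity}. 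Hence $u\in L^\infty_{\mathrm{loc}}(\overline{\mathbb{R}^{n+1}_+})$.

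I expect the main obstacle to be precisely the uniform-in-$\theta$ absorption of the nonlinear term $\int t^\beta\eta^2 u^{p^*-2}w^2$: one must prevent the Moser constant from growing faster than polynomially in $\theta$, which forces the use of the higher local integrability $u\in L^q_{\mathrm{loc}}$ for every $q$ (Proposition~\ref{thm:regularity}) rather than merely $u\in L^{p^*}$, and forces the $\alpha\ge\beta$ versus $0<\alpha<\beta$ case split so that the weight on the cutoff-gradient term matches the weight appearing in the available Sobolev inequality. A secondary, more routine point is justifying the test function $\eta^2 u^{2\theta+1}$ and the limit $M\to\infty$ up to the degenerate boundary $\{t=0\}$, which is handled exactly as in the proof of Proposition~\ref{thm:regularity}.
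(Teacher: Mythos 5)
Your proof is correct and reaches the same conclusion, but the device you use to absorb the critical nonlinear term $\int t^\beta\eta^2 u^{p^*-2}w^2$ with a constant polynomial in $\theta$ differs from the paper's. You perform a level-set split $\{u\le L\}\cup\{u>L\}$ and choose a $\theta$-dependent truncation level $L=L(\theta)\sim(1+\theta)^{c}$, controlling the tail $\int_{\{u>L\}\cap\operatorname{supp}\eta}t^\beta u^{p^*}$ quantitatively by $C_\eta L^{-\delta}$ via the higher local integrability from Proposition~\ref{thm:regularity}. The paper instead fixes an intermediate exponent $q$ with $2<2q<p^*$ (and $p_\alpha$ in place of $p^*$ when $0<\alpha<\beta$), bounds the nonlinear term by $V\cdot\big(\int t^\gamma(\eta w)^{2q}\big)^{1/q}$ where $V$ is a fixed local $L^{(p^*-2)q/(q-1)}_\beta$-norm of $u$ (finite by Proposition~\ref{thm:regularity}), and then applies Young's interpolation inequality between the $L^2$ and $L^{p^*}$ (resp.\ $L^{p_\alpha}$) norms of $\eta w$ with a small parameter $\delta$; absorbing the Sobolev term yields a surviving factor $C(1+\theta)^{2\sigma+2}V^{\sigma+1}$. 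Both routes are standard and give polynomially growing Moser constants; the paper's fixed-$q$ version isolates the dependence on $u$ into a single quantity $V$ that enters the final bound explicitly, while your $\theta$-dependent truncation bypasses Young's inequality at the cost of a slightly more involved bookkeeping of how the truncation level interacts with the iteration indices, especially in the $0<\alpha<\beta$ case where one must additionally split the weight $t^\beta$ into $t^{\tilde\gamma}$ and $t^\alpha$ as the paper does. Both approaches lean on the same two ingredients — Proposition~\ref{thm:regularity} and the weighted Sobolev inequality \eqref{GGN-2} — and the $\alpha\ge\beta$ versus $0<\alpha<\beta$ dichotomy you identify is exactly the one the paper implements.
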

\begin{proof}
	Since there is no singularity in the interior, we only consider the $L_{loc}^{\infty}$ property near the boundary. Denote $\phi=\eta^2 u^{2\theta+1}$ for some $\theta>0$ and $\phi\in C^\infty_0(\overline{\mathbb{R}^{n+1}_+})$ with $supp \eta\subset \overline{B_2^+}=\overline{B_2^+(0)}$. By Proposition \ref{thm:regularity} and Lemma \ref{density}, it holds $\phi\in \cal{D}_\alpha^{1,2}(\mathbb{R}^{n+1}_+)$. Testing \eqref{weak-1} by $\phi$, we have
	\begin{align*}
	&2\int_{B_2^+}t^\alpha \eta  u^{2\theta+1} \nabla \eta\cdot \nabla  udydt+(2\theta+1)\int_{B_2^+} t^\alpha \eta^2  u^{2\theta}|\nabla  u|^2dydt=\int_{B_2^+}t^\beta \eta^2 u^{p^*+2\theta}dydt.
	\end{align*}
	It follows that
	\begin{align*}
	\int_{B_2^+}t^\alpha \eta^2  u^{2\theta}|\nabla u|^2dydt\leq C(\theta) \int_{B_2^+} \big(t^\alpha |\nabla \eta|^2  u^{2\theta+2} +t^\beta \eta^2u^{p^*+2\theta}\big)dydt
	\end{align*}
	for some constant $C(\theta)=\frac2{2\theta+\frac12}\le4$.
	Write $w= u^{\theta+1}$, then we have
	\begin{align}\label{eq:moser-bt-i2}
	\int_{B_2^+}t^\alpha |\nabla (\eta w)|^2dydt \leq C(\theta)(\theta+1)^2\big(\int_{B_2^+}t^\alpha |\nabla\eta|^2w^2dydt+\int_{B_2^+}t^\beta \eta^2u^{p^*-2}w^{2}dydt\big).
	\end{align}
	Using H\"{o}lder's inequality, we obtain that the second term of RHS in \eqref{eq:moser-bt-i2} satisfies
	\begin{align}\label{alpha>beta-1}
	\int_{B_2^+}t^\beta  \eta^2 u^{p^*-2} w^{2}dydt\leq\big(\int_{B_2^+}t^\beta u^{(p^*-2)\frac{q}{q-1}}dydt\big)^{1-\frac{1}{q}}\big(\int_{B_2^+}t^\beta(\eta w)^{2q} dydt\big)^{\frac{1}{q}}
	\end{align}
	for some $q$ fixed such that $p^*>2q>2$. By Theorem \ref{thm:regularity}, we know
	\begin{align*}
	V:=\big(\int_{B_2^+}t^\beta u^{(p^*-2)\frac{q}{q-1}}dydt\big)^{1-\frac{1}{q}}<\infty.
	\end{align*}
Also,  by Young's inequality, we have
\begin{eqnarray}\label{alpha>beta-2}
	\big(\int_{B_2^+}t^\beta(\eta w)^{2q}dydt\big)^{\frac{1}{2q}}\leq \delta \big(\int_{B_2^+}t^\beta(\eta w)^{p^*}dydt\big)^{\frac{1}{p^*}}+\delta^{-\sigma}\big( \int_{B_2^+} t^\beta \eta^2 w^2dydt\big)^{\frac12},~~~~~
\end{eqnarray}
where $\sigma=\frac{p^*(q-1)}{p^*-2q}$. Putting these back to \eqref{eq:moser-bt-i2}, one gets
	\begin{align*}
	\int_{B_2^+}t^\alpha |\nabla (\eta w)|^2dydt
	\leq& C(1+\theta)^2\big[\int_{B_2^+}t^\alpha |\nabla\eta|^2w^2dydt+V \delta^2 \big(\int_{B_2^+}t^\beta(\eta w)^{p^*}dydt\big)^{\frac{2}{p^*}}\notag\big.\\
	&\big.+V\delta^{-2\sigma} \int_{B_2^+} t^\beta \eta^2 w^2dydt\big] .%\label{eq:moser-pos-v1}.
	\end{align*}
	
	For $\alpha\geq\beta$, using inequality \eqref{GGN-2} and choosing $\delta>0$ small enough, we have
	\begin{align}\label{eq:m-fineq}
	&\big(\int_{B_2^+}t^\beta(\eta w)^{p^*}dydt\big)^{\frac{2}{p^*}}\notag\\
\leq&  C(1+\theta)^2\int_{B_2^+}t^\alpha |\nabla\eta|^2w^2dydt+C(1+\theta)^{2\sigma+2} V^{\sigma+1}\int_{B_2^+} t^\beta \eta^2 w^2dydt\notag\\
\leq &  C(1+\theta)^2\int_{B_2^+}t^\beta |\nabla\eta|^2w^2dydt+
	C(1+\theta)^{2\sigma+2} V^{\sigma+1}\int_{B_2^+} t^\beta \eta^2 w^2dydt	.
	\end{align}
	For $r\leq 2$ and $p>2$, define
	\[\Phi(p,r)=\big(\int_{B_r^+}t^\beta u^pdydt\big)^{\frac{1}{p}}.\]
	Set $\gamma=2(1+\theta)$, and choose $\eta=1$ in $B_{r_2}^+$ and $\eta=0$ in $B_2^+\backslash B_{r_1}^+$, where $0<r_2<r_1\le2$. Then \eqref{eq:m-fineq} shows that for any $\gamma>2$
	\[\Phi\big(\frac{p^*}{2}\gamma,r_2\big)\leq \big[\frac{C(\gamma \sqrt{V})^{2(\sigma+1)}}{(r_1-r_2)^2}\big]^{\frac{1}{\gamma}}\Phi(\gamma,r_1).\]
	By iterating the above inequality: set $r_m=1+2^{-m}$, $\gamma_0=p>2$ and $\gamma_m=\gamma_{m-1}\frac{p^*}{2}$, $m=1,2,\cdots$, one gets
	\[\Phi(\gamma_m,r_m)\leq (C\cdot \sqrt{V}p)^{\frac{2(1+\sigma)}{p}\sum (p^*/2)^{-k}}(2(\frac{p^*}{2})^{\sigma+1})^{\frac{2}{p}\sum k(p^*/2)^{-k}}\Phi(p,2).\]
	Since $p^*>2$, we have$\sum (p^*/2)^{-k}<\infty$ and $\sum k(p^*/2)^{-k}<\infty$. Letting $m\to \infty$, by $\beta>-1$, we have $\sup_{B_1^+} u<\infty$.
	
 For $0<\alpha<\beta$, changing the weight from $\beta$
		to $\alpha$ in  \eqref{alpha>beta-1} and \eqref{alpha>beta-2}, one has
	\begin{align*}
	\int_{B_2^+}t^\beta  \eta^2 u^{p^*-2} w^{2}dydt&\leq\big(\int_{B_2^+}t^{\tilde{\gamma}} u^{(p^*-2)\frac{q}{q-1}}dydt\big)^{1-\frac{1}{q}}\big(\int_{B_2^+}t^\alpha(\eta w)^{2q} dydt\big)^{\frac{1}{q}}\\
	&\leq C\big(\int_{B_2^+}t^\beta u^{(p^*-2)\frac{q}{q-1}}dydt\big)^{1-\frac{1}{q}}\big(\int_{B_2^+}t^\alpha(\eta w)^{2q} dydt\big)^{\frac{1}{q}}\\
	&\leq V\big(\int_{B_2^+}t^\alpha(\eta w)^{2q} dydt\big)^{\frac{1}{q}},
	\end{align*} and
	\begin{align*}
	\big(\int_{B_2^+}t^\alpha(\eta w)^{2q}dydt\big)^{\frac{1}{2q}}\leq \delta \big(\int_{B_2^+}t^\alpha(\eta w)^{p_\alpha}dydt\big)^{\frac{1}{p_\alpha}}+\delta^{-\tilde{\sigma}}\big( \int_{B_2^+} t^\alpha \eta^2 w^2dydt\big)^{\frac12},
	\end{align*}
	where $p_\alpha=\frac{2(n+\alpha+1)}{n+\alpha-1}>2q>2$, $\tilde{\gamma}=\frac{\beta q-\alpha}{q-1}>\beta$ and $\tilde{\sigma}=\frac{p_\alpha(q-1)}{p_\alpha-2q}$. Similarly, we can get
	\begin{align*}
	&\big(\int_{B_2^+}t^\alpha(\eta w)^{p_\alpha}dydt\big)^{\frac{2}{p_\alpha}}\\
	\leq & C(1+\theta)^2\int_{B_2^+}t^\alpha |\nabla\eta|^2w^2dydt+C(1+\theta)^{2\tilde{\sigma}+2} V^{\tilde{\sigma}+1}\int_{B_2^+} t^\alpha \eta^2 w^2dydt.
	\end{align*}
	Let $$\tilde{\Phi}(p,r)=\big(\int_{B_r^+}t^\alpha u^pdydt\big)^{\frac{1}{p}}.$$
	Iterating as before, we can get $\sup_{B_1^+} u<+\infty$.
\end{proof}

The $L^\infty_{loc}$ bound yields that  $u$ is actually smooth in $\mathbb{R}^{n+1}_+$ by the standard elliptic estimates.  Next, we shall show that $u$ is H\"{o}lder continuous up to the boundary. To that end, we firstly need to establish some lemmas.

We need the following weak  weighted Poincar\'e inequalities.

\begin{lemma}\label{lem:poincare-equal}
Let $\alpha>0$. There exists $C$ depending only on $n$ and $\alpha$ such that
	\begin{align}\label{eq:poin-eq}
	\int_{B^+_r(X)}t^\alpha |u-u_{B^+_r(X),\alpha}|^2dydt\leq Cr^2\int_{B^+_{3r}(X)}t^\alpha |\nabla u|^2dydt,
	\end{align}
	holds for any $r>0$, $X\in\overline{\mathbb{R}^{n+1}_+}$ and $u\in \mathcal{D}^{1,2}_{\alpha,loc}(\mathbb{R}^{n+1}_+)$. Here we write \[u_{B^+_r(X),\alpha}=\frac{\int_{B^+_r(X)}t^\alpha u(y,t)dydt}{\int_{B^+_r(X)}t^\alpha dydt}.\]
\end{lemma}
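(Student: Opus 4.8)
The plan is to reduce \eqref{eq:poin-eq} to a single scale‑invariant weighted Poincar\'e inequality on a half‑ball whose centre lies on $\partial\RpN$, and then to prove that inequality by a compactness argument. First, translating in the $y$‑variable leaves everything invariant, and under the dilation $(y,t)\mapsto((y-x_0)/r,\,t/r)$ — which preserves $\RpN$ and turns $B^+_r(X),B^+_{3r}(X)$ into half‑balls of radii $1$ and $3$ about $(0,t_0/r)$ — the weighted Dirichlet energy, the weighted $L^2$‑deviation from a constant, and the weighted mean all scale by the \emph{same} power of $r$ once the factor $r^2$ on the right of \eqref{eq:poin-eq} is taken into account. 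Hence it suffices to prove, with $C=C(n,\alpha)$,
\[
\int_{B^+_1(X)}t^\alpha|u-u_{B^+_1(X),\alpha}|^2\,dy\,dt\le C\int_{B^+_3(X)}t^\alpha|\nabla u|^2\,dy\,dt,
\]
for $X=(0,t_0)$ with $t_0=\operatorname{dist}(X,\partial\RpN)\ge0$ arbitrary.

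Next I split on the size of $t_0$, with threshold $c_0:=\tfrac{\sqrt{17}-1}{2}$. If $t_0\ge c_0$, then $B_1(X)\Subset\RpN$ and $t\in(t_0-1,t_0+1)$ on $B_1(X)$, so $t^\alpha$ is bounded above and below by multiples of $t_0^\alpha$ with constants depending only on $\alpha$; the inequality then follows at once from the classical Poincar\'e inequality on the Euclidean ball $B_1(X)$, using that the weighted mean minimises $c\mapsto\int_{B_1(X)}t^\alpha|u-c|^2$. If $t_0<c_0$, put $X_0=(0,0)\in\partial\RpN$ and $\rho=t_0+1$; a short triangle‑inequality computation gives the nested inclusions $B^+_1(X)\subset B^+_\rho(X_0)\subset B^+_3(X)$ — the value $c_0$ is exactly what makes $\rho^2+t_0^2\le9$, so that the enlarged half‑ball still lies in $B^+_3(X)$, which is why the radius $3r$ appears in the statement. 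Replacing the weighted mean over $B^+_1(X)$ by the weighted mean over $B^+_\rho(X_0)$ (again by the minimising property) and enlarging the domain of the left integral, the inequality is reduced to the weighted Poincar\'e inequality on the half‑ball $B^+_\rho(X_0)$ centred on $\partial\RpN$; rescaling once more (as $X_0\in\partial\RpN$) this becomes the single inequality
\[
\int_{B^+_1(0)}t^\alpha|u-u_{B^+_1(0),\alpha}|^2\,dy\,dt\le C(n,\alpha)\int_{B^+_1(0)}t^\alpha|\nabla u|^2\,dy\,dt.
\]

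This last inequality I would prove by contradiction. If it failed there would be $u_j\in\mathcal{D}^{1,2}_{\alpha,loc}(\RpN)$ which, after subtracting their weighted means over $B^+_1(0)$ and normalising, satisfy $u_{j,B^+_1(0),\alpha}=0$, $\int_{B^+_1(0)}t^\alpha|u_j|^2=1$ and $\int_{B^+_1(0)}t^\alpha|\nabla u_j|^2\to0$. Then $\{u_j\}$ is bounded in $\mathcal{D}^{1,2}_\alpha(B^+_1(0))$, so by the compact embedding lemma (Lemma~\ref{CompE}) a subsequence converges in $L^2_\alpha(B^+_1(0))$ to a limit $u$ with $\int t^\alpha|u|^2=1$ and zero weighted mean, while weak lower semicontinuity of the Dirichlet energy forces $\nabla u=0$, hence $u$ is constant and therefore $\equiv0$ — a contradiction. (Alternatively one can reach the same inequality from the weighted Sobolev inequality \eqref{GGN-2} with $\beta=\alpha$ by inserting a cut‑off and applying H\"older's inequality, but absorbing the resulting lower‑order $L^2_\alpha$ term still requires the compactness above.)

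The only genuinely substantive ingredient is thus the compact embedding of $\mathcal{D}^{1,2}_\alpha$ into $L^2_\alpha$ up to the degenerate part $\{t=0\}$ of the boundary, i.e.\ Lemma~\ref{CompE}; granting that, everything else is bookkeeping — chiefly tracking the two half‑ball inclusions so the enlarged ball stays inside $B^+_3(X)$ (hence the specific $c_0$ and the $3r$ in the statement), the harmless vanishing of $t^\alpha$ at the boundary playing no role in any of these steps.
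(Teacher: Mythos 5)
Your reduction by scaling/translation and the two‑case split on $t_0$ are correct, and the threshold $c_0=\tfrac{\sqrt{17}-1}{2}$ works just as well as the paper's $\tfrac54$: both only need to guarantee the nesting $B^+_1(X)\subset B^+_{t_0+1}(0)\subset B^+_3(X)$ when $X=(0,t_0)$. The genuine gap is the final compactness step. Lemma~\ref{CompE} is a compact embedding of $\mathcal{D}^{1,2}_{\alpha,loc}(\RpN)$ into $L^2_{\alpha,loc}(\overline{\RpN})$; to invoke it you need a priori bounds on \emph{every} ball $B^+_R(0)$, whereas your normalised sequence $u_j$ is controlled only on the single half‑ball $B^+_1(0)$. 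To place $u_j$ into $\mathcal{D}^{1,2}_{\alpha,loc}(\RpN)$ you would have to cut off by some $\eta\in C^\infty_0(B_1(0))$, but then Lemma~\ref{CompE} yields convergence only on $\{\eta=1\}\cap B^+_1(0)$, a strictly smaller half‑ball; recovering control of the remaining annular strip near $\partial B_1(0)$ is itself a Poincar\'e‑type estimate, so the argument is circular. What you actually need is a compact embedding $W^{1,2}(B^+_1,t^\alpha\,dydt)\hookrightarrow L^2(B^+_1,t^\alpha\,dydt)$ on the \emph{fixed} half‑ball touching $\{t=0\}$. For $0<\alpha<1$ this can be quoted from the Fabes--Kenig--Serapioni theory because $t^\alpha$ is an $A_2$ weight, but for $\alpha\ge 1$ the weight is no longer $A_2$ and that theory does not apply, so your proposal has a real hole precisely in the range where the problem is hardest.

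The paper avoids compactness entirely, and the comparison is instructive. For integer $\alpha=m$ it extends $u$ evenly in $t$, sets $\tilde u(y,z)=u(y,|z|)$ with $z\in\mathbb{R}^{m+1}$ and $|z|=t$, observes $t^m\,dt\,dS_m=dz$, and thereby turns the weighted half‑ball integral into a genuine Lebesgue integral over a Euclidean ball in $\mathbb{R}^{n+m+1}$, where the classical Poincar\'e inequality applies directly. For $\alpha\in(m-1,m)$ it uses \eqref{GGN-2} with weight exponent $m$ (your fallback idea) and then absorbs the lower‑order term $\int t^m|u-c|^2$ by the already‑proved integer‑$m$ Poincar\'e on a slightly larger ball — the absorption is by the integer case, not by compactness. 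If you wish to pursue the compactness route, you would first have to establish the fixed‑half‑ball compact embedding for all $\alpha>0$, and the cleanest proof of \emph{that} again goes through the same dimension‑lifting trick; the paper's direct argument is therefore shorter and strictly more elementary.
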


\begin{remark}\label{even-1}
For $u\in \mathcal{D}^{1,2}_{\alpha,loc}(\mathbb{R}^{n+1}_+)$, similarly to the proof of  Lemma \ref{density}, we can show that: for any $\varepsilon>0$ and compact subset $K\subset\mathbb{R}^{n+1}$, there is $v\in C^\infty_0(\mathbb{R}^{n+1})$ which is even with respect to $t$, such that
	$$\int_{K\cap\mathbb{R}^{n+1}_+ }t^\alpha |\nabla v-\nabla u|^2dydt<\varepsilon.$$
\end{remark}
\begin{proof} According to Remark \ref{even-1}, we only need to
prove the  inequality \eqref{eq:poin-eq}  for $r=1$, and $u\in C^\infty_0(\mathbb{R}^{n+1})$ which is even with respect to $t$. The general case follows by scaling and approximation. Now we only need to show that there exists a constant $c$ such that
\begin{align}\label{pe-1}
\int_{B^+_1(X)}t^\alpha |u-c|^2dydt\leq C\int_{B^+_3(X)}t^\alpha |\nabla u|^2dydt.
\end{align}
In fact, we have
\begin{align*}&\int_{B^+_1(X)}t^\alpha|u
-u_{B^+_1(X),\alpha}|^2dydt\\
\leq &2\int_{B^+_1(X)}t^\alpha|u-c|^2dydt+2\int_{B^+_1(X)}t^\alpha |c-u_{B^+_1(X),\alpha}|^2dydt\\
\leq& 4\int_{B^+_1(X)}t^\alpha |u-c|^2dydt,
\end{align*}
Combining the above and \eqref{pe-1}, we can obtain \eqref{eq:poin-eq}.

Next, we prove \eqref{pe-1}. We consider $t_c$ in two cases for $X=(y_c,t_c)\in\overline{\mathbb{R}^{n+1}_+}$.
	
If $t_c\geq\frac{5}{4}$,  then $B^+_1(X)=B_1(X)$ and $dist(B_1(X), \partial\mathbb{R}^{n+1}_+)\geq\frac14$.
Inequality \eqref{eq:poin-eq} just becomes the classical Poincar\'{e} inequality on $\mathbb{R}^{n+1}$.  We can choose $c=\frac{1}{w_{n+1}}\int_{B_1(X)}u(y,t)dydt$, then
\begin{align*}
\int_{B_1(X)}t^\alpha |u-c|^2dydt\leq& (t_c+1)^\alpha \int_{B_1(X)}|u-c|^2dydt\\
\leq&C(t_c+1)^\alpha \int_{B_1(X)}|\nabla u|^2dydt\\
\leq&C\big(\frac{t_c+1}{t_c-1}\big)^\alpha\int_{B_1(X)}t^\alpha|\nabla u|^2dydt\\
\leq&9^\alpha C\int_{B_1(X)}t^\alpha|\nabla u|^2dydt.
\end{align*}

If $t_c\in [0,\frac{5}{4})$, without loss of generality, we assume that $y_c=0$, then
\begin{align}\label{pe-2}
B^+_{1}(X)\subset B^+_{t_c+1}(0)\subset B^+_{\sqrt{t_c^2+(t_c+1)^2}}(X)\subset B^+_3(X).
\end{align}

\noindent $1)$ Suppose that $\alpha=m$ is a positive integer. Let $\tilde{u}(y,z)=u(y,t)$ with $y\in\mathbb{R}^n,\ z\in \mathbb{R}^{m+1}$ and $|z|=t\geq 0$. We consider $u(y,t)$ on
$B^+_{t_c+1}(0)$, that is,  we  consider $\tilde{u}(y,z)$ on $B^{n+m+1}_{t_c+1}(0)\subset\mathbb{R}^{n+m+1}$.
Under polar coordinates, $dz=t^mdtdS_m$, then it yields
$$\int_{B^{n+m+1}_{t_c+1}(0)}(|\nabla \tilde{u}|^2+|\tilde{u}|^2) dydz
=(m+1)\omega_{m+1}\int_{B^+_{t_c+1}(0)}t^{m}(|\nabla u|^2+|u|^2) dydt<\infty.$$
By classical Poincar\'e  inequality on $\mathbb{R}^{n+m+1}$,
\begin{align}\label{pe-3}
\int_{B^{n+m+1}_{t_c+1}(0)}|\tilde u-c|^2dydz\leq C\int_{B^{n+m+1}_{t_c+1}(0)}|\nabla \tilde u|^2dydz,
\end{align}
where $c=\frac{1}{w_{n+m+1}(t_c+1)^{n+m+1}}\int_{B^{n+m+1}_{t_c+1}(0)}\tilde udydz$. \eqref{pe-3} is equivalent to
\begin{align}\label{pe-4}
\int_{B^+_{t_c+1}(0)}t^m|u-c|^2dydt\leq C\int_{B^+_{t_c+1}(0)}t^{m}|\nabla u|^2dydt.
\end{align}
Combining \eqref{pe-4} and \eqref{pe-2},  we arrive at \eqref{pe-1}.

\noindent $2)$ Suppose that $\alpha\in (m-1,m)$ for some positive integer $m$.  We choose cut-off function $\eta\in C^\infty_0(B_\frac{5}{4}(X))$ with $\eta|_{B_1(X)}=1,\ 0\leq \eta\leq 1,\ |\nabla \eta|\leq C$. Using the fact $\frac{2(n+\alpha+1)}{n+m-1}>2$,
 H\"older inequality and weighted Sobolev inequality \eqref{GGN-2}, we have
\begin{align}\label{pe-5}
\int_{B^+_1(X)}t^\alpha|u-c|^2dydt\leq& C\big(\int_{B^+_\frac{5}{4}(X)}t^{\alpha}|\eta(u-c)|^{\frac{2(n+\alpha+1)}{n+m-1}}dydt\big)^\frac{n+m-1}{n+\alpha+1}\nonumber\\
\leq & C \int_{B^+_\frac{5}{4}(X)}t^{m}|\nabla (\eta(u-c))|^2dydt\nonumber\\
\leq& C\int_{B^+_\frac{5}{4}(X)}t^m |\nabla u|^2dydt+C\int_{B^+_\frac{5}{4}(X)}t^m |u-c|^2dydt.
\end{align}
Noting that
	\begin{align*}
		B^+_{\frac{5}{4}}(X)\subset B^+_{t_c+\frac{5}{4}}(0)\subset B^+_{\sqrt{t_c^2+(t_c+\frac{5}{4})^2}}(X)\subset B^+_3(X),
	\end{align*}
similar to the case $1)$, there exists a constant $C$ such that
	$$\int_{B^+_\frac{5}{4}(X)}t^m |u-c|^2dydt\leq C\int_{B^+_3(X)}t^m |\nabla u|^2dydt.$$
Back to \eqref{pe-5} and using $\alpha<m$, we have
\begin{eqnarray*}
\int_{B^+_1(X)}t^\alpha|u-c|^2dydt\leq C\int_{B^+_3(X)}t^m |\nabla u|^2dydt\leq C\int_{B^+_3(X)}t^\alpha |\nabla u|^2dydt.
\end{eqnarray*}
% line 647
\end{proof}

Now let us deal with general $\alpha$ and $\beta$ satisfying \eqref{eq:constr-a-b}.
\begin{lemma}\label{thm:poincare-weight}
Suppose $\alpha$ and $\beta$ satisfy \eqref{eq:constr-a-b}, then there exists $C$ depends on $n,\alpha,\beta$ such that
\begin{align}\label{eq:poin-weight}
\frac{\int_{B^+_r(X)}t^\beta |u-u_{B^+_r(X),\beta}|dydt}{\int_{B^+_r(X)}t^\beta dydt}\leq Cr\big(\frac{\int_{B^+_{4r}(X)}t^\alpha |\nabla u|^2dydt}{\int_{B^+_{4 r}(X)}t^\alpha dydt}\big)^{\frac12}
\end{align}
holds for any $r>0$, $X\in\overline{\mathbb{R}^{n+1}_+}$ and $u\in \mathcal{D}^{1,2}_{\alpha,loc}(\mathbb{R}^{n+1}_+)$, where \[u_{B^+_r(X),\beta}=\frac{\int_{B^+_r(X)}t^\beta u(y,t)dydt}{\int_{B^+_r(X)}t^\beta dydt}.\]
\end{lemma}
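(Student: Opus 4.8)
The plan is to reduce to $r=1$ by the scaling invariance of \eqref{eq:poin-weight} and, after a translation in $y$, to assume $X=(0,t_c)$ with $t_c\ge 0$; note first that a function $u\in\mathcal D^{1,2}_{\alpha,loc}(\RpN)$ lies in $L^1_{\beta,loc}(\overline{\RpN})$ (apply \eqref{GGN-2} to $\eta u$ for a cutoff $\eta$), so $u_{B^+_r(X),\beta}$ is well defined. For any constant $c$ one has
\[
\int_{B_1^+(X)}t^\beta\bigl|u-u_{B_1^+(X),\beta}\bigr|\,dydt\le 2\int_{B_1^+(X)}t^\beta|u-c|\,dydt,
\]
because $\bigl|c-u_{B_1^+(X),\beta}\bigr|\int_{B_1^+(X)}t^\beta=\bigl|\int_{B_1^+(X)}t^\beta(c-u)\bigr|\le\int_{B_1^+(X)}t^\beta|u-c|$; so it suffices to bound $\int_{B_1^+(X)}t^\beta|u-c|$ for a well-chosen $c$. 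I would then split according to whether $t_c\ge 16$ or $t_c\le 16$.

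If $t_c\ge 16$, then $B_4^+(X)=B_4(X)$ and $B_1(X)$ lie at distance $\ge 12$ from $\partial\RpN$, so on them both $t^\alpha$ and $t^\beta$ are comparable, with constants depending only on $\alpha,\beta$, to the fixed numbers $t_c^\alpha,t_c^\beta$; in particular $u\in W^{1,2}(B_1(X))$. Choosing $c$ to be the unweighted mean of $u$ over $B_1(X)$, the classical Poincar\'e inequality (or Lemma~\ref{lem:poincare-equal}) gives $\int_{B_1(X)}|u-c|\le C\bigl(\int_{B_1(X)}|\nabla u|^2\bigr)^{1/2}$, and replacing the two weights by their constant comparables on the left and on $B_4(X)\supset B_1(X)$ on the right converts this into \eqref{eq:poin-weight}.

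If $t_c\le 16$, where the weights degenerate at $t=0$, I would use the weighted Sobolev inequality. Fix $\eta\in C_0^\infty(\overline{\RpN})$ with $\eta\equiv 1$ on $B_1^+(X)$, $\operatorname{supp}\eta\subset B_{4/3}^+(X)$, $0\le\eta\le 1$, $|\nabla\eta|\le C$, and put $c=u_{B_{4/3}^+(X),\alpha}$. Since $p^*=\frac{2(n+\beta+1)}{n+\alpha-1}>2$, H\"older's inequality gives
\[
\int_{B_1^+(X)}t^\beta|u-c|\,dydt\le\Bigl(\int_{\RpN}t^\beta|\eta(u-c)|^{p^*}dydt\Bigr)^{1/p^*}\Bigl(\int_{B_1^+(X)}t^\beta dydt\Bigr)^{1-1/p^*},
\]
and \eqref{GGN-2} applied to $\eta(u-c)\in\mathcal D^{1,2}_\alpha(\RpN)$ (compactly supported, hence admissible by Lemma~\ref{density}) together with $|\nabla\eta|\le C$ bounds $\bigl(\int_{\RpN}t^\beta|\eta(u-c)|^{p^*}\bigr)^{2/p^*}$ by $C\bigl(\int_{B_{4/3}^+(X)}t^\alpha|\nabla u|^2+\int_{B_{4/3}^+(X)}t^\alpha|u-c|^2\bigr)$. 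The choice of $c$ then lets Lemma~\ref{lem:poincare-equal} with $r=\tfrac43$ (for which $3r=4$) dominate the second term by $C\int_{B_4^+(X)}t^\alpha|\nabla u|^2$ — this is exactly why the statement carries $4r$ rather than $3r$. Combining and dividing by $\int_{B_1^+(X)}t^\beta$,
\[
\frac{\int_{B_1^+(X)}t^\beta|u-u_{B_1^+(X),\beta}|\,dydt}{\int_{B_1^+(X)}t^\beta dydt}\le C\,\frac{\bigl(\int_{B_4^+(X)}t^\alpha|\nabla u|^2dydt\bigr)^{1/2}}{\bigl(\int_{B_1^+(X)}t^\beta dydt\bigr)^{1/p^*}}.
\]
To recast this in the normalized form \eqref{eq:poin-weight} it remains to note that, uniformly for $t_c\in[0,16]$, $\int_{B_4^+(X)}t^\alpha\le C_0$ (immediate, since $0<t\le 20$ there and $\alpha>0$) and $\int_{B_1^+(X)}t^\beta\ge c_0>0$ (the map $t_c\mapsto\int_{B_1^+((0,t_c))}t^\beta dydt$ is continuous by dominated convergence, using $\beta>-1$, and strictly positive on $[0,16]$); hence $\bigl(\int_{B_4^+(X)}t^\alpha\bigr)^{1/2}\le C_0^{1/2}\le (C_0^{1/2}/c_0^{1/p^*})\bigl(\int_{B_1^+(X)}t^\beta\bigr)^{1/p^*}$, which is all that is needed.

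The main obstacle is precisely the mismatch of the two weights: one cannot run the Sobolev argument uniformly in $t_c$, since in the interior region ($t_c$ large) the weights are comparable to $t_c^\alpha$ and $t_c^\beta$ and that route would require $t_c^{\alpha/2}\lesssim t_c^{\beta/p^*}$, i.e.\ $\alpha\le\frac{n-1}{n+1}\beta$, which fails in general under \eqref{eq:constr-a-b}. So the split into the degenerate boundary region (handled by \eqref{GGN-2} and Lemma~\ref{lem:poincare-equal}) and the non-degenerate interior region (handled by ordinary Poincar\'e) is essential, and one must leave enough slack in the cutoff that Lemma~\ref{lem:poincare-equal}'s enlargement factor $3r$ stays inside $B_{4r}^+(X)$.
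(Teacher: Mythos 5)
Your proof is correct and follows essentially the same route as the paper's: reduce to $r=1$, replace the $\beta$-weighted mean by a free constant $c$, split into a non-degenerate case where the ordinary Poincar\'e inequality applies after comparing the weights to constants, and a near-boundary case where you apply H\"older, the weighted Sobolev inequality \eqref{GGN-2} to $\eta(u-c)$, and Lemma~\ref{lem:poincare-equal} with $c$ the $\alpha$-weighted average over the cutoff ball. The only cosmetic differences are the thresholds — you take $t_c=16$ and a cutoff ball of radius $4/3$ (so $3\cdot\tfrac43=4$), whereas the paper takes $t_c=5$ and radius $5/4$ (so $3\cdot\tfrac54=\tfrac{15}{4}<4$) — both choices work for the same reason.
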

\begin{proof}
It also suffices to prove the above inequality for $r=1$.
For the same reason in Lemma \ref{lem:poincare-equal}, if we can find a constant $c$ such that
\begin{align*}
\frac{\int_{B^+_1(X)}t^\beta |u-c|dydt}{\int_{B^+_1(X)}t^\beta dydt}\leq C\big(\frac{\int_{B^+_{4}(X)}t^\alpha |\nabla u|^2dydt}{\int_{B^+_{4}(X)}t^\alpha dydt}\big)^{\frac12},
\end{align*}
then \eqref{eq:poin-weight} is verified.

Let $X=(y_c,t_c)\in\overline{\mathbb{R}^{n+1}_+}$.
	
$1)$ For $t_c>5$, $B^+_4(X)=B_4(X)$ and $dist(B_4(X),\partial \mathbb{R}^{n+1}_+)>1$, then
the inequality can be reduced to the Poincar\'e inequality in Euclidean space without weight.
	
$2)$ For $0\leq t_c\leq 5$, there is a constant $C$ such that
$$(\int_{B^+_{4 }(X)}t^\alpha dydt)^{\frac12}\leq C\int_{B^+_1(X)}t^\beta dydt,$$	
so it suffices to prove that
\begin{align}\label{eq:poincare-sim-2}
\int_{B^+_1(X)}t^\beta |u-c|dydt\leq C\big(\int_{B^+_4(X)}t^\alpha |\nabla u|^2dydt\big)^{\frac12}
\end{align} holds for some  constant $c$.

Suppose $\eta\in C_0^\infty(B_\frac{5}{4}(X))$ is a cut-off function with $\eta|_{B_1(X)}=1$, $0\leq\eta\leq1$ and $|\nabla \eta|\leq C$. By H\"older's inequality and \eqref{GGN-2}, we have
\begin{align}\label{5/4}
\int_{B^+_1(X)}t^\beta|u-c|dydt
\leq&C \big(\int_{B^+_{\frac{5}{4}}(X)}t^\beta|\eta(u-c)|^{p^*}dydt\big)^{\frac{1}{p^*}}\nonumber\\
\leq &C \big(\int_{B^+_{\frac{5}{4}}(X)}t^\alpha|\nabla u|^2dydt\big)^{\frac12}+C\big(\int_{B^+_{\frac{5}{4}}(X)}t^\alpha|u-c|^2dydt\big)^{\frac12}.
\end{align}
Taking $c=u_{B^+_{\frac{5}{4}}(X),\alpha}$ and using Lemma \ref{lem:poincare-equal}, we have
\begin{align*}
\int_{B^+_{\frac{5}{4}}(X)}t^\alpha |u-c|^2dydt\leq C \int_{B^+_{\frac{15}{4}}(X)}t^\alpha|\nabla u|^2dydt.
\end{align*}
Back to \eqref{5/4}, we obtain \eqref{eq:poincare-sim-2}. The lemma is proved.
\end{proof}

\medskip
Suppose $d\mu$ is a doubling measure on some domain $\Omega\subset \mathbb{R}^{n+1}$, that is $\mu(2B)\leq C(\mu)\mu(B)$ for any $2B\subset \Omega$. A function $w\in L_{loc}^1(\Omega,d\mu)$ is said to be in $BMO(\Omega,\mu)$ if there is a constant $C>0$ such that for every ball $B$ satisfying $2B\subset\Omega $, it holds that
\[\frac{1}{\mu(B)}\int_{B}|w-w_B|d\mu\leq C.\]
Here $w_B=\frac{1}{\mu(B)}\int_{B}wd\mu$ is the average of $w$ on $B$.
%one can also use cubes instead of balls to define $bmo$.the two definitions are equivalent.
The least $C$ such that the above inequality holds is called the $BMO(\Omega,\mu)-$norm of $w$. Similar to the classical result of BMO space on Euclidean space, we have the following result from  Corollary 19.10 in \cite{JTO1993}.

%{\color{blue}
\begin{lemma}[John-Nirenberg lemma for doubling measures] \label{lem:JN}Suppose $\mu$ is a doubling measure. A function $w$ is in $BMO(\Omega,d\mu)$ if and only if there exist constant $c$ and $C$ such that
	\[\frac{1}{\mu({B})}\int_Be^{c|w-w_B|}d\mu\leq C\]
	for every ball $B$ such that $2B\subset \Omega$.
\end{lemma}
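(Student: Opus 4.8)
The plan is to prove the two implications separately; the forward direction is elementary and the converse is the John--Nirenberg inequality, which one establishes by a Calder\'on--Zygmund stopping-time argument adapted to the doubling measure $\mu$. Since this is precisely Corollary~19.10 of \cite{JTO1993}, in the paper we may simply cite it, but the argument goes as follows.

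For the ``if'' direction, suppose the exponential bound holds with constants $c,C$. Using the elementary inequality $|s|\le c^{-1}e^{c|s|}$, valid for all $s\in\mathbb{R}$, we get for every ball $B$ with $2B\subset\Omega$ that
\[
\frac{1}{\mu(B)}\int_B|w-w_B|\,d\mu\le\frac{1}{c\,\mu(B)}\int_B e^{c|w-w_B|}\,d\mu\le\frac{C}{c},
\]
so $w\in BMO(\Omega,\mu)$ with norm at most $C/c$.

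For the ``only if'' direction, normalize so that $\|w\|_{BMO(\Omega,\mu)}=1$, fix a ball $B$ with $2B\subset\Omega$, and aim for the distributional estimate $\mu(\{x\in B:\ |w-w_B|>\lambda\})\le C_1 e^{-c_1\lambda}\,\mu(B)$ for $\lambda>0$, with $C_1,c_1$ depending only on $n$ and the doubling constant $C(\mu)$. To obtain it, fix a threshold $\lambda_0$ large compared with the overlap constant of a Besicovitch/Vitali covering lemma on $\mathbb{R}^{n+1}$, and run a stopping-time decomposition inside $B$: select a countable family $\{B_j\}$ of balls with bounded overlap such that the $\mu$-average of $|w-w_B|$ over each $B_j$ lies between $\lambda_0$ and $C(\mu)\lambda_0$ (the upper bound coming from the doubling property applied to the ``parent'' ball on which the average is still $\le\lambda_0$), the union of the $B_j$ contains $\mu$-a.e.\ point of $\{x\in B:\ |w-w_B|>\lambda_0\}$, and $\sum_j\mu(B_j)\le\theta\,\mu(B)$ with $\theta=\theta(n,C(\mu))<1$ once $\lambda_0$ is large. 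A chaining estimate then gives $|w_{B_j}-w_B|\le M$ for a fixed $M=M(C(\mu))$, so iterating the construction inside each $B_j$ yields $\mu(\{x\in B:\ |w-w_B|>kM\})\le\theta^{k}\mu(B)$ for every integer $k\ge1$; interpolating in $\lambda$ gives the claimed exponential decay with $c_1=-\log\theta/M$.

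Finally, by the layer-cake formula and Fubini,
\[
\frac{1}{\mu(B)}\int_B e^{c|w-w_B|}\,d\mu=1+c\int_0^\infty e^{c\lambda}\,\frac{\mu(\{x\in B:\ |w-w_B|>\lambda\})}{\mu(B)}\,d\lambda\le 1+cC_1\int_0^\infty e^{(c-c_1)\lambda}\,d\lambda,
\]
which is finite as soon as $c<c_1$, yielding the exponential integrability with explicit $c$ and $C$. The main obstacle is the stopping-time decomposition: in a general doubling setting there is no dyadic grid, so the dyadic Calder\'on--Zygmund cubes must be replaced by a bounded-overlap family of Euclidean balls produced by a covering lemma, and the doubling inequality is invoked repeatedly, both to control the overlap sum $\sum_j\mu(B_j)$ and to bound $|w_{B_j}-w_B|$ along chains of balls; the remaining steps are routine.
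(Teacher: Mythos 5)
The paper does not actually prove Lemma~\ref{lem:JN}; it is quoted as Corollary~19.10 of \cite{JTO1993}, and you correctly observe that a citation suffices. Your sketch is a faithful outline of the standard John--Nirenberg argument adapted to a doubling measure: the forward implication via $|s|\le c^{-1}e^{c|s|}$ is complete, and the converse is the usual good-$\lambda$ iteration with a Besicovitch-type covering replacing the dyadic grid, followed by the layer-cake integration. One small simplification worth noting: the bound $|w_{B_j}-w_B|\le M$ does not require a chaining argument; it follows in a single step from the stopping condition $\mu(B_j)^{-1}\int_{B_j}|w-w_B|\,d\mu\le C(\mu)\lambda_0$ by averaging, so ``chaining estimate'' overstates what is needed there. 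The genuinely delicate step, which you correctly flag, is the stopping-time covering itself: one must produce the bounded-overlap family $\{B_j\}$ with the two-sided average bounds while ensuring that each $B_j$ and its double remain inside a dilate of $B$ still contained in $\Omega$ (so the hypothesis is applicable at each stage of the recursion), and one needs a Lebesgue differentiation theorem for the doubling measure $\mu$ to see that the stopping balls cover $\mu$-a.e.\ point of the level set. Those details are precisely what \cite{JTO1993} supplies, so citing it, as the paper does, is the economical choice.
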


One consequence of this lemma is that
\begin{align}\label{JN-1}
\int_B e^{cw}d\mu\int_{{B}}e^{-cw}d\mu\leq C[\mu(B)]^2.
\end{align}
\begin{proposition}\label{thm:inf-moser}
	Suppose that  \eqref{eq:constr-a-b} holds and $0\leq u\in\mathcal{D}_{\alpha,loc}^{1,2}(\mathbb{R}^{n+1}_+)$ satisfies
	\begin{align}\label{g}
	\int_{\RpN} t^\alpha \nabla u \cdot \nabla \phi dydt\geq \int_{\RpN}t^\beta g\phi dydt
	\end{align}
	for some $g\in L^\infty_{loc}(\overline{\mathbb{R}^{n+1}_+})$ and any $0\leq \phi\in {\cal D}^{1,2}_\alpha(\mathbb{R}^{n+1}_+)$ with compact support. Then there exist $C>0$ depending on $n,\alpha,\beta$ such that for any $r>0$ and $y\in\mathbb{R}^n$,
	\begin{align}\label{harnack}
	C\big(\inf_{B_r^+(y,0)}u+r^{{\beta+2-\alpha}}|g|_{L^\infty(B^+_r(y,0))}\big)\geq \frac{1}{r^{n+1+\beta}}\int_{B_{2r}^+(y,0)}t^\beta u dydt.
	\end{align}
\end{proposition}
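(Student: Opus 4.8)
The plan is to prove \eqref{harnack} by the classical Moser iteration for the infimum of a nonnegative supersolution, in which the Euclidean Sobolev and Poincar\'e inequalities are replaced by the weighted inequality \eqref{GGN-2} and by Lemmas \ref{lem:poincare-equal}--\ref{thm:poincare-weight}, and the passage between negative and positive exponents is effected through the John--Nirenberg lemma for doubling measures (Lemma \ref{lem:JN}). By the dilation $(y,t)=(ry',rt')$ one reduces to $r=1$, $y=0$: $v(y',t')=u(ry',rt')$ satisfies \eqref{g} with $g$ replaced by $r^{\beta+2-\alpha}g(r\,\cdot\,)$, and $\inf_{B_1^+}v=\inf_{B_r^+(y,0)}u$ while $r^{-(n+1+\beta)}\int_{B_{2r}^+(y,0)}t^\beta u\,dydt=\int_{B_2^+}t^\beta v\,dydt$; this is exactly where the weight $r^{\beta+2-\alpha}$ in \eqref{harnack} originates. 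Thus it suffices to bound $\int_{B_2^+}t^\beta u\,dydt$ by $C\big(\inf_{B_1^+}u+|g|_{L^\infty}\big)$ on the normalized configuration, the precise nesting of radii being adjusted, if necessary, by a standard covering argument.

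Set $\bar u=u+\kappa$ with $\kappa$ equal to the relevant $L^\infty$ bound on $g$ (if $\kappa=0$, put $\kappa=\varepsilon$ and let $\varepsilon\to0$ at the end). Since $|g|\le\kappa\le\bar u$, \eqref{g} becomes
\[
\int_{\RpN}t^\alpha\nabla\bar u\cdot\nabla\phi\,dydt\ \ge\ -\int_{\RpN}t^\beta\bar u\,\phi\,dydt
\]
for all $0\le\phi\in{\cal D}^{1,2}_\alpha(\RpN)$ with compact support in the relevant ball; that is, $\bar u>0$ is a weak supersolution of $-\mathrm{div}(t^\alpha\nabla\bar u)+t^\beta\bar u\ge0$, and because $\beta>-1$ this locally integrable zeroth order term (with the favourable sign) is harmless in every manipulation below. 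Since $\inf_{B_1^+}\bar u=\inf_{B_1^+}u+\kappa$, it now suffices to show $\int_{B_2^+}t^\beta\bar u\,dydt\le C\inf_{B_1^+}\bar u$.

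The core of the proof is three estimates for $\bar u$, all obtained by testing the supersolution inequality with $\phi=\eta^2\bar u^{2\theta-1}$ for a cut-off $\eta$ and a suitable exponent $\theta$. First, for $\theta<0$, the resulting Caccioppoli estimate combined with \eqref{GGN-2} and iterated over half-balls between $B_{3/2}^+$ and $B_1^+$ gives, for some small $p_0>0$, a local $L^\infty$ bound for $\bar u^{-1}$ of the form $\inf_{B_1^+}\bar u\ge c\big(\int_{B_{3/2}^+}t^\beta\bar u^{-p_0}\,dydt\big)^{-1/p_0}$. Second, taking $\theta=0$, i.e. $\phi=\eta^2\bar u^{-1}$, yields $\int t^\alpha\eta^2|\nabla\log\bar u|^2\le C\int t^\alpha|\nabla\eta|^2+C\int t^\beta\eta^2$; because $\alpha<\beta+2$ in \eqref{eq:constr-a-b}, this rescales to $\int_{B_s^+(X)}t^\alpha|\nabla\log\bar u|^2\,dydt\le C\,s^{n-1+\alpha}$ for all small half-balls, and feeding it into the weighted Poincar\'e inequality of Lemma \ref{thm:poincare-weight} shows that $\log\bar u\in BMO\big(B_{7/4}^+,\,t^\beta dydt\big)$ with a universal norm. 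Since $t^\beta dydt$ is a doubling measure on these (possibly boundary-centred) half-balls — again because $\beta>-1$ — Lemma \ref{lem:JN} and \eqref{JN-1} give, after shrinking $p_0$,
\[
\Big(\int_{B_{3/2}^+}t^\beta\bar u^{\,p_0}\,dydt\Big)\Big(\int_{B_{3/2}^+}t^\beta\bar u^{-p_0}\,dydt\Big)\ \le\ C .
\]
Third, for $0<\theta<\tfrac12$ the same iteration, run from $B_2^+$ down to $B_{3/2}^+$, raises the exponent from $p_0$ up to $1$ (admissible since $p^*>2$), giving $\int_{B_{3/2}^+}t^\beta\bar u\,dydt\le C\big(\int_{B_2^+}t^\beta\bar u^{\,p_0}\,dydt\big)^{1/p_0}$. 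Chaining the third, second and first estimates yields $\int_{B_2^+}t^\beta\bar u\le C\inf_{B_1^+}\bar u$, and undoing the normalization and the substitution $\bar u=u+\kappa$ gives \eqref{harnack}.

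As in the proofs of Propositions \ref{thm:regularity} and \ref{local-infty}, in the first and third estimates the two weights must be handled according to the sign of $\alpha-\beta$: for $\alpha\ge\beta$ one uses \eqref{GGN-2} directly (on $B_2^+$ one has $t^\alpha\le C t^\beta$), while for $0<\alpha<\beta$ one runs the iteration with the weight $t^\alpha$ on both sides, using \eqref{GGN-2} with $\beta$ replaced by $\alpha$ (exponent $p_\alpha=\tfrac{2(n+\alpha+1)}{n+\alpha-1}>2$) and converting the resulting $t^\alpha$-averages into $t^\beta$-averages at the end. I expect the second estimate to be the main obstacle: one must verify that $t^\beta dydt$ is doubling up to the degenerate boundary $\{t=0\}$ so that Lemma \ref{lem:JN} applies, and that the logarithmic energy estimate has the correct scaling — which is precisely the role of the constraint $\alpha<\beta+2$ in \eqref{eq:constr-a-b} — so that Lemma \ref{thm:poincare-weight} can be invoked to place $\log\bar u$ in the weighted $BMO$ space.
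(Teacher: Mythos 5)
Your proposal reproduces the paper's proof essentially verbatim: rescaling to $r=1$, shifting to $\bar u=u+\kappa$, running Moser iteration for negative and small positive exponents (handling $\alpha\ge\beta$ and $0<\alpha<\beta$ via the two different weights, as in Propositions \ref{thm:regularity} and \ref{local-infty}), and bridging the gap at exponent $0$ by showing $\log\bar u$ lies in weighted $BMO$ through Lemma \ref{thm:poincare-weight} and then applying Lemma \ref{lem:JN}; the only small detail you flag but do not carry out — verifying that $t^\beta\,dydt$ is doubling up to the boundary — is resolved in the paper by extending $\log\bar u$ evenly in $t$ and working with the genuinely doubling weight $|t|^\beta$ on full balls in $\mathbb{R}^{n+1}$.
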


\begin{proof} We just prove the result for $r=1$ and $y=0$, the general case follows by rescaling and translation. Let $k=|g|_{L^\infty(B_r)}+\varepsilon$ for some $\varepsilon>0$. Define $\bar u=u+k$.
Plugging $\phi=\eta^2\bar u^{2\theta+1}$ in \eqref{g}  for some $\theta<-\frac12$ and cut-off function $\eta$ with $supp\, \eta \subset \overline{B_3^+}$, leads to
\begin{align*}
2\int_{B_3^+}t^\alpha \eta \bar u^{2\theta+1} \nabla \eta\cdot \nabla \bar udydt+(2\theta+1)\int_{B_3^+} t^\alpha \eta^2 \bar u^{2\theta}|\nabla \bar u|^2dydt\geq \int_{\RpN}t^\beta \eta^2\bar u^{2\theta+1}gdydt.
\end{align*}
Choosing a fixed $\theta_0<-\frac{1}{2}$ such that $\theta\leq\theta_0$, we have
\begin{eqnarray}\label{inf-1}
& &\int_{B_3^+}t^\alpha \eta^2 \bar u^{2\theta}|\nabla \bar u|^2dydt\nonumber\\
&\leq &(\frac{2}{2\theta+1})^2\int_{B_3^+} t^\alpha |\nabla \eta|^2 \bar u^{2\theta+2}dydt +(-\frac{2}{2\theta+1})\int_{B_3^+}t^\beta\eta^2 \frac{|g|}{k}\bar u^{2\theta+2}dydt\nonumber\\
&\leq& C(\theta_0)\int_{B_3^+}(t^\alpha |\nabla \eta|^2 \bar u^{2\theta+2}+t^\beta\eta^2 \bar{u}^{2\theta+2})dydt.
\end{eqnarray}
Define
\begin{equation*}
w=
\begin{cases}
\bar u^{\theta+1}, & \theta\neq -1 ,\\
\log \bar u, & \theta=-1 .
\end{cases}
\end{equation*}
For $\theta\leq\theta_0$, it follows from \eqref{inf-1} that
\begin{align}\label{inf-2}
\int_{B_3^+}t^\alpha \eta^2 |\nabla w|^2dydt\leq
\begin{cases}C(1+\theta)^2\int_{B_3^+}(t^\alpha |\nabla \eta|^2 w^2 +t^\beta\eta^2 w^2)dydt,&\theta\neq  -1,\\
C\int_{B_3^+}(t^\alpha |\nabla \eta|^2 +t^\beta \eta^2)dydt,&\theta=-1.
\end{cases}
\end{align}
	
For $\alpha\geq\beta$, if $\theta\neq 1$ and $\theta\leq\theta_0$, by inequality \eqref{GGN-2} and \eqref{inf-2}, we have
\begin{align}\label{inf-3}
\big(\int_{B_3^+}t^\beta (\eta w)^{p^*}dydt\big)^{\frac{2}{p^*}}\leq (C(1+\theta)^2+2)\int_{B_3^+}(t^\beta |\nabla \eta|^2 w^2 +t^\beta\eta^2 w^2)dydt.
\end{align}
For $p\neq 0$, define
	\[ \Phi(p,r)=\big(\int_{B_r^+}t^\beta \bar u^pdydt\big)^{\frac{1}{p}}.\]
Set $0<r_2<r_1\leq 3$ and $\gamma=2(\theta+1)$ for some $\theta\leq\theta_0<-\frac12$ and $\theta\neq -1$. Choose a cut-off function $\eta=1$ in $B_{r_2}^+$ and $\eta=0$ in $B_3^+\backslash B_{r_1}^+$, $|\nabla \eta|\leq \frac{C}{r_1-r_2}$. Then \eqref{inf-3} implies
\begin{align}
\Phi(\gamma,r_1)\leq \big[\frac{C\gamma^2}{(r_1-r_2)^2}\big]^{\frac{1}{|\gamma|}}\Phi(\gamma\frac{p^*}{2},r_2),\quad \gamma<0,\label{inf-4}\\
\Phi(\gamma\frac{p^*}{2},r_2)\leq \big[\frac{C}{(r_1-r_2)^2}\big]^{\frac{1}{\gamma}}\Phi(\gamma,r_1),\quad\gamma>0.
\label{inf-5}
\end{align}
Iterate inequality \eqref{inf-4}. That is, for some $p_0\in(0,1)$, set $r_m=2+2^{-m}$, $\gamma_0=-p_0$  and $\gamma_m=\gamma_{m-1}\frac{p^*}{2}$, $m=1,2,\cdots$. Sending $m\to\infty$, we have for $\beta>-1$,
\begin{equation}\label{mj-1}C\inf_{B_1^+}\bar u\geq  \Phi(-p_0,3),
\end{equation}
where $C>0$ depends on $n,\alpha, p_0$.
	
Iterate  inequality \eqref{inf-5}. That is, set $r_m=+2^{-m}$, $\gamma_0=p_0\in (0,1)$ and $\gamma_m=\gamma_{m-1}\frac{p^*}{2}$, $m=1,2,\cdots$, where we choose $p_0$ to guarantee  $\gamma_m\neq1$ for any $m$. After some finite steps, one gets
\[\Phi(\gamma_{m_0},r_{m_0})\leq C\Phi(\gamma_{m_0-1},r_{m_0-1})\leq C\Phi(p_0,3),\]
where $\gamma_{m_0-1}<1<\gamma_{m_0}$.	 By H\"{o}lder's inequality, we have
\begin{equation}\label{mj-a}
\Phi(1,2)\leq C\Phi(p_0,3).
\end{equation}
	
Next, we want to show for some $p_0$ small enough that
\begin{equation}\label{mj-2}
\Phi(p_0,3)\leq C\Phi(-p_0,3).
\end{equation}
Indeed, for $\theta=-1$, by \eqref{inf-2} we have
\begin{align}\label{mj-3}
\int_{B_{12}^{+}}t^\alpha \eta^2 |\nabla  w|^2dydt\leq C\int_{B_{12}^{+}}(t^\alpha |\nabla \eta|^2 +t^\beta \eta^2)dydt,
\end{align}	
where $\eta$ is a cut-off function with $supp\,\eta\subset B_{12}$.
Let $B^+_{2R}(X)\subset B^+_{12}$ for $X\in\overline{\mathbb{R}^{n+1}_+},\ R>0$,
and one can choose a cut-off function $\eta$ such that $\eta=1$ on $B_R(X)$, $supp \eta \subset B_{2R}(X)$ and $|\nabla\eta|\leq 2/R$. Then inequality \eqref{mj-3} implies
\begin{align}\label{2R}
\int_{B^+_{R}(X)}t^\alpha |\nabla w|^2dydt\leq CR^{-2} \int_{B^+_{2R}{(X)}}t^\alpha  dydt+C\int_{B^+_{2R}{(X)}}t^\beta  dydt.
\end{align}
For any $X=(y_c,t_c)\in\overline{\mathbb{R}^{n+1}_+}$ and $ r>0$ satisfying $B_{2r}(X)\subset B_3$, we have that $B_{8r}(X)\subset B_{12}$. By \eqref{2R}, Lemma \ref{thm:poincare-weight} and $\beta+2-\alpha>0$, we get
\begin{align*}
\frac{\int_{B^+_{r}(X)}t^\beta |w-w_{B^+_{r}(X),\beta}|dydt}{\int_{B^+_{r}(X)}t^\beta dydt}\leq& Cr\Big(\frac{\int_{B^+_{4r}(X)}t^\alpha |\nabla w|^2dydt}{\int_{B^+_{4r}(X)}t^\alpha dydt}\Big)^{\frac 12}\\
\leq&Cr\Big(\frac{r^{-2}\int_{B^+_{8r}{(X)}}t^\alpha  dydt+C\int_{B^+_{8r}{(X)}}t^\beta  dydt}{\int_{B^+_{4r}(X)}t^\alpha dydt}\Big)^{\frac 12}\\
\leq& C+C(t_c+r)^{\frac{\beta+2-\alpha}{2}}\leq C.
\end{align*}

Now, we extend $w$ evenly to the whole  space $\mathbb{R}^{n+1}$. For any $X\in\overline{\mathbb{R}^{n+1}_+}$ and $ r>0$ satisfying  $B_{2r}(X)\subset B_3$, we have
\begin{align*}
\frac{\int_{B_{r}(X)}|t|^\beta |w-w_{B_{r}(X),\beta}|dydt}{\int_{B_{r}(X)}|t|^\beta dydt}
\leq& \frac{2\int_{B_{r}(X)}|t|^\beta |w-w_{B^+_{r}(X),\beta}|dydt}{\int_{B_{r}(X)}|t|^\beta dydt}\\
\leq&\frac{4\int_{B^+_{r}(X)}t^\beta |w-w_{B^+_{r}(X),\beta}|dydt}{\int_{B^+_{r}(X)}t^\beta dydt}\leq C,
\end{align*}
which shows $w\in BMO(B_3, |t|^\beta dydt)$. Since $|t|^\beta$ is a weight with doubling property, that is for $B_{2r}(X)\subset B_3$,
\[\int_{B_{2r}(X)}|t|^{\beta}dydt\leq C(\beta)\int_{B_r(X)}|t|^{\beta}dydt.\]
Using  \eqref{JN-1}, there exists some $p_0>0$ small such that
\[\int_{B_3}e^{p_0w}|t|^\beta dydt\int_{B_3}e^{-p_0 w}|t|^\beta dydt\leq C.\]
Notice that $w=\log \bar u$, and $\bar{u}$ is even with respect to $t$, then from the above inequality, we obtain  \eqref{mj-2}.
	%Then take such $p_0$ in the previous iteration steps,
	Combining \eqref{mj-1}, \eqref{mj-a} with \eqref{mj-2} and  letting $\varepsilon\to 0$,  we get our conclusion.
	
	For $0<\alpha<\beta$, similar to the proof in Proposition \ref{local-infty}, we replace $\beta,\, p^*$ by $\alpha,\, p_\alpha$ and get the conclusion.
\end{proof}

\begin{corollary}
	Suppose $0\leq u\in \mathcal{D}^{1,2}_{\alpha}(\RpN)$ is a weak solution to equation \eqref{genequ-1} and $\alpha, \beta$ satisfy condition \eqref{eq:constr-a-b}, then $u$ is H\"{o}lder continuous up to  the boundary $\partial \mathbb{R}^{n+1}_+$.
\end{corollary}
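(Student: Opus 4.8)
The plan is to promote the interior regularity already noted after Proposition \ref{local-infty} — the local boundedness there, with standard elliptic theory, gives $u\in C^2(\RpN)$ — to H\"older continuity up to $\partial\RpN$, by first establishing a quantitative oscillation--decay estimate at the boundary from the weak Harnack inequality of Proposition \ref{thm:inf-moser}, and then patching the boundary and interior estimates together.

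First I would fix a compact set $\mathcal K\subset\overline{\RpN}$, the point being to bound $|u(X_1)-u(X_2)|$ for $X_1,X_2\in\mathcal K$, and a boundary point which after a horizontal translation we take to be $0$. By Proposition \ref{local-infty}, $u\in L^\infty_{loc}(\overline{\RpN})$, so $g:=u^{p^*-1}\in L^\infty_{loc}(\overline{\RpN})$ and $-\dv(t^\alpha\nabla u)=t^\beta g$ weakly. For $s>0$ write $M_s=\sup_{B^+_s(0)}u$, $m_s=\inf_{B^+_s(0)}u$ and $\omega(s)=M_s-m_s$. I would then pick a fixed $\kappa>1$ large enough (e.g.\ $\kappa=12$) that, at every scale $r$, all balls used in the proof of Proposition \ref{thm:inf-moser} lie in $B^+_{\kappa r}(0)$, and on $B^+_{\kappa r}(0)$ consider $v_1:=M_{\kappa r}-u\ge 0$ and $v_2:=u-m_{\kappa r}\ge 0$. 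Since $-\dv(t^\alpha\nabla v_1)=-t^\beta g$ and $-\dv(t^\alpha\nabla v_2)=t^\beta g\ge 0$ weakly on $B^+_{\kappa r}(0)$, both $v_i$ satisfy \eqref{g} there, with the source of $v_1$ bounded in $L^\infty$ by $K:=\|u\|^{p^*-1}_{L^\infty(B^+_{\kappa r}(0))}$ and that of $v_2$ equal to $0$. The proof of Proposition \ref{thm:inf-moser} uses only test functions compactly supported in such balls, so it applies verbatim to $v_1,v_2$ even though these are not globally nonnegative, yielding
\[
\frac{1}{r^{n+1+\beta}}\int_{B^+_{2r}(0)}t^\beta v_i\,dydt\ \le\ C\Big(\inf_{B^+_r(0)}v_i+r^{\beta+2-\alpha}K_i\Big),\qquad K_1=K,\ K_2=0 .
\]
Adding these, and using $v_1+v_2\equiv\omega(\kappa r)$, $\int_{B^+_{2r}(0)}t^\beta\,dydt=c_0(2r)^{n+1+\beta}$ with $c_0=\int_{B^+_1(0)}t^\beta\,dydt<\infty$ (as $\beta>-1$), $\inf_{B^+_r(0)}v_1=M_{\kappa r}-M_r$ and $\inf_{B^+_r(0)}v_2=m_r-m_{\kappa r}$, one gets
\[
c_0\,2^{n+1+\beta}\,\omega(\kappa r)\ \le\ C\big(\omega(\kappa r)-\omega(r)\big)+C\,r^{\beta+2-\alpha}K ,
\]
i.e. $\omega(r)\le\theta\,\omega(\kappa r)+C'\,r^{\beta+2-\alpha}K$ with $\theta:=\max\{0,\,1-c_0 2^{n+1+\beta}/C\}\in[0,1)$.

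Since $\beta+2-\alpha>0$ by \eqref{eq:constr-a-b} and $\omega$ is nondecreasing, a standard iteration lemma then gives $\omega(r)\le C_{\mathcal K}\,r^{\gamma}$ for all small $r$, with $\gamma\in(0,1)$ depending only on $n,\alpha,\beta,\kappa$ and $C_{\mathcal K}$ controlled by $\|u\|_{L^\infty}$ on a fixed compact neighbourhood of $\mathcal K$; as $C$ in Proposition \ref{thm:inf-moser} is independent of the boundary point, this modulus is uniform over $\partial\RpN\cap\mathcal K$. To finish, I would patch with the interior estimate in the usual way: on any ball $B_\rho(X)\subset\subset\RpN$ with $X=(y_c,t_c)$ and $\rho\le t_c/2$ the weight $t^\alpha$ is bounded above and below by comparable positive constants, so $-\dv(t^\alpha\nabla\cdot)$ is uniformly elliptic there and the scale--invariant De Giorgi estimate gives $\mathrm{osc}_{B_s(X)}u\le C(s/t_c)^{\gamma}\,\mathrm{osc}_{B_{t_c/2}(X)}u$ for $0<s\le t_c/2$ (with $\gamma$ the exponent above). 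For $X_1,X_2\in\mathcal K$ with $\delta_1:=\mathrm{dist}(X_1,\partial\RpN)\le\delta_2$: if $|X_1-X_2|\ge\delta_1/4$ then $X_1,X_2\in B^+_{\rho}(\bar X_1)$ for $\bar X_1$ the boundary projection of $X_1$ and some $\rho\le 5|X_1-X_2|$, so the boundary bound gives $|u(X_1)-u(X_2)|\le\mathrm{osc}_{B^+_\rho(\bar X_1)}u\le C|X_1-X_2|^{\gamma}$; if $|X_1-X_2|<\delta_1/4$ then $X_2\in B_{\delta_1/2}(X_1)\subset\RpN$ and, using $\mathrm{osc}_{B_{\delta_1/2}(X_1)}u\le\mathrm{osc}_{B^+_{3\delta_1/2}(\bar X_1)}u\le C\delta_1^{\gamma}$, the interior estimate gives $|u(X_1)-u(X_2)|\le C(|X_1-X_2|/\delta_1)^{\gamma}\delta_1^{\gamma}=C|X_1-X_2|^{\gamma}$. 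Hence $u\in C^{\gamma}_{loc}(\overline{\RpN})$.

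The step I expect to be most delicate is the boundary one: localizing Proposition \ref{thm:inf-moser} to the auxiliary functions $v_1,v_2$ (which are nonnegative only on a fixed ball), and verifying that the source term contributes just the benign quantity $r^{\beta+2-\alpha}K$ — which is precisely where the hypothesis $\alpha<\beta+2$ in \eqref{eq:constr-a-b} is used, as without it the iteration produces no positive H\"older exponent. As in Propositions \ref{local-infty} and \ref{thm:inf-moser}, the case $0<\alpha<\beta$ is handled identically, replacing $\beta$ and $p^*$ throughout by $\alpha$ and $p_\alpha=\frac{2(n+\alpha+1)}{n+\alpha-1}$.
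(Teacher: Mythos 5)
Your argument is correct and follows the same route as the paper: apply the weak Harnack estimate of Proposition \ref{thm:inf-moser} to $M_{\kappa r}-u$ and $u-m_{\kappa r}$, add the two resulting inequalities, and iterate the resulting oscillation decay. The only differences are cosmetic — you take $\kappa=12$ where the paper writes $4$ (arguably the more careful choice, since the proof of Proposition \ref{thm:inf-moser} does use balls of that size), and you spell out the interior/boundary patching that the paper leaves to the reader.
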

\begin{proof}
	For $r<\frac12$, define $M(r)=\sup_{B_r^+}u$, $m(r)=\inf_{B_r^+}u$, and $\omega(r)=M(r)-m(r)$, then for any nonnegative $\phi\in\mathcal{D}^{1,2}_{\alpha}(\RpN)$ with  $supp\phi\subset\overline{B_{2r}^+}$, it holds
	\begin{align*}
	\int_{B_{2r}^+ }t^{\alpha} \nabla [M(4r)-u] \cdot \nabla \phi dy dt &=-\int_{B_{2r}^+ }t^\beta u^{p^*-1}\phi dydt\geq -M(2)^{p^*-1}\int_{B_{2r}^+ }t^\beta \phi dydt,\\
	\int_{B_{2r}^+ }t^{\alpha} \nabla [u-m(4r)] \cdot \nabla \phi dy dt &=\int_{B_{2r}^+ }t^\beta u^{p^*-1}\phi dydt\geq 0.
	\end{align*}
	 By Proposition \ref{thm:inf-moser}, we get
	\begin{align*}
	\frac{1}{r^{n+1+\beta}}\int_{B_{2r}^+}t^\beta [M(4r)-u]dydt&\leq C\inf_{B_{r}^+}[M(4r)-u]+Cr^{{\beta+2-\alpha}}M(2)^{p^*-1}\\
	&=C[M(4r)-M(r)]+Cr^{{\beta+2-\alpha}}M(2)^{p^*-1},\\
	\frac{1}{r^{n+1+\beta}}\int_{B_{2r}^+}t^\beta[u-m(4r)]dydt&\leq C\inf_{B_{r}^+}[u-m(4r)]=C[m(r)-m(4r)].
	\end{align*}
	Summing the above two inequalities leads to
	\[\omega(r)\leq \frac{C-1}{C}\omega(4r)+r^{{\beta+2-\alpha}}M(2)^{p^*-1}.\]
	Then we conclude (see, for example \cite[Lemma 8.23]{GT1983}), that $u$ is H\"{o}lder continuous up to  the boundary.
\end{proof}

\begin{remark}\label{harnack-1}
	By \eqref{harnack} and the maximum principle, for nonnegative weak solution of equation \eqref{genequ-1}, if there is a point $(y_0,t_0)\in \overline{\mathbb{R}^{n+1}_+}$ such that $u(y_0,t_0)=0$, then $u\equiv 0$.
\end{remark}

\section{Classification results}
Though, in certain  cases  (see, for example, Obata \cite{Ob}, Escolar \cite{Es}, Beckner \cite{Bec2001}, Jerison and Lee \cite{JL1988}) one can use conformal invariant property to obtain the best constant for the sharp Sobolev type inequalities,  the more powerful way is to classify all positive solutions to the Euler-Lagrange equations satisfied by the extremal functions.  In this section, we shall prove Theorem \ref{L-1} through the proofs of a sequence of  propositions.

By Theorem \ref{thm-reg} we know that if $u$ is a nonnegative weak solution to equation \eqref{genequ-1}, then $u\in C^2(\mathbb{R}^{n+1}_+) \cap C^\gamma_{loc}(\overline{\mathbb{R}^{n+1}_+})$ for some $\gamma \in (0, 1)$, and $u$ satisfies
	\begin{align}\label{u-equ}
		-div(t^\alpha\nabla u)=t^\beta u^{p^*-1},\quad\text{in }\mathbb{R}^{n+1}_+.
	\end{align}
 Besides, by Remark \ref{harnack-1}, we only need to consider positive weak solution.

First, we use the method of moving spheres to determine the boundary value $u(y, 0)$.
\begin{proposition}\label{Prop-ms-1}Assume that  $\alpha>0, \ \beta>-1,
\   \frac{n-1}{n+1}\beta\leq \alpha<\beta+2$, and  $u \in   {\cal D}_{\alpha}^{1,2}(\mathbb{R}^{n+1}_+)$ is a positive  weak solution to \eqref{genequ-1}. Then $u$  takes the form of
	$$u(y,0)=k\Big(\frac{A}{A^2+|y-y^o|^2}\Big)^{\frac{n+\alpha-1}{2}}$$
on $\partial \mathbb{R}^{n+1}_+$, where  $k, A>0$ are some constants and $y^o\in\mathbb{R}^{n}$.
\end{proposition}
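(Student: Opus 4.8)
The plan is to apply the method of moving spheres to $u$ directly on the half space, with the inversions centered at boundary points. For $x_0=(y^o,0)\in\partial\mathbb{R}^{n+1}_+$ and $\lambda>0$, introduce the Kelvin transform
\[
u_{x_0,\lambda}(x)=\Big(\frac{\lambda}{|x-x_0|}\Big)^{n+\alpha-1}\,u\Big(x_0+\frac{\lambda^2(x-x_0)}{|x-x_0|^2}\Big),\qquad x\in\overline{\mathbb{R}^{n+1}_+}\setminus\{x_0\}.
\]
The first step is to verify that $u_{x_0,\lambda}$ is again a positive weak solution of \eqref{genequ-1} on $\mathbb{R}^{n+1}_+\setminus\{x_0\}$, with the same boundary condition on $\partial\mathbb{R}^{n+1}_+\setminus\{x_0\}$, and that $u_{x_0,\lambda}=u$ on $\partial B_\lambda(x_0)$. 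This is where the exponent $p^*=\frac{2(n+\beta+1)}{n+\alpha-1}$ and the power $\frac{n+\alpha-1}{2}$ enter: inversions about points of $\{t=0\}$ act ``conformally'' on $\operatorname{div}(t^\alpha\nabla\cdot)$ --- when $\alpha$ is a positive integer this is literally the Kelvin transform in $\mathbb{R}^{n+\alpha+1}$ restricted to functions radial in the last $\alpha+1$ variables, and for general $\alpha$ it follows from a direct computation --- and then the power of $|x-x_0|$ produced by the transformation of $t^\beta$ on the right-hand side cancels exactly at this exponent. By Theorem \ref{thm-reg} and Remark \ref{harnack-1}, $u$ is smooth in $\mathbb{R}^{n+1}_+$ and continuous and strictly positive on $\overline{\mathbb{R}^{n+1}_+}$, so these computations are legitimate provided the boundary condition is read in the weak sense of \eqref{weak-1}.

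Next I would work with $w_{x_0,\lambda}=u-u_{x_0,\lambda}$ on the exterior region $\Sigma_\lambda=\{x\in\mathbb{R}^{n+1}_+:|x-x_0|>\lambda\}$; it is a weak solution of a linear degenerate equation $-\operatorname{div}(t^\alpha\nabla w_{x_0,\lambda})=t^\beta c_{x_0,\lambda}\,w_{x_0,\lambda}$ with $0\le c_{x_0,\lambda}\le(p^*-1)\max\{u,u_{x_0,\lambda}\}^{p^*-2}$. The analytic heart is a weak maximum principle (and, later, a strong maximum principle and a Hopf-type lemma) for $\operatorname{div}(t^\alpha\nabla\cdot)$ on such domains together with the Neumann condition on $\{t=0\}$; the zeroth-order term is controlled by the Sobolev inequality \eqref{GGN-2}, exactly as in the regularity proof of Section~4, once one knows that $\int_{\{w_{x_0,\lambda}<0\}\cap\Sigma_\lambda}t^\beta u^{p^*}$ is small (true for $\lambda$ small because the relevant domain is small, and near infinity because $u\in L^{p^*}_\beta(\mathbb{R}^{n+1}_+)$). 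With these tools: (i) for every $x_0$ there is $\lambda_0(x_0)>0$ such that $w_{x_0,\lambda}\ge 0$ in $\Sigma_\lambda$ for all $\lambda\in(0,\lambda_0)$ --- on the far region because $u_{x_0,\lambda}(x)=O\big(\lambda^{n+\alpha-1}|x-x_0|^{-(n+\alpha-1)}\big)$ is dominated by $u$ thanks to positivity and continuity of $u$ near $x_0$, and on the thin shell adjacent to $\partial B_\lambda(x_0)$ by the maximum principle; and (ii) the set of admissible $\lambda$ is open.

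Then one sets $\bar\lambda(x_0)=\sup\{\lambda>0:w_{x_0,\mu}\ge 0\text{ in }\Sigma_\mu\ \text{ for all }\mu\in(0,\lambda]\}$. If $\bar\lambda(x_0)=\infty$ for some $x_0$, then $w_{x_0,\lambda}\ge 0$ for all $\lambda$; rewriting this along each ray $s\mapsto x_0+s\theta$ shows that $s\mapsto s^{(n+\alpha-1)/2}u(x_0+s\theta)$ is nondecreasing, whence $u(x)\ge c\,|x-x_0|^{-(n+\alpha-1)/2}$ for $|x|$ large, which contradicts $u\in L^{p^*}_\beta(\mathbb{R}^{n+1}_+)$ since $\int t^\beta|x|^{-(n+\beta+1)}$ diverges at infinity. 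Hence $\bar\lambda(x_0)<\infty$ for every $x_0$, and at $\lambda=\bar\lambda(x_0)$ the strong maximum principle and Hopf lemma leave only two options for $w_{x_0,\bar\lambda}\ge 0$: either it is strictly positive with strictly positive inward normal derivative on $\partial B_{\bar\lambda}(x_0)$, in which case a continuity/compactness argument lets one enlarge $\lambda$ past $\bar\lambda(x_0)$, contradicting maximality; or $w_{x_0,\bar\lambda}\equiv 0$, i.e.\ $u\equiv u_{x_0,\bar\lambda(x_0)}$. Thus $u$ is invariant under the Kelvin transform about \emph{every} boundary point with radius $\bar\lambda(x_0)>0$; restricting to $t=0$, the trace $v(y):=u(y,0)$ is positive, continuous on $\mathbb{R}^n$, and satisfies $v_{y^o,\bar\lambda(y^o)}=v$ for all $y^o\in\mathbb{R}^n$. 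The elementary classification lemma for functions with this property (of Li--Zhu / Li--Zhang type; cf.\ \cite{Zhu96}) then forces either $v\equiv\mathrm{const}$ --- excluded because $u$ decays at infinity --- or $v(y)=k\big(A/(A^2+|y-y^o|^2)\big)^{(n+\alpha-1)/2}$, which is the assertion.

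The step I expect to be the main obstacle is the second one: establishing the weak and strong maximum principles and the Hopf boundary lemma for the degenerate operator $\operatorname{div}(t^\alpha\nabla\cdot)$ on exterior subdomains of $\mathbb{R}^{n+1}_+$ --- in particular handling the Neumann condition on the degenerate face $\{t=0\}$, where the solution is only H\"older continuous, so that everything near that face has to be phrased through the weak formulation --- and making these facts compatible with the inversion, so that the moving-sphere procedure can both be started and closed up to the boundary.
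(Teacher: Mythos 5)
Your outline has the right skeleton --- Kelvin transform about boundary points, moving spheres, closing up at some radius, then a Li--Zhu classification --- and the contradiction you draw from $\bar\lambda(x_0)=\infty$ (monotonicity of $s\mapsto s^{(n+\alpha-1)/2}u(x_0+s\theta)$ along rays together with divergence of $\int t^\beta|x|^{-(n+\beta+1)}$) is sound. But there are two genuine gaps, both at exactly the places you flag as hard, and the paper's proof is designed specifically to avoid them.

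The first is the start. To get $w_{x_0,\lambda}\ge 0$ on the far part of $\Sigma_\lambda$ for small $\lambda$ you need $\liminf_{|x|\to\infty}|x-x_0|^{n+\alpha-1}u(x)>0$; ``positivity and continuity of $u$ near $x_0$'' controls $u_{x_0,\lambda}$ at infinity but gives no lower bound on $u$ itself there. Such a decay bound does not follow from $u\in{\cal D}^{1,2}_\alpha\cap L^{p^*}_\beta$ plus local H\"older regularity, and in the paper it is only obtained \emph{after} the moving spheres close at one point (the limit \eqref{decay}, used in the proof of Claim~4); using it to get the spheres started would be circular. The second is the close: you invoke a Hopf lemma for $div(t^\alpha\nabla\cdot)$ at the degenerate face $\{t=0\}$, which is neither proved nor obvious since $u$ is only H\"older there, and even with Hopf on $\partial B_{\bar\lambda}(x_0)$ you would still need a uniform margin near infinity to advance $\lambda$, which returns you to the missing decay bound. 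The paper sidesteps both obstructions at once by running the spheres from \emph{large} $\lambda$ inward and replacing the pointwise comparison with an integral one: test the linearized equation against $w^+_{\lambda,b}\in{\cal D}^{1,2}_\alpha$, apply \eqref{GGN-2}, and note that the resulting coefficient $(p^*-1)S^{-1}_{n+1,\alpha,\beta}\big(\int_{\Sigma_{\lambda,b}}t^\beta u_b^{p^*}dydt\big)^{(p^*-2)/p^*}$ drops below $1$ whenever the bad set $\Sigma_{\lambda,b}$ carries small $t^\beta u^{p^*}$-mass --- automatic for $\lambda$ large (Claim~1), and, after the interior strong maximum principle confines the bad set to thin collars near $\partial B_{\lambda_b}$, $\{t=0\}$, and infinity, also for $\lambda$ slightly below $\lambda_b$ (Claim~3). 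No Hopf lemma at the degenerate face and no a priori decay rate for $u$ are needed in that route.
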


For any fixed $b\in\partial\mathbb{R}^{n+1}_+$ and $\lambda>0$, set
$$u_b(y,t)=u((y,t)+b),\quad (y,t)\in\overline{\mathbb{R}^{n+1}_+},$$
$$u_{\lambda,b}(y,t)=\frac{\lambda^{n+\alpha-1}}{|(y,t)|^{n+\alpha-1}}u_b(\frac{\lambda^2 (y,t)}{|(y,t)|^2}),\quad (y,t)\in\overline{\mathbb{R}^{n+1}_+}\backslash\{0\},$$
$$w_{\lambda,b}(y,t)=u_b(y,t) -u_{\lambda,b}(y,t),\quad (y,t)\in\overline{\mathbb{R}^{n+1}_+}\backslash\{0\}.$$
Since $u \in   {\cal D}_{\alpha}^{1,2}(\mathbb{R}^{n+1}_+)$ is a positive  weak solution to \eqref{genequ-1}, by the proof of Lemma \ref{weak solution} in the Appendix, we know that $u_{\lambda,b} \in   {\cal D}_{\alpha}^{1,2}(\mathbb{R}^{n+1}_+)$ satisfies
\begin{equation*}
\int_{\mathbb{R}^{n+1}_+ }t^{\alpha} \nabla u_{\lambda,b} \cdot \nabla \phi dy dt
=\int_{\mathbb{R}^{n+1}_+ }t^{\beta} u_{\lambda,b}^{p^*-1}\phi dydt,
\end{equation*}
for any $\phi\in   {\cal D}_{\alpha}^{1,2}(\mathbb{R}^{n+1}_+)$ with $supp \phi\subset\overline{\mathbb{R}^{n+1}_+}\backslash\{0\}$.
Therefore, for such a test function $\phi$, $w_{\lambda,b}$ satisfies
\begin{equation}\label{weak-1-2}
\int_{\mathbb{R}^{n+1}_+ }t^{\alpha} \nabla w_{\lambda,b} \cdot \nabla \phi dy dt
=(p^*-1)\int_{\mathbb{R}^{n+1}_+ }t^{\beta} \varphi^{p^*-2}w_{\lambda,b}\phi dydt,
\end{equation}
where $\varphi(y,t)=s(y,t)v_b(y,t)+(1-s(y,t))v_{\lambda,b}(y,t)$ for some $s(y,t)\in [0,1]$.

Let $$\Sigma_{\lambda,b}=\{(y,t)\in\overline{\mathbb{R}^{n+1}_+}\backslash B_\lambda(0): w_{\lambda,b}(
y,t)>0\}.$$
Define
$w_{\lambda,b}^+=\max\{w_{\lambda,b},0\}$ in $\overline{\mathbb{R}^{n+1}_+}\backslash B_\lambda(0)$, and extend it to  the rest of $\overline{\mathbb{R}^{n+1}_+}$ with value zero. For simplicity, we still denote $w_{\lambda,b}^+$ as the new function after extension. It is easy to see that $w_{\lambda,b}^+ \in   {\cal D}_{\alpha}^{1,2}(\mathbb{R}^{n+1}_+)$.

\medskip

\textbf{Claim 1.} For $\lambda$ large enough, $w_{\lambda,b}\leq 0$ in $\overline{\mathbb{R}^{n+1}_+}\backslash B_\lambda(0)$.

\begin{proof}	
	%{\bf careful! MAJ 7-30-2019. Wang Lei, could you add more details here?}
	Taking $w_{\lambda,b}^+$ as the  test function in \eqref{weak-1-2},
 we have
	$$	\int_{\Sigma_{\lambda,b}}t^\alpha|\nabla w_{\lambda,b}^+|^2	dydt =(p^*-1)\int_{\Sigma_{\lambda,b}}t^{\beta}\varphi^{p^*-2}|w_{\lambda,b}^+|^2dydt.$$
Since $0<u_{\lambda,b}\leq \varphi\leq u_b$ in $\Sigma_{\lambda,b}$, we have
\begin{align}\label{claim-1}
\int_{\Sigma_{\lambda,b}}t^\alpha|\nabla w_{\lambda,b}^+|^2dydt	\leq&(p^*-1)\int_{\Sigma_{\lambda,b}}t^{\beta}u_b^{p^*-2}|w_{\lambda,b}^+|^2dydt\nonumber\\			\leq&(p^*-1)\big(\int_{\Sigma_{\lambda,b}}t^{\beta}u_b^{p^*}dydt\big)^\frac{p^*-2}{p^*}
\big(\int_{\Sigma_{\lambda,b}}t^{\beta}|w_{\lambda,b}^+|^{p^*}dydt\big)^{\frac{2}{p^*}}\nonumber\\	\leq&(p^*-1)S_{n+1, \alpha,\beta}^{-1}\big(\int_{\Sigma_{\lambda,b}}t^{\beta}u_b^{p^*}dydt\big)^\frac{p^*-2}{p^*}
\int_{\Sigma_{\lambda,b}}t^\alpha|\nabla w_{\lambda,b}^+|^2dydt.
\end{align}
By Lemma \ref{density}, we know that $u_b\in L^{p^*}_\beta(\mathbb{R}^{n+1}_+)$. Since $ p^*>2$ and $\Sigma_{\lambda,b}\subset\overline{\mathbb{R}^{n+1}_+}\backslash B_{\lambda}(0)$, we have
\begin{eqnarray*}
(p^*-1)S_{n+1, \alpha,\beta}^{-1}\big(\int_{\Sigma_{\lambda,b}}t^{\beta}u_b^{p^*}dydt\big)^\frac{p^*-2}{p^*}
<\frac{1}{2}
\end{eqnarray*}for sufficiently large $\lambda$.
Bringing this to \eqref{claim-1}, we obtain
$$\int_{\Sigma_{\lambda,b}}t^\alpha|\nabla w_{\lambda,b}^+|^2=0.$$
This implies $w_{\lambda,b}\leq 0$ in $\overline{\mathbb{R}^{n+1}_+}\backslash B_\lambda(0)$.
\end{proof}
\bigskip

Now, we define $\lambda_b=\inf\{\lambda>0: \forall \mu>\lambda,w_{\mu,b}\leq 0 \text{ in } \overline{\mathbb{R}^{n+1}_+}\backslash B_{\mu}(0)\}$.

\medskip

\textbf{Claim 2.} There exists $b\in\partial \mathbb{R}^{n+1}_+$, such that $\lambda_b>0$.

\begin{proof}	If for all $b\in \partial\mathbb{R}^{n+1}_+$, $\lambda_{b}=0$,  we have for all $b\in\partial\mathbb{R}^{n+1}_+$ and $\lambda>0$,
	$$u_b(y,t)\leq\frac{\lambda^{n+\alpha-1}}{|(y,t)|^{n+\alpha-1}}u_b(\frac{\lambda^2(y,t)}{|(y,t)|^2}),\quad (y,t)\in\overline{\mathbb{R}^{n+1}_+}\backslash B_\lambda(0).$$
It follows from the first Li-Zhu lemma (see, for example, Dou and Zhu \cite [Lemma 3.7] {DZ15} ) that $u$ only depends on $t$. Due to $u>0$, we have $\int_{\mathbb{R}^{n+1}_+}t^\beta u^{p^*}dydt=\infty$, which implies that $u\notin{\cal D}^{1,2}_{\alpha}(\mathbb{R}^{n+1}_+)$ by Lemma \ref{density} in the Appendix, contradiction.
\end{proof}

\bigskip

\textbf{Claim 3.} Suppose $\lambda_b>0$ for some $b\in\partial\mathbb{R}^{n+1}_+$,  then we have $w_{\lambda_b,b}\equiv 0$ in $\overline{\mathbb{R}^{n+1}_+}\backslash\{0\}$.

\begin{proof}
	First, by the continuity of $w_{\lambda_b,b}$, we know $w_{\lambda_b,b}\leq0$ in $\overline{\mathbb{R}^{n+1}_+}\backslash B_{\lambda_b}(0)$.  It then follows from \eqref{weak-1-2} that
	\begin{align*}
		-div(t^\alpha \nabla w_{\lambda_b,b})=(p^*-1)t^\beta \varphi^{p^*-2}w_{\lambda_b,b}\leq 0,\quad \text{in } \mathbb{R}^{n+1}_+\backslash\overline{ B_{\lambda_b}(0)}.
\end{align*}
We prove Claim 3 by contradiction. Assume $w_{\lambda_b,b}\not\equiv0$ in $\overline{\mathbb{R}^{n+1}_+}\backslash B_{\lambda_b}(0)$. For any open subset $U\subset\subset \mathbb{R}^{n+1}_+\backslash\overline{ B_{\lambda_b}(0)}$, since the divergent operator $div(t^\alpha\nabla)$ is uniformly elliptic in $U$, it holds  $w_{\lambda_b,b}<0$ in $U$ via the maximum principle, which implies that $w_{\lambda_b,b}<0$ in $\mathbb{R}^{n+1}_+\backslash\overline{ B_{\lambda_b}(0)}$.

Since $u_b\in L^{p^*}_{\beta}(\overline{\mathbb{R}^{n+1}_+})$, there is $R>0$ large enough such that
	$$(p^*-1) S^{-1}_{n+1, \alpha,\beta}\big(\int_{\mathbb{R}^{n+1}_+\backslash B_{R}(0)}t^{\beta}u_b^{p^*}\big)^\frac{p^*-2}{p^*}<\frac{1}{4}.$$
Take $\delta_1>0$ small enough, such that
$$ (p^*-1)S^{-1}_{n+1, \alpha,\beta}\big(\int_{\Omega_{\delta_1}}t^{\beta}u_b^{p^*}\big)^\frac{p^*-2}{p^*}
<\frac{1}{4},$$
where $\Omega_{\delta_1}=\big((B_R^+(0)\backslash B_{\lambda_b+\delta_1}^+(0))\cap\{(y,t): 0<t<\delta_1\}\big)\cup\big(B_{\lambda_b+\delta_1}^+(0)\backslash B_{\lambda_b-\delta_1}^+(0)\big)$ (See Figure 1).

\begin{figure}[!htbp]%\small
\center
\includegraphics[height=2.0in,width=3.7in]{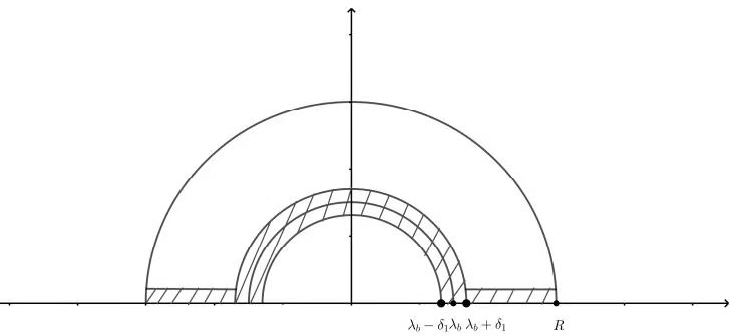}
\vspace{1em}
\caption{ Domain of $\Omega_{\delta_1}$}

\end{figure}

Since $w_{\lambda_b,b}<0$ in compact set $\overline{\big(B_R^+(0)\backslash B_{\lambda_b+\delta_1}\big)\cap\{(y,t): t>\delta_1\}}$, we have $$w_{\lambda_b,b}<-K<0 \text{ in } \overline{\big(B_R^+(0)\backslash B_{\lambda_b+\delta_1}\big)\cap\{(y,t): t>\delta_1\}}.$$ Due to the continuity, there exists $\delta_2$ small enough, such that $0<\delta_2<\delta_1$ and for any $\lambda\in[\lambda_b-\delta_2,\lambda_b]$,
$$w_{\lambda,b}<-\frac{K}{2}<0 \text{ in } \overline{\big(B_R^+(0)\backslash B_{\lambda_b+\delta_1}\big)\cap\{(y,t): t>\delta_1\}},$$
 then $\Sigma_{\lambda,b}\subset (\Sigma_{\lambda,b}\cap B_R^c(0))\cup \overline{\Omega_{\delta_1}}$.
Therefore, for $\lambda\in [\lambda_b-\delta_2,\lambda_b],$ we have
\begin{eqnarray*}
& &(p^*-1)S_{n+1, \alpha,\beta}^{-1}\big(\int_{\Sigma_{\lambda,b}}t^{\beta}u_b^{p^*}\big)^\frac{p^*-2}{p^*}\\
&\leq&(p^*-1)S_{n+1, \alpha,\beta}^{-1}\big(\int_{\Sigma_{\lambda,b}\cap B_R^c(0)}t^{\beta}u_b^{p^*}\big)^\frac{p^*-2}{p^*}
+(p^*-1)S_{n+1, \alpha,\beta}^{-1}\big(\int_{\Omega_{\delta_1}}t^{\beta}u_b^{p^*}\big)^\frac{p^*-2}{p^*}\\
&<&\frac{1}{2}.
\end{eqnarray*}
	Similar to the proof of  Claim 1, we have for $\lambda\in[\lambda_b-\delta_2,\lambda_b]$, $w_{\lambda,b}\leq 0$ in $\overline{\mathbb{R}^{n+1}_+}\backslash B_{\lambda}(0)$,  which is contradictory to the definition of $\lambda_b$. Hence, $w_{\lambda_b,b}\equiv 0$ in $\overline{\mathbb{R}^{n+1}_+}\backslash\{0\}$.
\end{proof}
\bigskip

\textbf{Claim 4.} $\lambda_{b}>0$ for all $b\in \partial \mathbb{R}^{n+1}_+$.

\begin{proof}
	It follows from Claim 2 and Claim 3 that there exists some $\bar{b}\in\partial\mathbb{R}^{n+1}_+$ such that $\lambda_{\bar{b}}>0$ and $w_{\lambda_{\bar{b}},\bar{b}}\equiv 0$ in $\overline{\mathbb{R}^{n+1}_+}\backslash
	\{0\}$. That is
	$$u_{\bar{b}}(y,t)=\frac{\lambda_{\bar{b}}^{n+\alpha-1}}{|(y,t)|^{n+\alpha-1}}u_{\bar{b}}(\frac{\lambda_{\bar{b}}^2(y,t)}{|(y,t)|^2}), \quad\forall (y,t)\in \overline{\mathbb{R}^{n+1}_+}\backslash\{0\}.$$
	Clearly, $$\lim_{|(y,t)|\rightarrow\infty}|(y,t)|^{n+\alpha-1}u_{\bar{b}}(y,t)=\lambda_{\bar{b}}^{n+\alpha-1}u_{\bar{b}}(0,0),$$
	that is
	\begin{equation}\label{decay}
	\lim_{|(y,t)|\rightarrow\infty}|(y,t)|^{n+\alpha-1}u(y,t)=\lambda_{\bar{b}}^{n+\alpha-1}u(\bar{b}).
	\end{equation}
	Suppose the contrary to Claim 4 for some $b\in\partial\mathbb{R}^{n+1}_+$, then
	$$ | (y,t) |^{n+\alpha-1}u_{ b } ( y ,t) \leq  \lambda^{n+\alpha-1} u_{ b } \left( \frac{ \lambda^{ 2 } (y,t) }{ | (y,t) |^{ 2 } } \right) , \quad \forall \lambda > 0 , (y,t)\in \overline{\mathbb{R}^{n+1}_+}\backslash B_{\lambda}(0).$$
%	Then
%	$$u_{ b } ( \frac { (y,t)} { | (y,t)|^{ 2 } } ) \leq \frac{ | (y,t)|^{n+\alpha-1} }{ \lambda^{n+\alpha-1} } u_{ b } ( \frac{ (y,t) }{ \lambda^{ 2 } } ) , \quad \forall \lambda > 0 , (y,t) \in \mathbb{R}^{n+1}_+\backslash B_{\lambda}(0).$$
	Fixing $\lambda>0$ in the above and sending $|(y,t)|$ to $\infty$, by \eqref{decay}, we have
	$$u_{\bar{b}}(0)\leq\frac{\lambda^{n+\alpha-1}}{\lambda_{\bar{b}}^{n+\alpha-1}}u(b).$$
	Sending $\lambda$ to $0$, we have $u(\bar{b})\leq 0,$
	 which yields a contradiction.
\end{proof}

\bigskip
The second Li-Zhu Lemma  in \cite[Lemma 2.5]{LZ1995} and its generalization for continuous functions due to Li and Nirenberg \cite[lemma 5.8]{Li2004} are stated as follows.

\begin{lemma}\label{LZ-2} For a given parameter $\mu\in \mathbb{R}$,
 if $f\in C(\mathbb{R}^{n})$ $(n\geq 1)$ satisfies: for all $b\in\mathbb{R}^{n}$, there exists $\lambda_b>0$ such that
	\begin{equation}\label{kel}
	f ( x + b ) = \big(\frac{ \lambda _ { b } }{ | x |}\big)^{\mu} f ( \frac{ \lambda_{ b }^{ 2 } x}{ | x |^{ 2 } } + b ) , \quad \forall x\in \mathbb { R }^{ n  } \backslash \{ 0 \}.
	\end{equation}
	Then for some $a\geq 0, d>0, x_0\in \mathbb{R}^{n},$
	$$f(x)=(\frac{a}{|x-x_0|^2+d})^{\frac{\mu}{2}},\quad \forall x\in\mathbb{R}^{n},
	$$ or
	$$f(x)=-(\frac{a}{|x-x_0|^2+d})^{\frac{\mu}{2}},\quad \forall x\in\mathbb{R}^{n}.$$
\end{lemma}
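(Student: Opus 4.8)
\medskip
\noindent The plan is to run the moving-sphere rigidity argument in the spirit of \cite{LZ1995} and \cite{Li2004}: normalize $f$, read off its behaviour at infinity, and turn the functional equation \eqref{kel} into the statement that $f^{-2/\mu}$ is a quadratic polynomial. Throughout write
\[
f_{b,\lambda}(x)=\Big(\frac{\lambda}{|x-b|}\Big)^{\mu}f\Big(b+\frac{\lambda^{2}(x-b)}{|x-b|^{2}}\Big),
\]
so that the hypothesis reads: for every $b\in\mathbb{R}^{n}$ there is $\lambda_{b}>0$ with $f_{b,\lambda_{b}}\equiv f$ on $\mathbb{R}^{n}\setminus\{b\}$. (We assume $\mu>0$, as in the application; $a=0$ gives $f\equiv0$.) The first step is to reduce to a strictly signed $f$: if $f(b_{0})=0$ for some $b_{0}$, then letting $|x|\to\infty$ in $f_{b_{0},\lambda_{b_{0}}}=f$ gives $|x|^{\mu}f(x)\to0$, while if some $b_{1}$ has $f(b_{1})\neq0$ the same limit computed from $f_{b_{1},\lambda_{b_{1}}}=f$ is $\lambda_{b_{1}}^{\mu}f(b_{1})\neq0$ --- a contradiction. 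Hence $f\equiv0$ (done, with $a=0$), or $f$ never vanishes; by continuity $f$ then has a fixed sign, and replacing $f$ by $-f$ if necessary we may assume $f>0$, the minus-sign alternative in the conclusion covering the negative case.

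Next I would record the asymptotics and normalize. Letting $|x|\to\infty$ in $f_{b,\lambda_{b}}=f$ shows $\lim_{|x|\to\infty}|x|^{\mu}f(x)=\lambda_{b}^{\mu}f(b)$; the left-hand side does not depend on $b$ and is positive, so $\lambda_{b}^{\mu}f(b)\equiv c$ for some $c>0$, and in particular $\lambda_{b}$ is determined by $f(b)$. Put $v:=c^{2/\mu}f^{-2/\mu}$. Then $v$ is continuous and positive, $v(x)/|x|^{2}\to1$ as $|x|\to\infty$, one has $\lambda_{b}^{2}=v(b)$, and raising $f_{b,\lambda_{b}}=f$ to the power $-2/\mu$ rewrites the hypothesis as
\[
v(b)\,v(x)=|x-b|^{2}\,v\Big(b+\frac{v(b)\,(x-b)}{|x-b|^{2}}\Big)\qquad\text{for all }b\in\mathbb{R}^{n},\ x\neq b,
\]
which I will call $(\star)$. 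Conversely, if $v(x)=|x-x_{0}|^{2}+d$ with $x_{0}\in\mathbb{R}^{n}$ and $d>0$, then $(\star)$ holds and $f(x)=\big(c^{2/\mu}/(|x-x_{0}|^{2}+d)\big)^{\mu/2}$, i.e.\ the asserted form with $a=c^{2/\mu}>0$. So it remains to show that a positive continuous $v$ with $v(x)/|x|^{2}\to1$ satisfying $(\star)$ has exactly this shape; here positivity (and finiteness) of $v$ forces $d>0$.

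To this end write $v(y)=|y|^{2}+p(y)$, so $p(y)=o(|y|^{2})$; inserting this in $(\star)$ and cancelling the terms that match identically (using $\lambda_{b}^{2}=v(b)$) reduces $(\star)$ to the weight $-2$ covariance
\[
v(b)\big(p(b+\xi)-p(b)\big)=|\xi|^{2}\Big(p\big(b+\tfrac{v(b)\,\xi}{|\xi|^{2}}\big)-p(b)\Big)\qquad\text{for all }b\in\mathbb{R}^{n},\ \xi\neq0.
\]
Imposing this at a single $b$ is far from enough (e.g.\ $p(y)=|y-b|$ satisfies the one-$b$ relation); the content of the lemma is that requiring it for \emph{all} $b$ with the compatible radius $\sqrt{v(b)}$ forces $p$ to be affine, whence $v(y)=|y|^{2}+c_{1}\cdot y+c_{0}=|y-x_{0}|^{2}+d$ with $x_{0}=-\tfrac12 c_{1}$ and $d=c_{0}-\tfrac14|c_{1}|^{2}$, completing the classification.

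I expect this last step to be the main obstacle, and it is where the low ($C^{0}$) regularity bites: one cannot differentiate \eqref{kel} in $\lambda$ (it is an identity only at $\lambda=\lambda_{b}$) nor, a priori, in $x$. The route I would follow is that of \cite{Li2004}: first bootstrap regularity, by noting that for $b$ near $0$ the composition $I_{b,\lambda_{b}}\circ I_{0,\lambda_{0}}$ (inversion in the sphere of radius $\lambda_{b}$ at $b$, then in that of radius $\lambda_{0}$ at $0$) is a M\"obius transformation of $\mathbb{R}^{n}\cup\{\infty\}$ close to the identity under which $f$ transforms by a smooth positive conformal factor; an averaging/convolution argument against this $n$-parameter family of ``conformal translations'' then shows $f$, hence $v$, is $C^{\infty}$. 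Once $v$ is smooth, differentiating $(\star)$ --- e.g.\ applying $\partial_{i}\partial_{j}$ with $b=0$ and letting $|x|\to\infty$, then translating $b$ --- yields that the Hessian of $v$ equals $2I$ identically, so $p$ is affine, and the proof is complete. The reductions and the back-substitution above are routine; the regularity bootstrap is the delicate point.
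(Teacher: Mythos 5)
The paper does not prove Lemma~\ref{LZ-2}; it cites \cite{LZ1995} (the $C^1$ case) and Li--Nirenberg \cite[Lemma 5.8]{Li2004} (the $C^0$ case), which is essentially the route you are reconstructing. Your reductions are sound: the dichotomy $f\equiv 0$ or $f$ nowhere zero, the normalization $\lambda_b^\mu f(b)\equiv c$, the substitution $v=c^{2/\mu}f^{-2/\mu}$, and the translation of \eqref{kel} into the identity $v(b)\,v(x)=|x-b|^2\,v\bigl(b+v(b)(x-b)/|x-b|^2\bigr)$ with $v(x)/|x|^2\to 1$ are all correct, as is the verification that $v(x)=|x-x_0|^2+d$ solves it with $d>0$ forced by $v>0$. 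The regularity bootstrap is legitimately deferred to \cite{Li2004}; that reference handles exactly the $C^0\!\to C^\infty$ upgrade via the family of near-identity conformal maps, including the point you should flag more carefully, namely that $\lambda_b=\sqrt{v(b)}$ is a priori only continuous in $b$.

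The genuine gap is in the final step. Applying $\partial_i\partial_j$ to $(\star)$ at $b=0$ and letting $|x|\to\infty$ yields only the asymptotic $D^2v(x)\to 2I$ as $|x|\to\infty$ (with a rate no better than $O(1/|x|)$ when $\nabla p(0)\neq 0$), and repeating the computation at other base points $b$ gives the same statement at infinity, not the pointwise identity $D^2v(b)=2I$. ``Translating $b$'' therefore does not close the argument. What does close it: expand $(\star)$ near $x=b$ to second order and use $v(y)=|y|^2+p(y)$ to obtain, for each unit $\omega$,
\[
p(b+r\omega)=r\,\nabla p(b)\cdot\omega+\Bigl(\tfrac{v(b)}{2}\,\omega^{T}D^2v(b)\,\omega-|b|^2\Bigr)+o(1)\qquad (r\to\infty).
\]
Writing the same expansion from the shifted base point $b'=b+t\omega$ and evaluating both at the same point $b+(r+t)\omega$, the coefficients of $r$ must agree, giving $\bigl(\nabla p(b+t\omega)-\nabla p(b)\bigr)\cdot\omega=0$ for all $t$, $\omega$, $b$. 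Sending $t\to 0$ gives $\omega^{T}D^2p(b)\,\omega=0$ for every $\omega$, and since $D^2p$ is symmetric this forces $D^2p\equiv0$, so $p$ is affine and $v(x)=|x-x_0|^2+d$ as required. You should also note that the blanket assumption $\mu>0$ restricts the generality of the stated lemma (it is what is used in the paper's application, but the lemma is stated for all $\mu\in\mathbb{R}$).
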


\textbf{Proof of Proposition \ref{Prop-ms-1}.}  From  {Claim 4}, we know $\lambda_{b}>0$ for all $b\in \partial \mathbb{R}^{n+1}_+$. Then it follows from  {Claim 3 } that
\begin{equation}\label{id}
u_b(y,t)=\frac{\lambda_b^{n+\alpha-1}}{|(y,t)|^{n+\alpha-1}}u_b(\frac{\lambda_b^2 (y,t)}{|(y,t)|^2}),\quad\forall (y,t)\in \overline{\mathbb{R}^{n+1}_+}\backslash\{0\}.
\end{equation}
 By Lemma \ref{LZ-2} and $u>0$, we have
$$u(y,0)=k\Big(\frac{A}{A^2+|y-y^{o}|^2}\Big)^{\frac{n+\alpha-1}{2}},\quad  y\in \mathbb{R}^{n},$$
 for some $k, A>0$ and $ y^{o}\in \mathbb{R}^{n}.$
 \hfill$\Box$

\bigskip

Without loss of generality, we assume that $k=1, A=1$ and $y^o=0$. By \eqref{decay}, we have
$$1=\lim_{|(y,t)|\rightarrow \infty}|(y,t)|^{n+\alpha-1}u(y,t)=\lambda_b^{n+\alpha-1}u(b)=\lambda_b^{n+\alpha-1}\Big(\frac{1}{1+|b|^2}\Big)^{\frac{n+\alpha-1}{2}},$$
which implies $$\lambda_b=\sqrt{1+|b|^2}.$$
For any $(y,t)\in\overline{\mathbb{R}^{n+1}_+}\backslash\{b\}$,  by \eqref{id} we have
\begin{equation}\label{sym}
u(y,t)=\frac{(1+|b|^2)^{\frac{
			n+\alpha-1}{2}}}{|(y,t)-b|^{n+\alpha-1}}u(b+\frac{(1+|b|^2)((y,t)-b)}{|(y,t)-b|^2}).
\end{equation}

Set $e_{n+1}=(0,1)$, and define the Kelvin transformation with respect to $ \partial B_{1}( -{e_{n+1}})$ as
\begin{equation}\label{transform-1}
x:=(x', x_{n+1})=-e_{n+1}+\frac{(y,t)+e_{n+1}}{|(y,t)+e_{n+1}|^2} .
\end{equation} This projects $\mathbb{R}^{n+1}_+$ to $B:= B_{\frac12}(-\frac{e_{n+1}}{2}),$
and $\partial \mathbb{R}^{n+1}_+$ to $\partial B$. Set
\begin{equation}\label{transform-2}
\psi(x)=\frac{1}{|x+e_{n+1}|^{n+\alpha-1}}u(-e_{n+1}+\frac{x+e_{n+1}}{|x+e_{n+1}|^2}),\quad x\in B.
\end{equation}
From the boundary value and  $C^2$ regularity  of $u$, and  equation \eqref{u-equ}
 we know that  $\psi$ is a positive, interior $C^2$ smooth function, and satisfies
\begin{equation}\label{ball-1}
\begin{cases}
\Delta \psi-\frac{2\alpha\nabla \psi\cdot(x +\frac{e_{n+1}}{2})}{\frac{1}{4}-|x+\frac{e_{n+1}}{2}|^2}-\frac{\alpha(n+\alpha-1)}{\frac{1}{4}-|x+\frac{e_{n+1}}{2}|^2}\psi
=-C\big({\frac{1}{4}-|x+\frac{e_{n+1}}{2}|^2}\big)^{\beta-\alpha}\psi^{\frac{n+2\beta-\alpha+3}{n+\alpha-1}},\;&\text{in } B,\\
\psi=1,&\text{on } \partial B
\end{cases}
\end{equation}
for some unknown $C>0$.

Further, the equality \eqref{sym} implies the following symmetric result.

\begin{lemma}\label{rigidity}
 $\psi$ is radially symmetric about the center $-\frac{e_{n+1}}{2}$.
 \end{lemma}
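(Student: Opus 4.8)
The plan is to push the conformal symmetry \eqref{sym} of $u$ through the Kelvin transform \eqref{transform-1}--\eqref{transform-2} and show that it becomes invariance of $\psi$ under the full group of rotations of the ball $B$ about its center $-\tfrac{e_{n+1}}2$; radial symmetry is then immediate. Throughout, identify a boundary point $b\in\partial\RpN$ with $(b,0)\in\mathbb{R}^{n+1}$, and let $J_b(z):=b+(1+|b|^2)\frac{z-b}{|z-b|^2}$ be the inversion appearing on the right of \eqref{sym}, and $P(x):=-e_{n+1}+\frac{x+e_{n+1}}{|x+e_{n+1}|^2}$ the conformal involution of \eqref{transform-1}, which the excerpt already notes maps $B$ onto $\RpN$.

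First I would record three elementary facts. $(i)$ Each $J_b$ maps $\overline{\RpN}$ onto itself: since its center $b$ lies on the hyperplane $\partial\RpN$, inversion in $\partial B_{\sqrt{1+|b|^2}}(b)$ preserves that hyperplane and each of the two open half-spaces it bounds. $(ii)$ $J_b$ fixes the point $(0,1)$: one computes $|(0,1)-b|^2=1+|b|^2$, whence $J_b(0,1)=(0,1)$ (equivalently, $(0,1)$ lies on the fixed sphere of the inversion). $(iii)$ $P$ carries $\partial B$ to $\partial\RpN$ and the center $-\tfrac{e_{n+1}}2$ to the point $(0,1)$, by a one-line computation. Combining $(i)$--$(iii)$, for each $b$ the map $T_b:=P^{-1}\circ J_b\circ P$ is a Möbius self-map of $B$ fixing its center, hence — in the shifted coordinate $\xi=x+\tfrac{e_{n+1}}2$ — an orthogonal linear transformation about $-\tfrac{e_{n+1}}2$. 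Next I would verify that \eqref{sym} is precisely the identity $\psi=\psi\circ T_b$ on $B$: substituting $z=P(x)$ into \eqref{sym}, feeding the result through the definition \eqref{transform-2} of $\psi$, and using that $T_b$ is orthogonal (so carries trivial conformal factor), everything reduces to the cocycle identity $|P(x)-b|\,|x+e_{n+1}|=\sqrt{1+|b|^2}\,|T_bx+e_{n+1}|$ for the conformal distortion factors, which holds exactly because $J_b$ was normalized with radius $\lambda_b=\sqrt{1+|b|^2}$. This conformal-factor bookkeeping is the genuinely computational point, though it is routine (one can sanity-check it at $x=-\tfrac{e_{n+1}}2$, where both sides equal $\tfrac12\sqrt{1+|b|^2}$).

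It then remains to show that $\{T_b:b\in\Rn\}$ generates the whole rotation group $O(n+1)$ about $-\tfrac{e_{n+1}}2$; a function on $B$ invariant under all of them is radial about $-\tfrac{e_{n+1}}2$, which is the claim. Each $T_b$ is an orthogonal involution, and its fixed-point set is $P^{-1}\big(\partial B_{\lambda_b}(b)\big)$, a hyperplane through the center of $B$ (a linear involution has linear fixed set, and $P$ sends the sphere $\partial B_{\lambda_b}(b)$, which passes through $(0,1)$, to a sphere or hyperplane through the center); hence $T_b$ is the reflection across that hyperplane. To find its normal I would differentiate $J_b$ at its fixed point $(0,1)$: the differential of an inversion at a point of its fixed sphere is the Euclidean reflection across the tangent hyperplane there, i.e. across the hyperplane perpendicular to $v_b:=(0,1)-(b,0)=(-b,1)$; conjugating by $dP$ at the center shows that the reflecting hyperplane of $T_b$ has normal $Q v_b$ for one fixed orthogonal map $Q$. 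As $b$ ranges over $\Rn$ the directions $v_b/|v_b|$ fill the open hemisphere $\{v\in S^{n}:v_{n+1}>0\}$, so the normals of the reflections $T_b$ fill a nonempty open subset of $S^{n}$; and reflections across hyperplanes whose normals fill an open set generate $O(n+1)$ — products of pairs of nearby ones are small rotations in a family of $2$-planes whose brackets exhaust the Lie algebra of $SO(n+1)$, giving all of $SO(n+1)$, while the reflections themselves supply the remaining component. This completes the argument. The obstacle I anticipate is purely technical: carefully checking the conformal-factor identity that turns \eqref{sym} into $\psi\circ T_b=\psi$, and confirming that the reflecting hyperplanes' normals really sweep out an open set rather than lying in a proper subspace (which would not suffice, as the coordinate reflections already show).
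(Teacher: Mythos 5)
Your proposal is correct, and it is structurally the same argument as the paper's, just packaged more abstractly. Both proofs push the conformal identity \eqref{sym} through the Kelvin transform \eqref{transform-1}--\eqref{transform-2}: the paper substitutes directly and computes $|x+\tfrac{e_{n+1}}2|^2=|x^b+\tfrac{e_{n+1}}2|^2$ together with $\psi(x)/\psi(x^b)=1$, while you phrase the same two facts as the statement that $T_b=P^{-1}\circ J_b\circ P$ is an orthogonal involution of $B$ fixing the center with $\psi\circ T_b=\psi$. (The paper's Remark immediately after the lemma even records your observation that $\partial B_{\sqrt{1+|b|^2}}(b)$ is carried to a hyperplane through $-\tfrac{e_{n+1}}2$.) The two cosmetic differences are worth noting. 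First, the "cocycle identity" you flag as routine is exactly the calculation the paper actually performs; it is the only place where the specific exponent $n+\alpha-1$ matters, and in a final write-up it should be checked in full rather than only at the center. Second, the last step differs: the paper asserts that for fixed $x$ the map $b\mapsto x^b$ is one-one and onto the sphere through $x$ (which, strictly, is only onto after compactifying $\Rn$ at infinity, and is not injective at $x$ itself — but surjectivity is all that is used), whereas you argue instead that the normals of the reflections $T_b$ sweep an open hemisphere and hence generate $O(n+1)$. Your version is arguably more robust: it avoids the uncompactified bijectivity claim, and the open-set-of-normals argument can be replaced by an even more elementary one — the orbit of any $x$ under the generated group is open and closed in the sphere through $x$, hence all of it — which avoids the Lie-algebra bracket computation entirely. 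Either way the conclusion is the same, so this is a correct alternative presentation of the paper's proof rather than a genuinely independent route.
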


\begin{proof}
	Combining \eqref{sym}, \eqref{transform-1} with \eqref{transform-2}, we have
\begin{equation*}
\frac{1}{|(y,t)+e_{n+1}|^{n+\alpha-1}}\psi(x)=\frac{(1+|b|^2)^{\frac{n+\alpha-1}{2}}}{|(y,t)-b|^{n+\alpha-1}}
\frac{1}{|b+e_{n+1}+\frac{(1+|b|^2)((y,t)-b)}{|(y,t)-b|^2}|^{n+\alpha-1}}\psi(x^b),
\end{equation*}
where $$x^b=-e_{n+1}+\frac{\frac{(1+|b|^2)((y,t)-b)}{|(y,t)-b|^2}+b+e_{n+1}}{|\frac{(1+|b|^2)((y,t)-b)}{|(y,t)-b|^2}+b+e_{n+1}|^2}$$
 is the symmetry point of $b+\frac{(1+|b|^2)((y,t)-b)}{|(y,t)-b|^2}$ with respect to the sphere $\partial B_1(-e_{n+1})$.
A direct calculation yields
$$|x+\frac{e_{n+1}}{2}|^2=|x^b+\frac{e_{n+1}}{2}|^2=\frac{1}{4}-\frac{t}{|(y,t)+e_{n+1}|^2},$$ and
$$\frac{\psi(x)}{\psi(x^b)}=\big(\frac{(1+b^2)|(y,b)+e_{n+1}|^2}{|(y,t)-b|^2\cdot|b+e_{n+1}+\frac{(1+|b|^2)((y,t)-b)}{|(y,t)-b|^2}|^2}\big)^{\frac{n+\alpha-1}{2}}=1.$$
Since $b\mapsto x^b$ is a one-one and onto map from $\partial\mathbb{R}^{n+1}_+$  to $\partial B_{\sqrt{\frac{1}{4}-\frac{t}{|(y,t)+e_{n+1}|^2}}}(-\frac{e_{n+1}}{2})$, we have that  $\psi $ is radially symmetric about the center $-\frac{e_{n+1}}{2}$.
\end{proof}

\begin{remark}
	For any $b\in\partial\mathbb{R}^{n+1}_+$, the transformation \eqref{transform-1} maps the sphere $\partial B_{\sqrt{1+|b|^2}}(b)$ onto some hyperplane passing through $-\frac{e_{n+1}}{2}$ (see also Li and Zhang \cite{LZ2003}).
\end{remark}
\medskip

Write $r=|x+\frac{e_{n+1}}{2}|$ and
$
\psi(r)=\psi(x),\ x\in B.
$  Up to a positive constant multiplier, $\psi$ satisfies the following ODE
\begin{equation}\label{ode-2}
\begin{cases}
\psi''+(\frac{n}{r}-\frac{2\alpha r}{\frac{1}{4}-r^2})\psi'-\frac{\alpha(n-1+\alpha)}{\frac{1}{4}-r^2}\psi
=-\big({\frac{1}{4}-r^2}\big)^{\beta-\alpha}\psi^{p^*-1},\; r\in (0,\frac{1}{2}),\\
\psi(\frac{1}{2})=K
\end{cases}
\end{equation}
for some unknown constant $K>0$. And we also have the  boundary condition
 \begin{align}\label{boundary-alpha}
		\lim_{r\rightarrow(\frac12)^-}(\frac14-r^2)^\alpha\psi'(r)=0
		\end{align}
  for $\alpha>0$  (see Lemma \ref{n-bd} in the Appendix).
Summarizing the above analysis, we shall consider $0<\psi\in C^2[0,\frac12)\cap C^0[0,\frac12]$ satisfying the following ODE
\begin{equation}\label{eq:uni-main-eq}
\begin{cases}
\psi''(r)+(\frac{n}{r}-\frac{2\alpha r}{\frac{1}{4}-r^2})\psi'(r)-\frac{\alpha(n+\alpha-1)}{\frac{1}{4}-r^2}\psi
=- ({\frac{1}{4}-r^2})^{\beta-\alpha} \psi^{p^*-1},  \ r\in (0,\frac12),\\
\psi(\frac{1}{2})=K, \ \ \ \psi'(0)=0,\ \ \ \lim_{r\rightarrow(\frac12)^-}(\frac14-r^2)^\alpha\psi'(r)=0,\\
\end{cases}
\end{equation}
for some $K>0$.

The proof of the uniqueness result will follow from next  two propositions.

\begin{proposition}\label{uniq}
	Suppose $0< \alpha<\beta+2$ and $\beta>-1$. Then for any $K>0$, there is at most one solution $\psi\in C^2[0,\frac{1}{2})\cap C^0[0,\frac{1}{2}]$ to equation \eqref{eq:uni-main-eq}.
\end{proposition}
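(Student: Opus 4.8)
The plan is to reduce the statement to a uniqueness assertion for the \emph{singular terminal‑value problem at $r=\frac12$} and then to invoke the classical uniqueness theorem for ODEs. Let $\psi_1,\psi_2\in C^2[0,\frac12)\cap C^0[0,\frac12]$ be two solutions of \eqref{eq:uni-main-eq} with the same $K$. I claim it suffices to prove that $\psi_1\equiv\psi_2$ on some left neighborhood $(\frac12-\delta,\frac12)$. Indeed, on the open interval $(0,\frac12)$ the equation in \eqref{eq:uni-main-eq} has smooth coefficients and its right‑hand side is locally Lipschitz in $(\psi,\psi')$ near any positive value; hence the initial value problem at a point $r_0\in(\frac12-\delta,\frac12)$, where $\psi_1$ and $\psi_2$ share the same Cauchy data, has a unique maximal solution, which is then both $\psi_1$ and $\psi_2$ on all of $(0,\frac12)$, so $\psi_1\equiv\psi_2$ there and on $[0,\frac12]$ by continuity. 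Observe that only the two conditions at $r=\frac12$, namely $\psi_i(\frac12)=K$ and $\lim_{r\to(1/2)^-}(\frac14-r^2)^\alpha\psi_i'(r)=0$, enter the argument; the condition $\psi_i'(0)=0$, which makes the problem over‑determined, is irrelevant for uniqueness (it is relevant only for existence).

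To establish the local coincidence near $r=\frac12$ I would run a Gronwall‑type estimate on the difference. Put $h=\psi_1-\psi_2$ and $g=(\frac14-r^2)^\alpha h'$, and write $c(r)=(\psi_1^{p^*-1}-\psi_2^{p^*-1})/(\psi_1-\psi_2)$, interpreted as $(p^*-1)\psi_1^{p^*-2}$ on $\{\psi_1=\psi_2\}$; the function $c$ is bounded on $[0,\frac12]$ because $\psi_1,\psi_2$ are positive and continuous there. Solving \eqref{eq:uni-main-eq} for $\psi''$ and subtracting, $(h,g)$ obeys the first‑order system
\[
h'(r)=\big(\tfrac14-r^2\big)^{-\alpha}g(r),\qquad
g'(r)+\tfrac{n}{r}g(r)=\Big[\alpha(n+\alpha-1)\big(\tfrac14-r^2\big)^{\alpha-1}-\big(\tfrac14-r^2\big)^{\beta}c(r)\Big]h(r),
\]
together with $h(\frac12)=0$ and $g(r)\to0$ as $r\to(\frac12)^-$. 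Integrating backward from $r=\frac12$ turns this into a coupled pair of Volterra integral equations. A preliminary bootstrap of the $g$‑equation for a single solution $\psi_i$, using only the boundedness of $\psi_i$, gives the decay $|(\frac14-r^2)^\alpha\psi_i'(r)|\le C(\frac12-r)^{\mu}$ near $r=\frac12$ with $\mu:=\min(\alpha,\beta+1)$; since $0<\alpha<\beta+2$ one has $\mu>\alpha-1$, so $(\frac14-r^2)^{-\alpha}g$ is integrable at $r=\frac12$ and $|h(r)|\le C(\frac12-r)^{\mu-\alpha+1}$. Feeding these weighted bounds back into the two integral equations — where $\beta>-1$ and $\alpha<\beta+2$ ensure that the arising integrals $\int_0^s\sigma^{\mu-\alpha}\,d\sigma$ and $\int_0^s\sigma^{\beta+\mu-\alpha+1}\,d\sigma$ converge — one obtains an estimate of the form $G\le C(\delta+\delta^{\beta-\alpha+2})G$ for the weighted norm $G=\sup_{r\in(1/2-\delta,1/2)}|g(r)|(\frac12-r)^{-\mu}$. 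Choosing $\delta$ small makes the constant strictly less than $1$, forcing $G=0$, hence $g\equiv h\equiv0$ near $r=\frac12$.

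Equivalently, the heart of the argument can be phrased in Frobenius language: after the substitution $s=\frac12-r$ the linear ODE satisfied by $h$ has a regular singular point at $r=\frac12$ with indicial exponents $0$ and $1-\alpha$, the boundary condition $(\frac14-r^2)^\alpha h'\to0$ isolates the branch belonging to the exponent $0$, and that branch is determined up to a multiplicative constant which is pinned to $h(\frac12)=0$. The Gronwall estimate above is exactly what makes this rigorous, since the coefficient $(\frac14-r^2)^{\beta-\alpha}c(r)$ need not be analytic — indeed it blows up at $r=\frac12$ when $\alpha>\beta$. This degenerate endpoint is the main obstacle: the zeroth‑order coefficient $\frac{\alpha(n+\alpha-1)}{\frac14-r^2}$ is singular at $r=\frac12$ in any case, and for $\alpha>\beta$ so is the nonlinear coefficient, so one cannot argue with plain sup norms and must track the sharp weighted decay of $h$ and $h'$. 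By contrast, the interior of $(0,\frac12)$ carries smooth coefficients and the other singular endpoint $r=0$ plays no role in the uniqueness argument.
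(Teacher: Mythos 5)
Your proof is correct and follows essentially the same strategy as the paper: a Volterra/Gronwall contraction near the degenerate endpoint $r=\tfrac12$, driven by the terminal data $\psi(\tfrac12)=K$ and $\lim_{r\to(1/2)^-}(\tfrac14-r^2)^\alpha\psi'=0$, followed by propagation into the interior. The paper obtains the same small factor $C\bigl(\varepsilon+\varepsilon^{\beta+2-\alpha}\bigr)<1$ a bit more directly, by integrating the self-adjoint form $\bigl(r^n(\tfrac14-r^2)^\alpha\omega'\bigr)'=\cdots$ twice and estimating the resulting double integral against the plain sup norm of $\omega=\psi_1-\psi_2$ on $[\tfrac12-\varepsilon,\tfrac12]$, which sidesteps your preliminary decay bootstrap for $g=(\tfrac14-r^2)^\alpha h'$ and the weighted norm $G$.
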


\begin{proof}
Let $$c(r)=r^n(\frac{1}{4}-r^2)^\alpha,$$
and $\psi$ be any solution of \eqref{eq:uni-main-eq}, then we have
	\begin{eqnarray*}
		(c(r)\psi'(r))'&=&c(r)\big(\psi''(r)+(\frac{n}{r}-\frac{2\alpha r}{\frac{1}{4}-r^2})\psi'(r)\big)\\
		&=&c(r)\big\{\frac{\alpha(n+\alpha-1)}{\frac{1}{4}-r^2}\psi(r)
		-({\frac{1}{4}-r^2})^{\beta-\alpha}\psi(r)^{p^*-1}\big\}\\
		&=&r^n\big\{\alpha(n+\alpha-1)(\frac{1}{4}-r^2)^{\alpha-1}\psi(r)
		-(\frac{1}{4}-r^2)^{\beta}\psi(r)^{p^*-1}\big\}.
	\end{eqnarray*}
	For any fixed $K>0$, assume that $\psi_1,\psi_2\in C^2[0,\frac{1}{2})\cap C^0[0,\frac{1}{2}]$ are two solutions to \eqref{eq:uni-main-eq}.
Write $$\omega=\psi_1-\psi_2.$$  It is easy to verify
	\begin{align}\label{w-equ}
		(c(r)\omega'(r))'=r^n\big\{\alpha(n+\alpha-1)(\frac{1}{4}-r^2)^{\alpha-1}\omega(r)
		-\big({\frac{1}{4}-r^2}\big)^{\beta}\omega(r)h(r)\big\}
	\end{align}
for $0<r<\frac12$, where $h(r)=(p^*-1)\big(\theta(r)\psi_1(r)+(1-\theta(r))\psi_2(r)\big)^{\frac{2(\beta+2-\alpha)}{n+\alpha-1}}$ is bounded on $[0,\frac{1}{2}]$. Since $$\lim_{r\rightarrow (\frac{1}{2})^-}(\frac{1}{4}-r^2)^\alpha\frac{\partial \omega}{\partial r}=0$$
for $\alpha>0$,
integrating \eqref{w-equ}, we have
$$c(r)\omega'(r)=-\int_{r}^{\frac{1}{2}}s^n\big\{\alpha(n+\alpha-1)(\frac{1}{4}-s^2)^{\alpha-1}
-(\frac{1}{4}-s^2)^{\beta}h(s)\big\}\omega(s)ds$$
 for $0<r<\frac12$. That is,
$$\omega'(r)=-\frac{1}{r^n(\frac{1}{4}-r^2)^\alpha}\int_{r}^{\frac{1}{2}}s^n\big\{\alpha(n+\alpha-1)(\frac{1}{4}-s^2)^{\alpha-1}
	-(\frac{1}{4}-s^2)^{\beta}h(s)\big\}\omega(s)ds$$
for $0<r<\frac12$. Integrating the above on $(r,\frac12)$  and noting $\omega(\frac{1}{2})=0$, we have
	\begin{eqnarray*}
		\omega(r)=\int_r^\frac{1}{2}\frac{1}{\tau^n(\frac{1}{4}-\tau^2)^\alpha}\int_{\tau}^{\frac{1}{2}}s^n\big\{\alpha(n+\alpha-1)(\frac{1}{4}-s^2)^{\alpha-1}
		-(\frac{1}{4}-s^2)^{\beta}h(s)\big\}\omega(s)dsd\tau.
	\end{eqnarray*}
Let $\varepsilon\in(0,\frac14)$. For $0< \alpha<\beta+2$ and $\beta>-1$,  we have
	\begin{align*}
	&	\sup_{r\in [\frac{1}{2}-\varepsilon,\frac{1}{2}]}|\omega(r)|\\
	\leq&C\sup_{r\in [\frac{1}{2}-\varepsilon,\frac{1}{2}]}|\omega(r)|\sup_{r\in [\frac{1}{2}-\varepsilon,\frac{1}{2}]}\int_r^\frac{1}{2}\frac{1}{(\frac{1}{2}-\tau)^\alpha}\int_{\tau}^{\frac{1}{2}}\big((\frac{1}{2}-s)^{\alpha-1}+(\frac{1}{2}-s)^\beta\big)dsd\tau\\
	\leq&C\sup_{r\in [\frac{1}{2}-\varepsilon,\frac{1}{2}]}|\omega(r)|\sup_{r\in [\frac{1}{2}-\varepsilon,\frac{1}{2}]}\big((\frac{1}{2}-r)+(\frac{1}{2}-r)^{\beta+2-\alpha}\big)\\
	\leq&C\big(\varepsilon+\varepsilon^{\beta+2-\alpha}\big)\sup_{r\in [\frac{1}{2}-\varepsilon,\frac{1}{2}]}|\omega(r)|,
	\end{align*}
	where $C$ is a constant independent of $\varepsilon$. Choose $\varepsilon>0$ small enough, such that $C\big(\varepsilon+\varepsilon^{\beta+2-\alpha}\big)<1$, then $$\omega(r)=0,\quad \forall r\in [\frac{1}{2}-\varepsilon,\frac{1}{2}].$$
We claim that $\omega\equiv 0$ on $[0,\frac12]$. Otherwise, take
	$$r_0=\inf\{r\in[0,\frac{1}{2}]:\, \omega(s)=0,\;\forall s\in[r,\frac{1}{2}]\}.$$
Obviously, $0<r_0<\frac12-\varepsilon$ and $\omega(r_0)=0$. Analyzing as before, we can show that there is a small $\varepsilon_0>0$, such that $\omega(r)=0$ on $[r_0-\varepsilon_0,r_0]$, which contradicts  the definition of $r_0$. Hence, we obtain the uniqueness.
	
\end{proof}

\begin{proposition}\label{thm:uniqueness-main} Suppose $\alpha>0,\ \beta>-1,\  \frac{n-1}{n+1}\beta\le \alpha<\beta+2$. If $n=1$, we also assume
\[\frac{1-(1-\alpha)^2}{4}\leq \frac{\alpha(2+\beta)}{(\alpha+\beta+2)^2}.\]
Then there exists at most one $K$ such that \eqref{eq:uni-main-eq} has a solution $0<\psi\in C^2[0,\frac12)\cap C^0[0,\frac12]$.
\end{proposition}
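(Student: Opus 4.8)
The plan is to argue by contradiction. Suppose $\psi_1,\psi_2\in C^2[0,\tfrac12)\cap C^0[0,\tfrac12]$ are two positive solutions of \eqref{eq:uni-main-eq} with boundary values $\psi_1(\tfrac12)=K_1$ and $\psi_2(\tfrac12)=K_2$; after relabelling assume $K_1<K_2$. Keeping the notation $c(r)=r^n(\tfrac14-r^2)^\alpha$ from the proof of Proposition \ref{uniq}, the first step is to exploit the Wronskian $W(r)=c(r)\bigl(\psi_1'(r)\psi_2(r)-\psi_1(r)\psi_2'(r)\bigr)$. Since $(c\psi_i')'=r^n\{\alpha(n+\alpha-1)(\tfrac14-r^2)^{\alpha-1}\psi_i-(\tfrac14-r^2)^{\beta}\psi_i^{p^*-1}\}$, a direct computation gives $W'(r)=r^n(\tfrac14-r^2)^{\beta}\,\psi_1(r)\psi_2(r)\bigl(\psi_2(r)^{p^*-2}-\psi_1(r)^{p^*-2}\bigr)$, which (because $p^*>2$) is strictly positive wherever $\psi_1<\psi_2$ and strictly negative wherever $\psi_1>\psi_2$. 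On the other hand $W(0^+)=0$ because $\psi_i'(0)=0$ and $c(0)=0$, while $W(\tfrac12^-)=0$ because $\psi_i(r)\to K_i$ and $c(r)\psi_i'(r)=r^n(\tfrac14-r^2)^{\alpha}\psi_i'(r)\to0$ by the boundary condition built into \eqref{eq:uni-main-eq}. Hence $\psi_1<\psi_2$ cannot hold on all of $(0,\tfrac12)$, so the two profiles must intersect; by uniqueness for the (regular) ODE on $(0,\tfrac12)$ every intersection is transversal, and near $r=\tfrac12$ one has $\psi_1<\psi_2$. This pins down the structure of the crossing set and, at the last crossing point $r_\ast\in(0,\tfrac12)$, gives $\psi_1(r_\ast)=\psi_2(r_\ast)$ and $\psi_1'(r_\ast)<\psi_2'(r_\ast)$, hence a definite sign for $W(r_\ast)$ and, via the monotonicity of $W$, control of $c\psi_i'$ throughout $(r_\ast,\tfrac12)$.

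The second step is a Pohozaev--Rellich identity for the ODE. One multiplies the equation $(c\psi')'=r^n\{\alpha(n+\alpha-1)(\tfrac14-r^2)^{\alpha-1}\psi-(\tfrac14-r^2)^{\beta}\psi^{p^*-1}\}$ by a suitable multiplier of the form $a(r)\psi'(r)+b(r)\psi(r)$ --- the natural candidate being built from the generator of the scaling symmetry $u\mapsto\lambda^{(n+\alpha-1)/2}u(\lambda\,\cdot\,)$ of \eqref{u-equ} --- chosen so that the contributions of the critical nonlinearity cancel after integration over an interval ending at $\tfrac12$; this is exactly where $p^*=\tfrac{2(n+\beta+1)}{n+\alpha-1}$ is used. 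One is then left with an identity relating the boundary data at the endpoints to a manifestly signed interior integral. Using the two conditions $\psi(\tfrac12)=K$ and $\lim_{r\to1/2^-}(\tfrac14-r^2)^{\alpha}\psi'(r)=0$, the boundary term at $r=\tfrac12$ collapses to an explicit strictly monotone function of $K$ (up to the common constant), while at $r=0$ the weight $r^n$ forces the remaining boundary term to vanish when $n\ge2$, and to be merely borderline when $n=1$. Feeding in the sign information on $W(r_\ast)$ and on $W$ along $(r_\ast,\tfrac12)$ from the first step, the Pohozaev identities for $\psi_1$ and for $\psi_2$ become comparable and yield $K_1\ge K_2$, contradicting $K_1<K_2$. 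Together with Proposition \ref{uniq} this gives the asserted uniqueness of $K$.

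The main obstacle is closing this last comparison when $n=1$. For $n\ge2$ the strong vanishing of $r^n$ at the origin, combined with $p^*\le 2^\ast=\tfrac{2(n+1)}{n-1}$, makes the quadratic form in the boundary data that decides the sign automatically nonnegative; but for $n=1$ the weight is only $r$, the origin contribution is borderline, and the critical exponent $p^*=\tfrac{2(\beta+2)}{\alpha}$ is unbounded, so the ``gain'' $\tfrac{2p^*}{(p^*+2)^2}$ produced by the nonlinearity can be arbitrarily small. Tracking the constants, the sign one actually needs is that a Hardy-type constant associated with the degenerate boundary --- which works out to $\tfrac14\bigl(1-(1-\alpha)^2\bigr)=\tfrac{\alpha(2-\alpha)}{4}$ --- does not exceed $\tfrac{2p^*}{(p^*+2)^2}=\tfrac{\alpha(2+\beta)}{(\alpha+\beta+2)^2}$, which is precisely the extra hypothesis \eqref{constraint} imposed for $n=1$. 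So the plan reduces to: (i) the Wronskian analysis of the crossing set (routine); (ii) the derivation and the collapse of the critical terms in the Pohozaev identity (routine exponent bookkeeping); and (iii) the borderline $n=1$ estimate showing that the combined boundary quadratic form has the right sign under the displayed inequality --- this last point being the genuinely delicate step.
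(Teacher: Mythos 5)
Your Step 1 reproduces what the paper does in its ``Claim 1'' (the Wronskian shows the two profiles must cross), and your identification of the $n=1$ constraint as $\frac{1-(1-\alpha)^2}{4}\le\frac{2p^*}{(p^*+2)^2}$ is exactly the Mancini--Sandeep threshold used in the paper. But Steps 2--3 have a genuine gap. The paper's actual proof first passes to the hyperbolic variable $v(s)=w(\tanh\tfrac s2)$ and then uses the Mancini--Sandeep energy function $\mathcal{E}_{\hat v}(s)$ with $\hat v=(\sinh^a s)v$; this \emph{is} a Pohozaev-type quantity, but the crucial point is that $\tfrac{d}{ds}\mathcal{E}_{\hat v}=\tfrac12 G'(s)\hat v^2$, and $G'$ does \emph{not} have a fixed sign in general --- in cases (A1) with $A>0$ and (A2) with $A<0$ it changes sign at some $c>0$, forcing the three-case analysis in the paper. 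Your claim that the multiplier can be ``chosen so that the contributions of the critical nonlinearity cancel'' leaving ``a manifestly signed interior integral'' is not correct: the nonlinear term is retained inside $\mathcal{E}$, and the signed structure comes only after comparing $\mathcal{E}_{\hat v_1}$ and $\gamma^2\mathcal{E}_{\hat v_2}$ and exploiting monotonicity of the ratio $\gamma=v_1/v_2$.

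That monotonicity of the ratio is exactly what your proposal does not establish. It is not automatic even after the Wronskian step: a priori $\psi_1/\psi_2$ can oscillate, giving a chain of crossing points accumulating at $0$. The paper handles this as ``Claim 2'' ($\tilde r_0=0$), and the proof of Claim 2 crucially uses Mancini's separation lemma (Lemma \ref{mancini}, quoted as \cite[Lemma 4.1]{Mancini}) to show consecutive crossings cannot accumulate, combined with the two endpoint limits of $\mathcal{E}_{\hat v_1}-\gamma^2\mathcal{E}_{\hat v_2}$ computed in Lemma \ref{E-var-S}. Your sketch, which goes directly from ``there is a last crossing $r_\ast$'' to ``the Pohozaev identities become comparable,'' skips precisely this oscillation control, and it also silently assumes for $n\ge2$ the interior term is nonnegative, whereas the paper must separately exclude the case $A<0$, $p^*<2^*$ and use Remark \ref{no-solu} to dispose of $A\le0$, $p^*=2^*$. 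In short: the Wronskian crossing step and the identification of the $n=1$ constraint are right, but the heart of the argument --- controlling the crossing structure and handling the sign changes of the Pohozaev density --- is missing, and without it the comparison ``$K_1\ge K_2$'' does not follow.
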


To prove Proposition \ref{thm:uniqueness-main}, we shall use some known results as follows.

Let  $w(r)=(\frac{1-r^2}4)^{\frac{n+\alpha-1}{2}}\psi(\frac r 2)$. For $r \in [0, 1)$,  $w(r)$ satisfies

\begin{align}\label{5.12}
    \left[\frac{1-r^2}{2}\right]^2&(w''+\frac{n}{r}w')+(n-1)\frac{1-r^2}{2}rw'+\frac{n^2-(1-\alpha)^2}{4}w=-w^{p^*-1}.
\end{align}
We also have $w'(0)=0$.

Now we view $w$ as a positive radial function defined  on the unit disc $B^{n+1}$. The above equation actually can be interpreted in hyperbolic space. That is:  if $B^{n+1}=\mathbb{H}^{n+1}$ is equipped with the hyperbolic  metric $4/(1-|x|^2)|dx|^2$,  the above equation is equivalent to  (for example, see \cite[pg. 666]{Mancini}):
\begin{align}\label{hy-1}
    \Delta_{\mathbb{H}} w+\frac{n^2-(1-\alpha)^2}{4}w=-w^{p^*-1}.
\end{align}
Such an equation was already studied in Mancini and Sandeep\cite{Mancini},  in which  Kwong and Li's method in  \cite{Kwong92} was used. We will borrow some of their arguments to establish our uniqueness result.

Define
\begin{align}\label{v-psi}
v(s):=w(\tanh \frac s2)=\frac{e^\frac{n+\alpha-1}{2}s}{(e^s+1)^{n+\alpha-1}}\psi(\frac{\tanh\frac{s}{2}}{2}),\ s\geq 0,
\end{align}  and
$$q(s)= (\sinh s)^n,\ s\geq 0.$$
Equation \eqref{5.12} can be written as
\begin{equation}\label{hy-2}
v''+\frac{n}{\tanh s} v' +\frac{n^2-(1-\alpha)^2}4 v+v^{p^*-1}=0,\ \ \ \ v'(0)=0,
\end{equation}
or
\begin{equation}\label{q}
	(qv')'+\frac{n^2-(1-\alpha)^2}4 qv+qv^{p^*-1}=0,\ \ \ \ v'(0)=0.
\end{equation}
Noting that $\psi(r)$ is bounded, the asymptotic behavior of $v(s)$ at infinity  is
\begin{equation}\label{hy-3}
\lim_{s \to \infty}  {v(s)}\cdot {e^{\frac{(n+\alpha-1)s}2}} =\psi(\frac{1}{2})=K>0.
\end{equation}

The uniqueness result in Mancini and Sandeep \cite{Mancini} is as follows:
\begin{proposition}\label{mancini-thm}
	Assume $n\geq 2,\ 2<p\leq \frac{2(n+1)}{n-1},\ \lambda\leq \frac{n^2}{4}$ or $n= 1,\ p>2,\ \lambda\leq \frac{2p}{(p+2)^2}$, then the equation
	\begin{align}\label{mancini-equ}
	v''+\frac{n}{\tanh s}v'+\lambda v+v^{p-1}=0,\quad s>0, \quad v'(0)=0
	\end{align}
	has at most one positive solution satisfying
	\begin{equation}\label{bound}
	\begin{cases}
	\int_{0}^{\infty}q[|v'|^2+|v|^2]ds<\infty, &\ \text{for }\lambda< \frac{n^2}{4},\\
	\int_{0}^{\infty}q[(v'+\frac{n}{2}(\tanh\frac{s}{2})v)^2+\frac{nv^2}{(2\cosh\frac{s}{2})^2}]ds<\infty,&\ \text{for }\lambda= \frac{n^2}{4}.
	\end{cases}
	\end{equation}
\end{proposition}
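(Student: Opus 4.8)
The plan is to follow the Kwong--Li shooting scheme \cite{Kwong92} underlying the uniqueness theorem of Mancini and Sandeep \cite{Mancini}. First I would view \eqref{mancini-equ} as an initial value problem with shooting parameter $a=v(0)>0$: the point $s=0$ is a regular singular point at which $v'(0)=0$ is forced by regularity, so standard ODE theory gives, for each $a>0$, a unique maximal solution $v_a$ depending real-analytically on $a$, and the energy $E(s)=\frac12(v_a')^2+\frac{\lambda}{2}v_a^2+\frac1p v_a^p$ satisfies $E'=-\frac{n}{\tanh s}(v_a')^2\le 0$. On its interval of positivity each $v_a$ has exactly one of three behaviours: it vanishes at a finite $s$; or it stays positive and decays like $e^{-\gamma_+ s}$; or it stays positive and decays like $e^{-\gamma_- s}$, where $\gamma_\pm=\frac n2\pm\sqrt{\frac{n^2}{4}-\lambda}$ are the roots of $\gamma^2-n\gamma+\lambda=0$ (with a logarithmic factor when $\lambda=\frac{n^2}{4}$). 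Since $q(s)=(\sinh s)^n\sim 2^{-n}e^{ns}$, the weighted integrability in \eqref{bound} holds exactly for the ``fast'' solutions $v_a\sim c\,e^{-\gamma_+ s}$ (a ``slow'' solution gives $\int^\infty q\,v_a^2\,ds\sim\int^\infty e^{(n-2\gamma_-)s}\,ds=\infty$ when $\lambda<\frac{n^2}{4}$), and the modified quantity in the $\lambda=\frac{n^2}{4}$ line of \eqref{bound} is precisely the one that stays finite in the borderline case. Hence the theorem reduces to showing that at most one value of $a$ yields a fast-decay positive solution.

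For the uniqueness step I would differentiate in the shooting parameter. The function $\phi_a:=\partial_a v_a$ solves the linearised equation $(q\phi')'+\lambda q\phi+(p-1)q\,v_a^{p-2}\phi=0$ with $\phi_a(0)=1,\ \phi_a'(0)=0$, and the heart of the Kwong--Li method is to count the zeros of $\phi_a$: after writing $\phi=v_a\xi$ (so that $\xi$ solves a self-adjoint equation without zeroth-order term) or, equivalently, tracking the phase angle of $(\sqrt q\,\phi,\sqrt q\,\phi')$, a Sturm comparison shows that along a fast-decay solution $\phi_a$ changes sign exactly once on $(0,\infty)$, and this is possible for at most one $a$. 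A convenient way to package this is to introduce the Pohozaev-type functional $N(a)=\lim_{s\to\infty}\big[\text{weighted boundary term in }v_a,v_a'\big]$ attached to the conformal/scaling structure of $-\Delta_{\mathbb{H}^{n+1}}-\lambda$, prove it is strictly monotone in $a$ along positive solutions using the sign of the linearised flow, and note that fast decay forces $N(a)$ to equal a fixed value; monotonicity then gives at most one such $a$. The Wronskian $W=q(v_1'v_2-v_1v_2')$ for two candidate solutions satisfies $W(0)=W(\infty)=0$ and $W'=q\,v_1v_2(v_2^{p-2}-v_1^{p-2})$, which already settles the case where $v_1-v_2$ has constant sign, but not the general one; ruling out crossings is exactly what the zero-count for $\phi_a$ achieves.

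The restrictions $\lambda\le\frac{n^2}{4}$ for $n\ge2$ and $\lambda\le\frac{2p}{(p+2)^2}$ for $n=1$ enter precisely at this Sturm-comparison step: they are the conditions under which the quadratic form governing the oscillation of the linearised equation (near $s=0$ and near $s=\infty$) has the right sign, so that no extra zeros of $\phi_a$ can appear. The case $\lambda=\frac{n^2}{4}$ also requires handling the $e^{-\frac n2 s}$-with-logarithm asymptotics separately, which is why \eqref{bound} is stated with the altered integral there. I expect the zero-counting for $\phi_a$ — equivalently, the strict monotonicity of $N(a)$ — to be the main obstacle, since it is the technical core of the Kwong--Li scheme and typically needs explicit sub- and supersolutions for the linearised problem or a careful phase-plane analysis; the remaining ingredients (well-posedness of the IVP, monotonicity of the energy, the trichotomy of asymptotic behaviours, and the reduction to the fast-decay solutions) are comparatively routine.
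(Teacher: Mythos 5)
The paper does not prove Proposition \ref{mancini-thm}: it is quoted directly from Mancini and Sandeep \cite{Mancini} and used as a black box (together with Lemmas \ref{v'} and \ref{mancini}), and the same machinery is re-run in the paper's own proof of Proposition \ref{thm:uniqueness-main}. So there is no in-paper proof of this statement to compare against, but there is a clear ``native'' method: one assumes two distinct positive solutions $v_1,v_2$, passes to $\hat v=(\sinh s)^a v$ and the modified energy $\mathcal E_{\hat v}(s)$, exploits the identity $\frac{d}{ds}\mathcal E_{\hat v}=\tfrac12 G'\hat v^2$, shows that the ratio $\gamma=v_1/v_2$ is eventually monotone after its last critical point, and derives a contradiction by comparing the limits of $\mathcal E_{\hat v_1}-\gamma^2\mathcal E_{\hat v_2}$ as $s\to 0^+$ and $s\to\infty$; repeated sign changes of $\gamma'$ are ruled out by the non-accumulation estimate in Lemma \ref{mancini}. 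The thresholds $\lambda\le n^2/4$ for $n\ge 2$ and $\lambda\le 2p/(p+2)^2$ for $n=1$ appear exactly as the conditions ensuring $G'$ has a usable sign (the cases (A1)--(A3) in Section 5).

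Your plan is a different and classical route---the shooting-parameter method: track $a=v(0)$, count zeros of $\phi_a=\partial_a v_a$, and deduce strict monotonicity of a Pohozaev functional $N(a)$. As a reconstruction of what \cite{Mancini} actually does, that is not it; and as a standalone proof it has a genuine gap exactly where you flag one. The zero-count for $\phi_a$ is precisely the step whose analogue is absorbed, in \cite{Mancini}, into the sign analysis of $G'$ and the energy boundary terms; your sketch gives no sub- or supersolution, no phase-plane reduction, and no indication of where the strictly stronger $n=1$ threshold $\lambda\le 2p/(p+2)^2<1/4$ would come from---a naive Sturm comparison against the large-$s$ linearisation produces only $\lambda\le n^2/4$ and does not see the extra constraint near $s=0$. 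Likewise $N(a)$ is never defined, so its claimed monotonicity and its limiting values (which are delicate in the borderline case $\lambda=n^2/4$, where both characteristic exponents collapse to $-n/2$ with a logarithmic correction and \eqref{bound} is deliberately re-weighted) cannot be checked. The remaining ingredients you list (Frobenius well-posedness of the IVP at the removable singularity $s=0$, energy monotonicity, the fast/slow dichotomy) are indeed routine; the core of the argument is left open.
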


Take $\lambda=\frac{n^2-(\alpha-1)^2}{4}$ and $p=p^*$ in Proposition \ref{mancini-thm}, then the case $\alpha\geq 1$ in Proposition \ref{thm:uniqueness-main} can be obtained by the following Lemma.

\begin{lemma}\label{alpha>1}
	Assume $n\geq 1,\ \alpha\geq 1,\ \beta>-1$,  $\psi\in C^2[0,\frac12)\cap C^0[0,\frac12]$ is a positive solution to  \eqref{eq:uni-main-eq}, and  $v$ satisfies \eqref{v-psi}. Then $v$ is a positive solution to \eqref{hy-2} satisfying \eqref{bound}  with $\lambda=\frac{n^2-(\alpha-1)^2}{4}$.
\end{lemma}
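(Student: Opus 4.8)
The plan is to prove the two assertions of the lemma separately. That $v$ solves \eqref{hy-2} with $\lambda=\frac{n^2-(\alpha-1)^2}{4}$ requires no new work: it is exactly the change of variables $w(r)=(\frac{1-r^2}{4})^{\frac{n+\alpha-1}{2}}\psi(\frac r2)$, $v(s)=w(\tanh\frac s2)$ carried out in \eqref{5.12}--\eqref{v-psi}, while positivity of $v$ and $v'(0)=0$ follow from $\psi>0$, $\psi'(0)=0$ and positivity of the prefactor $(2\cosh\frac s2)^{-(n+\alpha-1)}$. Since $\alpha\ge1$ gives $\lambda\le n^2/4$ with equality iff $\alpha=1$, the two cases of \eqref{bound} correspond to $\alpha>1$ and $\alpha=1$. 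Moreover $v,v'$ are bounded and $q(s)=(\sinh s)^n\sim s^n$ near $s=0$, so \eqref{bound} is only at stake as $s\to\infty$, equivalently as $r:=\tfrac12\tanh\tfrac s2\to(\tfrac12)^-$.

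The heart of the matter is a quantitative decay estimate for $\psi'$ near $r=\tfrac12$ — the boundary condition $\lim_{r\to(1/2)^-}(\tfrac14-r^2)^\alpha\psi'(r)=0$ is only qualitative, so I would bootstrap through the equation as in the proof of Proposition \ref{uniq}. With $c(r)=r^n(\tfrac14-r^2)^\alpha$, equation \eqref{eq:uni-main-eq} reads
$$(c(r)\psi'(r))'=r^n\Big[\alpha(n+\alpha-1)(\tfrac14-r^2)^{\alpha-1}\psi(r)-(\tfrac14-r^2)^\beta\psi(r)^{p^*-1}\Big].$$
Since $\psi\in C^0[0,\tfrac12]$ is bounded and $c(r)\psi'(r)\to0$ as $r\to(\tfrac12)^-$ (the boundary condition and $r^n\to(\tfrac12)^n$), integrating over $(r,\tfrac12)$ and using $\alpha>0$, $\beta>-1$ gives $|c(r)\psi'(r)|\le C\big[(\tfrac12-r)^\alpha+(\tfrac12-r)^{\beta+1}\big]$ for $r$ near $\tfrac12$; dividing by $c(r)\asymp(\tfrac12-r)^\alpha$ yields
$$|\psi'(r)|\le C\big(1+(\tfrac12-r)^{\beta+1-\alpha}\big),\qquad r\ \text{near}\ \tfrac12,$$
while $\psi'$ is bounded on $[0,\tfrac12-\delta]$ by the assumed $C^2$ regularity.

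The rest is bookkeeping in $s$, using $v(s)=(2\cosh\frac s2)^{-(n+\alpha-1)}\psi(r)$, $\tfrac14-r^2=(2\cosh\frac s2)^{-2}$, $\tfrac12-r\asymp(2\cosh\frac s2)^{-2}\asymp e^{-s}$, and $q(s)(2\cosh\frac s2)^{-2(n+\alpha-1)}=2^{-(n+2\alpha-2)}\tanh^n\tfrac s2\,(\cosh\tfrac s2)^{-2(\alpha-1)}$. Differentiating, $v'(s)=(2\cosh\frac s2)^{-(n+\alpha-1)}\big[-\tfrac{n+\alpha-1}{2}\tanh\tfrac s2\,\psi(r)+(4\cosh^2\tfrac s2)^{-1}\psi'(r)\big]$. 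When $\alpha>1$ ($\lambda<n^2/4$): the $\psi'$-term in the bracket is $O\big((\cosh\tfrac s2)^{-2}+(\cosh\tfrac s2)^{-2(\beta+2-\alpha)}\big)=o(1)$ since $\beta+2-\alpha>0$, so the bracket is bounded, whence $q|v'|^2=O\big((\cosh\tfrac s2)^{-2(\alpha-1)}\big)=O(e^{-(\alpha-1)s})$ and likewise $qv^2=O(e^{-(\alpha-1)s})$; with the behavior near $s=0$ this gives $\int_0^\infty q(|v'|^2+v^2)\,ds<\infty$. When $\alpha=1$ ($\lambda=n^2/4$): here $v=(2\cosh\frac s2)^{-n}g$ with $g(s):=\psi(r)$, and since $\frac d{ds}(2\cosh\frac s2)^{-n}=-\tfrac n2\tanh\tfrac s2\,(2\cosh\frac s2)^{-n}$, one gets $v'+\tfrac n2\tanh\tfrac s2\,v=(2\cosh\frac s2)^{-n}g'$, so $q\big(v'+\tfrac n2\tanh\tfrac s2\,v\big)^2=2^{-n}\tanh^n\tfrac s2\,(g')^2\le2^{-n}(g')^2$ and $q\,\frac{nv^2}{(2\cosh\frac s2)^2}=\frac n{2^n}\tanh^n\tfrac s2\,\frac{g^2}{4\cosh^2\frac s2}\le Ce^{-s}$ ($g$ bounded). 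The last is integrable; for the first, $g'(s)=\psi'(r)(4\cosh^2\tfrac s2)^{-1}$ and the substitution $dr=(4\cosh^2\tfrac s2)^{-1}ds=(\tfrac14-r^2)\,ds$ converts $\int^\infty(g')^2ds$ into $\int^{1/2}(\psi'(r))^2(\tfrac14-r^2)\,dr\le C\int^{1/2}\big[(\tfrac12-r)+(\tfrac12-r)^{2\beta+1}\big]dr<\infty$ (here $\beta+1-\alpha=\beta$ and $\beta>-1$). Together with the contribution near $s=0$ this gives \eqref{bound} in the critical case.

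The main obstacle is precisely this boundary gradient estimate combined with the borderline case $\alpha=1$, $\lambda=n^2/4$: there $qv^2$ is no longer integrable, so one must exploit the exact cancellation built into the combination $v'+\tfrac n2\tanh\tfrac s2\,v$, and finiteness ultimately rests on the endpoint condition $\beta>-1$. (An alternative is to transport the finite energy $\int_{\mathbb{R}^{n+1}_+}t^\alpha|\nabla u|^2\,dydt<\infty$ through the Kelvin transform, which would give $\int^{1/2}(\tfrac14-r^2)^\alpha r^n(\psi')^2\,dr<\infty$ directly, but the ODE route keeps the weights transparent.)
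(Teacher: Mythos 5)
Your proof is correct, and it takes a genuinely different route from the paper's at the decisive step. To establish finiteness of the weighted $\psi'$-integral, the paper multiplies the ODE \eqref{psi} by $\psi$, integrates by parts over $(0,\tfrac12)$, and uses the two boundary conditions to obtain the exact energy identity
$\int_0^{1/2}r^n(\tfrac14-r^2)^\alpha|\psi'|^2dr=\int_0^{1/2}r^n(\tfrac14-r^2)^\beta\psi^{p^*}dr-\alpha(n+\alpha-1)\int_0^{1/2}r^n(\tfrac14-r^2)^{\alpha-1}\psi^2dr$,
with both right-hand integrals visibly finite from $\alpha>0,\ \beta>-1$ and boundedness of $\psi$. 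You instead integrate the ODE once (without testing by $\psi$) and extract the pointwise decay estimate $|\psi'(r)|\le C\big(1+(\tfrac12-r)^{\beta+1-\alpha}\big)$ near $r=\tfrac12$, which you then push through to $s$-variables. Both methods rest on the same boundary condition $\lim_{r\to(1/2)^-}(\tfrac14-r^2)^\alpha\psi'=0$, and both handle $\alpha=1$ by exploiting the same cancellation in $v'+\tfrac n2\tanh\tfrac s2\,v$ (the paper gets $\int q(v'+\tfrac n2\tanh\tfrac s2\,v)^2ds=\uppercase\expandafter{\romannumeral2}$ as an exact identity; you get an upper bound by $2^{-n}\int(g')^2ds$ via $\tanh^n\tfrac s2\le1$, which reduces to the same $r$-integral up to the $r^n$ factor). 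Your approach buys a quantitative pointwise rate for $\psi'$, at the cost of implicitly invoking $\beta+2-\alpha>0$ to conclude the bracket in $v'(s)$ is bounded in the $\alpha>1$ case; the paper's energy-identity argument does not need this and is thus formally more robust. Since the lemma is only ever applied with $\frac{n-1}{n+1}\beta\le\alpha<\beta+2$ (the hypotheses of Proposition \ref{thm:uniqueness-main}), this makes no practical difference, but it would be cleaner to state the extra restriction explicitly where you use it.
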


\begin{proof}  According to the definition of $v$ in \eqref{v-psi}, it is easy to check that $v$ satisfies \eqref{hy-2}.
	By \eqref{hy-3}, we know that $q|v|^2=O(e^{(1-\alpha)s})$ at infinity, i.e.,
	\begin{align*}
	\lim_{s\to \infty}e^{-(1-\alpha)s}q|v|^2=\frac{K^2}{2^n}>0,
	\end{align*}
and then
	\begin{align}\label{bd-1}
	\int_0^\infty q|v|^2ds<\infty \ \text{ for } \alpha>1, \quad \int_0^\infty q|v|^2ds=\infty\ \text{ for } 0<\alpha\leq 1.
	\end{align}
By \eqref{v-psi},
we have
\begin{equation}\label{v-der}
	v'(s)=-\frac{(n+\alpha-1)e^{\frac{n+\alpha-1}{2}s}(e^s-1)}{2(e^s+1)^{n+\alpha}}\psi(\frac{\tanh\frac{s}{2}}{2})+\frac{e^{\frac{n+\alpha+1}{2}s}}{(e^s+1)^{n+\alpha+1}}\psi'(\frac{\tanh\frac{s}{2}}{2}).
\end{equation}
Write
\begin{eqnarray*}
\int_{0}^\infty q|v'|^2ds\leq 2(\uppercase\expandafter{\romannumeral1}+\uppercase\expandafter{\romannumeral2}),
\end{eqnarray*}
where
\begin{eqnarray*}
	\uppercase\expandafter{\romannumeral1}=\int_0^\infty \frac{(n+\alpha-1)^2}{4}\sinh^n s\frac{e^{(n+\alpha-1)s}(e^s-1)^2}{(e^s+1)^{2(n+\alpha)}}|\psi(\frac{\tanh\frac{s}{2}}{2})|^2 ds,
	\end{eqnarray*}
and
\begin{eqnarray*}
	\uppercase\expandafter{\romannumeral2}&=&\int_0^\infty \sinh^n s\big|\frac{e^{\frac{n+\alpha+1}{2}s}}{(e^s+1)^{n+\alpha+1}}\psi'(\frac{\tanh\frac{s}{2}}{2})\big|^2 ds\\
	&=&\int_0^{\frac{1}{2}}r^n(\frac14-r^2)^{\alpha}|\psi'(r)|^2dr.
\end{eqnarray*}
Similar to \eqref{bd-1}, we  can check that $\uppercase\expandafter{\romannumeral1}<\infty$ for $\alpha>1$ and $\uppercase\expandafter{\romannumeral1}=\infty$ for $0<\alpha\leq 1$.
To estimate $\uppercase\expandafter{\romannumeral2}$, multiplying $r^n(\frac14-r^2)^\alpha$ in the two sides of \eqref{eq:uni-main-eq}, we obtain
\begin{equation}\label{psi}
[r^n(\frac14-r^2)^\alpha\psi']'-\alpha(n+\alpha-1)r^n(\frac14-r^2)^{\alpha-1}\psi
=-r^n(\frac14-r^2)^\beta\psi^{p^*-1}
\end{equation}for $0<r<\frac12$.
Since $\psi'(0)=0$ and $\lim_{r\rightarrow(\frac12)^-}(\frac14-r^2)^\alpha\psi'(r)=0$, multiplying $\psi$ and integrating from $0$ to $\frac12$ on \eqref{psi}, we have
\begin{equation*}
  \uppercase\expandafter{\romannumeral2}=\int_0^{\frac12} r^n(\frac14-r^2)^\beta\psi^{p^*}dr-\alpha(n+\alpha-1)\int_0^{\frac12}r^n(\frac14-r^2)^{\alpha-1}\psi^2dr<\infty
\end{equation*}since $\beta>-1$, $\alpha>0$  and $\psi$ is bounded.
Hence, we obtian \eqref{bound} for $\lambda<\frac{n^2}{4}$. %$\alpha>1$,

 For $\alpha=1$,  that is, $\lambda=\frac{n^2}{4}$,  we have
$$\int_{0}^{\infty}q(v'+\frac{n}{2}(\tanh\frac{s}{2})v)^2ds=\uppercase\expandafter{\romannumeral2}<\infty,$$
and $\frac{nqv^2}{(2\cosh\frac s2)^2}=O(e^{-s}),\ s\gg 1$. Hence, \eqref{bound} holds.
\end{proof}

\medskip

From the proof of Lemma \ref{alpha>1}, we observe that for $0<\alpha<1$, $v$ does not satisfy \eqref{bound}. On the other hand, by utilizing \eqref{boundary-alpha} and  \eqref{hy-3}, we can still analyze the energy function introduced in \cite{Mancini} and obtain the uniqueness result. In fact, by using \eqref{boundary-alpha} and \eqref{hy-3}, we can also give a shorter proof for the case $\alpha\geq 1$.

Now we recall the energy function $\mathcal{E}_{\hat{v}}(s)$, which was introduced in \cite{Mancini}: set
$$\hat{v}(s)=(\sinh^a s)v(s),\quad s\geq 0,$$ and
\begin{eqnarray*}\label{E}
	\mathcal{E}_{\hat{v}}(s)&=&\frac12(\sinh^b s)(\hat{v}')^2+\frac{|\hat{v}|^{p^*}}{p^*}+\frac12 G\hat{v}^2\\
	&=&\frac{\sinh^{ap^*}s}{2}v^2\big[(\frac{a}{\tanh s}+\frac{v'}{v})^2+\frac{2v^{p^*-2}}{p^*}+A+\frac{B}{\sinh^2 s}\big], \quad s>0,
\end{eqnarray*}
where $$G(s)=A\sinh^b s+B\sinh^{b-2} s,\ \ s> 0,$$
$$a=\frac{2n}{p^*+2}=\frac{n(n+\alpha-1)}{2n+\alpha+\beta},\ \ b=a(p^*-2)=\frac{2n(\beta+2-\alpha)}{2n+\alpha+\beta},\ \ B=\frac{a}{2}(2-ap^*),$$
\begin{align*}
A=&\frac{n^2-(1-\alpha)^2}{4}-\frac{a^2p^*}{2}=\frac{n^2-(1-\alpha)^2}{4}-\frac{n^2(n+\alpha-1))(n+\beta+1)}{(2n+\alpha+\beta)^2}\\
=&
\frac{(\alpha+\beta)(n+\alpha-1)}{4(2n+\alpha+\beta)^2}[4n(1-\alpha)+(\alpha+\beta)(n+1-\alpha)].
\end{align*}
It is easy to verify
\begin{align*}
(\sinh^b s)\hat{v}''+\frac12 [\sinh^b s]'\hat{v}'+G(s)\hat{v}+\hat{v}^{p^*-1}=0,\ \ s>0,
\end{align*}
and
\begin{equation}\label{diff-E}\frac{d}{ds}\mathcal{E}_{\hat{v}}(s)=\frac12 G'\hat{v}^2=\frac{1}{2}[Ab\sinh^2 s+B(b-2)]\hat{v}^2\sinh^{b-3}s \cosh s,\quad s>0.
\end{equation}
For $\alpha>0,\ \beta>-1,\ \frac{n-1}{n+1}\beta\leq \alpha<\beta+2$, we have

\noindent (A1) If $n=1$,  then $ 1<ap^*<2,\  0<b<2,\ B>0.$ It holds
\begin{equation*}
\begin{cases}
&G'(s)<0,\ \ \forall s>0,\quad\text{for } A\leq 0, \\
& G'(s)(s-c)>0,\ \ \exists c>0,\ \forall s>0,\ s\neq c,\quad\text{for } A>0.
\end{cases}
\end{equation*}
(A2) If $n\geq 2$ and $p^*<2^*$,  then $n<ap^*< n+1,\, 0<b< 2,\ B<0.$   It holds
\begin{equation*}
\begin{cases}
&G'(s)>0,\ \ \forall s>0,\quad\text{for }A\geq 0,  \\
&G'(s)(s-c)<0,\ \exists\  c>0,\ \forall s>0,\ s\neq c,\quad\text{for }A<0.
\end{cases}
\end{equation*}
(A3) If  $n\geq 2$ and $p^*=2^*$(i.e. $\beta=\frac{n+1}{n-1}\alpha$),  then $ap^*=n+1,\ b=2,\ \ B<0,\ A=\frac{\alpha(2-\alpha)}{4}$. It holds
\begin{equation*}
\begin{cases}
& G'(s)>0,\quad~\text{for }A> 0,\\
& G'(s)\equiv 0,\quad \text{for } A=0,\\
& G'(s)<0,\quad \forall s>0,\quad \text{for }A< 0.
\end{cases}
\end{equation*}

\medskip
%{\color{blue} Next, we introduce the main formula we use in the proof of Proposition \ref{thm:uniqueness-main}. }
For any $s,\ \sigma,\ S$ satisfying $0\leq s\leq \sigma\leq S$, integrating $\frac{d}{d\tau}\mathcal{E}_{\hat{v}_1}(\tau)-\gamma^2(\sigma)\frac{d}{d\tau}\mathcal{E}_{\hat{v}_2}(\tau)$ on $(s,S)$, and using \eqref{diff-E},  we have
\begin{align}\label{E-difference}
&\big(\mathcal{E}_{\hat{v}_1}(S)-\gamma^2(\sigma)\mathcal{E}_{\hat{v}_2}(S)\big)-\big(\mathcal{E}_{\hat{v}_1}(s)-\gamma^2(\sigma)\mathcal{E}_{\hat{v}_2}(s)\big)\nonumber\\
=&\frac12\int^S_{s}G'(\tau)[\hat{v}^2_1(\tau)-\gamma^2(\sigma)\hat{v}^2_2(\tau)]d\tau\nonumber\\
=&\frac12\int^S_{s}G'(\tau)[\gamma^2(\tau)-\gamma^2(\sigma)]\hat{v}^2_2(\tau)d\tau.
\end{align}
We have the following estimates
\begin{lemma}\label{E-var-S}
	(1) $\lim_{\varepsilon\to 0}\big(\mathcal{E}_{\hat{v}_1}(\varepsilon)-\gamma^2(\varepsilon)\mathcal{E}_{\hat{v}_2}(\varepsilon)\big)=0,$
	and for $i=1, 2$,
	\begin{equation}\label{E-var}
	\lim_{\varepsilon\to 0}\mathcal{E}_{\hat{v}_i}(\varepsilon)=
	\begin{cases}
	\lim_{\varepsilon\to 0}\frac{(a^2+B)v_i^2(\varepsilon)}{2}\sinh^{ap^*-2}\varepsilon=+\infty,&\   n=1,\\
	0 &\  n\geq 2.
	\end{cases}
	\end{equation}
	(2)
	\begin{equation*}
	\lim_{S\to\infty}\big(\mathcal{E}_{\hat{v}_1}(S)-\gamma^2(S)\mathcal{E}_{\hat{v}_2}(S)\big)
=
\begin{cases} \frac{K_1^2(K_1^{p^*-2}-K_2^{p^*-2})}{p^*2^{ap^*}}>0,& \alpha+\beta=0,\\
	+ \infty, & \alpha+\beta<0,\\
	0,&\alpha+\beta>0.
	\end{cases}
	\end{equation*} In particular, for $\alpha> 1$, and  $i=1, 2$,
	\begin{equation}\label{E-S}
	\lim_{S\to\infty}\mathcal{E}_{\hat{v}_i}(S)=\begin{cases}
	0,\ & A<0,\\
	\frac{K_i^2(\alpha-1)^2}{2^{ap^*+1}},\ & A=0,\\
	+\infty ,\ & A>0.
	\end{cases}
	\end{equation}
\end{lemma}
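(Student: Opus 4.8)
The plan is to compute the two limits directly from the asymptotic behavior of $v_i$ at $s=0$ and at $s=\infty$, using the explicit formula \eqref{v-psi} together with the boundary data $\psi_i'(0)=0$, $\psi_i(\tfrac12)=K_i$ and the boundary condition \eqref{boundary-alpha}. The key inputs are: (i) near $s=0$, $v_i(s)=v_i(0)+o(1)$ with $v_i'(s)\to 0$, so $\hat v_i=(\sinh^a s)v_i\sim (v_i(0)/1)\, s^a$ and $\hat v_i'\sim a\, v_i(0)\, s^{a-1}$; (ii) the exponents satisfy $b=a(p^*-2)$, so that $\sinh^b s\,(\hat v_i')^2$, $|\hat v_i|^{p^*}$ and $G\hat v_i^2 = (A\sinh^b s+B\sinh^{b-2}s)\hat v_i^2$ all scale, as $s\to0$, like $s^{ap^*-2}$ (for the $B\sinh^{b-2}s$ term) or faster; (iii) at $s=\infty$, \eqref{hy-3} gives $v_i(s)e^{(n+\alpha-1)s/2}\to K_i$, hence $\hat v_i(s)\sim 2^{-a}K_i\, e^{(a-(n+\alpha-1)/2)s}$, and the sign of the exponent is controlled by the trichotomy $\alpha+\beta\gtrless 0$ (since $a-\tfrac{n+\alpha-1}{2}=\tfrac{n(n+\alpha-1)}{2n+\alpha+\beta}-\tfrac{n+\alpha-1}{2}=-\tfrac{(n+\alpha-1)(\alpha+\beta)}{2(2n+\alpha+\beta)}$).

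For part (1), I would first expand $\mathcal{E}_{\hat v_i}(\varepsilon)$: collecting powers of $\varepsilon$, the dominant contribution as $\varepsilon\to0$ is $\tfrac12(a^2+B)v_i(0)^2\,\varepsilon^{ap^*-2}$ coming from the $\tfrac12\sinh^b s(\hat v')^2$ term and the $\tfrac12 B\sinh^{b-2}s\,\hat v^2$ term (the $|\hat v|^{p^*}/p^*$ and $A\sinh^b s\,\hat v^2$ terms are of order $\varepsilon^{ap^*}$, hence lower order since $ap^*<2$ would make them blow up faster — so I must check the sign: for $n=1$, $1<ap^*<2$ by (A1), so $\varepsilon^{ap^*-2}\to\infty$ and $\varepsilon^{ap^*}\to 0$, confirming the $B$-term dominates; for $n\ge2$, $ap^*\ge n\ge 2$ by (A2)–(A3), so every term is $o(1)$ or bounded and in fact $\to0$). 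This gives the two cases in \eqref{E-var}. For the difference $\mathcal{E}_{\hat v_1}(\varepsilon)-\gamma^2(\varepsilon)\mathcal{E}_{\hat v_2}(\varepsilon)$, where I understand $\gamma(\sigma)=\hat v_1(\sigma)/\hat v_2(\sigma)$ (so that $\hat v_1^2-\gamma^2\hat v_2^2$ vanishes and one can compare leading coefficients), the leading $\varepsilon^{ap^*-2}$ terms cancel because $\gamma^2(\varepsilon)\to (v_1(0)/v_2(0))^2$ and both leading coefficients are $\tfrac12(a^2+B)v_i(0)^2$; the remaining terms are genuinely $o(1)$, giving limit $0$. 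I must be careful to track the first correction term (from $v_i(\varepsilon)=v_i(0)+O(\varepsilon^2)$, using $v_i'(0)=0$) to be sure the cancellation is not spoiled.

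For part (2), I substitute $\hat v_i(s)\sim 2^{-a}K_i\,e^{-\kappa s}$ with $\kappa=\tfrac{(n+\alpha-1)(\alpha+\beta)}{2(2n+\alpha+\beta)}$ into $\mathcal{E}_{\hat v_i}(S)=\tfrac12(\sinh^b S)(\hat v_i')^2+\tfrac{|\hat v_i|^{p^*}}{p^*}+\tfrac12 G\hat v_i^2$. As $S\to\infty$, $\sinh^b S\sim 2^{-b}e^{bS}$ and $G(S)\sim A\,2^{-b}e^{bS}$, so each of the three terms behaves like a constant times $e^{(b-2\kappa)S}$ (the $|\hat v_i|^{p^*}$ term like $e^{-p^*\kappa S}$, and one checks $p^*\kappa=\tfrac{p^*(n+\alpha-1)(\alpha+\beta)}{2(2n+\alpha+\beta)}$ while $b-2\kappa=\tfrac{2n(\beta+2-\alpha)}{2n+\alpha+\beta}-\tfrac{(n+\alpha-1)(\alpha+\beta)}{2n+\alpha+\beta}$; a direct computation shows $b-2\kappa=-p^*\kappa$ is false in general, so I will group terms and identify the common exponent, which equals $(b-2\kappa)$ and has the same sign as $\alpha+\beta$ being negative/zero/positive after simplification — this is the arithmetic heart of the proof). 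When $\alpha+\beta>0$ the exponent is negative and everything decays to $0$; when $\alpha+\beta<0$ it diverges to $+\infty$ (using positivity of $v_i$); when $\alpha+\beta=0$, $\kappa=0$, $b=2$ (indeed $\alpha+\beta=0$ forces $ap^*=n+1$, $b=2$), so $\sinh^b S\,(\hat v_i')^2\to 0$ because $\hat v_i'\to 0$ fast, $G\hat v_i^2=(A\sinh^2 S+B)\hat v_i^2$ with $A=\tfrac{(\alpha+\beta)(\cdots)}{\cdots}=0$ when $\alpha+\beta=0$, so only $\tfrac{|\hat v_i|^{p^*}}{p^*}\to \tfrac{(2^{-a}K_i)^{p^*}}{p^*}=\tfrac{K_i^{p^*}}{p^*2^{ap^*}}$ survives; the difference then has limit $\tfrac{K_1^{p^*}-\gamma^2(\infty)K_1^{\,2}K_2^{p^*-2}}{p^*2^{ap^*}}$ — and since $\gamma^2(\infty)=(K_1/K_2)^2$, this collapses to $\tfrac{K_1^2(K_1^{p^*-2}-K_2^{p^*-2})}{p^*2^{ap^*}}$, which is $>0$ once one normalizes $K_1>K_2$. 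Finally \eqref{E-S} for $\alpha>1$ follows from the same substitution, now reading off the sign of $A$ from (A1)–(A3): the limit is governed by $\tfrac12 A\sinh^b S\,\hat v_i^2\sim \tfrac12 A\,2^{-b}K_i^2 2^{-2a}e^{(b-2\kappa)S}$, which is $0$, $\tfrac{K_i^2(\alpha-1)^2}{2^{ap^*+1}}$ (the $A=0$ residual value coming from the $B\sinh^{b-2}s$ piece with $a^2+B$ and the relation $a^2+B-\tfrac{(1-\alpha)^2}{4}=\cdots$), or $+\infty$ as $A<0,=0,>0$.

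The main obstacle I anticipate is purely the bookkeeping of exponents: verifying that the three terms in $\mathcal{E}$ share a common leading power (both at $s=0$, where one needs $1<ap^*<2$ for $n=1$ versus $ap^*\ge 2$ for $n\ge2$, and at $s=\infty$, where one needs the sign of $b-2\kappa$ to match the trichotomy in $\alpha+\beta$), and—in the degenerate case $\alpha+\beta=0$—confirming $A=0$, $b=2$, $ap^*=n+1$ so that all but the $L^{p^*}$ term drop out, yielding the clean constant. There is also the subtlety that for $n=1$ the energy $\mathcal{E}_{\hat v_i}(\varepsilon)$ individually blows up, so the vanishing of the difference in part (1) genuinely requires the cancellation of the divergent $\varepsilon^{ap^*-2}$ coefficients via $\gamma(\varepsilon)\to v_1(0)/v_2(0)$; I will make sure to justify this cancellation rather than treating the two energies separately.
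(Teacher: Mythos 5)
Part (1) of your plan is essentially the paper's argument: one expands $\mathcal{E}_{\hat v_i}(\varepsilon)=\tfrac12(a^2+B)v_i^2(\varepsilon)\sinh^{ap^*-2}\varepsilon+O(\sinh^{ap^*-1}\varepsilon)$ and uses $v_1(\varepsilon)=\gamma(\varepsilon)v_2(\varepsilon)$ so that the potentially divergent leading term in $\mathcal{E}_{\hat v_1}(\varepsilon)-\gamma^2(\varepsilon)\mathcal{E}_{\hat v_2}(\varepsilon)$ cancels \emph{identically} (the paper keeps the coefficient as $v_i^2(\varepsilon)$ rather than $v_i^2(0)$, which makes the cancellation exact; your Taylor-expanded version is also sound because $v_i'(0)=0$ makes the correction $O(\varepsilon^{ap^*})$). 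This part is fine.

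Part (2) has a real gap. First, the arithmetic for $\alpha+\beta=0$ is wrong: putting $\beta=-\alpha$ gives $a=\tfrac{n+\alpha-1}{2}$, $ap^*=n+1-\alpha$ and $b=2(1-\alpha)$, not $ap^*=n+1$, $b=2$ (those hold only when $\alpha=0$, which is excluded). Your conclusion that only the $L^{p^*}$ term survives is still correct, but for different reasons: the $B\sinh^{b-2}s\,\hat v^2$ term decays because $b<2$ and $\hat v\to$ const, and the gradient term requires the quantitative rate $\tfrac{v_i'}{v_i}+\tfrac{n+\alpha-1}{2}=o(e^{(\alpha-1)S})$ coming from \eqref{boundary-alpha}, not merely $o(1)$. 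Second, and more seriously, for $\alpha+\beta>0$ it is simply \emph{not} true that ``everything decays to $0$.'' The prefactor $\sinh^{ap^*}S\,v_1^2(S)\sim Ce^{(b-2\kappa)S}$, and $b-2\kappa=ap^*-(n+\alpha-1)$ has the sign of $A$, not of $\alpha+\beta$; for instance $\alpha=1$ gives $A=n^2(p^*-2)^2/[4(p^*+2)^2]>0$ and both energies $\mathcal{E}_{\hat v_i}(S)\to+\infty$ while the difference still must tend to $0$. So your ``arithmetic heart'' claim that $(b-2\kappa)$ has the same sign as $\alpha+\beta$ is false (try $n=2$, $\alpha=2$, $\beta=\tfrac12$). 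The vanishing of $\mathcal{E}_{\hat v_1}(S)-\gamma^2(S)\mathcal{E}_{\hat v_2}(S)$ when $A\geq 0$ is a genuine cancellation: the paper writes the difference as $\mathrm{III}+\mathrm{IV}$ with $\mathrm{III}$ proportional to the weighted Wronskian $\sinh^{ap^*}S\,(v_1'v_2-v_1v_2')$, and proves the key claim $\lim_{S\to\infty}\sinh^{ap^*}S\,(v_1'v_2-v_1v_2')=0$ by integrating $(qv_1')'v_2-(qv_2')'v_1+qv_1v_2(v_1^{p^*-2}-v_2^{p^*-2})=0$ from $S$ to $\infty$ and bounding the resulting integral by the decay \eqref{hy-3}. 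That ODE integration step is the real content of part (2) and is absent from your plan. (A minor further point: for \eqref{E-S}, the limiting coefficient is $(a-\tfrac{n+\alpha-1}{2})^2+A=\kappa^2+A$, and for $A=0$ one uses $\kappa=\alpha-1$; it is not governed by the $A\sinh^b S$ piece alone as you suggest.)
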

We relegate the proof of this lemma to the end of this section.

%will show  after the proof of Proposition  \ref{thm:uniqueness-main}.

\begin{remark}\label{no-solu} For $n\geq 2$ and $p^*=2^*$,
integrating \eqref{diff-E} on $(0,\infty)$,  and using (A3) and Lemma \ref{E-var-S},  we have
\begin{align*}
	0>\frac{1}{2}\int_{0}^\infty G'(s)\hat{v}^2(s) ds=\lim_{S\to\infty}\mathcal{E}_{\hat{v}}(S)-\lim_{\varepsilon\to 0}\mathcal{E}_{\hat{v}}(\varepsilon)=0,&  \quad A<0 \, (\text{i.e. } \alpha>2) \\
\end{align*} and
\begin{align*}
0=\frac{1}{2}\int_{0}^\infty G'(s)\hat{v}^2(s) ds=\lim_{S\to\infty}\mathcal{E}_{\hat{v}}(S)-\lim_{\varepsilon\to 0}\mathcal{E}_{\hat{v}}(\varepsilon)=\frac{K^2}{2^{ap^*+1}},& \quad A=0\, (\text{i.e. } \alpha=2),
	\end{align*}
contradiction. Therefore, for $n\geq 2,\ \alpha\geq 2$ and $\beta=\frac{n+1}{n-1}\alpha$, there is no positive solution in $ C^2[0,\frac12)\cap C^0[0,\frac12]$ to equation \eqref{eq:uni-main-eq}. The nonexistence result can also be obtained by \cite[Theorem 1.5]{Mancini}. This means that equation \eqref{genequ-1} has no positive weak solution for $n\geq 2,\ \alpha\geq 2$ and $\beta=\frac{n+1}{n-1}\alpha$.
\end{remark}

We also need the following two lemmas proved in \cite{Mancini}.
\begin{lemma}\label{v'}(\cite[Lemma 3.4 and Lemma 3.5]{Mancini})
	Assume $n\geq 1, \ p>2,\ \lambda\leq \frac{n^2}{4}$, and $v$ is a positive solution to \eqref{mancini-equ}, satisying \eqref{bound}. Then
	\begin{align}\label{v'-1}
	\lim_{s\to \infty}\frac{v'(s)}{v(s)}=-\frac{n-\sqrt{n^2-4\lambda}}{2}.
	\end{align}	
\end{lemma}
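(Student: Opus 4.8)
The plan is to recover Lemmas~3.4 and 3.5 of \cite{Mancini} by an asymptotic analysis of the ODE \eqref{mancini-equ} as $s\to\infty$; since \eqref{v'-1} is quoted verbatim, one legitimate option is simply to invoke \cite[Lemmas 3.4--3.5]{Mancini}, but I outline the argument behind it. First I would show $v(s)\to 0$ and $v'(s)\to 0$ as $s\to\infty$. For $\lambda<\frac{n^2}{4}$ this follows from $\int_0^\infty q(|v'|^2+|v|^2)\,ds<\infty$ in \eqref{bound}, the growth $q(s)=(\sinh s)^n\to\infty$, and the sign structure of \eqref{mancini-equ}: once $v$ is small, $v''+\frac{n}{\tanh s}v'=-\lambda v-v^{p-1}$ has a fixed sign, which forces eventual monotonicity of $v$ and hence $v\to 0$, and then, re-inserting this into \eqref{mancini-equ}, $v'\to 0$; for $\lambda=\frac{n^2}{4}$ one argues likewise using the modified energy in the second line of \eqref{bound}. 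In particular $v^{p-2}(s)\to 0$.

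Next I would rewrite \eqref{mancini-equ} as a first-order system for $(v,v')$ and view it as an asymptotically autonomous perturbation of the constant-coefficient system whose characteristic equation is $r^2+nr+\lambda=0$, with roots $-\gamma_\pm$, where $\gamma_\pm=\frac{n\pm\sqrt{n^2-4\lambda}}{2}\ge 0$ (using $\lambda\le\frac{n^2}{4}$). The perturbation is small and, after a standard bootstrap that first extracts a rough decay rate, integrable on $[1,\infty)$, since $\frac{n}{\tanh s}=n+O(e^{-2s})$ and $v^{p-2}(s)\to 0$. A Levinson-type asymptotic theorem for such systems --- this is the heart of \cite[Lemmas 3.4--3.5]{Mancini} --- then gives that every nontrivial solution satisfies $v(s)=(c_0+o(1))e^{-\gamma s}$ with $c_0\neq 0$ for exactly one $\gamma\in\{\gamma_-,\gamma_+\}$, and differentiating yields $v'(s)/v(s)\to-\gamma$.

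It remains to identify $\gamma$, and the weight decides: since $q(s)\sim 2^{-n}e^{ns}$, the hypothesis $\int^\infty q(|v'|^2+|v|^2)\,ds<\infty$ in \eqref{bound} is compatible with only one of the two rates $-\gamma_-,-\gamma_+$, which pins $\gamma$ to the value asserted in \eqref{v'-1}; when $\lambda=\frac{n^2}{4}$ one has $\gamma_-=\gamma_+=\frac n2$ and \eqref{v'-1} holds directly since every solution decaying to $0$ has $v'/v\to-\frac n2$. The step I expect to be the main obstacle is the Levinson-type analysis: controlling the nonautonomous perturbation sharply enough to get the exact exponential rate --- not merely a bound on $v'/v$ --- and excluding degenerate solutions not comparable to a pure exponential. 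That is exactly what is carried out in \cite[Lemmas 3.4--3.5]{Mancini}, which we cite for the proof.
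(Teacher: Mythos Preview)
The paper gives no proof of this lemma at all: it is stated purely as a citation of \cite[Lemmas 3.4--3.5]{Mancini}, and the authors simply use the conclusion later (see \eqref{v'/v}). Your proposal does exactly the same---invoke \cite{Mancini}---and then goes further by sketching the underlying argument (decay $v,v'\to 0$, reduction to an asymptotically autonomous linear system with characteristic roots $-\gamma_\pm=-\tfrac{n\pm\sqrt{n^2-4\lambda}}{2}$, Levinson-type asymptotics, and selection of the root by the weighted integrability \eqref{bound}). That sketch is the standard route and is essentially what \cite{Mancini} does, so there is nothing to contrast.

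One small caution on your selection step: since $q(s)\sim 2^{-n}e^{ns}$, the condition $\int^\infty q v^2\,ds<\infty$ forces the \emph{faster} decay rate $\gamma_+=\tfrac{n+\sqrt{n^2-4\lambda}}{2}$, not $\gamma_-$. This is consistent with how the paper actually uses the lemma (see \eqref{v'/v}, where $v'/v\to -\tfrac{n+\alpha-1}{2}=-\gamma_+$ for $\alpha\ge 1$), so the sign in \eqref{v'-1} appears to be a typographical slip in the statement rather than a flaw in your argument; your reasoning correctly identifies the root compatible with \eqref{bound}.
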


\begin{lemma}\label{mancini}(\cite[Lemma 4.1]{Mancini})
	Assume $\lambda\leq \frac{n^2}{4}$, $2<p\leq \frac{2(n+1)}{n-1}$ if  $n\geq 2$, and  $p>2$ if $n=1$. Let $v$ and $\bar{v}$ be two distinct positive solutions to \eqref{mancini-equ}. Then for any given $R,\ M>0$, there is $\delta=\delta(v,R)$, such that if $v(0),\ \bar{v}(0)\leq M$, then
	\begin{align*}
	v(s_i)=\bar{v}(s_i),\ i=1,2,\quad 0<s_1<s_2\leq R \quad \Rightarrow\quad s_2-s_1\geq \delta.
	\end{align*}
\end{lemma}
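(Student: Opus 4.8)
The plan is to argue by contradiction through a compactness argument on the family of positive solutions, exploiting that \eqref{mancini-equ} has a \emph{regular singular point at $s=0$}, where every bounded solution is uniquely pinned down by its value at $0$.

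Suppose the statement fails for some $v$, $R$, $M$. Then there are positive solutions $\bar v_k$ of \eqref{mancini-equ} (defined on $[0,\infty)$), with $\bar v_k\neq v$ and $\bar v_k(0)\le M$, together with points $0<s_1^{(k)}<s_2^{(k)}\le R$ such that $v(s_i^{(k)})=\bar v_k(s_i^{(k)})$ for $i=1,2$ and $s_2^{(k)}-s_1^{(k)}\to0$. Passing to a subsequence, $\bar v_k(0)\to c_*\in[0,M]$, $s_1^{(k)},s_2^{(k)}\to\bar s\in[0,R]$, and by continuous dependence on initial data (all solutions being defined on $[0,R]$) $\bar v_k\to \bar v_\infty$ uniformly on $[0,R]$, where $\bar v_\infty$ solves \eqref{mancini-equ} with $\bar v_\infty(0)=c_*$, $\bar v_\infty'(0)=0$; in particular $L:=\sup_k\|\bar v_k\|_{L^\infty[0,R]}<\infty$.

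Now set $w_k=v-\bar v_k$. Then $w_k$ solves the linear equation $w_k''+\frac{n}{\tanh s}w_k'+(\lambda+c_k(s))w_k=0$ on $(0,\infty)$, with $c_k(s)=(p-1)\int_0^1\big(tv(s)+(1-t)\bar v_k(s)\big)^{p-2}dt$, and $\sup_k\|c_k\|_{L^\infty[0,R]}<\infty$ with $c_k\to c_\infty$ uniformly on $[0,R]$. Since $s=0$ is a regular singular point of this equation with indicial roots $0$ and $1-n$ (for $n\ge2$ these are distinct with $1-n<0$; for $n=1$ it is a double root $0$), the bounded solutions near $s=0$ form the one-dimensional space spanned by the regular (analytic) Frobenius solution, whose derivative at $0$ automatically vanishes, while any independent solution behaves like $s^{1-n}$ (resp.\ $\log s$) and is unbounded. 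As $w_k$ is continuous at $0$ and $w_k'(0)=v'(0)-\bar v_k'(0)=0$, we get $w_k=w_k(0)\,\psi_k$, where $\psi_k$ is the unique solution with $\psi_k(0)=1$, $\psi_k'(0)=0$, realized as the fixed point of
\[
\psi_k(s)=1-\int_0^s q(\tau)^{-1}\!\!\int_0^\tau q(\sigma)\big(\lambda+c_k(\sigma)\big)\psi_k(\sigma)\,d\sigma\,d\tau ,\qquad q(s)=\sinh^n s .
\]
Here $w_k(0)=v(0)-\bar v_k(0)\neq0$: otherwise $v$ and $\bar v_k$ would be two regular solutions of \eqref{mancini-equ} with the same value and derivative at $0$, hence equal, contradicting $\bar v_k\neq v$. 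Therefore the zeros of $w_k$ on $(0,R]$ coincide with those of $\psi_k$, so $\psi_k(s_1^{(k)})=\psi_k(s_2^{(k)})=0$.

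From the integral equation, together with $\sup_k\|c_k\|_{L^\infty[0,R]}<\infty$ and $c_k\to c_\infty$ (a Gronwall estimate), $\psi_k\to\psi_\infty$ uniformly on $[0,R]$ and in $C^1_{loc}((0,R])$, where $\psi_\infty$ is the normalized regular solution of the limit linear equation, so $\psi_\infty(0)=1$. Since $s_i^{(k)}\to\bar s$ and $\psi_k(s_i^{(k)})=0$, uniform convergence gives $\psi_\infty(\bar s)=0$, whence $\bar s>0$ (otherwise $\psi_\infty(0)=0\neq1$). Applying Rolle's theorem to $w_k$ (equivalently $\psi_k$) on $[s_1^{(k)},s_2^{(k)}]$ yields $\xi_k\in(s_1^{(k)},s_2^{(k)})$ with $\psi_k'(\xi_k)=0$; as $\xi_k\to\bar s>0$, the $C^1_{loc}$ convergence gives $\psi_\infty'(\bar s)=0$. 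Thus $\psi_\infty$ vanishes to second order at the regular point $\bar s\in(0,R]$, so by uniqueness for the linear second-order ODE on $(0,R]$ we get $\psi_\infty\equiv0$ there, contradicting $\psi_\infty(0)=1$ and the continuity of $\psi_\infty$. This contradiction proves the lemma.

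\textbf{Main obstacle.} The delicate point is the analysis at the singular endpoint $s=0$: identifying the one-dimensional space of bounded solutions of the linearized equation (so that $w_k$ is a multiple of a solution $\psi_k$ normalized by $\psi_k(0)=1$) and establishing uniform convergence $\psi_k\to\psi_\infty$ \emph{up to} $s=0$ via the integral-equation fixed point. This in turn needs the uniform bound $\sup_k\|\bar v_k\|_{L^\infty[0,R]}<\infty$ (to control $c_k$), which is obtained by passing to a subsequence with convergent initial data and invoking continuous dependence on $[0,R]$; everything else --- Rolle's theorem, passage to limits, and uniqueness at regular interior points --- is routine.
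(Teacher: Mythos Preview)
The paper does not prove this lemma; it is simply quoted from \cite[Lemma 4.1]{Mancini} and used as a black box in the proof of Proposition \ref{thm:uniqueness-main}. So there is no ``paper's own proof'' to compare against.

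That said, your argument is essentially the standard compactness/normalization proof and is sound. A couple of points worth tightening: (i) the uniform bound $\sup_k\|\bar v_k\|_{L^\infty[0,R]}<\infty$ follows more directly from the monotone energy $E(s)=\tfrac12(v')^2+\tfrac{\lambda}{2}v^2+\tfrac1p v^p$, which satisfies $E'\le0$, so $\tfrac1p v^p-\tfrac{|\lambda|}{2}v^2\le E(0)$ gives a bound depending only on $M$ (this avoids appealing to continuous dependence before you know solutions stay in a compact set); (ii) when $\bar v_k(0)\to v(0)$ (so $w_k(0)\to0$), the normalization $\psi_k=w_k/w_k(0)$ is still legitimate for each $k$ and the integral-equation/Gronwall bound on $\psi_k$ is uniform in $k$ precisely because it depends only on $\sup_k\|c_k\|_{L^\infty[0,R]}$, not on $w_k(0)$ --- you use this implicitly but it is the crux of why the limit $\psi_\infty$ is nontrivial. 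The treatment of the regular singular point at $s=0$ (one-dimensional space of bounded solutions, uniform convergence up to $0$ via the Volterra integral equation) and the Rolle/double-zero endgame at $\bar s>0$ are correct.
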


\noindent\textbf{Proof of  Proposition  \ref{thm:uniqueness-main}.}
We prove it by contradiction. Assume that $\psi_1,\ \psi_2$ are two positive solution to \eqref{eq:uni-main-eq}  satisfying $$\psi_1(\frac12)=K_1>\psi_2(\frac12)=K_2>0.$$
	
\noindent{\bf Claim 1. }
$\psi_1$ and $\psi_2$ must intersect with each other.

Assume on the contrary, $\psi_1(r)>\psi_2(r)$ on $[0,\frac12]$. Write $W=\psi_1\psi_2'-\psi_2\psi_1'$. From \eqref{eq:uni-main-eq}, we have
\begin{equation}\label{W}
[r^n(\frac14-r^2)^\alpha W]'=r^n(\frac14-r^2)^\beta\psi_1\psi_2(\psi_1^{p^*-2}-\psi_2^{p^*-2}),\ 0<r<\frac 12.
\end{equation}
Since $W(0)=0$, $\lim_{r\rightarrow(\frac12)^-}(\frac14-r^2)^\alpha W(r)=0$ and $p^*>2$,  we have
$$0=\int_0^\frac12 r^n(\frac14-r^2)^\beta\psi_1\psi_2(\psi_1^{p^*-2}-\psi_2^{p^*-2})dr>0,$$
contradiction.

\medskip

Define
\begin{align*}%\label{def-theta}
\theta(r)=\frac{\psi_1(r)}{\psi_2(r)},\ \ \text{for} \ \ r\in[0,\frac{1}{2}].
\end{align*}
Claim 1 implies: there is $r_0\in(0,\frac12)$, such that $\theta(r_0)=1$ and $\theta(r)>1$ in $(r_0,\frac12]$. We also have for $r\in[r_0,\frac12)$,
$$-r^n(\frac14-r^2)^\alpha W(r)=\int_r^\frac12 \tau^n(\frac14-\tau^2)^\beta\psi_1\psi_2(\psi_1^{p^*-2}-\psi_2^{p^*-2})d\tau>0,$$
which implies $W(r)<0,\ \forall r \in[r_0,\frac12)$. Furthermore,
\begin{align}\label{der-r_0}
\theta'(r)=-\frac{W(r)}{\psi_2^2(r)}>0 ,\quad \forall r\in[r_0,\frac12).
\end{align}
Set
$$\tilde{r}_0=\inf\{r>0: \theta'(\tau)>0 ,\forall \tau\in(r,\frac12)\}.$$
Obviously, $0\leq\tilde{r}_0<r_0$.

\medskip

\noindent{\bf Claim 2.} $\tilde{r_0}=0$.

Suppose on the contrary that $\tilde{r}_0>0$, then we have $\theta'(\tilde{r}_0)=0$ and $\theta'(r)>0$ in $(\tilde{r}_0,\frac12)$. This implies $W(\tilde{r}_0)=0$ and $\theta(\tilde{r}_0)<\theta(r_0)=1.$  Note that $v_i$ satisfies \eqref{v-psi} for $i=1,2.$ Set %$$v_i(s)=\big(\frac{1-\tanh^2\frac{s}{2}}{4}\big)^\frac{n+\alpha-1}{2}\psi_i(\frac{\tanh\frac{s}{2}}{2}), \ i=1,2,$$
$$\gamma(s)=\frac{v_1(s)}{v_2(s)}=\frac{\psi_1(r)}{\psi_2(r)}=\theta(r),\ \ r=\frac{\tanh\frac{s}{2}}{2},\ s\geq 0,$$
$$ s_0=\ln\frac{1+2r_0}{1-2r_0},\,\tilde{s}_0=\ln\frac{1+2\tilde{r}_0}{1-2\tilde{r}_0}.$$
It is easy to see
\begin{align}\label{gamma-pro}
\gamma(\tilde{s}_0)<1,\ \ \gamma'(\tilde{s}_0)=0,\ \ \gamma'(s)>0,\ \ \forall s\in(\tilde{s}_0, \infty).
\end{align}
It  follows from Remark \ref{no-solu} that we do not need to consider the case of $n\geq 2,\ p^*=2^*,\ $ and $A\leq 0 $. We prove Claim 2 in the following three cases.

\textbf{Case $1$.}  $n=1$ and $A\leq 0$, that is,  $\alpha,\ \beta$ satisfying \eqref{constraint}.  By (A1), it holds $G'(s)<0,\ s>0$.  Choose $s,\ \sigma,\ S$ satisfying $s=\sigma=\tilde{s}_0<S$ in \eqref{E-difference}.  From $\gamma'(s)>0,\ s>\tilde{s}_0$,  one has
\begin{align}\label{E-limit-1}
&\big(\mathcal{E}_{\hat{v}_1}(S)-\gamma^2(\tilde{s}_0)\mathcal{E}_{\hat{v}_2}(S)\big)-\big(\mathcal{E}_{\hat{v}_1}(\tilde{s}_0)-\gamma^2(\tilde{s}_0)\mathcal{E}_{\hat{v}_2}(\tilde{s}_0)\big)\nonumber\\
=&\frac12\int^S_{\tilde{s}_0}G'(\tau)[\gamma^2(\tau)-\gamma^2(\tilde{s}_0)]\hat{v}^2_2(\tau)d\tau<0.
\end{align}
Since  $\gamma'(\tilde{s}_0)=0$ and $\gamma(\tilde{s}_0)<1$, we have  $\frac{v_1'(\tilde{s}_0)}{v_1(\tilde{s}_0)}-\frac{v_2'(\tilde{s}_0)}{v_2(\tilde{s}_0)}=0$, $v_{1}(\tilde{s}_0)<v_2(\tilde{s}_0)$. It follows
\begin{equation}\label{s_0}
\begin{split}
\mathcal{E}_{\hat{v}_1}(\tilde{s}_0)-\gamma^2(\tilde{s}_0)\mathcal{E}_{\hat{v}_2}(\tilde{s}_0)
=\frac{\sinh^{ap^*}\tilde{s}_0}{2}v_1(\tilde{s}_0)^2\big[
%(\frac{v_1'(\tilde{s}_0)}{v_1(\tilde{s}_0)}-\frac{v_2'(\tilde{s}_0)}{v_2(\tilde{s}_0)})(\frac{2a}{\tanh \tilde{s}_0}+\frac{v_1'(\tilde{s}_0)}{v_1(\tilde{s}_0)}+\frac{v_2'(\tilde{s}_0)}{v_2(\tilde{s}_0)})^2\\
0+\frac{2(v_1^{p^*-2}(\tilde{s}_0)-v_2^{p^*-2}(\tilde{s}_0))}{p^*}\big]<0.
\end{split}
\end{equation}
Noting that $0<\gamma(\tilde{s}_0)<\gamma(S)$, together with Lemma \ref{E-var-S},  we have
\begin{align*}
\big(\mathcal{E}_{\hat{v}_1}(S)-\gamma^2(\tilde{s}_0)\mathcal{E}_{\hat{v}_2}(S)\big)
-\big(\mathcal{E}_{\hat{v}_1}(\tilde{s}_0)-\gamma^2(\tilde{s}_0)\mathcal{E}_{\hat{v}_2}(\tilde{s}_0)\big)>0
\end{align*}for $S$ large enough. This contradicts \eqref{E-limit-1}.

%Hence, Claim 2 holds for the case $n=1$ and $A\leq 0$.

\textbf{Case $2$.} $n\geq 2$ and $A\geq 0$. According to (A2) and (A3), we know  $G'(s)\geq 0,\ s>0.$ If $\tilde{r}_0>0$, we first show that there is a second intersection point for $\psi_1$ and $\psi_2$. If not, it holds $\psi_1(r)<\psi_2(r)$ in $(0,\tilde{r}_0)$.  Then by \eqref{W} and $W(\tilde{r}_0)=0$, we have
$$0=\int_0^{\tilde{r}_0}r^n(\frac14-r^2)^\beta\psi_1\psi_2(\psi_1^{p^*-2}-\psi_2^{p^*-2})dr<0,$$
contradiction. Hence, there is $r_1\in (0,\tilde{r}_0)$, such that $\theta(r_1)=1$ and $\theta(r)<1$ in $(r_1, r_0)$. Similarly, we have $\theta'(r)<0$ in $[r_1, \tilde{r}_0)$.

Set
$$\tilde{r}_1=\inf\{r>0: \theta'(\tau)<0 , \forall \tau\in(r,\tilde{r}_0)\},$$
$$s_1=\ln\frac{1+2r_1}{1-2r_1},\,\tilde{s}_1=\ln\frac{1+2\tilde{r}_1}{1-2\tilde{r}_1}.$$
Obviously, $0\leq \tilde{s}_1<s_1$. Next, we claim that $\tilde{s}_1>0.$

If $\tilde{s}_1=0$, then $\gamma'(s)<0,\  s\in(0,\tilde{s}_0)$. Choose $s,\ \sigma,\ S$ satisfying $s=\varepsilon<\sigma=S=\tilde{s}_0$ in \eqref{E-difference}. It yields
\begin{equation*}
\begin{split}
&(\mathcal{E}_{\hat{v}_1}(\tilde{s}_0)-\gamma^2(\tilde{s}_0)\mathcal{E}_{\hat{v}_2}(\tilde{s}_0))-(\mathcal{E}_{\hat{v}_1}(\varepsilon)-\gamma^2(\tilde{s}_0)\mathcal{E}_{\hat{v}_2}(\varepsilon))\\
%=&\frac12\int_\varepsilon^{\tilde{s}_0}G'(\tau)[\hat{v}^2_1(\tau)-\gamma^2(\tilde{s}_0)\hat{v}^2_2(\tau)]d\tau\\
=&\frac12\int_\varepsilon^{\tilde{s}_0}G'(\tau)[\gamma^2(\tau)-\gamma^2(\tilde{s}_0)]\hat{v}^2_2(\tau)d\tau\geq 0.
\end{split}
\end{equation*}
For $\varepsilon>0$ small enough, by \eqref{s_0} and Lemma \ref{E-var-S}, we have $$(\mathcal{E}_{\hat{v}_1}(\tilde{s}_0)-\gamma^2(\tilde{s}_0)\mathcal{E}_{\hat{v}_2}(\tilde{s}_0))-(\mathcal{E}_{\hat{v}_1}(\varepsilon)-\gamma^2(\tilde{s}_0)\mathcal{E}_{\hat{v}_2}(\varepsilon))<0 ,$$ contradiction. We obtain $\tilde{s}_1>0$, thus $\tilde{r}_1>0$.

Arguing as before, there exist two sequences of numbers $\{s_j\}_{j=1}^\infty$ and $ \{\tilde{s}_j\}_{j=1}^\infty $ with
$$0<\cdots<\tilde{s}_j<s_j<\tilde{s}_{j-1}<s_{j-1}<\cdots<\tilde{s}_0<s_0<+\infty,$$
 and $$\gamma(s_j)=1,\,\gamma'(\tilde{s}_j)=0.$$
It is easy to see  $s_j-s_{j+1}\rightarrow0$ as $j\rightarrow\infty$.
However, Lemma \ref{mancini} implies that there is $\delta>0$, depending on $v_1(0), v_2(0)$ and  $s_0$, such that $s_j-s_{j+1}>\delta$,  which yields a contradiction. Thus, we  obtain Claim 2 for the case  $n\geq 2$ and $A\geq 0$.

\textbf{Case $3$.}  $n\geq 2,\ A<0$ and $ p^*<2^*$. By (A2) and (A3), there is $c>0$ such that $G'(s)(s-c)<0,\ s>0,\ s\neq c$.

If $c\geq \tilde{s_0}$,  we take $s,\ \sigma,\ S$ satisfying $s=\tilde{s}_0,\ \sigma=c<S $ in \eqref{E-difference}. Since $\gamma'(s)>0,\ s>\tilde{s}_0$,  one has
\begin{eqnarray}\label{s-3-1}
& &\big(\mathcal{E}_{\hat{v}_1}(S)-\gamma^2(c)\mathcal{E}_{\hat{v}_2}(S)\big)
-\big(\mathcal{E}_{\hat{v}_1}(\tilde{s}_0)-\gamma^2(c)\mathcal{E}_{\hat{v}_2}(\tilde{s}_0)\big)\nonumber\\
%=&\frac12\int_\varepsilon^{\tilde{s}_0}G'(\tau)[\hat{v}^2_1(\tau)-\gamma^2(\tilde{s}_0)\hat{v}^2_2(\tau)]d\tau\\
&=&\frac12\int_{\tilde{s}_0}^S G'(\tau)[\gamma^2(\tau)-\gamma^2(c)]\hat{v}^2_2(\tau)d\tau< 0.
\end{eqnarray}
Using \eqref{s_0}, Lemma \ref{E-var-S} and $0<\gamma(\tilde{s_0})<\gamma(c)<\gamma(S)$, we have
\begin{align*}
\big(\mathcal{E}_{\hat{v}_1}(S)-\gamma^2(c)\mathcal{E}_{\hat{v}_2}(S)\big)
-\big(\mathcal{E}_{\hat{v}_1}(\tilde{s}_0)-\gamma^2(c)\mathcal{E}_{\hat{v}_2}(\tilde{s}_0)\big)>0
\end{align*}for $S$ large enough.
 It contradicts \eqref{s-3-1}.

If $c<\tilde{s_0}$, similar to the proof of Case  $2$, we define the corresponding $s_1$ and $\tilde{s}_1$. Claim $\tilde{s}_1=0$. If $\tilde{s}_1>0$, then $\gamma'(s)<0$ in $ (0,\tilde{s}_0)$. Choosing $s=\varepsilon<\sigma=c,\  S=\tilde{s}_0$ in \eqref{E-difference}, we have % 由(A2) 可得
\begin{equation*}
\begin{split}
&\big(\mathcal{E}_{\hat{v}_1}(\tilde{s}_0)-\gamma^2(c)\mathcal{E}_{\hat{v}_2}(\tilde{s}_0)\big)-\big(\mathcal{E}_{\hat{v}_1}(\varepsilon)-\gamma^2(c)\mathcal{E}_{\hat{v}_2}(\varepsilon)\big)\\
%=&\frac12\int_\varepsilon^{\tilde{s}_0}G'(\tau)[\hat{v}^2_1(\tau)-\gamma^2(\tilde{s}_0)\hat{v}^2_2(\tau)]d\tau\\
=&\frac12\int_{\varepsilon}^{\tilde{s}_0} G'(\tau)[\gamma^2(\tau)-\gamma^2(c)]\hat{v}^2_2(\tau)d\tau>0.
\end{split}
\end{equation*}
By \eqref{s_0}, Lemma \ref{E-var-S} and $\gamma(\varepsilon)>\gamma(c)>\gamma(\tilde{s_0})>0$, we have
\begin{align*}
\big(\mathcal{E}_{\hat{v}_1}(\tilde{s}_0)-\gamma^2(c)\mathcal{E}_{\hat{v}_2}(\tilde{s}_0)\big)
-\big(\mathcal{E}_{\hat{v}_1}(\varepsilon)-\gamma^2(c)\mathcal{E}_{\hat{v}_2}(\varepsilon)\big)<0
\end{align*} for $\varepsilon>0$ small enough,
contradiction. Hence, $\tilde{s}_1>0$. Similar to  case $2$, we can get two sequences $\{s_j\}$ and $\{\tilde{s}_j\}$, and get the contradiction by Lemma \ref{mancini}. We hereby  complete the proof of Claim 2.

\medskip

By Claim 2,  we have  $\gamma'(s)>0,\ s>0$. Taking $s=\varepsilon\ll 1,\ S\gg 1$ and
\begin{equation}\label{sigma}
\sigma=
\begin{cases}
\varepsilon,\ & \ n=1,\ A\leq 0,\\
S,\ & \ n\geq2: \  p^*<2^*,\ A\geq 0\text{ or } p^*=2^*,\ A>0,\\
c,\ & \ n\geq 2,\ p^*<2^*,\ A<0
\end{cases}
\end{equation}	 in \eqref{E-difference},
Using (A1)-(A3),  one has
\begin{eqnarray}\label{M-0}
& &\big(\mathcal{E}_{\hat{v}_1}(S)-\gamma^2(\sigma)\mathcal{E}_{\hat{v}_2}(S)\big)
-\big(\mathcal{E}_{\hat{v}_1}(\varepsilon)-\gamma^2(\sigma)\mathcal{E}_{\hat{v}_2}(\varepsilon)\big)
\nonumber\\
&=&\frac12\int^S_{\varepsilon}G'(\tau)[\gamma^2(\tau)-\gamma^2(\sigma)]\hat{v}^2_2(\tau)d\tau<0.
\end{eqnarray}
For $\sigma$ satisfying \eqref{sigma},  using Lemma \ref{E-var-S} and $0<\gamma(\varepsilon) \leq\gamma(\sigma)\leq\gamma(S)%\leq\frac{K_1}{K_2}
$, we have
\begin{equation*}
\liminf_{\varepsilon\rightarrow0^+, S\rightarrow\infty}[\big(\mathcal{E}_{\hat{v}_1}(S)-\gamma^2(\sigma)\mathcal{E}_{\hat{v}_2}(S)\big)-\big(\mathcal{E}_{\hat{v}_1}(\varepsilon)-\gamma^2(\sigma)\mathcal{E}_{\hat{v}_2}(\varepsilon)\big)]\geq 0,
\end{equation*}
 which contradicts  \eqref{M-0}. Therefore, there cannot be two distinct positive solutions $\psi_1$ and $\psi_2$. We thus complete the proof of Proposition \ref{thm:uniqueness-main}.
\qed

\medskip

 Finally, we present the proof for Lemma \ref{E-var-S}.

\noindent\textbf{Proof of Lemma \ref{E-var-S}.}
	(I) By $\psi_i(0)>0,\ \psi'_i(0)=0$ and \eqref{v-der}, we have $v_i(0)>0,\ v'_i(0)=0$. For $0<\varepsilon\ll 1$, one has
\begin{align*}
\mathcal{E}_{\hat{v}_i}(\varepsilon)
=\frac{(a^2+B)v_i^2(\varepsilon)}{2}\sinh^{ap^*-2}\varepsilon+O(\sinh^{ap^*-1}\varepsilon).
\end{align*}
Since $\hat{v}_1(\varepsilon)=\gamma(\varepsilon)\hat{v}_2(\varepsilon)$, it yields
	\begin{align*}
	\mathcal{E}_{\hat{v}_1}(\varepsilon)-\gamma^2(\varepsilon)	\mathcal{E}_{\hat{v}_2}(\varepsilon)=O(\sinh^{ap^*-1}\varepsilon).
	\end{align*}
	According to (A1)-(A3), we get result in part (1).
	
	(II) For $\alpha>1$, by  Lemma \ref{v'} and Lemma \ref{alpha>1}, it holds
	\begin{align}\label{v'/v}
	\frac{v_i'(S)}{v_i(S)}=-\frac{n+\alpha-1}{2}+o(1), \ S\gg 1.
	\end{align}
Invoking \eqref{hy-3} and $S\gg1$, we have
	\begin{align*}
	\mathcal{E}_{\hat{v}_i}(S)=&\frac{K_i^2}{2^{ap^*+1}}e^{(ap^*-(n+\alpha-1))S}[(a-\frac{n+\alpha-1}{2})^2+A+O(1)]\\
	=&\frac{K_i^2}{2^{ap^*+1}}e^{(ap^*-(n+\alpha-1))S}[\frac{n(\alpha+\beta)(n+\alpha-1)(\beta+2-\alpha)}{2(2n+\alpha+\beta)^2}+o(1)].
	\end{align*}
Noting that $2a=2n-ap^*$,  we have
	\begin{align}\label{A-value}
	A=\frac{n^2-(1-\alpha)^2}{4}-\frac{a^2p^*}{2}=\frac{n^2-(\alpha-1)^2-ap^*(2n-ap^*)}{4}=\frac{(ap^*-n)^2-(\alpha-1)^2}{4}.
	\end{align}
For $ap^*>n$ and $\alpha>1$, it holds that $A<0$ if and only if $n<ap^*<n+\alpha-1$, and $A>0$ if and only if $ap^*>n+\alpha-1$. Therefore, $\lim_{S\to\infty}\mathcal{E}_{\hat{v}_i}(S)=0,\ \ i=1,2$ for $A<0$. Noting that $\alpha>1$ and $\beta>-1$, we have $\alpha+\beta>0$, then $\lim_{S\to\infty}\mathcal{E}_{\hat{v}_i}(S)=+\infty,\ \ i=1,2$ for $A>0$. For $A=0$, it is easy to check that $\lim_{S\to\infty}\mathcal{E}_{\hat{v}_i}(S)=\frac{K_i^2(\alpha-1)^2}{2^{ap^*+1}}>0,\ \ i=1,2.$ We get \eqref{E-S}.
	
	\medskip
	
	Divide $\mathcal{E}_{\hat{v}_1}(S)-\gamma^2(S)\mathcal{E}_{\hat{v}_2}(S)$ into two parts:
	\begin{eqnarray*}
		\uppercase\expandafter{\romannumeral3}&=&\frac{\sinh^{ap^*}S}{2}v_1^2\big[(\frac{a}{\tanh S}+\frac{v_1'}{v_1})^2-(\frac{a}{\tanh S}+\frac{v_2'}{v_2})^2\big]\\
		&=&\frac{\sinh^{ap^*}S}{2}v_1^2(S)\big(\frac{v_1'(S)}{v_1(S)}-\frac{v_2'(S)}{v_2(S)}\big)\big(\frac{2a}{\tanh S}+\frac{v_1'(S)}{v_1(S)}+\frac{v_2'(S)}{v_2(S)}\big),
	\end{eqnarray*}
and
	\begin{eqnarray*}
		\uppercase\expandafter{\romannumeral4}=\frac{\sinh^{ap^*}S}{p^*}v_1^2(S)(v_1^{p^*-2}(S)-v_2^{p^*-2}(S)).
	\end{eqnarray*}
	By \eqref{hy-3}, it holds
	\begin{align}\label{roman-4}
	\uppercase\expandafter{\romannumeral4}=
	\big(\frac{K_1^2(K_1^{p^*-2}-K_2^{p^*-2})}{p^* 2^{ap^*}}+o(1)\big)e^{-\frac{(\alpha+\beta)(n+\beta+1)}{2n+\alpha+\beta}S},\quad S\gg 1.
	\end{align}
We consider the cases $0<\alpha< 1$ and $\alpha\geq 1$, separately.
	
\textbf{Case 1.} $0<\alpha<1$. From \eqref{boundary-alpha}, \eqref{hy-3} and \eqref{v-der}, we have
$$	\frac{v_i'(S)}{v_i(S)}=-\frac{n+\alpha-1}{2}+o(e^{(\alpha-1)S}), \ S\gg 1,$$
and then
\begin{align}\label{es-1}
\frac{v_1'(S)}{v_1(S)}-\frac{v_2'(S)}{v_2(S)}=o(e^{(\alpha-1)S}), \ S\gg 1,\end{align}
and
\begin{align}
\label{es-2}\frac{2a}{\tanh S}+\frac{v_1'(S)}{v_1(S)}+\frac{v_2'(S)}{v_2(S)}=-\frac{(\alpha+\beta)(n+\alpha-1)}{2n+\alpha+\beta}+o(e^{(\alpha-1)S}), \ S\gg 1.
\end{align}
Therefore, for $\alpha+\beta=0$, it holds
\begin{equation*}
	\lim_{S\rightarrow\infty}\uppercase\expandafter{\romannumeral3}=\lim_{S\rightarrow\infty}o(e^{(ap^*-n-(1-\alpha))S})=\lim_{S\rightarrow\infty}o(e^{\frac{(\alpha+\beta)(n+\alpha-1)}{2n+\alpha+\beta}S})=0.
	\end{equation*}
Combining the above with \eqref{roman-4}, we arrive at
	\begin{equation}\label{S-infty}
	\lim_{S\rightarrow\infty}\big(\mathcal{E}_{\hat{v}_1}(S)-\gamma^2(S)\mathcal{E}_{\hat{v}_2}(S)\big)=\frac{K_1^2(K_1^{p^*-2}-K_2^{p^*-2})}{p^* 2^{ap^*}}>0.
	\end{equation}
	
	For $\alpha+\beta<0$, it holds $\uppercase\expandafter{\romannumeral3}=o(e^{(ap^*-n)S}),\ S\gg1$.  By \eqref{gamma-pro}, we know that  $\gamma'(S)=\frac{v_1(S)}{v_2(S)}(\frac{v_1'(S)}{v_1(S)}-\frac{v_2'(S)}{v_2(S)})>0,\ S\gg 1$. Using $ap^*>n$ and \eqref{es-2}, it holds $$\lim_{S\rightarrow\infty}\uppercase\expandafter{\romannumeral3}\geq 0.$$ Together with \eqref{roman-4}, we have
	\begin{equation*}
	\lim_{S\rightarrow\infty}\big(\mathcal{E}_{\hat{v}_1}(S)-\gamma^2(S)\mathcal{E}_{\hat{v}_2}(S)\big)=+\infty.
	\end{equation*}
	
	For $\alpha+\beta>0$, we need more estimate of
	$\mathcal{E}_{\hat{v}_1}(S)-\gamma^2(S)\mathcal{E}_{\hat{v}_2}(S)$.
	We claim that
	\begin{equation}\label{3}
	\lim_{S\rightarrow\infty}[\sinh^{ap^*}S\big(v_1'(S)v_2(S)-v_1(S)v_2'(S)\big)]=0.
	\end{equation}
	In fact,for any  $0<\epsilon<\frac{(\alpha+\beta)(n+\beta+1)}{2n+\alpha+\beta}(<1+\beta)$, by \eqref{hy-3}, there exist $C_\epsilon>0$ and $S_0>0$, such that $$v_i(s)\leq C_\epsilon e^{(-\frac{n+\alpha-1}{2}+\frac{\epsilon}{p^*})s},\ \ \forall s\geq S_0,\ \  i=1,2.$$
It follows from \eqref{q} that
	\begin{equation}\label{difference}
	(qv_1')'v_2-(qv_2')'v_1+qv_1v_2(v_1^{p^*-2}-v_2^{p^*-2})=0.
	\end{equation}
	By \eqref{es-1},   we know $(qv_1'v_2-qv_1v_2')(S)=o(1),\ S\gg 1$. Integrating \eqref{difference}, for any $S\geq S_0$, we have
	\begin{eqnarray*}(qv_1'v_2-qv_1v_2')(S)&=&\int_{S}^{\infty}qv_1v_2(v_1^{p^*-2}-v_2^{p^*-2})ds\leq\int_{S}^{\infty}qv_1^{p^*-1}v_2ds\\
		&\leq&C(\epsilon)\int_S^\infty e^{(n-\frac{n+\alpha-1}{2}p^*+\epsilon)s}ds=O(e^{[\epsilon-(1+\beta)]S}).
	\end{eqnarray*}
Noting that by \eqref{gamma-pro}, we have $(qv_1'v_2-qv_1v_2')(S)>0,\ S\gg 1$, then
	\begin{equation*}
	\begin{split}
	&\lim_{S\rightarrow\infty}[\sinh^{ap^*}S\big(v_1'(S)v_2(S)-v_1(S)v_2'(S)\big)]\\
	=&\lim_{S\rightarrow\infty}O(e^{[ap^*-n+\epsilon-(1+\beta)]S})
	=\lim_{S\rightarrow\infty}O(e^{[\epsilon-\frac{(\alpha+\beta)(n+\beta+1)}{2n+\alpha+\beta}]S})=0.
	\end{split}
	\end{equation*}
We get the claim and arrive at
$$\lim_{S\rightarrow+\infty}\uppercase\expandafter{\romannumeral3}=0.$$
Combining  the above with \eqref{roman-4},  we have
	$$\lim_{S\rightarrow\infty}(\mathcal{E}_{\hat{v}_1}(S)-\gamma^2(S)\mathcal{E}_{\hat{v}_2}(S))=0.$$
	
\textbf{Case 2.} $\alpha\geq 1$. By  Lemma \ref{v'} and Lemma \ref{alpha>1}, \eqref{v'/v} holds, then we have
	$$\frac{2a}{\tanh S}+\frac{v_1'(S)}{v_1(S)}+\frac{v_2'(S)}{v_2(S)}=-\frac{(\alpha+\beta)(n+\alpha-1)}{2n+\alpha+\beta}+o(1), \ S\gg 1.$$
	Noting that $\alpha+\beta>0$ and using \eqref{3},
  we arrive at
  $$\lim_{S\rightarrow+\infty}\uppercase\expandafter{\romannumeral3}=0.$$
Combining this  with \eqref{roman-4}, we have
	$$\lim_{S\rightarrow\infty}(\mathcal{E}_{\hat{v}_1}(S)-\gamma^2(S)\mathcal{E}_{\hat{v}_2}(S))=0.$$
	We complete the proof of Lemma \ref{E-var-S}.
\hfill$\Box$

\bigskip

We now continue the proof of Theorem \ref{L-1}. First we observe:

$(i)$ If $\beta=\alpha-1$, \eqref{eq:uni-main-eq} has an obvious solution, $\psi=[\alpha(n+\alpha-1)]^{1/(p^*-2)}$. By Proposition \ref{uniq} and Proposition \ref{thm:uniqueness-main}, it is the unique solution provided $\alpha>0$ for $n\geq 2$ or $\alpha\in (0,\frac12]\cup[\frac14(1+\sqrt{17}),\infty)$ for $n=1$. Since $u$ is related to $\psi$ by \eqref{transform-2}, $u$ takes the form of \eqref{sol-1-0}. Combining Theorem \ref{sharp-C-2} with uniqueness result, we know that \eqref{sol-1-0} are the extremal functions of the optimal weighted Sobolev inequality \eqref{GGN-2}. A direct calculation yields the best constant
$$S_{n+1,\alpha,\alpha-1}=\alpha(n+\alpha-1)
\big[\pi^\frac n2\frac{\Gamma(\alpha)\Gamma({\frac{n}2+\alpha})}{\Gamma(n+2\alpha)}\big]^{\frac{1}{n+\alpha}}.$$

$(ii)$ If $\beta=\alpha$, it is easy to verify that \eqref{eq:uni-main-eq} has a solution $\psi(r)=C_{n,\alpha}(r^2+\frac14)^{-\frac{n+\alpha-1}{2}}$ for some suitable $C_{n,\alpha}$. By Proposition \ref{uniq} and Proposition \ref{thm:uniqueness-main}, it is the unique solution provided $\alpha>0$ for $n\geq 2$ or $\alpha\geq \sqrt{2}$ for $n=1$.  Using \eqref{transform-2}, we know that $u$ takes the form of \eqref{sol-2-0}. Combining Theorem \ref{sharp-C-2} with the uniqueness result, we know that \eqref{sol-2-0} are the extremal functions of the optimal weighted Sobolev inequality \eqref{GGN-2}. A direct calculation  yields the best constant
$$S_{n+1,\alpha,\alpha}=(n+\alpha-1)(n+\alpha+1)
\big[\frac{\pi^\frac n2}2\frac{\Gamma(\frac{\alpha+1}2)\Gamma(\frac{n+\alpha+1}2)}{\Gamma({n+\alpha+1})}
\big]^{\frac{2}{n+\alpha+1}}.$$
\hfill$\Box$

\section{Baouendi-Grushin operator and inequality}

As an application of the sharp Gargliardo-Nirenberg inequality, we shall derive the best constants for the sharp form of inequality \eqref{GGN-3}.   We first prove Proposition \ref{bec-1}.

\noindent\textbf{Proof of Propostion \ref{bec-1}.}
 Assume first $u\in  C_0^{\infty}(\mathbb{R}^{n+m})$ with  $u(x,z)=U(x,|z|),$ $ x\in\mathbb{R}^n,\ z\in\mathbb{R}^m$. % since for general $u\in{\cal D}^1_{\tau,z}(\mathbb{R}^{n+m})$, \eqref{GGN-3} can be obtained by approximation.
 %For $x\in\mathbb{R}^{n},\ z\in \mathbb{R}^{m}$ and
 For $ \tau\ge0$, set
\begin{align}\label{change-1}
y=x, \ t=r^{\tau+1},\ r=|z|,\ \tilde{u}(y, t)= u(x, z).
\end{align} It is easy to verify
\begin{align*}dt=(\tau+1)r^{\tau}dr, \ dz=r^{m-1}drdS_{m-1},\  \partial_t\tilde{u}=\frac{\partial_r u}{(\tau+1)r^{\tau}},\ |\partial_r u|=|\nabla _z u|.\end{align*}
Then we have
\begin{align}\label{GB-3}
&\int_{\mathbb{R}_+^{n+1}} t^\alpha |\nabla \tilde{u}|^2 dy dt
=\int_{\mathbb{R}_+^{n+1}} t^\alpha \big[|\partial_t\tilde{u}|^2+|\nabla_y \tilde{u}|^2 \big]dy dt\nonumber \\
=&\int_{\mathbb{R}^{n}}\int_0^\infty r^{(\tau+1)\alpha} \big[ \frac1{(\tau+1)^2r^{2\tau}}|\partial_r u|^2+|\nabla_x u|^2\big](\tau+1)r^{\tau}dr dx \nonumber \\
=&\frac {1}{m\omega_m(\tau+1)}\int_{\mathbb{R}^{n+m}} |z|^{(\tau+1)\alpha -\tau-(m-1)}\big[|\nabla_z u|^2+(\tau+1)^2|z|^{2\tau}|\nabla_x u|^2 \big]dxdz,
\end{align}
 and
\begin{eqnarray}\label{GB-4}
\int_{\mathbb{R}_+^{n+1}} t^\beta | \tilde{u}|^{p^*} dy dt
&=&\int_{\mathbb{R}^{n}} \int_0^\infty r^{(\tau+1)\beta} | \tilde{u}|^{p^*}(\tau+1)r^{\tau}dr dy \nonumber \\
&=&\frac {\tau+1}{m\omega_m}\int_{\mathbb{R}^{n+m}}|z|^{(\tau+1)(\beta+1)-m}  | u|^{p^*}dxdz.
\end{eqnarray}
In \eqref{GB-3} and \eqref{GB-4}, let
$	(\tau+1)\alpha -\tau-(m-1)=0,\ \
(\tau+1)(\beta+1)-m=0,$
that is,
\begin{align}\label{abtau}
\alpha=\frac{m+\tau-1}{\tau+1},\quad \beta=\frac m{\tau+1}-1.
\end{align}
It is easy to check that  $p^*=\frac{2(n+\beta+1)}{n+ \alpha-1}=\frac{2(n(\tau+1)+m)}{n(\tau+1)+ m}=\frac{2Q}{Q-2},\ \alpha>0,\ \beta>-1$ and  $\frac {n-1}{n+1} \beta<\alpha<\beta+2.$ %此时，\eqref{GB-3}、 \eqref{GB-4}中的积分有界，
By Lemma \ref{density}, we have $\tilde{u}\in {\cal D}^{1,2}_{\alpha}(\mathbb{R}^{n+1}_+)$. Then by
weighted Sobolev inequality \eqref{GGN-2}, it holds
\begin{align}\label{GB-5}
( \int_{\mathbb{R}^{n+m} }  |u|^\frac{2Q}{Q-2}dx dz )^\frac{Q-2}{Q}\le C \int_{\mathbb{R}^{n+m} }(|\nabla_zu|^2+(\tau+1)^{2}|z|^{ 2\tau}|\nabla_x u|^2  )dx dz.
\end{align} %for $u\in{\cal D}^1_{\tau,z}(\mathbb{R}^{n+m})$.

For general $u\in{\cal D}^1_{\tau,z}(\mathbb{R}^{n+m})$, \eqref{GB-5} can be obtained by an approximation argument.
\hfill$\Box$

\medskip

In particular, for $m=1$ and $u\in C_0^\infty({\mathbb{R}^{n+1}})$, consider $u$ in $\mathbb{R}^{n+1}_+$ and $\mathbb{R}^{n+1}_-$ separately. Using the substitutions in \eqref{change-1} (with $-t$ instead of $t$ for $t<0$), similarly, we have
\begin{align*}
( \int_{\mathbb{R}^{n+1}_+ }  |u|^\frac{2Q}{Q-2}dx dz )^\frac{Q-2}{Q}\le C \int_{\mathbb{R}^{n+1}_+ }(|\nabla_zu|^2+(\tau+1)^{2}|z|^{ 2\tau}|\nabla_x u|^2  )dx dz,
\end{align*}
 and \begin{align*}
( \int_{\mathbb{R}^{n+1}_- }  |u|^\frac{2Q}{Q-2}dx dz )^\frac{Q-2}{Q}\le C \int_{\mathbb{R}^{n+1}_-}(|\nabla_zu|^2+(\tau+1)^{2}|z|^{ 2\tau}|\nabla_x u|^2  )dx dz.
\end{align*}
Combining the above two inequalities,  we obtain  the inequality in the whole space
\begin{align*}
( \int_{\mathbb{R}^{n+1} }  |u|^\frac{2Q}{Q-2}dx dz )^\frac{Q-2}{Q}\le C \int_{\mathbb{R}^{n+1}}(|\nabla_zu|^2+(\tau+1)^{2}|z|^{ 2\tau}|\nabla_x u|^2  )dx dz
\end{align*}
 for $u\in \cal{D}^1_\tau(\mathbb{R}^{n+1})$.

\medskip

Now we derive Theorem \ref{L-2} from Theorem \ref{L-1}.

\smallskip

\noindent\textbf{Proof of Theorem \ref{L-2}.}
Define \begin{align*}
F[v]=\frac{\int_{\mathbb{R}_+^{n+1}} t^\alpha |\nabla v|^2 dy dt}{\big(\int_{\mathbb{R}_+^{n+1}} t^\beta |v|^{p^*} dy dt\big)^{\frac{2}{p^*}}},\ v\in {\cal D}^{1,2}_{\alpha}(\mathbb{R}^{n+1}_+),
\end{align*} and
\begin{align}\label{G}
G[v]=\frac{\int_{\mathbb{R}^{n+m} }(|\nabla_zv|^2+(\tau+1)^{2}|z|^{ 2\tau}|\nabla_x v|^2  )dx dz}{\big(\int_{\mathbb{R}^{n+m} }|v|^{\frac{2Q}{Q-2}} dx dz\big)^{\frac{Q-2}{Q}}},\ v\in{\cal D}^1_{\tau}(\mathbb{R}^{n+m}).
\end{align}
For $\tau>0$, let $\alpha, \ \beta$ satisfy \eqref{abtau}, $u$, $\tilde{u}$ satisfy \eqref{change-1}, and $u\in C_0^\infty(\mathbb{R}^{n+m})$ with $u(x,z)=U(x,|z|)$. From the proof of Proposition \ref{bec-1}, we know $\tilde{u}\in {\cal D}^{1,2}_{\alpha}(\mathbb{R}^{n+1}_+)$, and
\begin{align}\label{functional}
G[u]=(m\omega_m)^{\frac{2}{Q}}(\tau+1)^{\frac{2Q-2}{Q}}F[\tilde{u}].
\end{align}
Taking the infimum of  $G[u]$, we ge
\begin{align*}
S_{\tau,z}(n,m)%=&%\inf_{u\in C_0^\infty(\mathbb{R}^{n+m}),u(x,z)=\bar{u}(x,|z|)}G[u]
\geq &(m\omega_m)^{\frac{2}{Q}}(\tau+1)^{\frac{2Q-2}{Q}}\inf_{v\in {\cal D}^{1,2}_\alpha(\mathbb{R}^{n+1}_+)}F[v]\\
=&(m\omega_m)^{\frac{2}{Q}}(\tau+1)^{\frac{2Q-2}{Q}}S_{n+1,\frac{m+\tau-1}{\tau+1},\frac{m}{\tau+1}-1}.
\end{align*}
On the other hand, noting that $\tau>0$ and $\partial_r u=(\tau+1)t^{\frac{\tau}{\tau+1}}\partial_t\tilde{u}$,  for  $\tilde{u}\in C_0^1(\overline{\mathbb{R}^{n+1}_+})$,  we have $u\in C_0^1(\mathbb{R}^{n+m})$ and $u$ is radially symmetric with respect to $z$. We obtain the reverse inequality. Thus
\begin{align*}
S_{\tau,z}(n,m)=
(m\omega_m)^{\frac{2}{Q}}(\tau+1)^{\frac{2Q-2}{Q}}S_{n+1,\frac{m+\tau-1}{\tau+1},\frac{m}{\tau+1}-1}.
\end{align*}
Besides, $u$ is the extremal function of $G$ in ${\cal D}^1_{\tau,z}(\mathbb{R}^{n+m})$ if and only if $\tilde{u}$ is the extremal function of $F$ in ${\cal D}^{1,2}_{\alpha}(\mathbb{R}^{n+1}_+)$.

For $n=1$, $\alpha$ and $\beta$ satisfy \eqref{constraint}, or $n\geq 2$, that is,  for $n=1,\ m\neq 2$, or $n\geq 2$, it follows from Theorem \ref{L-1}, that the extremal functions of  \eqref{GGN-3} in $\cal{D}^1_{\tau,z}(\mathbb{R}^{n+m})$ are in the following form
\begin{align}\label{sol-gru-1-1}
u(x, z)=k(\frac{1}{|x-x^o|^2+(|z|^{\tau+1}+A)^2})^{\frac{Q-2}{2(\tau+1)}} \psi(|\frac{(x-x^o, |z|^{\tau+1}+A)}{|x-x^o|^2+(|z|^{\tau+1}+A)^2}-(0,\frac{1}{2 A})|),
\end{align} where $k>0,\ A>0, \ x^o\in\mathbb{R}^{n}$, and $\psi$ is the unique positive solution to equation \eqref{ode-0}.

In particular, for $\tau=1$, that is,  for $\alpha=\frac{m}{2}$, $\beta=\frac{m}{2}-1$($n=1,\ m\neq 2$, or $n\geq 2$), it follows from Theorem \ref{L-1}, that the extremal functions of  \eqref{GGN-3} in $\cal{D}^1_{1,z}(\mathbb{R}^{n+m})$ are given by
\begin{align}\label{solu-tau=1}
u(x,z)=k\Big(\frac{A}{(A+|z|^2)^2+|x-x^o|^2}\Big)^{\frac{2n+m-2}{4}}, \quad x\in \mathbb{R}^{n},\ z\in\mathbb{R}^m,
\end{align}
 where $k>0, \ A>0, \ x^o\in\mathbb{R}^{n}$.
 Via a direct calculation,  the best constant is
\begin{align*}
S_{1,z}(n,m)=m(2n+m-2) \big[\pi^\frac {n+m}2\frac{
	\Gamma({\frac{n+m}2})}{\Gamma(n+m)}\big]^{\frac{2}{2n+m}}.
\end{align*}
We have hereby  completed the proof of  part $1)$ and part $2)$  in Theorem \ref{L-2}.
Next, we prove the second part.

Assume that $u$ is a positive weak solution to equation  \eqref{gru-1} in $\cal{D}^1_{\tau.z}(\mathbb{R}^{n+m})$, let $\tilde{u}$ and $u$ satisfy the transformation in \eqref{change-1}. Similar to the proof of Proposition \ref{bec-1}, $(\tau+1)^{-\frac{Q-2}2}\tilde{u}$ is a positive weak solution to equation \eqref{genequ-1} for $\alpha=\frac{m+\tau-1}{\tau+1},\ \beta=\frac m{\tau+1}-1$.  By Theorem \ref{L-1}, we know that $\tilde{u}$ is of the form \eqref{type-0}, then
$u$ is of the form \eqref{sol-gru-1-1}. In particular, for $\tau=1$, that is, for  $\alpha=\frac{m}{2}$, $\beta=\frac{m}{2}-1$($n=1,\ m\neq 2$, or $n\geq 2$), $u$ is of the form \eqref{solu-tau=1}.
\hfill$\Box$

\bigskip

Finally, we prove Proposition \ref{symmetrization-z}. Apparently, this will follow directly from the following rearrangement result (Proposition \ref{rearrangement-lemma}).

For a function $u(x,z)\in C_0^\infty(\mathbb{R}^{n+m})$, where $x\in \mathbb{R}^{n}$ and $z\in\mathbb{R}^m$, let $u_x^*$  denote the symmetric decreasing rearrangement of $|u|(\cdot,z)$ with respect to variable $x$, and $u_{z}^{*}$ denote the  symmetric decreasing rearrangement of $|u|(x,\cdot)$ with respect to variable $z$.

 %\textbf{If you do not want to use nonnegative functions, then you have to define in this way. Could you re-check the rest of this section and then pass the text file to me? 3-29-2021}

Let
$\mathcal{F} f$ be the Fourier transform of $f$, %with respect to variable $x$,
% i.e.
%\[
%\mathcal{F}f(\xi)=\int_{\mathbb{R}^{n}}e^{-2\pi i\xi\cdot x}f(x) dx,
%\]
and $\mathcal{F}^{-1}f$ be the inverse Fourier transform of $f$. For $f\in L^1(\mathbb{R}^n)$,
\[
\mathcal{F}f(\xi)=\int_{\mathbb{R}^{n}}e^{-2\pi i\xi\cdot x}f(x) dx,
\] and for $f\in L^p(\mathbb{R}^{n})$, the Fourier transform can be defined via  approximation in Schwartz space ${\cal S}(\mathbb{R}^{n})(1<p\leq 2)$ or as a distribution($p>1$)( see e.g., \cite{SS2003}). For the function $u(x, z)$ over $\mathbb{R}^{n+m}$, we denote $\mathcal{F}_x u$ to be the Fourier transform of $u$ with respect to variable $x\in\mathbb{R}^n$, and $\mathcal{F}^{-1}_\xi u$ to be the inverse Fourier transform of $u$ with respect to variable $\xi\in\mathbb{R}^n$.

% We write $f^*$ for short when there is no ambiguity. %Also we write $u_{ x z}^{**}=(u_x^*)_z^*(x,z)=u(|x|,|z|)$.

We shall establish the following  decreasing rearrangement property involving Fourier transform.
%\textbf{Define: $u^*_x$, $u^*_z$ and $\mathcal{F} f$ first}

\begin{proposition}\label{rearrangement-lemma} Assume $\tau>0$, $\frac{2Q}{Q-2}=s$ is an integer, $u\in C_0^\infty(\mathbb{R}^{n+m})$ is  a real-valued function, and $w=\mathcal{F}^{-1}_\xi[(\mathcal{F}_x(u^*_x))^*_z]$. Then $w$ is real-valued, $w\in \cal{D}^1_{\tau,z}(\mathbb{R}^{n+m})$ and it holds
	\begin{eqnarray} \label{rearrangement-1}\small
	& &\frac{\int_{\mathbb{R}^{n+m}}(|\nabla_z u|^2+(\tau+1)^{2}|z|^{2\tau }|\nabla_x u|^2 )dxdz}{\big(\int_{\mathbb{R}^{n+m}}  |u|^{\frac{2Q}{Q-2}}dxdz\big)^\frac{Q-2}{Q}}\nonumber\\
	&\ge&\frac{\int_{\mathbb{R}^{n+m}}(|\nabla_z w|^2+(\tau+1)^{2}|z|^{2\tau }(|\nabla_x w|^2 )dx dz}{\big(\int_{\mathbb{R}^{n+m}}  |w|^{\frac{2Q}{Q-2}}dx dz\big)^\frac{Q-2}{Q}}.
	\end{eqnarray}
\end{proposition}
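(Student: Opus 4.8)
<br>

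The plan is to prove Proposition~\ref{rearrangement-lemma} by combining three classical ingredients: the Riesz rearrangement inequality (equivalently, the nonexpansivity of symmetric decreasing rearrangement in Dirichlet-type forms), the Plancherel/Parseval identity, and the observation that the weighted quantity $\int(\tau+1)^2|z|^{2\tau}|\nabla_x u|^2$ has a clean multiplier description after a Fourier transform in the $x$ variable. Concretely, write $\hat u(\xi,z)=\mathcal{F}_x u(\xi,z)$; by Plancherel in $x$,
\begin{align}\label{eq:plancherel-plan}
\int_{\mathbb{R}^{n+m}}(\tau+1)^2|z|^{2\tau}|\nabla_x u|^2\,dxdz&=\int_{\mathbb{R}^{n+m}}(\tau+1)^2|z|^{2\tau}(2\pi|\xi|)^2|\hat u(\xi,z)|^2\,d\xi dz,
\end{align}
and similarly $\int|\nabla_z u|^2\,dxdz=\int|\nabla_z\hat u(\xi,z)|^2\,d\xi dz$, while the $L^s$ norm on the right-hand side will be controlled by passing back through the inverse transform. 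The first step is therefore to reduce the stated functional inequality to the corresponding inequality for $\hat u$, where the $z$-gradient term and the weighted multiplier term both have the form $\int G(\xi,z)|\hat u(\xi,z)|^2$ with $G\ge 0$, plus a $z$-Dirichlet energy.

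Next I would carry out the rearrangement in two stages, mirroring the definition $w=\mathcal{F}^{-1}_\xi[(\mathcal{F}_x(u^*_x))^*_z]$. Stage one: replace $u$ by $u^*_x$, its symmetric decreasing rearrangement in $x$. Here the key point is that rearrangement in $x$ does not increase $\int|\nabla_x u|^2$ fiber-by-fiber in $z$, does not increase $\int|\nabla_z u|^2$ (by a standard argument that $z$-differences are not increased under $x$-rearrangement, using the layer-cake representation and Riesz's inequality), does not change $\int|u|^s\,dx$ for each $z$, and commutes with the weight $|z|^{2\tau}$ since the weight depends only on $z$. Stage two: work with $\hat u^{(1)}=\mathcal{F}_x(u^*_x)$ and rearrange in $z$; because $u^*_x(\cdot,z)$ is real, even, and radial decreasing in $x$, its Fourier transform $\hat u^{(1)}(\xi,z)$ is real-valued, and the $z$-rearrangement $(\hat u^{(1)})^*_z$ again does not increase $\int|\nabla_z\hat u^{(1)}|^2$, does not increase $\int G(\xi,z)|\hat u^{(1)}|^2$ for the nonnegative, $z$-radial-increasing weight $G=(\tau+1)^2(2\pi|\xi|)^2|z|^{2\tau}$ (this is a Hardy--Littlewood inequality: rearranging a function decreasingly against a radially increasing weight decreases the integral), and finally does not decrease the $L^s$ norm of $w=\mathcal{F}^{-1}_\xi(\hat u^{(1)})^*_z$ — this last point is exactly where the hypothesis that $s=2Q/(Q-2)$ is an even integer enters, via the iterated Hausdorff--Young/convolution argument of Beckner that I would reproduce: $\|w\|_{L^s}^{s}$ can be written as an integral of a product of inverse-Fourier transforms that is, after expansion, a Riesz-type multiple integral which only increases under symmetric decreasing rearrangement of the factors.

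Putting the stages together yields, numerator by numerator and denominator by denominator, $\int(|\nabla_z w|^2+(\tau+1)^2|z|^{2\tau}|\nabla_x w|^2)\le\int(|\nabla_z u|^2+(\tau+1)^2|z|^{2\tau}|\nabla_x u|^2)$ and $\|w\|_{L^s}\ge\|u\|_{L^s}$, which is \eqref{rearrangement-1}; the membership $w\in\cal{D}^1_{\tau,z}(\mathbb{R}^{n+m})$ follows because $w$ is radial in $z$ by construction and has finite energy, and Proposition~\ref{symmetrization-z} then follows by taking infima over $u\in C_0^\infty(\mathbb{R}^{n+m})$ on the left and noting $w$ ranges within the admissible class for $S_{\tau,z}(n,m)$. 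The main obstacle, and the step I expect to require the most care, is controlling the $L^s$ norm: the quantity $\|w\|_{L^s}$ is a norm of an inverse Fourier transform of a rearranged function, so its monotonicity under rearrangement is \emph{not} automatic and genuinely relies on $s$ being an even integer — one must expand $\|w\|_s^s=\|\mathcal{F}^{-1}_\xi(\hat u^{(1)})^*_z\|_s^s$ as an $s/2$-fold convolution (in $\xi$, fiberwise in $z$), recognize the resulting multiple integral as one to which the multilinear Riesz rearrangement inequality applies, and check that each elementary rearrangement step in the two-stage construction is compatible (in particular that $x$-rearrangement followed by $\mathcal{F}_x$ produces a function to which $z$-rearrangement can be applied without destroying the reality and the structure needed for the final $L^s$ bound). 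A secondary technical point is justifying the Plancherel reductions and the rearrangement inequalities for $\mathcal{D}^1$-functions rather than Schwartz functions, which I would handle by a routine density/approximation argument on $C_0^\infty(\mathbb{R}^{n+m})$ together with lower semicontinuity of the energy under weak convergence.
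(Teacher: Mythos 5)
Your two-stage strategy --- rearrange in $x$, then Fourier transform in $x$ and rearrange in $z$ --- combined with Plancherel for the gradient and weight terms, a P\'olya--Szeg\H{o}/Hardy--Littlewood step in each variable, and a convolution expansion of the $L^s$ norm, is exactly the architecture of the paper's proof. The difference-quotient control of the $z$-gradients, the fiberwise weight bound coming from Lemma~\ref{*1}, the observation that $w$ is real-valued because $\mathcal{F}_x(u^*_x)$ and its $z$-rearrangement are radial in $\xi$, and the closing membership $w\in{\cal D}^1_{\tau,z}(\mathbb{R}^{n+m})$ (which the paper confirms by transporting to ${\cal D}^{1,2}_\alpha(\mathbb{R}^{n+1}_+)$ and invoking the density lemma) all line up with the paper's Steps $1$--$3$.

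The genuine gap is in the $L^s$ step. The proposition assumes only that $s=2Q/(Q-2)$ is an integer, and the odd case actually occurs: for $\tau=1$ with $n=m=2$ or with $n=1,\ m=4$ one has $Q=6$ and $s=3$. Your argument insists throughout on $s$ being \emph{even} and on an ``$s/2$-fold convolution'' expansion, i.e.\ $\|w\|_s^s=\|\mathcal{F}_x(w^{s/2})\|_2^2$; that identity is precisely Beckner's original restriction and breaks down when $s$ is odd, so the proof as proposed does not cover all cases of the statement. The paper extends Beckner by expanding from the other side: since $u^*_x\ge 0$, one has $\|u^*_x\|_s^s=\int(u^*_x)^s$, and by the integer-power convolution identity of Lemma~\ref{F-co} this equals $\int_{\mathbb{R}^m}(\mathcal{F}_x u^*_x)\ast\cdots\ast(\mathcal{F}_x u^*_x)(0,z)\,dz$ with $(s-1)$ convolutions in $\xi$ evaluated at $\xi=0$; for each fixed $(\xi_1,\dots,\xi_{s-1})$ this is an integral in $z$ of an $s$-fold product, and applying inequality~\eqref{*-ineq} (the one-variable Brascamp--Lieb--Luttinger bound) fiberwise in $z$ and reversing the Fourier steps yields $\int(u^*_x)^s\le\int w^s\le\int|w|^s$ for every integer $s\ge 2$. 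To prove the proposition as stated you must recast your $L^s$ step this way: start from the nonnegative $u^*_x$ rather than from $w$, and use the integer-power identity rather than the square-of-a-half-power trick. (A smaller point: your opening ``reduce the whole functional inequality to $\hat u$ via Plancherel'' cannot encompass the $L^s$ term, and indeed your own later discussion handles it separately, as it must.)
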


Obviously, the function $w$ obtained in Proposition \ref{rearrangement-lemma} is radially symmetric with respect to $x$ and $z$ respectively, since the Fourier transform of a radial function is radial.

Similar rearrangement results are obtained by Beckner \cite{Bec2001}, and recently by E. Lenzmann and J. Sok \cite{LS2019}.

 In order to prove Proposition \ref{rearrangement-lemma}, we need some technical lemmas.

\begin{lemma}\label{*1}(\cite[(4.9)]{LS2019})
	Assume $\tau>0$. For any measurable function $f:\ \mathbb{R}^{m}\to [0,\infty)$ that vanishes at infinity, it holds
	\begin{align*}
	\int_{\mathbb{R}^m}	|z|^{2\tau}f^*_z(z)dz\leq  \int_{\mathbb{R}^m}	|z|^{2\tau}f(z)dz.
	\end{align*}
\end{lemma}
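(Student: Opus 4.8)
The plan is to reduce the claimed inequality to an elementary statement about level sets and then prove that statement by a one-line exchange argument. Since $f\ge 0$ and $f$ vanishes at infinity, the superlevel sets $\{f>t\}$ have finite measure for every $t>0$, so $f^*_z$ is well defined and, by the standard properties of the symmetric decreasing rearrangement, $\{f^*_z>t\}$ is the open centered ball $B_{\rho(t)}(0)\subset\mathbb{R}^m$ with $|B_{\rho(t)}(0)|=|\{f>t\}|$. First I would apply the layer-cake (Cavalieri) formula $f(z)=\int_0^\infty\chi_{\{f>t\}}(z)\,dt$ together with Tonelli's theorem to write
\[
\int_{\mathbb{R}^m}|z|^{2\tau}f(z)\,dz=\int_0^\infty\Big(\int_{\{f>t\}}|z|^{2\tau}\,dz\Big)dt,
\]
and the analogous identity for $f^*_z$ with $B_{\rho(t)}(0)$ in place of $\{f>t\}$. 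Comparing the two integrands, it then suffices to show: for every measurable $A\subset\mathbb{R}^m$ of finite measure, the centered ball $B=B_\rho(0)$ with $|B|=|A|$ satisfies $\int_B|z|^{2\tau}\,dz\le\int_A|z|^{2\tau}\,dz$.

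This set-level inequality I would prove directly. Writing
\[
\int_A|z|^{2\tau}\,dz-\int_B|z|^{2\tau}\,dz=\int_{A\setminus B}|z|^{2\tau}\,dz-\int_{B\setminus A}|z|^{2\tau}\,dz,
\]
and using that $|A|=|B|$ forces $|A\setminus B|=|B\setminus A|$, one observes that on $A\setminus B$ one has $|z|\ge\rho$, hence $|z|^{2\tau}\ge\rho^{2\tau}$, while on $B\setminus A$ one has $|z|\le\rho$, hence $|z|^{2\tau}\le\rho^{2\tau}$. Therefore $\int_{A\setminus B}|z|^{2\tau}\,dz\ge\rho^{2\tau}|A\setminus B|=\rho^{2\tau}|B\setminus A|\ge\int_{B\setminus A}|z|^{2\tau}\,dz$, which is exactly the desired inequality. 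Combining this with the two layer-cake identities and integrating in $t$ gives the lemma; since all integrands are nonnegative the manipulation is valid regardless of whether the right-hand side is finite (and if it is infinite there is nothing to prove).

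The only real obstacle is bookkeeping rather than substance: one must make sure that $f^*_z$ is genuinely defined with superlevel sets that are honest centered balls of the correct (finite) measure — which is precisely where the hypothesis that $f$ vanishes at infinity enters — and that sets of the form $\{f=t\}$ and the distinction between open and closed balls are harmless, being of measure zero or irrelevant to the integral. An alternative I would keep in mind, bypassing the level-set reduction entirely, is to truncate the weight: the function $g_M(z):=M-\min(|z|^{2\tau},M)$ is nonnegative, bounded, compactly supported and radially non-increasing, hence coincides with its own symmetric decreasing rearrangement, so the Hardy--Littlewood rearrangement inequality gives $\int g_M f\le\int g_M f^*_z$; when $f\in L^1$ one subtracts $M\int f=M\int f^*_z$ and lets $M\to\infty$ by monotone convergence. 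The layer-cake route is preferable since it requires no integrability assumption on $f$.
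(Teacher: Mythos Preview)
Your argument is correct. The layer-cake reduction together with the elementary set-level comparison (a version of the bathtub principle) is the standard way to obtain this inequality, and the bookkeeping remarks about finiteness of level sets and the role of the ``vanishing at infinity'' hypothesis are accurate. The alternative route via Hardy--Littlewood and truncation is also valid, with the caveat you noted.

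As for comparison with the paper: there is nothing to compare. The paper does not supply its own proof of this lemma; it is simply quoted from \cite[(4.9)]{LS2019} (Lenzmann--Sok). Your self-contained proof is more than what the paper offers here.
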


\begin{lemma}\label{bll-ineq}(Brascamp-Lieb-Luttinger inequality, \cite{BLL1974})
	Suppose $f_j,\ 1\leq j\leq J$ are nonnegative measurable functions on $\mathbb{R}^m$ that vanish at infinity, and $a_{jl},\ 1\le j\le J,\ 1\le l\le L$ are real numbers. Then
	\begin{align*}
	\int_{\mathbb{R}^{mL}} \prod_{j=1}^J f_j(\sum_{l=1}^L a_{jl}z_l) dz_1\cdots dz_L \leq 	\int_{\mathbb{R}^{mL}} \prod_{j=1}^J (f_j)^*_z(\sum_{l=1}^L a_{jl}z_l) dz_1\cdots dz_L .
	\end{align*}
\end{lemma}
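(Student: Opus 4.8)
I would prove the asserted inequality by the classical route: reduce to indicator functions by the layer-cake formula, establish monotonicity under Steiner symmetrization, and pass to the limit along Steiner symmetrizations converging to the spherical rearrangement.

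First, since each $f_j$ vanishes at infinity, $A_j^t:=\{f_j>t\}$ has finite measure, and $f_j(z)=\int_0^\infty\chi_{A_j^t}(z)\,dt$ while $\{(f_j)^*_z>t\}=(A_j^t)^*$ for a.e.\ $t$. Expanding the products $\prod_j f_j$ and $\prod_j(f_j)^*_z$ and applying Tonelli's theorem, it suffices to prove the inequality when each $f_j=\chi_{A_j}$ with $A_j\subset\mathbb{R}^m$ measurable of finite measure; by monotone convergence one may further assume each $A_j$ bounded with $|A_j|>0$. Writing $A_j^*$ for the open ball about the origin with $|A_j^*|=|A_j|$ and
\[
I(A_1,\dots,A_J):=\mathcal L^{mL}\Big\{(z_1,\dots,z_L)\in\mathbb{R}^{mL}:\ \textstyle\sum_{l=1}^L a_{jl}z_l\in A_j\ \text{for every }j\Big\},
\]
the claim becomes $I(A_1,\dots,A_J)\le I(A_1^*,\dots,A_J^*)$. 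If the linear map $(z_l)_l\mapsto(\sum_l a_{jl}z_l)_j$ is not injective, both sides equal $+\infty$ and nothing is to prove; otherwise both constraint regions are bounded, so $I$ is finite.

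Next I would show that $I(A_1,\dots,A_J)\le I(S_eA_1,\dots,S_eA_J)$ for every unit vector $e\in\mathbb{R}^m$, where $S_e$ denotes Steiner symmetrization in the hyperplane $e^\perp$. Taking $e=e_1$ and writing $z_l=(s_l,z_l')\in\mathbb{R}\times\mathbb{R}^{m-1}$, for fixed $(z_1',\dots,z_L')$ the constraint on $(s_1,\dots,s_L)$ reads $\sum_l a_{jl}s_l\in B_j$, where $B_j=\{s\in\mathbb{R}:(s,\sum_l a_{jl}z_l')\in A_j\}$ is the corresponding one-dimensional slice of $A_j$, whose Steiner-symmetrized counterpart is the centered interval $B_j^*$ of the same length. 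Hence, by Fubini, the Steiner step reduces to the one-dimensional inequality
\[
\int_{\mathbb{R}^L}\prod_{j=1}^J\chi_{B_j}\Big(\textstyle\sum_l a_{jl}s_l\Big)\,ds\ \le\ \int_{\mathbb{R}^L}\prod_{j=1}^J\chi_{B_j^*}\Big(\textstyle\sum_l a_{jl}s_l\Big)\,ds ,
\]
applied for a.e.\ $(z_1',\dots,z_L')$. Granting this one-dimensional inequality, I would invoke the standard approximation fact that there is a sequence of Steiner symmetrizations, applied simultaneously to $A_1,\dots,A_J$, whose iterates converge in $L^1$ to the spherical rearrangements $A_j^*$; since all sets remain inside a fixed ball and the linear map is injective, dominated convergence gives $I(A_1^{(k)},\dots,A_J^{(k)})\to I(A_1^*,\dots,A_J^*)$, and combining this with the monotonicity above completes the proof.

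The genuine obstacle is the one-dimensional base case. A further layer-cake reduction makes each $B_j$ a finite union of intervals (after approximation in measure); for $L=1$ the inequality is immediate, since $\bigcap_j\{s: a_js\in B_j\}$ has measure at most $\min_j|B_j|/|a_j|$ (the factors with $a_j=0$ only help on the right-hand side), with equality when the $B_j$ are centered. For general $L$ one argues as in Brascamp--Lieb--Luttinger, sliding the connected components of the $B_j$ toward the origin and tracking the Lebesgue measure of the intersection of the slabs $\{s\in\mathbb{R}^L:|\langle a_j,s\rangle-c_j|<r_j\}$ as the offsets $c_j$ are moved; I expect establishing the monotonicity of this volume under such moves to be the most delicate point of the whole argument. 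An alternative that bypasses both the Steiner reduction and this induction is to prove directly on $\mathbb{R}^m$ that $I$ is non-decreasing under two-point (polarization) symmetrization and then use that iterated polarizations approximate the spherical rearrangement.
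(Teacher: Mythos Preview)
The paper does not prove this lemma at all; it is quoted verbatim as a known result from Brascamp--Lieb--Luttinger \cite{BLL1974} and used as a black box in the proof of Proposition~\ref{rearrangement-lemma}. So there is no ``paper's own proof'' to compare against.

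That said, your outline is the standard route taken in the original BLL paper: layer-cake reduction to indicators, Steiner symmetrization monotonicity via the one-dimensional base case, and passage to the spherical rearrangement along a sequence of Steiner symmetrizations. You have correctly identified the crux as the one-dimensional inequality for $L\ge 2$, where the ``sliding'' argument of BLL (or, in your alternative, iterated polarizations) is needed; your sketch is honest about this being the delicate step rather than pretending it is routine. One small remark: in the reduction to finite $I$, the dichotomy ``injective vs.\ both sides infinite'' is not quite right as stated, since for $J<L$ the map cannot be injective yet both sides can still be finite (e.g.\ $J=1$, $L=2$ with $a_{11}=a_{12}=0$ gives both sides zero unless $0\in A_1$, in which case both are infinite). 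The cleaner statement is that if some nontrivial linear combination of the $z_l$ is unconstrained then both sides are simultaneously $0$ or $+\infty$, and otherwise one may reduce to a full-rank situation; this does not affect the substance of your argument.
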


In particular, in Lemma \ref{bll-ineq},  taking $L=1$ and $a_{j1}=1$, we have
\begin{align}\label{*-ineq}
		\int_{\mathbb{R}^{m}} \prod_{j=1}^J f_j(z)dz\leq \int_{\mathbb{R}^{m}} \prod_{j=1}^J (f_j)^*_z(z)dz.
\end{align}
Since $f^*_z=|f|^*_z$, the above inequality also holds for $f$ changing sign or complex-valued.

\bigskip

For $f$ and $g$ in Schwartz space $\mathcal{S}(\mathbb{R}^n)$, we have the following basic facts (see %Proposition 2.1 in
\cite{SS2003}): %for any fixed $z\in\mathbb{R}^m$,
\begin{equation}\label{F-de}
\mathcal{F}(\nabla f)(\xi)=(2\pi i\xi)\mathcal{F}f(\xi),\ \xi \in \mathbb{R}^n,
\end{equation}
\begin{align}\label{F-inverse}
\mathcal{F}^{-1} \mathcal{F} f =f,
\end{align}
and the Plancherel formula
\begin{align}\label{parseval}
\int_{\mathbb{R}^n}(\mathcal{F}f)\overline{(\mathcal{F}g)}d\xi=\int_{\mathbb{R}^n}f\bar{g}dx.
%\|\mathcal{F}_x u\|_{L^2(\mathbb{R}^{n})}=\|u\|_{L^2(\mathbb{R}^{n})}.
\end{align}
Combining \eqref{F-de} and \eqref{parseval}, we have
\begin{align}\label{F-inte}
	\int_{\mathbb{R}^{n}} |\nabla f|^2(x) dx=\int_{\mathbb{R}^{n}} |\mathcal{F}(\nabla f)|^2(\xi) d\xi=4\pi^2\int_{\mathbb{R}^n} |\xi|^2|\mathcal{F} f|^2(\xi)d\xi.
\end{align}

By the approximation, it is easy to see that \eqref{F-inverse} and \eqref{parseval} hold for $f,g\in L^2(\mathbb{R}^{n})$, while \eqref{F-de} and \eqref{F-inte} hold for $f\in H^{1,2}(\mathbb{R}^{n})$.

 \begin{lemma}\label{hy}(Hausdorff-Young inequality)
 	Suppose $f\in L^p(\mathbb{R}^n)$, $1\leq p\leq 2$, $\frac{1}{p}+\frac{1}{p'}=1$. Then $\mathcal{F}f\in L^{p'}(\mathbb{R}^n)$ and $\|\mathcal{F}f\|_{L^{p'}(\mathbb{R}^n)}\leq \|f\|_{L^p(\mathbb{R}^n)}$.
 \end{lemma}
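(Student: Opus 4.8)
\textbf{Proof proposal for Lemma \ref{hy} (Hausdorff--Young inequality).}
The plan is to obtain this by the Riesz--Thoren interpolation theorem applied to the two endpoint mapping properties of the Fourier transform that are already available to us. First I would record the two endpoint facts. At $p=1$ (so $p'=\infty$), for $f\in L^1(\mathbb{R}^n)$ the Fourier transform $\mathcal{F}f(\xi)=\int_{\mathbb{R}^n}e^{-2\pi i\xi\cdot x}f(x)\,dx$ is well defined and satisfies the trivial bound
\[
\|\mathcal{F}f\|_{L^\infty(\mathbb{R}^n)}\le \int_{\mathbb{R}^n}|f(x)|\,dx=\|f\|_{L^1(\mathbb{R}^n)},
\]
since $|e^{-2\pi i\xi\cdot x}|=1$. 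At $p=2$ (so $p'=2$), the Plancherel formula \eqref{parseval} with $g=f$ gives
\[
\|\mathcal{F}f\|_{L^2(\mathbb{R}^n)}=\|f\|_{L^2(\mathbb{R}^n)},
\]
so $\mathcal{F}$ is an isometry on $L^2$. Thus $\mathcal{F}$ is a linear operator that is bounded from $L^1$ to $L^\infty$ with norm $\le 1$ and from $L^2$ to $L^2$ with norm $\le 1$.

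Next I would invoke the Riesz--Thorin interpolation theorem. For $1\le p\le 2$ write $\frac1p=\frac{1-\theta}{1}+\frac{\theta}{2}$ with $\theta\in[0,1]$; then the conjugate exponent satisfies $\frac{1}{p'}=\frac{1-\theta}{\infty}+\frac{\theta}{2}$, i.e. $\frac{1}{p'}=\frac\theta2$, which is consistent with $\frac1p+\frac1{p'}=1$. Riesz--Thorin then yields that $\mathcal{F}$ extends to a bounded operator from $L^p(\mathbb{R}^n)$ to $L^{p'}(\mathbb{R}^n)$ with
\[
\|\mathcal{F}f\|_{L^{p'}(\mathbb{R}^n)}\le \|\mathcal{F}\|_{L^1\to L^\infty}^{1-\theta}\,\|\mathcal{F}\|_{L^2\to L^2}^{\theta}\,\|f\|_{L^p(\mathbb{R}^n)}\le 1^{1-\theta}\cdot 1^{\theta}\cdot\|f\|_{L^p(\mathbb{R}^n)}=\|f\|_{L^p(\mathbb{R}^n)}.
\]
One technical point to address cleanly is the meaning of $\mathcal{F}f$ for $f\in L^p$ with $1<p<2$: as recalled just before Lemma \ref{hy}, it is defined by density from the Schwartz class (or $L^1\cap L^2$), and the interpolation bound guarantees that this densely-defined operator has a unique bounded extension to all of $L^p$, which coincides with the distributional Fourier transform. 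I would state this identification briefly and note that the Schwartz class is dense in every $L^p$, $1\le p<\infty$, so the inequality, once known on a dense subspace, passes to the whole space by continuity.

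The content here is genuinely soft: everything reduces to the two endpoint estimates, both of which are already in hand, plus the complex-interpolation theorem of Riesz and Thorin. There is no real obstacle; the only care needed is the bookkeeping of exponents (verifying $\frac1{p'}=\frac\theta2$) and making the density/extension argument precise so that the inequality is asserted for the correctly-defined $\mathcal{F}f$ on $L^p$. If one prefers to avoid citing Riesz--Thorin as a black box, an alternative is to give the direct three-lines-lemma / Hadamard three-lines argument specialized to $\mathcal{F}$, but since this is a standard tool used only to feed into Proposition \ref{rearrangement-lemma}, citing the interpolation theorem (e.g. from \cite{SS2003}) is the most economical route.
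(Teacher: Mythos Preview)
Your proof is correct and is the standard Riesz--Thorin interpolation argument for the Hausdorff--Young inequality. Note, however, that the paper does not supply its own proof of this lemma: it is stated as a classical named result and used as a black box, so there is no alternative approach in the paper to compare against.
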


\begin{lemma}\label{F-co}(\cite[Lemma A.4,]{LS2019})
  Let $s$ be an integer with $s\geq 2$, and $g\in\mathcal{F}(L^{\frac{s}{s-1}}(\mathbb{R}^n))$ is real-valued, then
  \begin{align*}
  	\mathcal{F}(g^s)(\xi)=(\mathcal{F}g)\ast\cdots\ast(\mathcal{F}g)(\xi),\ \forall \xi\in\mathbb{R}^n,
  \end{align*}
  with $s-1$ convolutions on the right side.
\end{lemma}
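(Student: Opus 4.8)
The plan is to factor the construction as $u\mapsto u^*_x\mapsto f:=\mathcal F_x(u^*_x)\mapsto f^*_z\mapsto w:=\mathcal F^{-1}_\xi(f^*_z)$ and to show that at each of the four steps the numerator $N[v]:=\int_{\mathbb R^{n+m}}\big(|\nabla_z v|^2+(\tau+1)^2|z|^{2\tau}|\nabla_x v|^2\big)$ does not increase while $\int_{\mathbb R^{n+m}}|v|^{s}$ with $s=\frac{2Q}{Q-2}$ does not decrease; since \eqref{rearrangement-1} is precisely the statement that the ratio $N[\cdot]\big/\big(\int|\cdot|^{s}\big)^{(Q-2)/Q}$ decreases from $u$ to $w$, this suffices. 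That $w$ is real-valued is immediate from the symmetry propagated along the chain: $u^*_x\ge 0$ is radial (in particular even) in $x$, so $f=\mathcal F_x(u^*_x)$ is real-valued and even in $\xi$; $f^*_z$, being the decreasing $z$-rearrangement of $|f|$, is real, non-negative and still even in $\xi$; hence $w=\mathcal F^{-1}_\xi(f^*_z)$ is real-valued (and even in $x$, radial in $z$). Finiteness of $N[w]$ will come out of the numerator estimate and finiteness of $\int|w|^{s}$ out of the denominator estimate, so $w\in\mathcal D^1_{\tau,z}(\mathbb R^{n+m})$ after the usual density identification.

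For the numerator, at step $1$: for a.e.\ fixed $z$, P\'olya--Szeg\H o in the $x$-variables applied to $|u(\cdot,z)|\in W^{1,2}(\mathbb R^n)$ gives $\int_{\mathbb R^n}|\nabla_x u^*_x(\cdot,z)|^2dx\le\int_{\mathbb R^n}|\nabla_x u(\cdot,z)|^2dx$, and multiplying by $(\tau+1)^2|z|^{2\tau}$ and integrating in $z$ controls the weighted $x$-part; for the $z$-part one uses that the $x$-Schwarz rearrangement is an iterated limit of Steiner symmetrizations acting only on the $x$-variables, each of which leaves $\int|\partial_{z_j}v|^2$ non-increasing for the current iterate $v$, so that (by lower semicontinuity in the limit) $\int|\nabla_z u^*_x|^2\le\int|\nabla_z u|^2$. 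At step $2$, the Plancherel formula \eqref{parseval} together with \eqref{F-inte}, applied in the $x$-variable with $z$ and the weight as parameters, gives $N[u^*_x]=\int_{\mathbb R^{n+m}}\big(|\nabla_z f|^2+4\pi^2(\tau+1)^2|z|^{2\tau}|\xi|^2|f|^2\big)d\xi\,dz=:\widetilde N[f]$. At step $3$: for a.e.\ fixed $\xi$, P\'olya--Szeg\H o in $z$ gives $\int_{\mathbb R^m}|\nabla_z f^*_z(\xi,z)|^2dz\le\int_{\mathbb R^m}|\nabla_z f(\xi,z)|^2dz$, while Lemma~\ref{*1} applied to $|f(\xi,\cdot)|^2$ (using $(|f|^2)^*_z=(|f|^*_z)^2$) gives $\int|z|^{2\tau}|f^*_z(\xi,z)|^2dz\le\int|z|^{2\tau}|f(\xi,z)|^2dz$; integrating in $\xi$ yields $\widetilde N[f^*_z]\le\widetilde N[f]$. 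At step $4$, \eqref{parseval} and \eqref{F-inte} in the $\xi$-variable turn $\widetilde N[f^*_z]$ back into $N[w]$ since $\mathcal F_x w=f^*_z$. Chaining the four inequalities gives $N[u]\ge N[u^*_x]=\widetilde N[f]\ge\widetilde N[f^*_z]=N[w]$.

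For the denominator, fix $z$ and put $g=u^*_x(\cdot,z)\ge 0$; since $s\in\mathbb N$, Lemma~\ref{F-co} gives $\int_{\mathbb R^n}g^s\,dx=(\mathcal F_x g)^{*(s-1)}(0)=\int_{\xi_1+\cdots+\xi_s=0}\prod_{i=1}^s f(\xi_i,z)\,d\xi_1\cdots d\xi_{s-1}$ (the product of $s$ copies of $f$, constrained by $\xi_s=-\xi_1-\cdots-\xi_{s-1}$), and since $\int h\le\int|h|$ this is $\le\int_{\sum_i\xi_i=0}\prod_i|f(\xi_i,z)|\,d\xi$. Integrating in $z$, using Tonelli, and applying the degenerate Brascamp--Lieb--Luttinger inequality \eqref{*-ineq} to the functions $z\mapsto|f(\xi_j,z)|$ for each fixed $(\xi_1,\dots,\xi_s)$, we get $\int_{\mathbb R^m}\prod_j|f(\xi_j,z)|\,dz\le\int_{\mathbb R^m}\prod_j f^*_z(\xi_j,z)\,dz$, whence $\int_{\mathbb R^{n+m}}(u^*_x)^s\le\int_{\mathbb R^m}(f^*_z(\cdot,z))^{*(s-1)}(0)\,dz$. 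Running Lemma~\ref{F-co} backwards on $w(\cdot,z)$ (admissible because $\mathcal F_x w(\cdot,z)=f^*_z(\cdot,z)$), $(f^*_z(\cdot,z))^{*(s-1)}(0)=\int_{\mathbb R^n}w(\cdot,z)^s\,dx\le\int_{\mathbb R^n}|w(\cdot,z)|^s\,dx$, and since the $x$-rearrangement preserves the $L^s$ norm, $\int|u|^s=\int|u^*_x|^s\le\int|w|^s$, the reverse inequality needed for the denominator. The main obstacle is not the scheme but the harmonic-analytic bookkeeping needed to apply the Fourier identities — Plancherel, \eqref{F-de}, \eqref{F-inte}, Hausdorff--Young (Lemma~\ref{hy}) and especially Lemma~\ref{F-co} — to the rearranged functions, which are merely Lipschitz rather than smooth: one must verify, for a.e.\ slice, that $u^*_x(\cdot,z)$ and $f^*_z(\cdot,z)$ lie in $\mathcal F\big(L^{s/(s-1)}(\mathbb R^n)\big)$ as Lemma~\ref{F-co} demands, that the $(s-1)$-fold convolutions evaluated at $0$ and the Steiner limit for the $z$-gradient are finite, and that all the quantities depend measurably on the sliced variable. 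I would dispose of these points by a mollification/approximation argument: replace $u^*_x$ by $C_0^\infty$ functions $u_j\ge 0$, radial in $x$, with $u_j\to u^*_x$ in $W^{1,2}$ and in $L^s$ and with $\mathcal F_x u_j$ Schwartz in $\xi$, so that every step above is literally justified for $u_j$, and then pass to the limit, as in \cite{Bec2001,LS2019}.
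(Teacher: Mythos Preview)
Your proposal does not address the stated lemma at all. The statement in question is Lemma~\ref{F-co}, the harmonic-analysis fact that for integer $s\ge 2$ and real-valued $g\in\mathcal F(L^{s/(s-1)}(\mathbb R^n))$ one has $\mathcal F(g^s)=(\mathcal F g)\ast\cdots\ast(\mathcal F g)$. The paper gives no proof of this lemma; it is quoted verbatim from \cite[Lemma~A.4]{LS2019}. What you have written is instead a sketch of Proposition~\ref{rearrangement-lemma}, in which Lemma~\ref{F-co} is merely one of the tools invoked.

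If the intent was to prove Lemma~\ref{F-co}, nothing in your write-up contributes to that: there is no argument identifying $\mathcal F(g^s)$ with the iterated convolution, no discussion of why $g^s\in L^1$ (so that the left side is a continuous function) or why the $(s-1)$-fold convolution of an $L^{s/(s-1)}$ function is bounded and continuous (so that the right side makes sense pointwise), and no density or approximation step reducing to Schwartz functions where the identity is the classical $\mathcal F(fg)=\mathcal F f\ast\mathcal F g$ iterated. If, on the other hand, your intent was Proposition~\ref{rearrangement-lemma}, then your outline is broadly the same two-step scheme the paper uses (first $u\mapsto u^*_x$, then the Fourier-side $z$-rearrangement), but you should be aware that you have answered the wrong question.
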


{\bf Proof of Proposition \ref{rearrangement-lemma}}.
Recall the definition
\begin{align*}
G[u]=\frac{\int_{\mathbb{R}^{n+m}}(|\nabla_z u|^2+(\tau+1)^{2}|z|^{2\tau }|\nabla_x u|^2 )dxdz}{\big(\int_{\mathbb{R}^{n+m}}  |u|^{\frac{2Q}{Q-2}}dxdz\big)^\frac{Q-2}{Q}}.
\end{align*}\
for $u\in C_0^{\infty}(\mathbb{R}^{n+m})$.  The proof is divided into  the following three steps.

\textbf{Step $1$.} We show $G[u^*_x]\leq G[u]$.
 For any fixed $z\in\mathbb{R}^{m}$, it holds
\begin{align}\label{equi-p}
	\int_{\mathbb{R}^n} |u^*_x(x,z)|^pdx=\int_{\mathbb{R}^n} |u(x,z)|^pdx,\ \forall p\geq 1,
\end{align}
and the P\'{o}lya-Szeg\"{o} inequality (sometimes called Dirichlet Principle)
\begin{align}\label{v-de-x}
	\int_{\mathbb{R}^n} |\nabla_x u^*_x(x,z)|^2dx\leq \int_{\mathbb{R}^n} |\nabla_x u(x,z)|^2dx.
\end{align}
 Let $e_j$ be the unit vector in $\mathbb{R}^m$ whose $j$-th component is $1$, $j=1,\cdots,m$.
By \eqref{equi-p} and \eqref{*-ineq}, it holds for any fixed $z\in\mathbb{R}^{m}$,
\begin{align*}
\int_{\mathbb{R}^{n}}|u^*_x(x,z)|^2dx=	\int_{\mathbb{R}^{n}}| u(x,z)|^2dx ,
\end{align*} and
\begin{align*}
\int_{\mathbb{R}^{n}}u^*_x(x,z+he_j)u^*_x(x,z)dx
\geq& \int_{\mathbb{R}^{n}}u(x,z+he_j)u(x,z)dx.
\end{align*}
As for the weak derivative of $u^*_x$ with respect to $z$, we apply the method in \cite{FD2002}. Letting $h\neq0$ , we have
\begin{align}\label{R-1}
&\int_{\mathbb{R}^{n+m}}\frac{|u^*_x(x,z+he_j)-u^*_x(x,z)|^2}{h^2}dxdz\nonumber\\
=&\int_{\mathbb{R}^{n+m}}\frac{|u^*_x(x,z+he_j)|^2-2u^*_x(x,z+he_j)u^*_x(x,z)+|u^*_x(x,z)|^2}{h^2}dxdz\nonumber\\
\leq& \int_{\mathbb{R}^{n+m}}\frac{|u(x,z+he_j)-u(x,z)|^2}{h^2}dxdz\nonumber\\
\leq &\int_{\mathbb{R}^{n+m}}|\int_0^1 ( \partial_{z_j} u)(x,z+\theta he_j)d\theta|^2dxdz\nonumber\\
\leq& \int_{\mathbb{R}^{n+m}}|\partial_{z_j} u(x,z)|^2dxdz.
\end{align}
 Define
	\[f_h(x,z)=\frac{u_{x}^{*}(x,z+he_j)-u_{x}^{*}(x,z)}{h}.
	\]
	It follows from \eqref{R-1} that $f_h$ is bounded in $L^2(\mathbb{R}^{n+m})$.   Using similar argument as that in \cite{FD2002}, we know: for a sequence $h_l\to0$, $f_{h_l}$  weakly converges to a certain $f_0$ in $L^2(\mathbb{R}^{n+m})$ such that $ \|f_0\|_{L^2(\mathbb{R}^{n+m})}\le\|\partial_{z_j}u\|_{L^2(\mathbb{R}^{n+m})}$. Furthermore, $f_0$
	is the weak derivative of $u_{x }^{*}$ with respect to $z_j$ and %$\|\partial_{z_j}u_{x }^{*}\|_{L^2(\mathbb{R}^{n+m})}\le\|\partial_{z_j}u\|_{L^2(\mathbb{R}^{n+m})}$. }
%Using  {\color{red} Lemma 7.24  in \cite{GT1983}},  there exists the weak derivative of $u^*_x$ with respect to $z$ and
we have
\begin{align}\label{v-de-z}
\int_{\mathbb{R}^{n+m}} |\nabla_z u^*_x|^2dxdz\leq \int_{\mathbb{R}^{n+m}} |\nabla_z u|^2dxdz.
\end{align}
 By \eqref{equi-p}($p=\frac{2Q}{Q-2}$), \eqref{v-de-x} and \eqref{v-de-z}, we arrive at $G[u^*_x]\leq G[u]$.
% Besides, $u^*_x$ has compact support in $\mathbb{R}^{n+m}$.

\textbf{Step $2$.} we show $G[w]\leq G[u^*_x]$. Since
 $(\mathcal{F}_x (u^*_x))^*_z$ is nonnegative, radially symmetric with respect to $\xi$ and $ z$, respectively,
 by Plancherel formula, we have $(\mathcal{F}_x (u^*_x))^*_z\in L^2(\mathbb{R}^{n+m})$ , i.e.
 \begin{align*}
 	\int_{\mathbb{R}^{n+m}}|(\mathcal{F}_x (u^*_x))^*_z|^2 d\xi dz=\int_{\mathbb{R}^{n+m}}|\mathcal{F}_x (u^*_x)|^2 d\xi dz=\int_{\mathbb{R}^{n+m}}| u^*_x|^2 dx dz=\int_{\mathbb{R}^{n+m}}|u|^2 dxdz.
 \end{align*}

Noting that $(\mathcal{F}_x (u^*_x))^*_z$ is real-valued and radially symmetric with respect to $\xi\in\mathbb{R}^n$, we have that $w=\mathcal{F}^{-1}_\xi[(\mathcal{F}_x (u^*_x))^*_z]$ is real valued.

 Similar to the proof of existence of weak derivative of $u^*_x$ with respect to $z$ in Step $1$, there exists the weak derivative of $w$ with respect to $z$  and
 \begin{align*}
 \int_{\mathbb{R}^{n+m}}|\nabla_z w|^2dxdz\leq  \int_{\mathbb{R}^{n+m}}|\nabla_z (u^*_x)|^2dxdz.
 \end{align*}
 For any fixed $\xi\in\mathbb{R}^n$, choosing $f(z)=|\mathcal{F}_x (u^*_x)|^2(\xi,z)$ in Lemma \ref{*1}, and using
 $$(|\mathcal{F}_x (u^*_x)|^2)^*_z=|(\mathcal{F}_x (u^*_x))^*_z|^2=|\mathcal{F}_x w|^2,$$
 we have
 \begin{align*}
 	\int_{\mathbb{R}^{m}}|z|^{2\tau}|\mathcal{F}_x w|^2 dz\leq 	\int_{\mathbb{R}^{m}}|z|^{2\tau}|\mathcal{F}_x (u^*_x)|^2  dz.
 \end{align*}
Integrating it with $|\xi|^2d\xi$, we have
 \begin{align*}
\int_{\mathbb{R}^{n+m}}|z|^{2\tau}|\xi|^2|\mathcal{F}_x w|^2 d\xi dz\leq 	\int_{\mathbb{R}^{n+m}}|z|^{2\tau}|\xi|^2|\mathcal{F}_x (u^*_x)|^2  d\xi dz.
\end{align*}
Furthermore, using \eqref{F-inte},  we know that there exists the weak derivative of $w$ with respcet to $x$ and
\begin{align*}
\int_{\mathbb{R}^{n+m}}|z|^{2\tau}|\nabla_x w|^2 dxdz\leq \int_{\mathbb{R}^{n+m}}|z|^{2\tau}|\nabla_x (u^*_x) |^2 dxdz,
\end{align*}where $$\partial_k w=2\pi i \mathcal{F}^{-1}_\xi(\xi_k\mathcal{F}_x w),\ k=1\cdots,n.$$

To show that $G(w]\leq G[u^*_x]$, we only need to prove that
\begin{align*}
	\int_{\mathbb{R}^{n+m}} |u^*_x|^s dxdz\leq 	\int_{\mathbb{R}^{n+m}} |w|^s dxdz,
\end{align*} where $s=\frac{2Q}{Q-2}\in\mathbb{N}$. We employ Lemma \ref{F-co} to show it.

First, we check that for a.e. fixed $z\in\mathbb{R}^m, $ $u^*_x(\cdot,z),\ w(\cdot,z) \in\mathcal{F}(L^{\frac{s}{s-1}}(\mathbb{R}^{n}))$. Set $s'=\frac{s}{s-1}=\frac{2Q}{Q+2}\in (1,2)$. Since $\mathcal{F}_x (u^*_x)$ and $(\mathcal{F}_x (u^*_x))^*_z$ are radially symmetric with respect to $x$, we have
 \begin{align}\label{f-inverse}
	u^*_x=\mathcal{F}^{-1}_\xi\mathcal{F}_x(u^*_x)=\mathcal{F}_x\mathcal{F}_x(u^*_x),\
	w=\mathcal{F}^{-1}_\xi((\mathcal{F}_x(u^*_x))^*_z)=\mathcal{F}_x((\mathcal{F}_x(u^*_x))^*_z).
\end{align} It suffices to prove that $\mathcal{F}_x (u^*_x)(\cdot,z)\in L^{s'}(\mathbb{R}^n)$ and $(\mathcal{F}_x (u^*_x))^*_z(\cdot,z)\in L^{s'}(\mathbb{R}^n)$.
By H\"{o}lder inequality, $Q>n$, \eqref{parseval} and \eqref{F-inte}, one has
\begin{align*}
\int_{\mathbb{R}^{n}} |\mathcal{F}_x(u^*_x)|^{s'}(\xi,z)d\xi \leq& C\Big( \int_{\mathbb{R}^{n}} (1+|\xi|^2)|\mathcal{F}_x(u^*_x)|^{2}(\xi,z)d\xi\Big)^{\frac{s'}{2}}\nonumber\\
=& C\Big(\int_{\mathbb{R}^{n}} (|\nabla_x( u^*_x)(x,z)|^{2}+|u^*_x(x,z)|^2)dx\Big)^{\frac{s'}{2}}\nonumber\\
\leq &C\Big(\int_{\mathbb{R}^{n}} (|\nabla_x u(x,z)|^{2}+|u(x,z)|^2)dx\Big)^{\frac{s'}{2}}\\
\leq& C(u)<\infty
\end{align*}for any fixed $z\in\mathbb{R}^m$.
%Since $\mathcal{F}_x(u^*_x)(\xi,\cdot)$ has compact support in $\mathbb{R}^m$, we no compact support
We also have
\begin{align}\label{s'-bd}
\int_{\mathbb{R}^{n+m}}|(\mathcal{F}_x (u^*_x))^*_z|^{s'}d\xi dz=\int_{\mathbb{R}^{n+m}}|\mathcal{F}_x (u^*_x)|^{s'}d\xi dz<\infty.
\end{align} By Fubini Theorem, for a.e. $z\in\mathbb{R}^m$, it holds $(\mathcal{F}_x (u^*_x))^*_z(\cdot,z)\in L^{s'}(\mathbb{R}^n)$. Besides, using \eqref{f-inverse}, \eqref{s'-bd} and Hausdorff-Young inequality (Lemma \ref{hy}), we obtain
\begin{align*}
\|w\|_{L^s(\mathbb{R}^{n+m})}\leq \|(\mathcal{F}_x(u^*_x))^*_z\|_{L^{s'}(\mathbb{R}^{n+m})}=\|\mathcal{F}_x(u^*_x)\|_{L^{s'}(\mathbb{R}^{n+m})}<\infty.
\end{align*}

Note that for fixed $z$, $\mathcal{F}_x((u^*_x)^s) (\cdot,z)$ is continuous due to $(u^*_x)^s(\cdot,z)\in L^1(\mathbb{R}^n)$, so $\mathcal{F}_x((u^*_x)^s)(0,z)=\int_{\mathbb{R}^n}(u^*_x)^s(x,z)dx$ is well-defined.  Since $u^*_x$ is nonnegative, by Lemma \ref{F-co}, it holds
\begin{align}\label{co-s}
	\int_{\mathbb{R}^{n+m}}|u^*_x|^s(x,z) dxdz=&\int_{\mathbb{R}^m}\mathcal{F}_x((u^*_x)^s)(0,z)dz\nonumber\\
	=&\int_{\mathbb{R}^m}(\mathcal{F}_x(u^*_z))\ast\cdots\ast(\mathcal{F}_x(u^*_x))(0,z)dz,
\end{align}
with $s-1$ convolutions with respect to $x\in\mathbb{R}^n$. By \eqref{*-ineq} and $\mathcal{F}_x w=(\mathcal{F}_x(u^*_x))^*_z$, %with variable $z\in\mathbb{R}^m$ and $L=1$,
it follows
\begin{align*}
	\int_{\mathbb{R}^m}(\mathcal{F}_x(u^*_x))\ast\cdots\ast(\mathcal{F}_x(u^*_x))(0,z)dz\leq& \int_{\mathbb{R}^m}(\mathcal{F}_x(u^*_x))^*_z\ast\cdots\ast(\mathcal{F}_x(u^*_x))^*_z(0,z)dz\\=&\int_{\mathbb{R}^m}(\mathcal{F}_x w)\ast\cdots\ast(\mathcal{F}_x w)(0,z)dz.
\end{align*}
Back to \eqref{co-s}, since $w$ is real-valued, invoking the definition of Fourier transformation and Lemma \ref{F-co} again,  we have
\begin{align*}
	\int_{\mathbb{R}^{n+m}}|u^*_x(x,z) |^sdxdz\leq \int_{\mathbb{R}^{n+m}}w^s(x,z)dxdz\leq \int_{\mathbb{R}^{n+m}}|w(x,z)|^sdxdz.
\end{align*}
Hence, we obtain $G[w]\leq G[u^*_x]\leq G[u]$.

\textbf{ Step $3$.} We prove that $w\in \cal{D}^1_{\tau,z}(\mathbb{R}^{n+m})$. By now, we know $w\in W^{1,1}_{loc}(\mathbb{R}^{n+m})$ is radially symmetric with respect to $x$ and $z$, respectively,  with
\begin{align*}
		\int_{\mathbb{R}^{n+m}}(|\nabla_z w|^2+|z|^{2\tau}|\nabla_x w|^2)dxdz<\infty,
\end{align*}
and
 \begin{align}\label{s-bd}
\int_{\mathbb{R}^{n+m}}|w|^{\frac{2Q}{Q-2}}dxdz<\infty. %\text{{\color{red} (need to prove)}}
\end{align}
Set
\begin{align}\label{sob-gru-tr}
	\tilde{w}(y,t)=w(x,z),\ y=x\in\mathbb{R}^n,\ t=|z|^{\tau+1}\geq 0, \ z\in\mathbb{R}^m
\end{align}
and
\begin{align*}
\alpha=\frac{m+\tau-1}{\tau+1},\ \beta=\frac{m}{\tau+1}-1.
\end{align*}
Similar to the proof of Proposition\ref{bec-1}, we have $\tilde{w}\in \cal{D}^{1,2}_{\alpha}(\mathbb{R}^{n+1}_+)$. Suppose ${\tilde{w}_j}\subset
C_0^\infty (\mathbb{R}^{n+1}_+)$ is the approximation of $\tilde{w}$, %in $D^{1,2}_{\alpha}(\mathbb{R}^{n+1}_+)$
 then for $\tau>0$, $w_j\in C_0^1(\mathbb{R}^{n+m})$ is the
approximation of $w$, %in $D^1_{\tau,z}(\mathbb{R}^{n+m})$,
where $\tilde{w}_j$ and $w_j$ satisfy the relation in \eqref{sob-gru-tr}. It implies  $w\in \cal{D}^{1}_{\tau,z}(\mathbb{R}^{n+m})$.

\hfill$\Box$

\section{Appendix}We provide proofs for some technical lemmas in this appendix.
Define
\begin{align}\label{K_ab}
{\cal K}_{\alpha,\beta}=\{u\in W^{1,1}_{loc}(\mathbb{R}^{n+1}_+)\big| \nabla u\in L^{2}_{\alpha}(\mathbb{R}^{n+1}_+),\  u\in L^{p^*}_{\beta}(\mathbb{R}^{n+1}_+)\}.
\end{align}
Then we have
\begin{lemma}\label{density} If $\alpha,\ \beta$ satisfy \eqref{beta-1}, then
	$	{\cal D}^{1,2}_{\alpha}(\mathbb{R}^{n+1}_+)={\cal K}_{\alpha,\beta}.$
\end{lemma}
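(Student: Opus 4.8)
The plan is to prove the two inclusions ${\cal D}^{1,2}_{\alpha}(\mathbb{R}^{n+1}_+)\subseteq {\cal K}_{\alpha,\beta}$ and ${\cal K}_{\alpha,\beta}\subseteq {\cal D}^{1,2}_{\alpha}(\mathbb{R}^{n+1}_+)$ separately. For the first inclusion, one takes $u\in {\cal D}^{1,2}_\alpha$, i.e.\ a limit of $u_j\in C^\infty_0(\overline{\mathbb{R}^{n+1}_+})$ in the $\|\cdot\|_{{\cal D}^{1,2}_\alpha}$ norm. By Corollary \ref{cor1-4} (inequality \eqref{GGN-2}), $\{u_j\}$ is Cauchy in $L^{p^*}_\beta(\mathbb{R}^{n+1}_+)$, hence converges to some $v\in L^{p^*}_\beta$; passing to a subsequence converging a.e.\ one identifies $v=u$ a.e., so $u\in L^{p^*}_\beta$. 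Since $\nabla u_j\to\nabla u$ in $L^2_\alpha$ by definition, one also gets $u\in W^{1,1}_{loc}$: on any compact $K\subset\subset\mathbb{R}^{n+1}_+$ the weight $t^\alpha$ is bounded above and below, so $L^2_\alpha(K)$-convergence is ordinary $L^2(K)$-convergence, giving $\nabla u\in L^2_{loc}$ and thus $u\in W^{1,2}_{loc}\subset W^{1,1}_{loc}$. This direction is essentially bookkeeping.

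The substantive direction is ${\cal K}_{\alpha,\beta}\subseteq {\cal D}^{1,2}_{\alpha}(\mathbb{R}^{n+1}_+)$: given $u\in W^{1,1}_{loc}$ with $\nabla u\in L^2_\alpha$ and $u\in L^{p^*}_\beta$, I must produce a sequence in $C^\infty_0(\overline{\mathbb{R}^{n+1}_+})$ converging to $u$ in the $\|\cdot\|_{{\cal D}^{1,2}_\alpha}$ norm. The standard three-step recipe applies: (i) truncate in the $t$-direction near the degenerate boundary $\{t=0\}$, (ii) cut off at spatial infinity, (iii) mollify. For step (i) one multiplies $u$ by $\chi_\delta(t)$ where $\chi_\delta$ transitions from $0$ on $\{t<\delta\}$ to $1$ on $\{t>2\delta\}$; the error term in $\int t^\alpha|\nabla(\chi_\delta u)-\nabla u|^2$ involves $\int_{\delta<t<2\delta} t^\alpha|\nabla\chi_\delta|^2 u^2\lesssim \delta^{-2}\int_{\delta<t<2\delta}t^\alpha u^2$, and this is where $\alpha>0$ together with $u\in L^{p^*}_\beta$ must be used to kill the term as $\delta\to 0$ — one applies Hölder with exponents $p^*/2$ and its conjugate on the thin slab, using that the slab has small measure in a weighted sense and that $\alpha>0$ makes $t^\alpha\to 0$. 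This truncation step is the main obstacle, because it is exactly where the degeneracy and the precise relation \eqref{beta-1} between $\alpha$ and $\beta$ enter; the cut-off at infinity (step (ii)) is analogous but easier since there the tail of $u\in L^{p^*}_\beta$ and $\nabla u\in L^2_\alpha$ is automatically small, and mollification (step (iii)) is routine once the function is compactly supported in $\overline{\mathbb{R}^{n+1}_+}$ away from $\{t=0\}$.

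For the slab estimate I would argue as follows: on $S_\delta=\{(y,t): \delta<t<2\delta\}\cap \mathrm{supp}$, write $\int_{S_\delta} t^\alpha u^2\,dydt \le \big(\int_{S_\delta} t^\beta u^{p^*}\big)^{2/p^*}\big(\int_{S_\delta} t^{\gamma}\big)^{1-2/p^*}$ for the appropriate $\gamma$ making the weights match, i.e.\ $\gamma = \frac{\alpha - \frac{2\beta}{p^*}}{1-2/p^*}$; on $S_\delta$ intersected with a fixed ball, $\int_{S_\delta} t^{\gamma}\,dydt \lesssim \delta^{\gamma+1}$ (when $\gamma>-1$, which follows from \eqref{beta-1}), so the error is $\lesssim \delta^{-2}\cdot o(1)\cdot \delta^{(\gamma+1)(1-2/p^*)}$, and one checks the exponent $(\gamma+1)(1-2/p^*)-2$ is positive precisely under $\frac{n-1}{n+1}\beta\le\alpha\le\beta+2$ and $\alpha>0$; the $o(1)$ comes from absolute continuity of $\int t^\beta u^{p^*}$. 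Combined with $\int_{S_\delta} t^\alpha \chi_\delta^2|\nabla u|^2\le \int_{S_\delta}t^\alpha|\nabla u|^2\to 0$ by dominated convergence, this gives $\|\chi_\delta u - u\|_{{\cal D}^{1,2}_\alpha}\to 0$. After that, intersecting with the spatial cutoff and mollifying produces the desired $C^\infty_0(\overline{\mathbb{R}^{n+1}_+})$ approximants, so $u\in {\cal D}^{1,2}_\alpha$, completing the proof. (Throughout one also records that the identity ${\cal D}^{1,2}_\alpha={\cal K}_{\alpha,\beta}$ in particular shows $u\in {\cal D}^{1,2}_\alpha\Rightarrow u\in L^{p^*}_\beta$, which is the form in which the lemma is invoked elsewhere.)
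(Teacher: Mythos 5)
Your first inclusion is essentially correct and matches the paper's argument. The second inclusion, however, has a genuine gap. In Step (i) you cut $u$ off by a function $\chi_\delta$ vanishing near $\{t=0\}$, producing approximants that are zero in a neighborhood of the boundary. This proves more than the lemma asks: $C^\infty_0(\overline{\mathbb{R}^{n+1}_+})$ is the set of restrictions of $C^\infty_0(\mathbb{R}^{n+1})$ functions, so its elements need not vanish on $\{t=0\}$, and the stronger density (approximation by functions compactly supported in the open half space) is precisely Lemma \ref{appro}, which holds only for $\alpha\ge 1$ and is false for $0<\alpha<1$. Your exponent bookkeeping is also off: with $\gamma=\frac{\alpha p^*-2\beta}{p^*-2}$ one finds
\[
(\gamma+1)\Bigl(1-\frac{2}{p^*}\Bigr)-2=\frac{n(\alpha-\beta-2)}{n+\beta+1}\le 0
\]
under \eqref{beta-1}, not positive, so the slab error $\delta^{-2}\int_{S_\delta}t^\alpha u^2$ need not tend to zero. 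Concretely, for any $u\in{\cal K}_{\alpha,\beta}$ with $u\equiv1$ on a neighborhood of a boundary piece, $\delta^{-2}\int_{S_\delta\cap\{|y|<1\}}t^\alpha u^2\sim\delta^{\alpha-1}$, which blows up for $0<\alpha<1$; the absolute-continuity $o(1)$ factor cannot rescue this because there is no rate.

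The paper avoids the issue by never forcing the approximant to vanish near $\{t=0\}$. After the spatial cut-off (your step (ii)), it mollifies $u$ only on $\{t>\delta\}$, where the weight is nondegenerate, and then reflects the mollification evenly across $t=\delta$ to define a continuous, piecewise smooth $\hat u$ on $\{t\ge 0\}$. The cost on the thin strip $\{0<t<\delta\}$ is controlled by $\int_{\{\delta<t<2\delta\}}t^\alpha|\nabla u_\tau|^2$ — using $(2\delta-t)^\alpha\le t^\alpha$ for $t\ge\delta$ — which is small by absolute continuity of $\int t^\alpha|\nabla u|^2$; crucially no $\delta^{-2}$ appears because reflection creates no new derivatives. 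A further modification freezes $\hat u(y,t)$ to its value $\hat u(y,\delta_1)$ on $\{0\le t\le\delta_1\}$, so the function becomes constant in $t$ near the boundary, after which extending evenly across $t=0$ and mollifying in $\mathbb{R}^{n+1}$ yields a $C^\infty_0(\mathbb{R}^{n+1})$ function whose restriction is the required element of $C^\infty_0(\overline{\mathbb{R}^{n+1}_+})$ — generically nonzero on $\{t=0\}$, which is exactly the freedom your approach gives up and cannot afford to when $0<\alpha<1$.
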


\begin{proof}
$(1)$ First, we prove that $	{\cal D}^{1,2}_{\alpha}(\mathbb{R}^{n+1}_+)\subset {\cal K}_{\alpha,\beta}.$
Assume that $\{u_j\}\subset C^{\infty}_{0}(\overline{\mathbb{R}^{n+1}_+})$ satisfies
	$$\int_{\mathbb{R}^{n+1}_+} t^\alpha |\nabla u_i-\nabla u_j|^2dydt\to 0,\text{ as }i,j\to\infty.$$
	Then by \eqref{GGN-2}, as $i,j\to\infty$,
		$$\big(\int_{\mathbb{R}^{n+1}_+}t^\beta |u_i-u_j|^{p^*} dydt\big)^{\frac{2}{p^*}}\leq C \int_{\mathbb{R}^{n+1}_+} t^\alpha |\nabla u_i-\nabla u_j|^2dydt\to 0.$$
	Hence there exist $u\in L^{p^*}_\beta(\mathbb{R}^{n+1}_+)$ and $g_k\in L^2_\alpha(\mathbb{R}^{n+1}_+),\ k=1,\cdots, n+1$, such that as $ j\to \infty,$
	$$u_j\to u,\  \text{ in }L^{p^*}_\beta(\mathbb{R}^{n+1}_+),$$
	$$\partial_k u_j\to g_k,\ \text{ in }L^2_\alpha(\mathbb{R}^{n+1}_+),$$
where $ \partial_{n+1}=\partial_t, \partial_{k}=\partial_{y_k},\ k=1,\cdots, n$.

For any $K\subset\subset\mathbb{R}^{n+1}_+$, we have $dist(K,\partial \mathbb{R}^{n+1}_+)>0$. Using  H\"{o}lder inequality, we obtain
	\begin{align*}
		\int_{K} |g_k| dydt
		\leq (\int_{\mathbb{R}^{n+1}_+}t^\alpha | g_k|^2dydt)^{\frac12}(\int_{K}t^{-\alpha} dydt)^{\frac12}
			\leq C(K).
	\end{align*}That is, $g_k\in L^1_{loc}(\mathbb{R}^{n+1}_+)$.

We claim that  the weak derivative in distribution $\nabla u$ of $u$  in $\mathbb{R}^{n+1}_+$ is $(g_1,\cdots,g_{n+1})$.
In fact, for any $\phi\in C^{\infty}_0(\mathbb{R}^{n+1}_+)$, we have  $dist(supp\phi,\partial \mathbb{R}^{n+1}_+)>0$,  and
\begin{eqnarray*}
|\int_{\mathbb{R}^{n+1}_+} (\partial_k u_j-g_k)\phi dydt|
&\leq &(\int_{\mathbb{R}^{n+1}_+}t^\alpha |\partial_k u_j- g_k|^2dydt)^{\frac12}(\int_{\text{supp}\phi}t^{-\alpha} |\phi|^2dydt)^{\frac12}\\
&	\leq& C(\phi)(\int_{\mathbb{R}^{n+1}_+}t^\alpha |\partial_k u_j- g_k|^2dydt)^{\frac12}\to 0,
\end{eqnarray*}as $j\to\infty$.
Similarly,
\begin{eqnarray*}
|\int_{\mathbb{R}^{n+1}_+} ( u_j-u)\partial_k \phi dydt|
%&\leq &(\int_{\mathbb{R}^{n+1}_+}t^\beta | u_j- u|^{p^*}dydt)^{\frac1{p^*}}(\int_{\text{supp}\phi}t^{-\frac{\beta}{p^*-1}} |\partial_k\phi|^{\frac{p^*}{p^*-1}}dydt)^{\frac{p^*-1}{p^*}}\\
	\leq C(\phi)(\int_{\mathbb{R}^{n+1}_+}t^\beta | u_j- u|^{p^*}dydt)^{\frac12}\to 0,
\end{eqnarray*}as $j\to\infty$.
Noting that
$$\int_{\mathbb{R}^{n+1}_+} (\partial_k u_j )\phi dydt=-\int_{\mathbb{R}^{n+1}_+} u_j\partial_k \phi dydt,$$
we have
$$\int_{\mathbb{R}^{n+1}_+} g_k \phi dydt=-\int_{\mathbb{R}^{n+1}_+} u \partial_k \phi dydt,$$
as $j\to \infty $. This implies that in $\mathbb{R}^{n+1}_+$, the weak derivative $\nabla u$ of $u$ is $(g_1,\cdots,g_{n+1})$. Therefore the limit $u$ of $u_j$ under norm $\|\cdot\|_{{\cal D}^{1,2}_{\alpha}(\mathbb{R}^{n+1}_+)}$ is in $ {\cal K}_{\alpha,\beta}$.
$	{\cal D}^{1,2}_{\alpha}(\mathbb{R}^{n+1}_+)\subset {\cal K}_{\alpha,\beta}$ is proved.

\smallskip

$(2)$ Next, we show that for any $u\in W^{1,1}_{loc}(\mathbb{R}^{n+1}_+)$ satisfying
	\begin{align}\label{2-1}\int_{\mathbb{R}^{n+1}_+}t^\alpha|\nabla u|^2 dydt\leq C, \int_{\mathbb{R}^{n+1}_+}t^\beta |u|^{p^*} dydt\leq C,
	\end{align}
and any $\varepsilon>0$, there is $\tilde{u}\in C^{\infty}_0(\overline{\mathbb{R}^{n+1}_+})$, such that
	$$\int_{\mathbb{R}^{n+1}_+}t^\alpha|\nabla\tilde{u}-\nabla u|^2 dydt<\varepsilon.$$
	
%{\color{blue}

We complete our proof through the following three steps.

\textbf{Step $1$}. We first show that  $u$ can be approximated by a function with compact support.

By \eqref{2-1}, there is $R>0$, such that
\begin{align}\label{2-2}\int_{\mathbb{R}^{n+1}_+\backslash B_R(0)}t^\alpha|\nabla u|^2 dydt< \varepsilon,\ \ \int_{\mathbb{R}^{n+1}_+\backslash B_R(0)}t^\beta |u|^{p^*} dydt< \varepsilon^{\frac{p^*}{2}}.
\end{align}
We claim that
\begin{equation}\label{a-b-holder}
\int_{\Omega}t^\alpha|u|^2dydt\leq C r^2(\int_{\Omega}t^\beta|u|^{p^*}dydt)^{\frac{2}{p^*}}
\end{equation}for $\alpha\leq \beta+2$, $\forall r>1$ and  $\Omega\subset \overline{B^+_r(0)}$.
Moreover, taking $\Omega=B_{2R}^+(0)\backslash B^+_{R}(0)$, and by \eqref{2-2}, we have
\begin{equation}\label{2-3}
\int_{B_{2R}^+(0)\backslash B^+_{R}(0)}t^\alpha|u|^2dydt< C R^2\varepsilon.
\end{equation}
In fact, for $\alpha=\beta+2$, we have $p^*=2$, and it is easy to check that  \eqref{a-b-holder} holds. For $\alpha<\beta+2$,  we have $p^*>2$. By H\"{o}lder inequality and $	\big(\frac{\alpha p^*-2\beta}{p^*-2}+n+1\big)\cdot\frac{p^*-2}{p^*}=2$, we obtain
\begin{align*}
\int_{\Omega}t^\alpha|u|^2dydt
\leq (\int_{\Omega}t^\beta|u|^{p^*}dydt)^{\frac{2}{p^*}}(\int_{B_{r}^+(0)}t^{\frac{\alpha p^*-2\beta}{p^*-2}} dydt)^{\frac{p^*-2}{p^*}}
\leq C r^2(\int_{\Omega}t^\beta|u|^{p^*}dydt)^{\frac{2}{p^*}}.
\end{align*}

Define a cut-off function $\eta_R\in C^\infty_0(\overline{\mathbb{R}^{n+1}_+})$ satisfying $0\leq \eta_R\leq 1$, $\eta_R=1 $ in $B_R^+(0)$, $\eta_R=0$ in $\mathbb{R}^{n+1}_+\backslash B_{2R}^+(0)$ and $|\nabla \eta_R|\leq \frac{C}{R} $. By \eqref{2-2} and \eqref{2-3}, it yields
\begin{eqnarray*}
	&&	\int_{\mathbb{R}^{n+1}_+}t^\alpha|\nabla(\eta_R u)-\nabla u|^2 dydt\\
	&\leq&2\int_{\mathbb{R}^{n+1}_+\backslash B_{R}(0)}t^\alpha|\nabla u|^2 dydt
	+2\int_{B_{2R}^+(0)\backslash B_{R}(0)}t^\alpha u^2|\nabla\eta_R|^2dydt\\
	&<&C\varepsilon.
\end{eqnarray*}
Therefore, without loss of generality, we can assume that $u$ has a compact support and $K:=\text{supp}u\subset \overline{B_{R-1}^+(0)}\subset\overline{B_{R}^+(0)}$. Similar to \eqref{2-3}, we have
 \begin{equation}\label{2-4}
\int_{K}t^\alpha |u|^2dydt\leq C(R).
\end{equation}

%{\color{blue}
\textbf{Step $2$}. We show that $u$ can be approximated by a continuous and piecewise smooth function.

If $u$ has a compact support, it follows from \eqref{2-1} and \eqref{2-4} that, for any $\varepsilon>0$, there is $\delta\in(0,\varepsilon)$ small enough, such that
\begin{align}\label{e-d-1}
		\int_{\{0<t<2\delta \}}t^\alpha |\nabla u|^2 dydt<\varepsilon,\ \int_{\{0<t<2\delta \}}t^\alpha |u|^2 dydt<\varepsilon,
	\end{align}
where $\{0< t<2\delta\}$ is denoted as $\{(y,t):\ y\in\mathbb{R}^n, \ 0< t< 2\delta\} $  for short. Define  the mollification of $u$ as
\begin{equation}\label{mollifier-1}
u_\tau(y,t)=(\zeta_\tau\ast u)(y,t)=\int_{B_\tau(0)}\zeta_{\tau}(x,s)u(y-x,t-s)dxds,\ y\in\mathbb{R}^{n},\ t>\delta, \tau<\delta,
\end{equation}
where $\zeta_\tau(Y)=\frac{1}{\tau^{n+1}}\zeta(\frac{Y}{\tau})\ $, $\zeta\in C^{\infty}_0(\mathbb{R}^{n+1})$, supp$\zeta=\overline{B_1(0)}$, and $\zeta$ is a radially symmetric function with  $\int_{B_1(0)}  \zeta(Y)dY=1$. Obviously, $u_\tau\in C^{\infty}(\{t>\delta\})$ for $\tau<\delta$ .

Claim: for $\tau$ small enough, it holds
\begin{align}\label{t-d-1}
\|\nabla u_\tau-\nabla u\|^2_{L^2_\alpha(\{t>\delta\})}=\int_{\{t>\delta\}}t^\alpha |\nabla u_\tau-\nabla u|^2 dydt<\varepsilon.
\end{align}
Noting that $u\in L^2_{\alpha}(\{t>\frac{\delta}{2}\})$ due to \eqref{2-4}, we know that  there exists $\bar{u}\in C_0(\{t>\frac{\delta}{4}\})$ such that
$$\|\bar{u}-u\|^2_{L^2_\alpha(\{t>\frac{\delta}{2}\})}<\varepsilon.$$
Denoting $\tilde{u}_\tau=\zeta_\tau\ast \tilde{u}$ and using the triangle inequality, we have
\begin{align}\label{tri}
\|u_\tau-u\|_{L^2_\alpha(\{t>\delta\})}\leq \|u_\tau-\bar{u}_\tau\|_{L^2_\alpha(\{t>\delta\})}+
\|\bar{u}_\tau-\bar{u}\|_{L^2_\alpha(\{t>\delta\})}+\|\bar{u}-u\|_{L^2_\alpha(\{t>\delta\})}.
\end{align}
For $0<\tau<\frac{\delta}{2}$,  by H\"{o}lder inequality, $\int_{B_\tau(0)}  \zeta_\tau(y,t)dydt=1$ and $\alpha>0$, one has
\begin{align*}
\int_{\{t>\delta\}} t^\alpha |u_\tau|^2 dydt
\leq &\int_{\{t>\delta\}}t^\alpha ( \int_{B_\tau(0)} \zeta_\tau(x,s)|u(y-x,t-s)|^2 dxds \big)dydt\nonumber\\
\leq&\int_{B_\tau(0)} \zeta_\tau(x,s)\big(\int_{\{t>\frac{\delta}{2}\}}(t+s)|^\alpha|u(y,t)|^2 dydt\big) dxds\nonumber\\
\leq& 2^\alpha \int_{\{t>\frac{\delta}{2}\}}t^\alpha |u|^2 dydt.
\end{align*}
It follows
$$\|u_\tau-\bar{u}_\tau\|^2_{L^2_\alpha(\{t>\delta\})}\leq 2^\alpha\|u-\bar{u}\|^2_{L^2_\alpha(\{t>\frac{\delta}{2}\})}< C\varepsilon.$$
However, since $\bar{u}$ is a continuous function with compact support, we have $\bar{u}_\tau\to\bar{u}$ in $C_0(\{t>\frac{\delta}{8}\})$ as $\tau\to0$. Then
$$\|\bar{u}_\tau-\bar{u}\|^2_{L^2_\alpha(\{t>\delta\})}<\varepsilon$$
 for $\tau$ small enough.
Combining the above with \eqref{tri} we have $$\|u_\tau-u\|^2_{L^2_\alpha(\{t>\delta\})}<C\varepsilon.$$
Noting that $\nabla (u_\tau)=(\nabla u)_\tau$ and  $\nabla u\in L^2_{\alpha}(\{t>\frac{\delta}{2}\})$, similar to the above argument, we obtain \eqref{t-d-1}.

Fix a sufficiently small $\tau$ such that \eqref{t-d-1} holds, and define
\begin{equation}
\hat{u}(y,t)=\begin{cases}
u_\tau(y,t),& y\in\mathbb{R}^n,\ t\geq\delta,\\
u_\tau(y,2\delta-t),& y\in\mathbb{R}^n,\ 0\leq t<\delta.
\end{cases}
\end{equation}
Obviously, $\hat{u}$ is a continuous and smooth piecewise function with supp$\hat{u}\subset\subset\overline{\mathbb{R}^{n+1}_+}$. For $\alpha> 0$, we know from \eqref{e-d-1} and \eqref{t-d-1} that
\begin{align*}
\int_{\{0<t<\delta\}}t^\alpha|\nabla \hat u|^2dydt
=&\int_{\{\delta<t<2\delta\}}(2\delta-t)^\alpha|\nabla u_\tau |^2dydt
\leq \int_{\{\delta<t<2\delta\} } t^\alpha|\nabla u_\tau |^2dydt\\
\leq &2\int_{\{\delta<t<2\delta\} } t^\alpha|\nabla u_\tau -\nabla u|^2dydt+2\int_{\{\delta<t<2\delta\} } t^\alpha|\nabla u|^2dydt\\
< &C\varepsilon,
\end{align*}
and then
\begin{align*}
&\int_{\mathbb{R}^{n+1}_+}t^\alpha|\nabla \hat u-\nabla u|^2dydt\\
\leq &\int_{\{t>\delta\}}t^\alpha|\nabla  u_\tau-\nabla u|^2dydt+2\int_{\{0<t<\delta\}}t^\alpha|\nabla \hat u|^2dydt+2\int_{\{0<t<\delta\}}t^\alpha|\nabla  u|^2dydt\\
< &C\varepsilon.
\end{align*}

\textbf{Step $3$}.  We prove that $u$ can be approximated by a $C^{\infty}_0(\mathbb{R}^{n+1})$ function which is  symmetric with respect to $t$.

Let $\hat{u}$ be defined in Step $2$. Assuming $\text{supp}\hat{u}\subset \overline{B_{R-1}^+(0)}\subset\overline{B_{R}^+(0)}$, we have
\begin{align}\label{hat-1}
\int_{B_{R}^+(0)}t^\alpha|\nabla \hat{u}|^2 dydt\leq C,\  \int_{B_{R}^+(0)}t^\alpha|\hat{u}|^{2} dydt\leq C.
\end{align}
According to Fubini theorem, there exists $0<\delta_1<\delta<\varepsilon$ small enough, such that
\begin{align}\label{fubini}
\int_{B_R^n(0)}\delta_1^\alpha |\hat{u}(y,\delta_1)|^2 dy\leq C,\,\int_{B_R^n(0)}\delta_1^\alpha |\nabla \hat{u}(y,\delta_1)|^2 dy\leq C,
\end{align}
and
\begin{align}\label{fubini-1}
\int_{\{0\leq t\leq\delta_1\}}t^\alpha |\hat{u}|^2dydt<\varepsilon,\, \int_{\{0\leq t\leq\delta_1\}}t^\alpha |\nabla\hat{ u}|^2dydt<\varepsilon.
\end{align}
Set
\begin{equation}\label{modify}
v(y,t)=
\begin{cases}
\hat{u}(y,t), &y\in\mathbb{R}^n,\  t>\delta_1,\\
\hat{u}(y,\delta_1),&y\in\mathbb{R}^n,\  0\leq t\leq\delta_1.
\end{cases}
\end{equation}
Obviously, $v$ is well defined since $\hat{u}$ is a continuous and smooth piecewise function. It follows from \eqref{hat-1}, \eqref{fubini} and \eqref{fubini-1} that
\begin{align}\label{v-cut}
\int_{B_R^+(0)}t^\alpha|\nabla v-\nabla \hat{u}|^2dydt
\leq&2\int_{\{0\leq t\leq\delta_1\}}t^\alpha |\nabla \hat{u}|^2dydt+C\delta_1^{\alpha+1}\int_{B_R^n(0)} |\nabla \hat{u}(y,\delta_1)|^2 dy\nonumber\\
< & C\varepsilon,
\end{align}
and
$$\int_{B_R^+(0)}t^\alpha |\nabla v|^2dydt\leq C,\,\int_{B_R^+(0)}t^\alpha |v|^2dydt\leq C.$$
Extend $v$ evenly with respect to $t$, then supp$v\subset\subset B_R(0)\subset B_R^n(0)\times [-R, R].$
Define  the mollification of $v$ as
\begin{equation}\label{mollifier}
v_\rho(y,t)=(\zeta_\rho\ast v)(y,t)%=\int_{B_\rho(0)}\zeta_{\rho}(X)v(Y-X)dX
,\ (y,t)\in\mathbb{R}^{n+1},
\end{equation}
For $\tau$ small enough, we have $v_\tau\in C_0^{\infty}(B_R(0))$, and $v_\rho$ is even with respect to  $t$ .

Claim:  for sufficiently small $\rho$, it holds
\begin{align}\label{v-rho}
\int_{B_R(0)} |t|^\alpha |\nabla v_\rho-\nabla v|^2dydt< C\varepsilon.
\end{align}
Similar to the proof of \eqref{t-d-1}, we only need to show
\begin{align}\label{mo}
\int_{B_R(0)}|t|^\alpha |v_\rho|^2dydt\leq C\int_{B_R(0)}|t|^\alpha |v|^2dydt.
\end{align}
In fact, for $0<\tau<\delta_1$, by H\"{o}lder inequality we have
\begin{align}\label{v-rho-1}
	\int_{B_R(0)}|t|^\alpha |v_\rho(y,t)|^2dydt
\leq&\int_{B_R(0)}|t|^\alpha\big( \int_{B_\rho(0)} \zeta_\rho(x,s)|v(y-x,t-s)|^2 dxds \big)dydt\nonumber\\
=&\int_{B_\rho(0)} \zeta_\rho(x,s)\big(\int_{B_R(0)}|t+s|^\alpha|v(y,t)|^2 dydt\big) dxds\nonumber\\
\leq&2^{\alpha+1}\int_{B_\rho(0)}  \zeta_\rho(x,s)\Big( \int_{B_R^n(0)\times [\delta_1, R]}|t|^\alpha|v(y,t)|^2 dydt\nonumber\\
&+ \delta_1^{\alpha}\int_{B_R^n(0)\times[0,\delta_1]}|\hat{u}(y,\delta_1)|^2dydt\Big)dxds,
	%\leq&2^{\alpha+1} \int_{B_R^n(0)\times [\delta_1, R]}|t|^\alpha|v(y,t)|^2 dydt+2^{\alpha+1} \delta_1^{\alpha+1}\int_{B_R^n(0)}|u(y,\delta_1)|^2dy\\
	%\leq&2^{\alpha+1}\int_{B_R^n(0)\times [\delta_1, R]}|t|^\alpha|v(y,t)|^2 dydt+2^{\alpha+1} (\alpha+1)\int_{B_R^n(0)\times[0,\delta_1]}|t|^\alpha |u(y,\delta_1)|^2dydt\\
%	^\leq&C\int_{B_R(0)} |t|^\alpha |v(y,t)|^2dydt.
\end{align}
where
\begin{align*}
\delta_1^{\alpha}\int_{B_R^n(0)\times[0,\delta_1]}|\hat{u}(y,\delta_1)|^2dydt
=&\delta_1^{\alpha+1}\int_{B_R^n(0)}|\hat{u}(y,\delta_1)|^2dy\\
=& (\alpha+1)\int_{B_R^n(0)\times[0,\delta_1]}|t|^\alpha |v(y,t)|^2dydt.
\end{align*}
Plugging the above into \eqref{v-rho-1} we obtain \eqref{mo}.

Noting that $\nabla v_\rho=\nabla(\zeta_\rho \ast v)=\zeta_\rho\ast \nabla v$, $\nabla v\in L^2_{\alpha}(B_R^+(0))$, and  for $0\leq t\leq \delta_1$,
	$$(\partial_{y_k})v(y,t)=(\partial_{y_k})v(y,\delta_1),\,(\partial_{t})v(y,t)=0,\ k=1,2,\cdots,n,$$
similar to \eqref{t-d-1}, we can obtain \eqref{v-rho}.

 From the above three steps, we can  choose $\tilde{u}=v_\rho$  for $\rho$ small enough, and hereby complete the proof.
\end{proof}

\medskip

For $\alpha\geq 1$, we also have the following density lemma.
\begin{lemma}\label{appro}
If $\alpha\geq 1$,  then ${\cal D}^{1,2}_{\alpha}(\mathbb{R}^{n+1}_+)$ is  the completion of the space $C^{\infty}_0(\mathbb{R}^{n+1}_+)$ under the norm $\|\cdot\|_{{\cal D}^{1,2}_{\alpha}(\mathbb{R}^{n+1}_+)}$.
\end{lemma}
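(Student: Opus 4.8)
The plan is to prove that $C^\infty_0(\mathbb{R}^{n+1}_+)$ is dense in $C^\infty_0(\overline{\mathbb{R}^{n+1}_+})$ for the norm $\|\cdot\|_{{\cal D}^{1,2}_\alpha(\mathbb{R}^{n+1}_+)}$. Since $C^\infty_0(\mathbb{R}^{n+1}_+)\subset C^\infty_0(\overline{\mathbb{R}^{n+1}_+})$ (extend by $0$) and the latter is by definition dense in ${\cal D}^{1,2}_\alpha(\mathbb{R}^{n+1}_+)$, such a density implies that $C^\infty_0(\mathbb{R}^{n+1}_+)$ is dense in ${\cal D}^{1,2}_\alpha(\mathbb{R}^{n+1}_+)$, and hence that the completion of $C^\infty_0(\mathbb{R}^{n+1}_+)$ under $\|\cdot\|_{{\cal D}^{1,2}_\alpha(\mathbb{R}^{n+1}_+)}$ coincides with ${\cal D}^{1,2}_\alpha(\mathbb{R}^{n+1}_+)$. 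So I would fix $u\in C^\infty_0(\overline{\mathbb{R}^{n+1}_+})$ and look for $u_\delta\in C^\infty_0(\mathbb{R}^{n+1}_+)$ with $\int_{\mathbb{R}^{n+1}_+}t^\alpha|\nabla u-\nabla u_\delta|^2\,dydt\to 0$ as $\delta\to 0$.

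The construction is a one-variable cutoff removing a neighbourhood of $\partial\mathbb{R}^{n+1}_+$. First I would build, for each small $\delta>0$, a function $\zeta_\delta\in C^\infty([0,\infty))$ with $0\le\zeta_\delta\le 1$, $\zeta_\delta\equiv 0$ on $[0,\delta^2]$, $\zeta_\delta\equiv 1$ on $[\delta,\infty)$, and
\[
\varepsilon_\delta:=\int_0^\infty t^\alpha|\zeta_\delta'(t)|^2\,dt\longrightarrow 0\quad\text{as }\delta\to 0.
\]
When $\alpha>1$ a linear cutoff over $[\delta,2\delta]$ with $|\zeta_\delta'|\le C\delta^{-1}$ already gives $\varepsilon_\delta\le C\delta^{\alpha-1}$; the borderline case $\alpha=1$ requires a logarithmic profile, for instance a mollification of $\min\{1,\max\{0,\log(t/\delta^2)/\log(1/\delta)\}\}$, for which $|\zeta_\delta'(t)|\le C\big(t\log(1/\delta)\big)^{-1}$ on $[\delta^2,\delta]$ and therefore $\varepsilon_\delta\le C/\log(1/\delta)$. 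This is exactly the point where the hypothesis $\alpha\ge 1$ enters: for $\alpha<1$ no such family exists (the weighted $1$-capacity of the hyperplane $\{t=0\}$ with weight $t^\alpha$ is positive), and I expect producing $\zeta_\delta$ with $\varepsilon_\delta\to 0$ to be the only nonroutine step.

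Given such $\zeta_\delta$, I would set $u_\delta(y,t):=\zeta_\delta(t)u(y,t)$; this lies in $C^\infty_0(\mathbb{R}^{n+1}_+)$ because $\zeta_\delta$ vanishes near $t=0$ and $u$ has compact support. From $u-u_\delta=(1-\zeta_\delta)u$ one has $\nabla(u-u_\delta)=(1-\zeta_\delta)\nabla u-u\,\zeta_\delta'(t)\,e_{n+1}$, whence
\[
\int_{\mathbb{R}^{n+1}_+}t^\alpha|\nabla(u-u_\delta)|^2\,dydt\le 2\int_{\mathbb{R}^{n+1}_+}t^\alpha(1-\zeta_\delta)^2|\nabla u|^2\,dydt+2\int_{\mathbb{R}^{n+1}_+}t^\alpha u^2|\zeta_\delta'(t)|^2\,dydt.
\]
The first term tends to $0$ by dominated convergence, since $(1-\zeta_\delta)^2\to 0$ pointwise on $\mathbb{R}^{n+1}_+$ while $t^\alpha|\nabla u|^2\in L^1(\mathbb{R}^{n+1}_+)$ (because $\alpha>-1$ and $u$ is smooth with compact support). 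For the second term, writing $E$ for the projection of $\text{supp}\,u$ onto $\mathbb{R}^n$, which has finite Lebesgue measure, one bounds it by $2\|u\|_{L^\infty}^2\,|E|\,\varepsilon_\delta\to 0$. Hence $\|u-u_\delta\|_{{\cal D}^{1,2}_\alpha(\mathbb{R}^{n+1}_+)}\to 0$, which gives the density and completes the proof.
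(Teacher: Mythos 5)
Your proof is correct and follows essentially the same route as the paper: multiply $u$ by a one-variable cutoff $\zeta_\delta$ vanishing near $\{t=0\}$, then observe that $\int_0^\infty t^\alpha|\zeta_\delta'|^2\,dt\to 0$ as $\delta\to 0$ precisely because $\alpha\ge 1$ (the paper uses the logarithmic profile $\ln(t/\delta^2)/\ln(1/\delta)$ for all $\alpha\ge 1$; your observation that a linear ramp already suffices when $\alpha>1$ is a minor simplification, and requiring $\zeta_\delta$ smooth from the outset lets you skip the paper's final mollification step). A cosmetic slip: you stipulate $\zeta_\delta\equiv 1$ on $[\delta,\infty)$ but then speak of a linear cutoff over $[\delta,2\delta]$; either pick the ramp on $[\delta^2,\delta]$ or change the plateau — it does not affect the estimate.
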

\begin{proof}
By the definition of ${\cal D}^{1,2}_{\alpha}(\mathbb{R}^{n+1}_+)$, we  only need to show: for any $u\in C^{\infty}_0(\overline{\mathbb{R}^{n+1}_+})$ and  any $\varepsilon>0$, there exists $\tilde{u}\in C^{\infty}_0(\mathbb{R}^{n+1}_+)$ such that $\int_{\mathbb{R}^{n+1}_+}t^\alpha |\nabla \tilde{u}-\nabla u|^2 dydyt<\varepsilon$.
	
For $0<\delta<1$, define
\begin{equation*}
f_\delta (t)=\begin{cases}
0,& 0\leq t\leq \delta^2,\\
\frac{\ln(t/\delta^2)}{\ln(1/\delta)},& \delta^2<t<\delta,\\
1,& t\geq \delta.
\end{cases}
\end{equation*}
Since $u\in C^{\infty}_0(\overline{\mathbb{R}^{n+1}_+})$, there exists some $M>0$ such that $|u|\leq M$ in $\overline{\mathbb{R}^{n+1}_+}$, and  for any $\varepsilon>0$, there exists $0<\delta_0<1$ such that for $0<\delta<\delta_0$, it holds
$$\int_{\{0\leq t\leq \delta\}} t^\alpha |\nabla u|^2 dydt<\varepsilon.$$
 It follows that for $0<\delta<\delta_0$,
\begin{align*}
\int_{\mathbb{R}^{n+1}_+}t^\alpha |\nabla(f_\delta u)-\nabla u|^2 dydt
=&\int_{\mathbb{R}^{n+1}_+}t^\alpha |(f_\delta-1) \nabla u+u\nabla f_\delta|^2 dydt\\
\le&2\int_{\mathbb{R}^{n+1}_+}t^\alpha\big( |(f_\delta-1)|^2| \nabla u|^2+|f'_\delta(t)|^2  |u|^2\big) dydt\\
\leq &2\int_{\{0\leq t\leq \delta\}} t^\alpha |\nabla u|^2 dydt + 2\int_{\{\delta^2\leq t\leq \delta\}}t^\alpha |f_{\delta}'(t)|^2 u^2 dydt\\
\leq &2\varepsilon+M\int_{\{\delta^2\leq t\leq \delta\}\cap\text{supp u}}t^\alpha\frac{1}{t^2\ln^2\delta}dydt\\
\leq&\begin{cases}
2 \varepsilon+C\frac{\delta^{\alpha-1}}{\ln^2\delta},\ &\alpha>1,\\
2\varepsilon+\frac{C}{-\ln\delta},\ & \alpha=1.
\end{cases}
\end{align*}
Since $$\lim_{\delta\to0}\frac{\delta^{\alpha-1}}{\ln^2\delta}=0\ (\alpha>1),\quad \lim_{\delta\to 0}\frac{1}{-\ln\delta}=0,$$
there exists $0<\delta_1<\delta_0$ such that for $0<\delta<\delta_1$, $$\frac{\delta^{\alpha-1}}{\ln^2\delta}<\varepsilon (\alpha>1),\quad \frac{1}{-\ln\delta}<\varepsilon.$$
We then get
\begin{align}\label{appro-1}
\int_{\mathbb{R}^{n+1}_+}t^\alpha |\nabla(f_\delta u)-\nabla u|^2 dydt< C\varepsilon.
\end{align}
Noting that $\text{supp}(f_\delta u)\subset\{(y,t): \ y\in\mathbb{R}^n,\ t\geq \delta^2\}$, we can define the mollification  of $f_\delta u$ as
$$(f_\delta u)_\tau=\zeta_{\tau}\ast (f_\delta u),$$
where $\zeta_\tau$ is the  convolution kernel defined in \eqref{mollifier-1}.
Obviously, $(f_\delta u)_\tau\in C^{\infty}_0(\mathbb{R}^{n+1}_+)$ for $0<\tau<\delta^2$, and  similar to Lemma \ref{density}, it is easy to verify
$$\int_{\mathbb{R}^{n+1}_+}t^\alpha |\nabla ((f_\delta u)_{\tau})-\nabla (f_\delta u)|^2 dydyt<\varepsilon$$
 for $\tau$ small enough.
Combining the above and \eqref{appro-1} we  can choose  $\tilde{u}=(f_\delta u)_{\tau}$ to get the desired result.
\end{proof}

\bigskip

For $\Omega\subset\overline{\mathbb{R}^{n+1}_+},$  define
$$\|u\|_{\alpha,\Omega}:=(\int_{\Omega}t^\alpha(|\nabla u|^2+|u|^2)dydt)^{\frac{1}{2}},$$
then $${\cal D}^{1,2}_{\alpha,loc}(\mathbb{R}^{n+1}_+)=\{u\in W^{1,1}_{loc}(\mathbb{R}^{n+1}_+)\,:\,\|u\|_{\alpha,\Omega}<\infty\ \forall \Omega\subset\subset\overline{\mathbb{R}^{n+1}_+}\}.$$

\begin{remark}\label{p-loc}
	Assume that $\alpha,\ \beta$ satisfy \eqref{beta-1}, and $u\in{\cal D}^{1,2}_{\alpha,loc}(\mathbb{R}^{n+1}_+)$, then we have $u\in L^{p}_{\beta,loc}(\overline{\mathbb{R}^{n+1}_+})$ where $p\leq p^{*}$. In fact, for any $\Omega\subset\subset\overline{\mathbb{R}^{n+1}_+}$, there exists $R>0$ such that $\Omega\subset\subset B_R(0)$. Choose a cut-off function $0\leq \eta\leq 1$ with $\eta|_\Omega=1$, $\eta|_{\mathbb{R}^{n+1}_+\backslash B_R(0)}=0$ and $|\nabla \eta|\leq C$. By weighted Sobolev inequality \eqref{GGN-2}, we have
	\begin{equation}\label{H1}
	(\int_{\Omega}t^\beta |u|^{p^*}dydt)^{\frac{2}{p^*}}\leq C\int_{B_R^+(0)}t^\alpha (|\nabla u|^2+|u|^2)dydt,
	\end{equation}
	then $u\in L^{p^*}_{\beta,loc}(\overline{\mathbb{R}^{n+1}_+})$. For $p\leq p^*$, by H\"{o}lder inequality  we have
	$u\in L^{p}_{\beta,loc}(\overline{\mathbb{R}^{n+1}_+})$.
\end{remark}

\begin{lemma}\label{CompE}
	Assume that $\alpha$ and $ \beta$ satisfy \eqref{beta-2}.
%	\begin{equation}\label{beta-4}
%	\alpha>0,\ 	\beta>-1,\ \frac{n-1}{n+1}\beta<\alpha<\beta+2.
%	\end{equation}
Then, for $1\leq p<p^*$, the embedding ${\cal D}^{1,2}_{\alpha,loc}(\mathbb{R}^{n+1}_+) \hookrightarrow L^{p}_{\beta,loc}(\overline{\mathbb{R}^{n+1}_+})$ is compact.
\end{lemma}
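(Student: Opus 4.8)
The plan is to reduce the statement to a fixed compact piece $B_R^+(0)$ and then split that piece into a non-degenerate part $\{t>\delta\}$, on which the classical Rellich--Kondrachov theorem applies verbatim, and a boundary layer $\{0<t<\delta\}$, whose $L^p_\beta$-contribution is uniformly small because $\beta>-1$. Concretely, since every $\Omega\subset\subset\overline{\mathbb{R}^{n+1}_+}$ is contained in some $B_R^+(0)$, and since a diagonal extraction over $R=1,2,\dots$ upgrades convergence on each $B_R^+(0)$ to convergence in $L^p_{\beta,loc}(\overline{\mathbb{R}^{n+1}_+})$, it suffices to show: if $\{u_j\}\subset{\cal D}^{1,2}_{\alpha,loc}(\mathbb{R}^{n+1}_+)$ satisfies $M:=\sup_j\|u_j\|_{\alpha,B_R^+(0)}<\infty$, then some subsequence converges in $L^p_\beta(B_R^+(0))$. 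By Remark \ref{p-loc} (i.e. inequality \eqref{H1}), which applies since \eqref{beta-2} implies \eqref{beta-1}, we also get $\sup_j\|u_j\|_{L^{p^*}_\beta(B_R^+(0))}<\infty$.

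First I would dispose of the boundary layer. For $0<\delta<1$, Hölder's inequality with exponents $p^*/p$ and its conjugate (writing $t^\beta|u|^p=(t^\beta|u|^{p^*})^{p/p^*}(t^\beta)^{1-p/p^*}$) gives
\begin{align*}
\int_{B_R^+(0)\cap\{0<t<\delta\}}t^\beta|u_j-u_k|^p\,dydt
\le \|u_j-u_k\|_{L^{p^*}_\beta(B_R^+(0))}^{p}\Big(\int_{B_R^+(0)\cap\{0<t<\delta\}}t^\beta\,dydt\Big)^{1-\frac{p}{p^*}},
\end{align*}
and since $\beta>-1$ the last factor is at most $C_{n,R}\,\delta^{\beta+1}/(\beta+1)$, which tends to $0$ as $\delta\to0^+$. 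Hence this piece is bounded by $\omega(\delta)$ for some $\omega(\delta)\to0$, uniformly in $j,k$.

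Next I would handle the non-degenerate region. Fix $\delta>0$ and set $\Omega_\delta=B_R(0)\cap\{t>\delta\}=B_R^+(0)\cap\{t>\delta\}$, a bounded Lipschitz domain on which $t^\alpha$ and $t^\beta$ are each bounded above and below by positive constants; thus $\{u_j\}$ is bounded in $H^1(\Omega_\delta)$. The hypothesis $\tfrac{n-1}{n+1}\beta<\alpha$ in \eqref{beta-2} forces $p^*<2^*:=\tfrac{2(n+1)}{n-1}$ when $n\ge2$ (and there is no exponent restriction when $n=1$, where $H^1\hookrightarrow\hookrightarrow L^q$ for all $q<\infty$), so $p<p^*<2^*$, and the classical Rellich--Kondrachov theorem produces a subsequence converging strongly in $L^p(\Omega_\delta)=L^p_\beta(\Omega_\delta)$. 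Applying this for $\delta=1/m$, $m\in\mathbb{N}$, and diagonalizing yields a single subsequence $\{u_{j_l}\}$ converging in $L^p_\beta(\Omega_{1/m})$ for every $m$. To conclude, given $\varepsilon>0$ I pick $m$ with $\omega(1/m)<\varepsilon/2$, then $L$ with $\|u_{j_l}-u_{j_{l'}}\|_{L^p_\beta(\Omega_{1/m})}^p<\varepsilon/2$ for $l,l'\ge L$; decomposing $B_R^+(0)$ into $\Omega_{1/m}$ and the layer $\{0<t<1/m\}$ then gives $\|u_{j_l}-u_{j_{l'}}\|_{L^p_\beta(B_R^+(0))}^p<\varepsilon$, so $\{u_{j_l}\}$ is Cauchy in $L^p_\beta(B_R^+(0))$ and converges.

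The only genuine obstacle is the degeneracy of the weight at $\{t=0\}$: away from it the weight is harmless and the argument is the textbook Rellich--Kondrachov theorem plus a diagonal extraction, while the boundary layer is controlled purely by the uniform $L^{p^*}_\beta$-bound (from the weighted Sobolev inequality \eqref{GGN-2}) together with the local integrability $\int_0^\delta t^\beta\,dt<\infty$, which is exactly where $\beta>-1$ is used. A minor technical point is to be sure that $\Omega_\delta$ is a bona fide Lipschitz domain so that the classical compact embedding applies; if one prefers to avoid discussing the spherical cap, one can first restrict to the slightly larger $\Omega_{\delta/2}$, bound $\{u_j\}$ in $H^1(\Omega_{\delta/2})$, and apply Rellich--Kondrachov there before restricting back to $\Omega_\delta$.
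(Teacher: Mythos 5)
Your proof is correct, and it follows a genuinely different and arguably simpler route than the paper's. The paper treats the weight degeneracy head-on: it first lifts the weight $t^\alpha$ to a flat Lebesgue measure in $n+m+1$ dimensions when $\alpha=m$ is an integer (via $t=|z|$, $z\in\mathbb{R}^{m+1}$), invokes the classical Rellich--Kondrachov theorem there, then handles non-integer $\alpha$ by interpolating between two integer-exponent weights, and finally handles $\beta\ne\alpha$ by a further interpolation in the weight exponent. Your argument sidesteps all of this by splitting $B_R^+$ into a thin boundary layer $\{0<t<\delta\}$, whose $L^p_\beta$-contribution is uniformly small thanks to the uniform $L^{p^*}_\beta$ bound (coming from \eqref{H1}/\eqref{GGN-2}) together with the integrability $\int_0^\delta t^\beta\,dt<\infty$ for $\beta>-1$, and the interior $\{t>\delta\}$, where both weights are comparable to $1$ and the classical Rellich--Kondrachov theorem applies since $p<p^*<2^*$ (this strict inequality is exactly what \eqref{beta-2} guarantees for $n\ge2$, while for $n=1$ the two-dimensional embedding $H^1\hookrightarrow\hookrightarrow L^q$ holds for all finite $q$). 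A diagonal extraction over $\delta=1/m$ and over $R=1,2,\dots$ then closes the argument. What your approach buys is robustness and brevity: no case distinction on whether $\alpha$ is an integer, no dimensional lifting, no interpolation between weight exponents. What the paper's approach buys is perhaps a more structural picture, tying the weighted space directly to an unweighted Sobolev space in higher dimensions, which is in the spirit of the Baouendi--Grushin connection exploited elsewhere in the paper. Your minor technical caveat about the Lipschitz regularity of $\Omega_\delta$ is well taken and is handled cleanly by your suggestion to work on the enlarged $\Omega_{\delta/2}$ first; alternatively, since $\Omega_\delta = B_R(0)\cap\{t>\delta\}$ is an intersection of a ball with a half-space, it already has Lipschitz boundary.
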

\begin{proof}
Assume that $\{u_i\}\subset {\cal D}^{1,2}_{\alpha,loc}(\mathbb{R}^{n+1}_+)$ satisfies
$$\int_{B_R^+(0)}t^\alpha \big(|\nabla u_i|^2+|u_i|^2\big)dydt\leq C(R),$$
for any $R>0$. For $1\leq p<p^*$, we will show that there exists a subsequence, still denoting it as $\{u_i\}$, such that
\begin{align}\label{converge}
\int_{B_R^+(0)}t^\beta|u_i-u_j|^{p}dydt\to 0, \quad\text{ as }i,j\rightarrow\infty
\end{align}
for any $R>0$. We shall consider two cases.
	
$(1)$ The case of $\beta=\alpha>0$.	

$(i)$ If $\alpha=m$ is a positive integer, we study the convergence on  $B_R^n(0)\times[0,R]$.
For any $i\in\mathbb{N}^*$, similar to Lemma \ref{density}, there exists $\tilde{u}_i\in C^\infty(B_{2R}(0))$ which is even with respect to $t$, such that
$$\int_{B_{2R}(0)}t^m \big(|\nabla \tilde{u}_i-\nabla u_i|^2+|\tilde{u}_i-u_i|^2\big)dydt<\frac{1}{i}.$$
By Remark \ref{p-loc}, we have
\begin{align}\label{converge-1}
\big(\int_{B_R^n(0)\times[0,R]}t^m |\tilde{u}_i-u_i|^p dydt\big)^{\frac{2}{p}}\leq C\int_{B_{2R}(0)}t^m \big(|\nabla \tilde{u}_i-\nabla u_i|^2+|\tilde{u}_i-u_i|^2\big)dydt<\frac{C}{i}.
\end{align}
It is also easy to see that $\|\tilde{u}_i\|_{m, B_R^n(0)\times[0,R]}\leq C(R)$. %is a bounded sequence in $ H^{1,2}(B_R^n(0)\times[0,R], t^m dydt)$.
 Define
$$v_i(y,z)=\tilde{u}_i(y,t),\ \ y\in B_R^n(0),\
	 t\in [0,R],\ z\in \overline{B^{m+1}_R(0)},\ |z|=t.$$
Noting that $dz=t^mdtdS_{m}$ in $B^{m+1}_R(0)$, we have
$$\int_{B_R^n(0)\times  B^{m+1}_R(0)}|v_i(y,z)|^2 dydz=(m+1)\omega_{m+1}\int_{B_R^n(0)\times[0,R]}t^{m}|\tilde{u}_i(y,t)|^2 dydt,$$
and
$$\int_{B_R^n(0)\times  B^{m+1}_R(0)}|\nabla v_i(y,z)|^2 dydz=(m+1)\omega_{m+1}\int_{B_R^n(0)\times[0,R]}t^{m}|\nabla \tilde{u}_i(y,t)|^2 dydt,$$
where $w_{m+1}$ is the volume of unit ball in  $\mathbb{R}^{m+1}$.
Thus $\{v_i\}$  is a bounded sequence in $H^{1,2}(B_R^n(0)\times B^{m+1}_R(0))$. Since $1\leq p<p^*=\frac{2(n+m+1)}{n+m-1}$, by classical compact embedding  in Sobolev space, there is a convergent subsequence (still denoting it as $\{v_i\}$) in  $L^p(B_R^n(0)\times  B^{m+1}_R(0))$. That is
$$\int_{B_R^n(0)\times  B^{m+1}_R(0)}|v_i(y,z)-v_j(y,z)|^p dydz\rightarrow 0, \quad\text{ as }i,j\rightarrow\infty,$$
which implies,
$$\int_{B_R^n(0)\times[0,R]}t^{m}|\tilde{u}_i-\tilde{u}_j|^p dydt\rightarrow0,\quad\text{as }i,j\rightarrow\infty.$$
Therefore by \eqref{converge-1}, we can obtain \eqref{converge} with $\beta=\alpha=m$.
%Since $\int_{B_R^n(0)\times[0,R]}t^{m}|\tilde{u}_i-u|^p dydt<\frac{C}{i}$, we have
%$$\int_{B_R^n(0)\times[0,R]}t^{m}|u_i-u_j|^p dydt\rightarrow0,\quad\text{as }i,j\rightarrow\infty.$$
%Then $\{u_i\}$  converges in $L^p(B_R^n(0)\times[0,R],t^m dydt)$.

$(ii)$ If $\alpha>0$ is not an integer, then there is a positive integer $m$, such that $m-1< \alpha<m$. Then we have
$$\|u_i\|^2_{m,B_R^+(0)}\leq R^{m-\alpha}\|u_i\|^2_{\alpha,B_R^+(0)}\leq C(R).$$
For $1\leq p_1<\frac{2(n+m+1)}{n+m-1}\big(<p^*=\frac{2(n+\alpha+1)}{n+\alpha-1}\big)$, we know from $(i)$ that there is a convergent subsequence (still denoting it as $\{u_i\}$)  in $L^{p_1}_m(B_R^+(0))$.	
For $1\le p<p^*$, choosing $\alpha_2\in(m-1,\alpha), \ \theta=\frac{\alpha-\alpha_2}{m-\alpha_2},$ and $  p_2=p-\frac{\alpha-\alpha_2}{m-\alpha}(p_1-p)$, we have $$\alpha=\theta m+(1-\theta)\alpha_2,\  p=\theta p_1+(1-\theta)p_2.$$
Using interpolation  inequality we have
\begin{align}\label{holder-1}
\int_{B_R^+(0)}t^{\alpha}|u_i-u_j|^p dydt
\leq \big(\int_{B_R^+(0)}t^{m}|u_i-u_j|^{p_1} dydt \big)^{\theta}\big(\int_{B_R^+(0)}t^{\alpha_2}|u_i-u_j|^{p_2} dydt \big)^{1-\theta}.
\end{align}
If
\begin{align}\label{alpha_2}
\int_{B_R^+(0)}t^{\alpha_2}|u_i-u_j|^{p_2} dydt\leq C(R),
\end{align}
then $\{u_i\}$ converges in $L^{p}_\alpha(B_R^+(0))$.
Next, we choose a suitable $\alpha_2$ such that \eqref{alpha_2} holds. Since	 $$\lim_{\alpha_2\to\alpha}p_2=p<p^*=\lim_{\alpha_2\to\alpha}\frac{2(n+\alpha_2+1)}{n+\alpha-1},$$
 we can choose  an  $\alpha_2$ sufficiently close to $\alpha$ such that $p_2<\frac{2(n+\alpha_2+1)}{n+\alpha-1}$.
Noting that $B_R^+(0)$ is bounded, invoking H\"older inequality  and \eqref{H1} we have
\begin{eqnarray*}
\big(\int_{B_R^+(0)}t^{\alpha_2}|u_i-u_j|^{p_2} dydt\big)^{\frac2{p_2}}
&\leq&C(R)\big(\int_{B_R^+(0)}t^{\alpha_2}|u_i-u_j|^{\frac{2(n+\alpha_2+1)}{n+\alpha-1}} dydt\big)^{\frac{n+\alpha-1}{n+\alpha_2+1}}\nonumber\\
&\leq &C(R)\|u_i-u_j\|^2_{\alpha, B_{R+1}^+(0)}
\leq C(R).
\end{eqnarray*}
Back to \eqref{holder-1}, we obtain \eqref{converge} with $\beta=\alpha$.%, $\{u_i\}$ converges in $L^p(B_R^+(0),t^\alpha dydt)$.

\medskip

$(2)$ We consider the general case. If $\beta>\alpha$, we choose $\gamma$ with $n\geq2,\ \beta<\gamma<\frac{n+1}{n-1}\alpha$ or $n=1, \ \gamma>\beta$. If $\beta<\alpha$, we choose $\gamma$ with $\max\{-1,\alpha-2,-\alpha\}<\gamma<\beta$.
It follows from \eqref{beta-2} that
$$\gamma>-1,\ \alpha+\gamma>0,\ \frac{n-1}{n+1}\gamma<\alpha<\gamma+2.$$
Setting $\beta=\theta\alpha+(1-\theta)\gamma$ with $\theta\in(0,1)$, we can write
$$p^*=\frac{2(n+\beta+1)}{n+\alpha-1}=\theta \frac{2(n+\alpha+1)}{n+\alpha-1}+(1-\theta)\frac{2(n+\gamma+1)}{n+\alpha-1}.$$
For $1\leq p<p^*$, take $p_3$ satisfying $p=\theta p_3+(1-\theta)\frac{2(n+\gamma+1)}{n+\alpha-1}$, then $1\leq p_3<\frac{2(n+\alpha+1)}{n+\alpha-1}$. By interpolation inequality and \eqref{H1} we have
\begin{align*}
\int_{B_R^+(0)}t^\beta |u|^p dydt \leq & \big(\int_{B_R^+(0)}t^\alpha |u|^{p_3} dydt\big)^\theta \big(\int_{B_R^+(0)}t^\gamma |u|^{\frac{2(n+\gamma+1)}{n+\alpha-1}} dydt\big)^{1-\theta} \\
\leq& C\big(\int_{B_R^+(0)}t^\alpha |u|^{p_3} dydt\big)^\theta \|u\|^{\frac{2(n+\gamma+1)(1-\theta)}{n+\alpha-1}}_{\alpha,\tilde{B_{R+1}^+(0)}}.
\end{align*}
Since the embedding  from ${\cal D}^{1,2}_{\alpha,loc}(\mathbb{R}^{n+1}_+)$ to $L^{p_3}_{\alpha,loc}(\overline{\mathbb{R}^{n+1}_+})$ is compact, we obtain that $D^{1,2}_{\alpha,loc}(\mathbb{R}^{n+1}_+)\hookrightarrow L^{p}_{\beta,loc}(\overline{\mathbb{R}^{n+1}_+})$ is  a compact embedding.
\end{proof}

\medskip

Finally, we show that for a weak solution $u$ to \eqref{genequ-1}, its Kelvin transformation $u_{\lambda,b}$ defined in Section 5, is still a "weak solution".
\begin{lemma}\label{weak solution}
Assume that $\alpha,\ \beta$ satisfy \eqref{beta-1}, and  $u\in {\cal D}^{1,2}_\alpha(\mathbb{R}^{n+1}_+)$ is a weak solution to \eqref{genequ-1}. Then $u_{\lambda,b}$ satisfies
 \begin{align}\label{weak-sol-v}
\int_{\mathbb{R}^{n+1}_+} t^\alpha\nabla u_{\lambda,b} \nabla \psi dydt=\int_{\mathbb{R}^{n+1}_+} t^\beta (u_{\lambda,b})^{p^*-1}\psi dydt
\end{align}
for any $\psi\in {\cal D}^{1,2}_{\alpha}(\mathbb{R}^{n+1}_+)$ with $supp\psi\subset\overline{\mathbb{R}^{n+1}_+}\backslash\{0\}$.
\end{lemma}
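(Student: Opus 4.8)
The plan is to exploit the invariance of \eqref{genequ-1} under horizontal translations and under the spherical inversion $x\mapsto\lambda^2x/|x|^2$, the point being that the exponent $n+\alpha-1$ in the Kelvin-type transform and the critical exponent $p^*$ are precisely those for which this invariance holds. First I would dispose of the translation: since $b\in\partial\mathbb{R}^{n+1}_+$, the map $u\mapsto u_b$ is a shift in the $y$-variables only, hence leaves the weights $t^\alpha,t^\beta$, the space ${\cal D}^{1,2}_\alpha(\mathbb{R}^{n+1}_+)$ and the weak formulation \eqref{weak-1} untouched, so $u_b$ is again a weak solution to \eqref{genequ-1}; thus it suffices to show that for any weak solution $v$ the function $(v)_\lambda(x):=\lambda^{n+\alpha-1}|x|^{-(n+\alpha-1)}v(x^*)$, $x^*:=\lambda^2x/|x|^2$, $x=(y,t)$, satisfies \eqref{weak-sol-v}. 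Note $x\mapsto x^*$ is an involutive diffeomorphism of the open half-space $\{t>0\}$, which stays at positive distance from $0$ on every compact subset, and $((v)_\lambda)_\lambda=v$.

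The core is two integral identities. The nonlinearity identity
\[
\int_{\mathbb{R}^{n+1}_+}t^\beta(v)_\lambda^{\,p^*-1}\,w\,dydt=\int_{\mathbb{R}^{n+1}_+}t^\beta v^{\,p^*-1}\,(w)_\lambda\,dydt
\]
is a pure change of variables $x=\lambda^2u/|u|^2$ (Jacobian $\lambda^{2(n+1)}|u|^{-2(n+1)}$, $t=\lambda^2u_{n+1}/|u|^2$), in which every power of $\lambda$ and of $|u|$ cancels exactly because $(n+\alpha-1)p^*=2(n+\beta+1)$. The energy identity
\[
\int_{\mathbb{R}^{n+1}_+}t^\alpha\nabla(v)_\lambda\cdot\nabla(w)_\lambda\,dydt=\int_{\mathbb{R}^{n+1}_+}t^\alpha\nabla v\cdot\nabla w\,dydt
\]
requires more work: from the pointwise rule $\nabla(v)_\lambda(x)=\lambda^{n+\alpha-1}|x|^{-(n+\alpha+1)}\bigl[\lambda^2R(x)(\nabla v)(x^*)-(n+\alpha-1)\,x\,v(x^*)\bigr]$ with $R(x)=I-2|x|^{-2}xx^{T}$ the reflection (so $R(x)x=-x$, $R^TR=I$), after the same substitution one gets $\int t^\alpha\nabla v\cdot\nabla w$ plus a first-order remainder $(n+\alpha-1)\int t^\alpha|u|^{-2}\bigl(u\cdot\nabla(vw)+(n+\alpha-1)vw\bigr)du$; integrating by parts and using $\mathrm{div}\bigl(t^\alpha|x|^{-2}x\bigr)=(n+\alpha-1)\,t^\alpha|x|^{-2}$ makes the remainder cancel, the only possible boundary term being on $\{t=0\}$, where the integrand carries a factor $t^{\alpha+1}\to0$ (this is where $\alpha>0$ is used). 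I would establish both identities first for $v,w$ smooth with compact support in $\overline{\mathbb{R}^{n+1}_+}\setminus\{0\}$, and then extend: the first to all $v,w\in{\cal D}^{1,2}_\alpha(\mathbb{R}^{n+1}_+)$ by H\"older (since $(w)_\lambda\in L^{p^*}_\beta$), the second to $v,w\in{\cal D}^{1,2}_\alpha(\mathbb{R}^{n+1}_+)$ with at least one of them having support away from $0$, by approximation — truncating such a $\psi$ with a radial cutoff produces $\psi_R\to\psi$ in ${\cal D}^{1,2}_\alpha$ with compact support still away from $0$, using the tail estimate $\int_{B_{2R}^+\setminus B_R^+}t^\alpha|\psi|^2\le CR^2(\int_{B_{2R}^+\setminus B_R^+}t^\beta|\psi|^{p^*})^{2/p^*}$ from the proof of Lemma \ref{density} to control the cutoff term.

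Granting the identities, I would finish as follows. Taking $v=w$ the weak solution gives $\int t^\alpha|\nabla(v)_\lambda|^2=\int t^\alpha|\nabla v|^2<\infty$ and (with $w=(v)_\lambda$ in the first identity and $v\ge0$) $\int t^\beta|(v)_\lambda|^{p^*}=\int t^\beta|v|^{p^*}<\infty$; since $(v)_\lambda\in W^{1,1}_{loc}(\mathbb{R}^{n+1}_+)$ (the inversion is a diffeomorphism of $\{t>0\}$, bounded away from $0$ on compacta), $(v)_\lambda\in{\cal K}_{\alpha,\beta}={\cal D}^{1,2}_\alpha(\mathbb{R}^{n+1}_+)$ by Lemma \ref{density}. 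Finally, for $\psi\in{\cal D}^{1,2}_\alpha(\mathbb{R}^{n+1}_+)$ with $\mathrm{supp}\,\psi\subset\overline{\mathbb{R}^{n+1}_+}\setminus\{0\}$, its transform $(\psi)_\lambda$ has the same support property and, by the same reasoning, lies in ${\cal D}^{1,2}_\alpha(\mathbb{R}^{n+1}_+)$; hence $(\psi)_\lambda$ is an admissible test function in \eqref{weak-1} for $v$. Applying the energy identity in the self-adjoint form $\int t^\alpha\nabla(v)_\lambda\cdot\nabla\psi=\int t^\alpha\nabla v\cdot\nabla(\psi)_\lambda$ (which follows from the energy identity and $((\psi)_\lambda)_\lambda=\psi$) to the left side of \eqref{weak-1} and the nonlinearity identity to the right side yields \eqref{weak-sol-v}.

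The step I expect to be the main obstacle is the energy identity: it is the point where the conformal covariance of $\mathrm{div}(t^\alpha\nabla)$ under inversion must be made explicit, and the exponent $n+\alpha-1=(n+1+\alpha)-2$ is precisely what forces the first-order remainder to cancel. One must also be careful that a weak solution is only known to be $C^2$ in the interior and $C^\gamma$ up to the boundary (Theorem \ref{thm-reg}), not smooth up to $\{t=0\}$, so all of the above has to be carried out at the level of the weak (integral) formulation, with the behaviour at $\{t=0\}$ controlled solely through the vanishing of the weight.
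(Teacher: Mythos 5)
Your core computation is the same as the paper's: the pointwise gradient formula for the Kelvin transform, the change of variables $x\mapsto\lambda^2x/|x|^2$, and the cancellation of the first-order remainder via a divergence identity (your $\operatorname{div}(t^\alpha|x|^{-2}x)=(n+\alpha-1)t^\alpha|x|^{-2}$ and the paper's $\operatorname{div}(s^\alpha|(x,s)|^{-(n+\alpha+1)}(x,s))=0$ are the same mechanism at different stages), with the boundary term at $\{t=0\}$ killed by the extra power of $t$. So the conformal-covariance skeleton is right. The gaps are in how you close the argument.

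First, you deduce $u_\lambda\in{\cal D}^{1,2}_\alpha$ by ``taking $v=w$ the weak solution'' in the energy identity, but you only extended that identity to pairs where at least one of $v,w$ has support away from the origin. A weak solution $u\in{\cal D}^{1,2}_\alpha$ does not: you would need to truncate $u$ on annuli $B_R\setminus B_{1/R}$, control the cutoff errors near $0$ and near $\infty$ by the tail estimate you quote, show the truncations converge to $u$ in ${\cal D}^{1,2}_\alpha$, show their transforms form a Cauchy sequence in ${\cal D}^{1,2}_\alpha$, and identify the limit with $u_\lambda$ a.e. None of this is in your write-up, and it is precisely the hard part. Second, the assertion that if $\operatorname{supp}\psi\subset\overline{\mathbb{R}^{n+1}_+}\setminus\{0\}$ then $(\psi)_\lambda$ ``has the same support property'' is false: if $\psi$ has unbounded support bounded away from $0$, then $(\psi)_\lambda$ is supported in a bounded ball whose closure contains $0$, and the factor $|x|^{-(n+\alpha-1)}$ can make $(\psi)_\lambda$ singular there. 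So you cannot appeal to the self-adjoint identity for the pair $(u,\psi)$ on the grounds that $(\psi)_\lambda$ avoids the origin. The paper sidesteps both problems by a different ordering: it first proves \eqref{weak-sol-v} for test functions $\psi\in C_0^\infty(\overline{\mathbb{R}^{n+1}_+}\setminus\{0\})$ (your change-of-variables computation, but phrased asymmetrically and at the level of the weak formulation), then obtains $\int t^\alpha|\nabla u_{1,0}|^2<\infty$ by a Caccioppoli estimate --- plugging $\psi=\eta_R^2u_{1,0}$ into the already-established identity and using the weighted H\"older bound \eqref{a-b-holder} on the cutoff terms --- and only after $u_{1,0}\in{\cal D}^{1,2}_\alpha$ is known does it pass to general $\psi$ by truncating $\psi$ at infinity. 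Reorganizing your argument along those lines (derive an a priori bound from the already-proved identity rather than from a symmetric energy equality whose validity for $v=w=u$ is the thing in question) would repair it.
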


\begin{proof} %The proof will be completed via  the following three step.
	 Without loss of generality,  we can assume $\lambda=1, \ b=0$. %and  for simplicity, we write $T_\alpha u=\frac{1}{|(y,t)|^{n+\alpha-1}}u\big(\frac{(y,t)}{|(y,t)|^2}\big)$.

$ 1)$ We first show that \eqref{weak-sol-v} holds for any $\psi\in C^\infty_0(\overline{\mathbb{R}^{n+1}_+}\backslash\{0\})$. Assume $ supp \psi\subset\overline{B_{R}^+(0)}\backslash B_{1/R}(0)$ with $R>1.$ A direct computation yields
\begin{align}\label{T-1}
\nabla  u_{1,0}(y,t)=&\frac{1}{|(y,t)|^{n+\alpha+1}}(\nabla  u)(\frac{(y,t)}{|y,t|^2})-\frac{2}{|(y,t)|^{n+\alpha+1}}\big[(\nabla u)(\frac{(y,t)}{|y,t|^2})\cdot\frac{(y,t)}{|y,t|^2}\big](y,t)\nonumber\\
&-\frac{n+\alpha-1}{|(y,t)|^{n+\alpha+1}}u(\frac{(y,t)}{|y,t|^2})(y,t).
\end{align}
Write $(x,s)=\frac{(y,t)}{|(y,t)|^2}$. Then by \eqref{T-1} and $\frac{t^\alpha}{|(y,t)|^{n+\alpha+1}}dydt=\frac{s^\alpha}{|(x,s)|^{n+\alpha+1}}dxds$, the left side of \eqref{weak-sol-v} can be written as
\begin{eqnarray*}%\label{T-3}
&&\int_{\mathbb{R}^{n+1}_+} t^{\alpha}\nabla  u_{1,0}(y,t)\nabla\psi(y,t)dydt\nonumber\\
&=&\int_{\mathbb{R}^{n+1}_+} \frac{s^\alpha}{|(x,s)|^{n+\alpha+1}}[(\nabla u)(x,s)\cdot(\nabla\psi)(\frac{(x,s)}{|(x,s)|^2})] dxds\nonumber\\
&&-\int_{\mathbb{R}^{n+1}_+}\frac{2s^\alpha}{|(x,s)|^{n+\alpha+1}}[(\nabla u)(x,s)\cdot(x,s)][(\nabla\psi)(\frac{(x,s)}{|(x,s)|^2})\cdot\frac{(x,s)}{|(x,s)|^2}]dxds\nonumber\\
&&	-\int_{\mathbb{R}^{n+1}_+}\frac{(n+\alpha-1)s^\alpha}{|(x,s)|^{n+\alpha+1}}u(x,s)
[(\nabla\psi)(\frac{(x,s)}{|(x,s)|^2})\cdot\frac{(x,s)}{|(x,s)|^2}]dxds.\nonumber\\
\end{eqnarray*}
Applying \eqref{T-1} with $u$ replaced by $\psi$, we have %by % the definition of weak solution $u$ and
%the fact $$div\big(\frac{s^\alpha}{|(x,s)|^{n+\alpha+1}}(x,s)\big)=0,$$ it holds
\begin{eqnarray*}%\label{T-3}
&&\int_{\mathbb{R}^{n+1}_+} t^{\alpha}\nabla  u_{1,0}(y,t)\nabla\psi(y,t)dydt\nonumber\\
&=&\int_{\mathbb{R}^{n+1}_+}s^\alpha\nabla u(x,s)\cdot \nabla\Big(\frac{1}{|(x,s)|^{n+\alpha-1}} \psi(\frac{(x,s)}{|(x,s)|^2})\Big)dxds\nonumber\\
&&+\int_{\mathbb{R}^{n+1}_+}\frac{(n+\alpha-1)s^\alpha}{|(x,s)|^{n+\alpha+1}}\big[\nabla \big(u(x,s)\psi(\frac{(x,s)}{|(x,s)|^2})\big)\cdot (x,s)\big]dxds.
%&=&\int_{\mathbb{R}^{n+1}_+} s^\beta u^{p^*-1}(x,s)(T_\alpha \psi)(x,s)dxds\nonumber\\
%&&+\int_{B_{R}^+(0)\backslash B_{1/R}(0)} div\Big(\frac{(n+\alpha-1)s^\alpha}{|(x,s)|^{n+\alpha+1}}u(x,s)\psi(\frac{(x,s)}{|(x,s)|^2})(x,s)\Big)dxds\nonumber\\
%&=&\int_{\mathbb{R}^{n+1}_+} t^\beta (T_\alpha u)^{p^*-1}(y,t) \psi(y,t)dydt,
\end{eqnarray*}
We claim that
\begin{align}\label{div}
	\int_{\mathbb{R}^{n+1}_+}\frac{s^\alpha}{|(x,s)|^{n+\alpha+1}}\big[\nabla \big(u(x,s)\psi(\frac{(x,s)}{|(x,s)|^2})\big)\cdot (x,s)\big]dxds=0.
\end{align}
Then since $u\in {\cal D}^{1,2}_\alpha(\mathbb{R}^{n+1}_+)$ is a weak solution of \eqref{genequ-1}, it follows that
\begin{align*}
	\int_{\mathbb{R}^{n+1}_+} t^{\alpha}\nabla  u_{1,0}(y,t)\nabla\psi(y,t)dydt
	=&\int_{\mathbb{R}^{n+1}_+} s^\beta u^{p^*-1}(x,s)\frac{1}{|(x,s)|^{n+\alpha-1}} \psi(\frac{(x,s)}{|(x,s)|^2})dxds\\
	=&\int_{\mathbb{R}^{n+1}_+} t^\beta (u_{1,0})^{p^*-1}(y,t) \psi(y,t)dydt.
\end{align*}

In fact, since $ supp \psi\subset\overline{B_{R}^+(0)}\backslash B_{1/R}(0)$ and  $$div\big(\frac{s^\alpha}{|(x,s)|^{n+\alpha+1}}(x,s)\big)=0,$$
it holds
\begin{align*}
	LHS \text{ of \eqref{div}}=\int_{B_{R}^+(0)\backslash B_{1/R}(0)} div\Big(\frac{s^\alpha}{|(x,s)|^{n+\alpha+1}}u(x,s)\psi(\frac{(x,s)}{|(x,s)|^2})(x,s)\Big)dxds.
\end{align*}
Note that $\psi =0 $ on $\partial \big(B_{R}^+(0)\backslash B_{1/R}(0)\big)\cap\mathbb{R}^{n+1}_+$ and $\nu$ denotes the outward unit normal vector field on $\partial \big(B_{R}^+(0)\backslash B_{1/R}(0)\big)\cap\partial \mathbb{R}^{n+1}_+$ which satisfies $\nu_i=0,\ i=1,\cdots,n,\ \nu_{n+1}=-1$. By the divergence theorem, for any $u\in C^1_0(\overline{\mathbb{R}^{n+1}_+})$,
it holds
\begin{align*}
	LHS \text{ of \eqref{div}}
=&-\int_{\partial \big(B_{R}^+(0)\backslash B_{1/R}(0)\big)\cap\partial \mathbb{R}^{n+1}_+}\lim_{s\to 0^+}\frac{s^{\alpha+1} }{|(x,s)|^{n+\alpha+1}}u(x,s)\psi(\frac{(x,s)}{|(x,s)|^2})dS\\
=&0.
\end{align*}
%Taking the above into \eqref{T-3}, and changing variable $(x,s)$ to $(y,t)$, we get
%\begin{align}\label{T-4}
%	\int_{\mathbb{R}^{n+1}_+} t^{\alpha}\nabla (T_\alpha u)(y,t)\nabla\psi(y,t)dydt
%	=&\int_{\mathbb{R}^{n+1}_+} s^\beta u^{p^*-1}(x,s)(T_\alpha \psi)(x,s)dxds\nonumber\\
%	=&\int_{\mathbb{R}^{n+1}_+} t^\beta (T_\alpha u)^{p^*-1}(y,t) \psi (y,t) dydt.
%\end{align}  %for $u\in C^1_0(\overline{\mathbb{R}^{n+1}_+})$.
For a general $u\in {\cal D}^{1,2}_{\alpha}(\mathbb{R}^{n+1}_+)$, similar to the proof of Lemma \ref{density},   we can show that for any $\varepsilon>0$, there exists $\tilde{u}\in C^\infty_0(\overline{ \mathbb{R}^{n+1}_+})$ such that
\begin{align}\label{loc-density}
\int_{B_R^+(0)} t^{\alpha}\big(|\nabla \tilde{u}-\nabla u|^2+|\tilde{u}-u|^2\big)dydt< \varepsilon.%, \ \int_{B_R^+(0)} t^\alpha |\tilde{u}-u|^2dydt\le \varepsilon.
\end{align}
We have shown \eqref{div} holds for $\tilde{u}$, then by H\"{o}lder inequality and \eqref{loc-density}, we know that \eqref{div} holds for $u\in {\cal D}^{1,2}_{\alpha}(\mathbb{R}^{n+1}_+)$. And hence \eqref{weak-sol-v} holds for $\psi\in C^\infty_0(\overline{\mathbb{R}^{n+1}_+}\backslash\{0\})$.

\medskip

$2)$ For $u\in {\cal D}^{1,2}_{\alpha}(\mathbb{R}^{n+1}_+)$, it is easy to check that
\begin{align}\label{weak-id-1}
		\int_{\mathbb{R}^{n+1}_+} t^\beta | u_{1,0}|^{p^*}dydt=\int_{\mathbb{R}^{n+1}_+} s^\beta |u|^{p^*}dxds<\infty,
\end{align}
and by \eqref{T-1},
\begin{align*}
		\int_{B_{R}^+(0)\backslash B_{1/R}(0)} t^\alpha|\nabla u_{1,0}|^2dydt\leq C(R),\ \forall R>1.
\end{align*}
Then by approximation, \eqref{weak-sol-v} holds for $\psi\in {\cal D}^{1,2}_\alpha(\mathbb{R}^{n+1}_+)$ with $ supp \psi\subset\subset\overline{\mathbb{R}^{n+1}_+}\backslash \{0\}$. We then use \eqref{weak-sol-v} to prove that $ u_{1,0}\in{\cal D}^{1,2}_{\alpha}(\mathbb{R}^{n+1}_+)$. Hence by approximation, \eqref{weak-sol-v} holds for any $\psi\in{\cal D}^{1,2}_{\alpha}(\mathbb{R}^{n+1}_+)$  with $supp \,\psi\subset\overline{\mathbb{R}^{n+1}_+}\backslash\{0\}$.
	
		For any  $R>1$, choose cut-off function $\eta_R\in C^\infty_0(\overline{\mathbb{R}^{n+1}_+})$ satisfying $supp \eta_R\subset \overline{B_{2R}^+(0)}\backslash B_{1/(2R)}(0)$, $0\leq\eta_R\leq 1$, $\eta_R|_{B_R^+(0)\backslash B_{1/R}(0)}=1$ and $|\nabla \eta_R|\leq\frac{C}{R}$ in $B_{2R}^+(0)\backslash B_R(0)$, $|\nabla \eta_R|\leq CR$ in $B_{1/R}^+(0)\backslash B_{1/(2R)}(0)$. It is easy to check that $\eta_R^2u_{1,0}\in {\cal D}^{1,2}_\alpha(\mathbb{R}^{n+1}_+)$ and $supp\big(\eta_R^2 u_{1,0}\big)\subset\subset\overline{\mathbb{R}^{n+1}_+}\backslash\{0\}$. Take $\psi=\eta_R^2u_{1,0}$ in \eqref{weak-sol-v} with $\lambda=1,\ b=0$, then
		\begin{align*}
		\int_{\mathbb{R}^{n+1}_+} t^{\alpha}\nabla u_{1,0}\cdot \nabla\big(\eta_R^2 u_{1,0}\big)dydt=\int_{\mathbb{R}^{n+1}_+} t^{\beta}\eta_R^2 (u_{1,0})^{p^*} dydt.
		\end{align*}
		Since $\nabla u_{1,0}\cdot \nabla\big(\eta_R^2u_{1,0}\big)\geq \frac{1}{2}\eta_R^2|\nabla u_{1,0}|^2-2|\nabla \eta_R|^2|u_{1,0}|^2$, it holds
		\begin{align}\label{weak-in-1}
		\int_{\mathbb{R}^{n+1}_+} t^{\alpha}\eta_R^2|\nabla u_{1,0}|^2 dydt\leq C\int_{\mathbb{R}^{n+1}_+} t^{\alpha}|\nabla \eta_R|^2|u_{1,0}|^2 dydt+C\int_{\mathbb{R}^{n+1}_+} t^{\beta}\eta_R^2|u_{1,0}|^{p^*} dydt.
		\end{align}
		By the upper bound of $|\nabla \eta_R|$ and \eqref{a-b-holder}, it holds
		\begin{align*}
		&	\int_{\mathbb{R}^{n+1}_+} t^{\alpha}|\nabla \eta_R|^2|u_{1,0}|^2 dydt\\
		\leq& \frac{C}{R^2}\int_{B_{2R}^+(0)\backslash B_R(0)} t^\alpha|u_{1,0}|^2dydt+CR^2\int_{B_{1/R}^+(0)\backslash B_{1/(2R)}(0)}t^\alpha|u_{1,0}|^2dydt\\
		\leq & C\int_{\mathbb{R}^{n+1}_+} t^\beta |u_{1,0}|^{p^*}dydt.
		%\leq & \int_{\mathbb{R}^{n+1}_+} t^\beta |u|^{p^*}dydt.
		\end{align*}
		Back to \eqref{weak-in-1} and letting $R\to\infty$, we have
		$$	\int_{\mathbb{R}^{n+1}_+} t^{\alpha}|\nabla u_{1,0}|^2 dydt\leq C\int_{\mathbb{R}^{n+1}_+} t^\beta |u_{1,0}|^{p^*}dydt.$$
		Combining above inequality with \eqref{weak-id-1}, and by Lemma \ref{density}, we have $u_{1,0}\in{\cal D}^{1,2}_{\alpha}(\mathbb{R}^{n+1}_+)$,  we hereby obtain the desired result.
\end{proof}

In the proof of Theorem \ref{L-1}, for $\psi$ defined in \eqref{transform-2}, we have
\begin{lemma}\label{n-bd}
	For $\alpha>0$,  $\lim_{r\rightarrow(\frac{1}{2})^-}(\frac{1}{4}-r^2)^\alpha\frac{\partial\psi}{\partial r}=0.$
\end{lemma}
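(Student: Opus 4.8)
The plan is to prove the lemma in two stages. First, using the radial equation \eqref{ode-2} in divergence form, I will show that the limit $\ell:=\lim_{r\to(1/2)^-}(\frac14-r^2)^\alpha\psi'(r)$ exists and is finite. Then I will show $\ell=0$ by using that $u$ is a \emph{weak} solution of \eqref{genequ-1}: the identity \eqref{weak-1} holds for test functions that need not vanish on $\partial\mathbb{R}^{n+1}_+$, which encodes the natural (conormal) boundary condition; after the Kelvin transform \eqref{transform-1}--\eqref{transform-2} this translates precisely into the asserted vanishing. Throughout, recall that $\psi$ is positive, $C^2$ on $(0,\frac12)$, and continuous up to $\frac12$ (the boundary continuity following from $u\in C^\gamma_{loc}(\overline{\mathbb{R}^{n+1}_+})$ and the asymptotics \eqref{decay}).

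\emph{Step 1.} Multiplying \eqref{ode-2} by $c(r)=r^n(\frac14-r^2)^\alpha$ gives, for $0<r<\frac12$,
\[
\big(c(r)\psi'(r)\big)'=r^n\Big\{\alpha(n+\alpha-1)(\tfrac14-r^2)^{\alpha-1}\psi(r)-(\tfrac14-r^2)^{\beta}\psi(r)^{p^*-1}\Big\}.
\]
Since $\psi$ is bounded on $[0,\frac12]$ and $\alpha>0$, $\beta>-1$, the right-hand side belongs to $L^1(r_0,\frac12)$ for any fixed $r_0\in(0,\frac12)$, because $(\frac12-r)^{\alpha-1}$ and $(\frac12-r)^{\beta}$ are integrable near $\frac12$. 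Integrating from $r_0$ to $r$ shows that $c(r)\psi'(r)$ has a finite limit as $r\to(\frac12)^-$; since $r^n\to(\frac12)^n>0$, the limit $\ell$ exists and is finite, and $c(r)\psi'(r)\to(\frac12)^n\ell$. When $\alpha\ge1$ this already finishes: if $\ell\ne0$ then near $\frac12$ the function $\psi'$ has a fixed sign with $|\psi'(r)|\asymp(\frac12-r)^{-\alpha}$, which is not integrable, so $\psi$ would be unbounded near $r=\frac12$, a contradiction. The case $0<\alpha<1$ is the crux, since there boundedness of $\psi$ --- and even finiteness of the energy $\int_{\mathbb{R}^{n+1}_+}t^\alpha|\nabla u|^2$ --- is consistent with $\ell\ne0$.

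\emph{Step 2.} Take $\phi\in C^\infty_0(\overline{\mathbb{R}^{n+1}_+})$ of the form $\phi(y,t)=\chi(y)\zeta(t)$ with $\chi\in C^\infty_0(\mathbb{R}^n)$ and $\zeta$ smooth, compactly supported, $\equiv1$ near $0$. Testing \eqref{weak-1} with $\phi$, and separately integrating by parts over $\{t>\varepsilon\}$ using that $u\in C^2(\mathbb{R}^{n+1}_+)$ solves \eqref{u-equ} there (the outward normal on $\{t=\varepsilon\}$ being $-e_{n+1}$, and $\phi$ having compact support), one gets
\[
\int_{\{t>\varepsilon\}}t^\alpha\nabla u\cdot\nabla\phi\,dydt=\int_{\{t>\varepsilon\}}t^\beta u^{p^*-1}\phi\,dydt-\varepsilon^\alpha\int_{\mathbb{R}^n}\partial_t u(y,\varepsilon)\chi(y)\,dy.
\]
Letting $\varepsilon\downarrow0$ (dominated convergence, using the finite energy and $u\in L^{p^*}_\beta$) and comparing with \eqref{weak-1} yields
\[
\lim_{\varepsilon\downarrow0}\varepsilon^\alpha\int_{\mathbb{R}^n}\partial_t u(y,\varepsilon)\chi(y)\,dy=0\qquad\text{for every }\chi\in C^\infty_0(\mathbb{R}^n).
\]
Differentiating the Kelvin representation $u(X)=|X+e_{n+1}|^{-(n+\alpha-1)}\psi(\rho(X))$, $\rho(X)=|x(X)+\frac{e_{n+1}}2|$ (with $\psi$ radial), gives $\partial_t u=b_0(X)\psi(\rho)+b_1(X)\psi'(\rho)$ with $b_0,b_1$ smooth and bounded on compact subsets of $\overline{\mathbb{R}^{n+1}_+}$ (note $-e_{n+1}\notin\overline{\mathbb{R}^{n+1}_+}$). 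From \eqref{transform-2} and the identity $\frac14-\rho(X)^2=t\,|X+e_{n+1}|^{-2}$ (proof of Lemma \ref{rigidity}), on the slice $X=(y,\varepsilon)$ one has $\varepsilon^\alpha=(\frac14-\rho^2)^\alpha|X+e_{n+1}|^{2\alpha}$; hence, using $\rho(y,\varepsilon)\to\frac12$ and $(\frac14-r^2)^\alpha\psi'(r)\to\ell$, as $\varepsilon\downarrow0$ (uniformly for $y$ in compacts)
\[
\varepsilon^\alpha\partial_t u(y,\varepsilon)\;\longrightarrow\;\ell\,b_1((y,0))\,(|y|^2+1)^\alpha=:\ell\,\kappa(y),
\]
while $\varepsilon^\alpha b_0(X)\psi(\rho)\to0$. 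Therefore $\ell\int_{\mathbb{R}^n}\kappa(y)\chi(y)\,dy=0$ for all $\chi$. Since $\kappa$ is continuous and $\kappa(0)=b_1((0,0))=-1\ne0$ (a direct computation of $\partial_t x$ at the origin), $\kappa\not\equiv0$, and we conclude $\ell=0$, which is the claim.

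The main obstacle is Step 2 in the range $0<\alpha<1$: neither the ODE together with boundedness of $\psi$, nor the finite energy, forces $\ell=0$, so one genuinely needs the weak formulation (the implicit conormal condition on $\partial\mathbb{R}^{n+1}_+$), together with the bookkeeping of the Kelvin transform to verify that the coefficient $\kappa$ of $\ell$ in the limiting conormal density is nontrivial. The remaining steps --- the divergence-form integration and the elementary $\alpha\ge1$ argument --- are routine.
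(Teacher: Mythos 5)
Your proof is correct and rests on the same essential mechanism as the paper's: the weak formulation \eqref{weak-1} encodes the conormal condition $\lim_{t\to0^+}t^\alpha\partial_t u=0$, and this is translated via the Kelvin transform into the vanishing of $(\frac14-r^2)^\alpha\psi'(r)$. Where you differ is the geometric implementation. The paper integrates by parts over a region $\Omega_2$ bounded by $\tilde\Gamma$, the Kelvin preimage of a shrinking spherical arc $\Gamma\subset\partial B_r(-\frac{e_{n+1}}{2})$, and then directly converts the boundary integral $\int_{\tilde\Gamma}t^\alpha\partial_\nu u\,\phi$ into an expression involving $(\frac14-r^2)^\alpha\psi'(r)$ and a nondegenerate integral over $\Gamma$; because $\psi$ is radial, the factor $(\frac14-r^2)^\alpha\psi'(r)$ pulls out of the integral, and the whole thing holds for every $r\to\frac12$, so the limit is established and computed at once. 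You instead slice at $\{t=\varepsilon\}$, which is cleaner to set up, but it forces you to first establish (your Step 1) that $\ell=\lim_{r\to(1/2)^-}(\frac14-r^2)^\alpha\psi'(r)$ exists as a single number, since your pointwise convergence $\varepsilon^\alpha\partial_t u(y,\varepsilon)\to\ell\kappa(y)$ makes no sense otherwise; the ODE in divergence form supplies that, and is a reasonable lemma to isolate. Both routes use the Kelvin identity $\frac14-\rho(X)^2=t\,|X+e_{n+1}|^{-2}$ in the same way, and both need that the relevant coefficient ($\kappa(0)=-1$ for you, $\int_\Gamma|x+e_{n+1}|^{-(n+\alpha+1)}\phi$ for the paper) is nonvanishing. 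Your observation that the $\alpha\geq1$ case admits the elementary non-integrability shortcut, and that $0<\alpha<1$ genuinely requires the weak formulation, is a good diagnostic the paper does not make explicit. In short: same idea, different coordinates for the integration by parts; your version is more transparent about \emph{why} the boundary condition is needed, the paper's is more economical because it never needs to pre-establish that the limit exists.

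One small point to keep in mind: in passing from the boundary-term identity to $\lim_{\varepsilon\to0}\varepsilon^\alpha\int\partial_t u(y,\varepsilon)\chi\,dy=0$, the convergence of the two bulk integrals to their $\varepsilon=0$ values needs $t^\alpha\nabla u\cdot\nabla\phi\in L^1$ near $t=0$ (Cauchy--Schwarz with the finite energy plus $\beta>-1$, as you indicate) and $t^\beta u^{p^*-1}\phi\in L^1$ (H\"older against the finite $L^{p^*}_\beta$ norm plus $\int_K t^\beta<\infty$). These are fine, but spell them out if you write this up, since they are precisely where $\alpha>0$, $\beta>-1$ enter.
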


\begin{proof} From the  regularity of $u$, we have
	$$\psi\in C^2(B_\frac{1}{2}(-\frac{e_{n+1}}{2})) \cap C^0(\overline{B_\frac{1}{2}(-\frac{e_{n+1}}{2})}).$$
	For  $r\in(0,\frac{1}{2})$  close to $\frac{1}{2}$ and   $\delta>0$  small, we take a small arc $$\Gamma=\{x\in \partial B_r(-\frac{e_{n+1}}{2}): r-\frac{1}{2}-\delta<x_{n+1}\leq r-\frac{1}{2}\}.$$
	Let $x$ and $(y,t)$ satisfy the relation \eqref{transform-1}, and let
	$$\tilde{\Gamma}=\{(y,t)\in\mathbb{R}^{n+1}_+: x=\frac{(y,t)+e_{n+1}}{|(y,t)+e_{n+1}|^2}-e_{n+1}\in\Gamma\}.$$

	\begin{figure}[!htbp]\small
		\center
		\includegraphics[height=1.8in,width=2.6in]{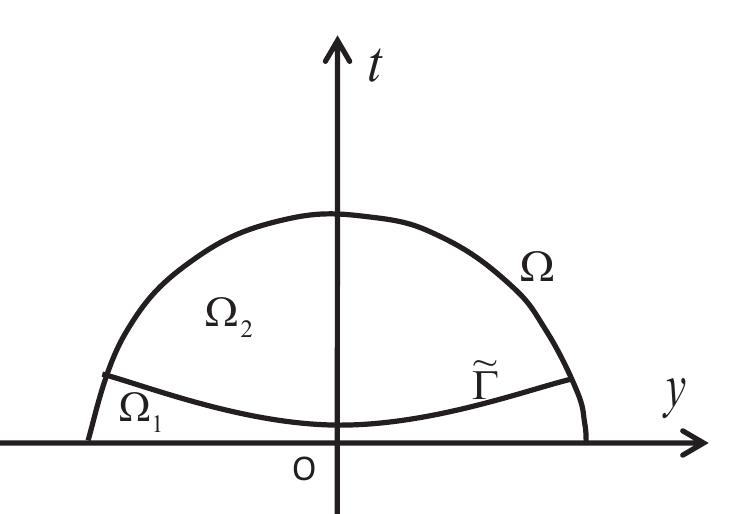}
		\vspace{1em}
		\caption{ Domain of $\Omega$}
	\end{figure}
	For a bounded domain  $\Omega$  in the upper half space (see figure 2). $\tilde{\Gamma}$ divides $\Omega$ into two parts $\Omega_1$ and $\Omega_2$. Take a test function $\phi\in C_0^\infty(\overline{\mathbb{R}^{n+1}_+})$ satisfying $0\leq\phi\leq 1$, $\phi\equiv 0$ in $\mathbb{R}^{n+1}_+\backslash\overline{\Omega}$ and $\phi\equiv1$ in $\Omega_\rho:=\{(y,t)\in\overline{\Omega}: dist((y,t),\partial\Omega\backslash\partial\mathbb{R}^{n+1}_+)>\rho\}$ for some small $\rho>0$.
	We have that
	\begin{equation}\label{equality}
	\int_\Omega t^\alpha\nabla u\cdot\nabla \phi dydt=\int_{\Omega}t^\beta u^{p^*-1}\phi dydt.
	\end{equation}
	When $r$ is sufficiently close to $\frac{1}{2}$, we have that $|\Omega_1|$ is small enough, such that
	$$|\int_{\Omega_1} t^\alpha\nabla u\cdot\nabla \phi dydt|\leq(\int_{\Omega_1}t^\alpha|\nabla u|^2)^{\frac{1}{2}}(\int_{\Omega_1}t^\alpha|\nabla\phi|^2)^{\frac{1}{2}}=o(1),$$
	and
		\begin{align*}
	|\int_{\Omega_1}t^\beta u^{p^*-1}\phi dydt|
	\leq (\int_{\Omega_1}t^\beta u^{p^*} dydt)^{\frac{p^*-1}{p^*}}(\int_{\Omega_1}t^\beta \phi^{p^*}dydt)^{\frac{1}{p^*}}=o(1).
	%\leq&S_{n+1, \alpha,\beta}^{-\frac{p^*}2}(\int_{\Omega_1}t^\alpha|\nabla u|^2dydt)^{\frac{p^*-1}{2}}(\int_{\Omega_1}t^\alpha|\nabla\phi|^2dydt)^{\frac{1}{2}}=o(1).
	\end{align*}
	Then back to \eqref{equality},
	\begin{eqnarray*}
		LHS \ of \ \eqref{equality} &=&\int_{\Omega_1} t^\alpha\nabla u\cdot\nabla \phi dydt+\int_{\Omega_2} t^\alpha\nabla u\cdot\nabla \phi dydt\\
	&=&o(1)+\int_{\Omega_2}div(t^\alpha\nabla u\phi)dydt-\int_{\Omega_2}div(t^\alpha\nabla u)\phi dydt\\
		&=&\int_{\tilde{\Gamma}}t^\alpha \frac{\partial u}{\partial \nu}\phi dydt+\int_{\Omega_2}t^\beta u^{p^*-1}\phi dydt+o(1),
	\end{eqnarray*}
	and
	$$RHS \ of \ \eqref{equality}=\int_{\Omega_2}t^\beta u^{p^*-1}\phi dydt+o(1),$$
	where $\nu$ is the unit outer normal vector on $\tilde{\Gamma}$ with respect to $\Omega_2$.
	Therefore, we have that
	\begin{align}\label{o(1)}
	\int_{\tilde{\Gamma}}t^\alpha \frac{\partial u}{\partial \nu}\phi dydt=o(1), \text{  as } r\rightarrow(\frac{1}{2})^-.\end{align}
	Since $$|x+\frac{e_{n+1}}{2}|^2=\frac{1}{4}-\frac{t}{|y|^2+(t+1)^2},$$ we have that
   on $\tilde{\Gamma}$, $r^2=\frac{1}{4}-\frac{t}{|y|^2+(t+1)^2},$ i.e.
	\begin{align}\label{t}
	t=\frac{1+4r^2-\sqrt{16r^2-|y|^2(1-4r^2)^2}}{1-4r^2}.
	\end{align}
	The unit normal vector $\nu$ on $\tilde{\Gamma}$ is
	\begin{eqnarray*}
		\nu=\Big(\frac{1-4r^2}{4r}y,-\frac{\sqrt{16r^2-|y|^2(1-4r^2)^2}}{4r}\Big)\big|_{\tilde{\Gamma}}
		=\Big(\frac{1-4r^2}{4r}y,\frac{1-4r^2}{4r}(t+1)-\frac{1}{2r}\Big)\big|_{\tilde{\Gamma}}.
	\end{eqnarray*}
	Thus
	\begin{eqnarray}\label{partial-nu}
	\frac{\partial u}{\partial \nu}\big|_{\tilde{\Gamma}}=(\nabla u \cdot \nu)\big|_{\tilde{\Gamma}}=\frac{1-4r^2}{4r}\big[\nabla u \cdot(y,t+1)-\frac{2}{1-4r^2}\partial_t u\big]\big|_{\tilde{\Gamma}}.
	%	&=&(\frac{1-4r^2}{4t})^{\frac{n+\alpha+1}{2}}\big|_{\tilde{\Gamma}}\frac{\partial \psi}{\partial r}\big|_{\Gamma}+\frac{(1-4r^2)(1-t)}{8rt}u\big|_{\tilde{\Gamma}}.
	\end{eqnarray}
	On the other hand, by \eqref{transform-1}, \eqref{transform-2} and \eqref{t}, the normal derivative of $\psi$ on $\Gamma$ is
	\begin{equation*}
	\begin{split}
	&\frac{\partial\psi}{\partial r}\big|_{\Gamma}= \big(\nabla_x\psi \cdot \frac{x+\frac{e_{n+1}}{2}}{r}\big)\big|_{\Gamma}\\
	=&\frac{[|y|^2+(t+1)^2]^\frac{n+\alpha-1}{2}}{2r}\big(-(n+\alpha-1)(1-t)u+2t\nabla u\cdot (y,t+1)-(|y|^2+(t+1)^2)\partial_t u\big)\big|_{\tilde{\Gamma}}\\
	=&\frac{1}{2r}\big(\frac{4t}{1-4r^2}\big)^{\frac{n+\alpha-1}{2}}\big(-(n+\alpha-1)(1-t)u+2t\nabla u\cdot (y,t+1)-\frac{4t}{1-4r^2}\partial_t u\big)\big|_{\tilde{\Gamma}}.
	\end{split}
	\end{equation*}
	%\begin{align*}
	%\big[\nabla u \cdot(y,t+1)-\frac{2}{1-4r^2}\partial_t u\big]\big|_{\tilde{\Gamma}}=
	%(\frac{1-4r^2}{4t})^{\frac{n+\alpha+1}{2}}\big|_{\tilde{\Gamma}}\frac{\partial \psi}{\partial r}\big|_{\Gamma}+\frac{(1-4r^2)(1-t)}{8rt}u\big|_{\tilde{\Gamma}},
	%\end{align*}
	Comparing with \eqref{partial-nu}, we have
	\begin{align*}
	\frac{\partial u}{\partial \nu}\big|_{\tilde{\Gamma}}=(\frac{1-4r^2}{4t})^{\frac{n+\alpha+1}{2}}\big|_{\tilde{\Gamma}}\frac{\partial \psi}{\partial r}\big|_{\Gamma}+\frac{(n+\alpha-1)(1-4r^2)(1-t)}{8rt}u\big|_{\tilde{\Gamma}}.
	\end{align*}
	 Back to \eqref{o(1)}, and by \eqref{transform-2}, $dS|_{\tilde{\Gamma}}=\frac{1}{|x+e_{n+1}|^{2n}}d\tilde{S}|_{\Gamma}$ and $t|_{\tilde{\Gamma}}=\frac{\frac{1}{4}-r^2}{|x+e_{n+1}|^2}|_{\Gamma}$, we have
	\begin{equation}\label{o(1)-1}
	\begin{split}
	o(1)=&\int_{\tilde{\Gamma}}t^\alpha\frac{\partial u}{\partial\nu}\phi dydt\\
	%=&\int_\Gamma \frac{(\frac{1}{4}-r^2)^\alpha}{|x+e_{n+1}|^{2(n+\alpha+1)}}\Big(|x+e_{n+1}|^{n+\alpha+1}\frac{\partial \psi}{\partial r}+\frac{(n+\alpha-1)(2r^2+x_{n+1}+\frac12)}{2r}|x+e_{n+1}|^{n+\alpha-1}\psi\Big)\phi dx\\
	=&\int_\Gamma\frac{(\frac{1}{4}-r^2)^\alpha \frac{\partial\psi}{\partial r}}{|x+e_{n+1}|^{n+\alpha+1}}\phi dx+(n+\alpha-1)\int_\Gamma \frac{(\frac{1}{4}-r^2)^\alpha(2r^2+x_{n+1}+\frac12)}{2r|x+e_{n+1}|^{n+\alpha+3}}\psi\phi dx.
	\end{split}
	\end{equation}
	Since $\psi$ is bounded and $\Gamma$ is far away from the point $-e_{n+1}$, we know that the integral $\int_\Gamma \frac{2r^2+x_{n+1}+\frac12}{2r|x+e_{n+1}|^{n+\alpha+3}}\psi\phi dx$ is bounded.  Then for $\alpha>0$, the second term of \eqref{o(1)-1} is $o(1)$ as $r\to (\frac{1}{2})^-$.
	Since $\psi$ is radially symmetric with respect to $-\frac{e_{n+1}}{2}$, we have that
	$$(\frac{1}{4}-r^2)^\alpha \frac{\partial\psi}{\partial r}\big|_{\Gamma}\int_\Gamma\frac{1}{|x+e_{n+1}|^{n+\alpha+1}}\phi dx=o(1),$$
	which imples
	\[
	\lim_{r\rightarrow(\frac{1}{2})^-}(\frac{1}{4}-r^2)^\alpha\frac{\partial\psi}{\partial r}=0.
	\]

\end{proof}

\bigskip

 \noindent {\bf Acknowledgements}\\
 \noindent  We would like to thank Yazhou Han for bringing our attention to Monti and Morbidelli's work while J. Dou and M. Zhu were working on the extension operators, and thank Xiaodong Wang for bringing Maz'ya book to our attention. We thank Ha\"im Brezis for his personal remark on the discovery of Gagliado-Nirenberg inequality and his interest in this paper. L. Wang especially thanks her supervisor Min Ji for guiding her PhD thesis, which involved the content of this paper. M. Ji helped her to catch the key points and to develop a deep, clear and rigorous logical thinking. M. Ji pointed out some errors in the proof of density lemma (Lemma \ref{density}) and also the need of globally bounded integrals for solutions in the classification results, and provided many valuable inputs, which helped us to make the paper more concise and precise. Besides, it is because of M. Ji's enlightenment that L. Wang found some other errors, such as the respective range of $\alpha$ and $\beta$ in different lemmas, and the rearrangement property in section 6.

  L. Wang is supported by the China Scholarship Council for her study/research at the University of Oklahoma.  L. Wang  and L. Sun would like to thank Department of Mathematics at  the  University of Oklahoma  for its hospitality,  where this work has been done.

 The project is supported by  the National Natural Science Foundation of China (Grant No. 12071269), Natural Science Basic Research Plan for Distinguished Young Scholars  in Shaanxi Province of China (Grant No. 2019JC-19), and the Fundamental Research Funds for the Central Universities (Grant No. GK202101008). The work of L. Wang  is supported by the National Natural Science Foundation of China (Grant No. 11571344).

\end{document}